\documentclass[noinfoline]{imsart}

\usepackage[a4paper,top=3cm, bottom=3cm, left=2.5cm, right=2.5cm]{geometry}
\usepackage{fancyhdr}

\RequirePackage{amsthm,amsmath,amsfonts,amssymb}
\usepackage{dsfont} 

\allowdisplaybreaks
\RequirePackage[colorlinks,citecolor=blue,urlcolor=blue]{hyperref}
\usepackage[capitalize]{cleveref}
\usepackage{mathrsfs} 

\usepackage{graphicx}
\graphicspath{{Images/}}
\usepackage{float}
\usepackage{textcomp}
\usepackage[figuresright]{rotating} 
\usepackage{pdfpages}
\usepackage{wrapfig}

\usepackage{bbm}

\usepackage{array}
\usepackage{booktabs} 
\usepackage{enumerate}
\usepackage{enumitem}
\usepackage{multicol}
\usepackage{tabularx}
\usepackage{tikz}
\usetikzlibrary{shapes.geometric, arrows.meta, positioning, shadows, backgrounds, fit, calc}
\usepackage{algorithm}
\usepackage[noend]{algpseudocode}
\makeatletter
\def\algbackskip{\hskip-\ALG@thistlm}
\makeatother

\renewcommand{\b}[1]{\mathbb{#1}} 
\newcommand{\m}[1]{\mathbf{#1}} 
\newcommand{\s}[1]{\mathscr{#1}} 
\newcommand{\f}[1]{\mathfrak{#1}} 
\renewcommand{\c}[1]{\mathcal{#1}} 
\newcommand{\ds}[1]{\mathds{#1}} 
\newcommand{\mb}[1]{\boldsymbol{#1}} 

\newcommand{\rd}{\mathrm{d}} 

\newcommand{\tr}[1]{\operatorname{Tr}\left[#1\right]}

\newcommand{\vertiii}[1]{{\left\vert\kern-0.25ex\left\vert\kern-0.25ex\left\vert #1 
    \right\vert\kern-0.25ex\right\vert\kern-0.25ex\right\vert}} 
\newcommand{\innpr}[2]{{\left\langle {#1},{#2}\right\rangle}} 
\newcommand{\QMD}{\operatorname{QMD}}

\newcommand{\Diag}{\operatorname{Diag}}
\newcommand{\Cov}{\operatorname{Cov}}
\renewcommand{\vec}{\operatorname{vec}}

\DeclareMathOperator*{\spann}{span}

\DeclareMathOperator*{\argmin}{\arg\min}
\DeclareMathOperator*{\argmax}{\arg\max}

\theoremstyle{plain}
\newtheorem{theorem}{Theorem}[section]
\newtheorem{lemma}[theorem]{Lemma}
\newtheorem{corollary}[theorem]{Corollary}
\newtheorem{proposition}[theorem]{Proposition}
\newtheorem{definition}[theorem]{Definition}
\newtheorem{remark}[theorem]{Remark}
\newtheorem{example}[theorem]{Example}

\begin{document}

\pagestyle{fancy}
\fancyhead{}
\renewcommand{\headrulewidth}{0pt} 
\fancyhead[CE]{P.~N. Mayer \& H. Yun}
\fancyhead[CO]{POVM Embedding with Its Application to QST}

\begin{frontmatter}
\title{{\large Kernel Embedding for Operator-Valued Measures and \\
Its Application to Quantum Tomography}}

\begin{aug}
\author[A]{\fnms{Philipp N.}~\snm{Mayer}\ead[label=e1]{philipp.mayer@epfl.ch}\orcid{0009-0004-1466-5000}}
\and
\author[A]{\fnms{Ho}~\snm{Yun}\ead[label=e2]{ho.yun@epfl.ch}\orcid{0009-0001-9976-6545}}
\thankstext{t1}{Equally contributed.}
\address[A]{Ecole Polytechnique F\'ed\'erale de Lausanne, \printead{e1,e2}}
\end{aug}

\begin{abstract}
This paper introduces the Quantum Covariance Embedding, which embeds Positive Operator-Valued Measures into a tensor product of a Reproducing Kernel Hilbert Space and the quantum state space via a tensorized Bochner integral. This construction induces the Quantum Maximum Discrepancy that metrizes the space of quantum measurements. Applying this framework to Quantum State Tomography, we reformulate density estimation as a tensorized kernel regression, enabling optimal inference without the basis-dependent sparsity constraints that restrict existing methods. We develop a unified geometric design theory for quantum Gram superoperators, establishing that Unitary Designs are strictly E-optimal experimental designs and thus statistically superior to Pauli observables. For general structure-free estimation, we derive the exact minimax lower bound and prove that our tensorized estimators achieve this optimal rate. Furthermore, we introduce the QUAntum Regression with Kernels (QUARK) estimator to accommodate the spectral geometry of physical implementations, deriving central limit theorem and concentration inequalities. To facilitate practical estimation, we establish the exactness of trace-preserving projections and demonstrate efficient estimation under mutually unbiased bases via the fast Walsh-Hadamard transform.
\end{abstract}

\begin{keyword}[class=MSC]
\kwd[Primary ]{81P50}   
\kwd{46E22} 
\kwd[; secondary ]{47N50}   
\kwd{62G05} 
\end{keyword}

\begin{keyword}
\kwd{Quantum state tomography}
\kwd{Positive operator-valued measure}
\kwd{Reproducing kernel Hilbert space}
\kwd{Optimal experimental design}
\kwd{Tensorized regression}
\end{keyword}
\end{frontmatter}

\maketitle

\section{Introduction}
Quantum technology has matured from theoretical curiosity to experimental reality, a shift underscored by the 2022 Nobel Prize in Physics. As quantum platforms scale, verifying and characterizing these systems has emerged as a central challenge. Fundamentally, this is a statistical problem: latent parameters must be inferred through macroscopic evidence, and one must rigorously quantify the associated uncertainties \cite{d2002quantum, christandl2012reliable, cramer2010efficient}. To address this, this paper develops the theory of kernel embeddings for quantum mechanics. By mapping the measurement process itself into an infinite-dimensional feature space, we provide a unified methodology for quantum state estimation that bridges nonparametric statistics with quantum physics.

Whereas classical statistics views data as samples directly generated from a probability distribution, the probability law driving the data-generating mechanism in quantum mechanics is determined by the interaction of a quantum state with a physical apparatus \cite{holevo2011probabilistic,cohen2019quantum,nielsen2010quantum}. Mathematically, the physical apparatus is modeled by an \emph{operator-valued} Borel measure $\mb{\nu}$ that acts upon an unobservable state of the system $\mb{\rho}$. The physical measurement process fuses these two objects together via the Born rule, reducing the operator-valued measure into a classical, scalar probability distribution $\mb{\nu}_{\mb{\rho}}$ over the measurement outcomes \cite{hall2013quantum,talagrand2022quantum}. The statistician is then presented with a finite set of independent and identically distributed (i.i.d.) observations drawn from $\mb{\nu}_{\mb{\rho}}$. If the state is known and the apparatus is uncertain, the task is Quantum Detector Tomography. On the other hand, if the apparatus is known and the state is uncertain, the task is Quantum State Tomography (QST).

The problem of Quantum State Tomography has attracted a vast literature, establishing minimax estimation rates \cite{gutaminmax, dongxiaminmax}, exploring compressed sensing for low-rank states \cite{gross2010quantum, gross2011recovering, flammia2012quantum}, and developing estimators under a range of structural assumptions \cite{artiles2005invitation, wang2013asymptotic, verdeil2022pure, gross2011recovering, acharya2025pauli, d2002quantum, christandl2012reliable, cramer2010efficient, cai2016optimal, georges2025pauli}. In particular, the predominant framework centers on tensorized Pauli observables \cite{acharya2025pauli, cai2016optimal, georges2025pauli}, where its sharpest statistical guarantees hold under sparsity in the Pauli basis. Yet, this restriction carries a severe fundamental consequence: by the Gottesman-Knill theorem \cite{gottesman1998heisenberg}, quantum states that are sparse in the Pauli basis admit efficient simulation also on classical computers, offering no true quantum advantage. Consequently, existing sparsity-based estimators may suffer from substantial bias precisely in the regimes where quantum computation is genuinely powerful. Furthermore, least-squares approaches treat measurement outcomes as abstract categorical labels, ignoring the underlying spectral geometry of the physical hardware  \cite{arecchi1972atomic,pegg1989phase,gottesman2001encoding}.

This paper takes a different route based on the theory of kernel embeddings for scalar probability measures \cite{fukumizu2004dimensionality, sriperumbudur2008injective, sriperumbudur2010hilbert,bach2022information}. Rather than imposing basis-dependent structural constraints on the unobservable state, we place the measurement mechanism itself at the center of the statistical analysis. Yet, extending classical kernel methods to quantum mechanics is non-trivial, as standard arguments for scalar measures do not readily apply to the non-commutative nature of the Positive Operator-Valued Measure (POVM) that governs the measurement process. To resolve this, our central construction introduces the \textbf{Quantum Covariance Embedding (QCE)}. By considering a tensorization of a Reproducing Kernel Hilbert Space (RKHS) and the quantum Hilbert space, the QCE rigorously embeds the POVM into an infinite-dimensional feature space. Consequently, this construction provides a unified view for both classical kernel methods and quantum physics, as illustrated in \cref{fig:pipeline}.

\begin{figure}[H]
    \centering
    \scalebox{0.8}{
        \begin{tikzpicture}[
    font=\sffamily,
    node distance=1.5cm and 2cm,
    >={Stealth[length=3mm]},
    base/.style={
        draw, 
        thick, 
        rounded corners, 
        align=center, 
        drop shadow, 
        inner sep=10pt,
    },
    quantum/.style={base, fill=blue!5, draw=blue!60!black},
    prob/.style={base, fill=green!5, draw=green!60!black},
    kernel/.style={base, fill=orange!5, draw=orange!60!black},
    title/.style={font=\bfseries\large, inner sep=5pt},
    maths/.style={font=\small, align=left}
]

    \node[quantum, text width=3cm, minimum height=2cm] (state) {
        \textbf{Quantum State in $\c{Q}$} \\
        \vspace{0.2cm}
        \begin{minipage}{2.8cm}
            \footnotesize
            Density: $\mb{\rho} \in \c{B}_1^+(\c{Q})$
        \end{minipage}
    };

    \begin{scope}[on background layer]
        \node[fit=(state), draw=blue!20, dashed, inner sep=15pt, rounded corners, fill=blue!2, label={[blue!60!black, font=\bfseries]above:Microscopic Scale}] (q_group) {};
    \end{scope}

    \node[prob, right=1.8cm of state, text width=3.5cm, minimum height=2cm] (law) {
        \textbf{Measurement} \\
        \vspace{0.2cm}
        \begin{minipage}{3.8cm}
            \footnotesize
            Induced Law: $\mb{\nu}_{\mb{\rho}} \in \s{P}(X)$ \\
            Outcome: $Y_{j} \stackrel{iid}{\sim} \mb{\nu}_{\mb{\rho}}$
        \end{minipage}
    };
    
    \begin{scope}[on background layer]
        \node[fit=(law), draw=green!20, dashed, inner sep=15pt, rounded corners, fill=green!2, label={[green!60!black, font=\bfseries]above:Macroscopic Scale}] (macro_group) {};
    \end{scope}

    \draw[->, very thick] (state.east) -- (law.west) node[midway, above, font=\bfseries] (born) {Born Rule};

    \node[quantum, below=1.8cm of born, text width=4cm] (povm) {
        \textbf{Observable $\m{O}: \c{Q} \to \c{Q}$} \\
        \vspace{0.2cm}
        \begin{minipage}{3.8cm}
            \footnotesize
            Spectrum: $\sigma(\m{O}) \subseteq X$ \\
            POVM: $\mb{\nu}: \c{B}(X) \to \c{B}_\infty^+(\c{Q})$
        \end{minipage}
    };

    \draw[->, thick, dashed, blue!60!black] (povm.north) -- (born.south);

    \node[kernel, right=1.8cm of macro_group.east, text width=3.5cm, minimum height=2cm] (cov) {
        \textbf{Covariance Embedding} \\
        \vspace{0.2cm}
        \begin{minipage}{3.5cm}
            \footnotesize
            $\m{T}_{K}^{\mb{\nu}_{\mb{\rho}}} := \int_{X} k_{x} k_{x}^{*} \ \rd \mb{\nu}_{\mb{\rho}} (x)$
        \end{minipage}
    };

    \draw[->, very thick] (law.east) -- (cov.west) node[midway, above, font=\bfseries] (embed) {Post-Process};

    \node[kernel, below=1.8cm of embed, text width=4cm] (rkhs) {
        \textbf{RKHS $\c{R}(K)$ for Spectra} \\
        \vspace{0.2cm}
        \begin{minipage}{3.8cm}
            \footnotesize
            Kernel : $K: X \times X \to \b{F}$ \\
            Feature map: $k_{x} \in \c{R}(K)$
        \end{minipage}
    };

    \draw[->, thick, dashed, blue!60!black] (rkhs.north) -- (embed.south);

\end{tikzpicture}
}
\caption{Pipeline of Quantum Covariance Embedding}
\label{fig:pipeline}
\end{figure}
A profound consequence of this embedding is the derivation of the \textbf{Quantum Maximum Discrepancy (QMD)}. Extending the Maximum Discrepancy of kernel methods \cite{gretton2012kernel, sriperumbudur2010hilbert, bach2022information} to the operator level, the QMD establishes a novel metrization on the space of POVMs. This provides a statistical tool to quantify the distance between distinct quantum measurement processes --- or between a theoretical apparatus and its finite-sample empirical counterpart --- independent of the underlying quantum state. 
Applied to QST, this embedding recasts tomography as a kernel regression. However, because our estimand is a density operator rather than a vector, this regression is inherently \emph{tensorized}. This lifts the problem to a tensor-operator equation, aware of the spectral geometry of the measurement design, enabling estimation without imposing specific classical simulability assumptions such as Pauli sparsity.
By bridging quantum theory with tensorized kernel regression, this paper makes three primary contributions in the context of QST:
\begin{enumerate}[leftmargin = *]
\item \textbf{Unified Design Theory:} We develop a basis-independent formulation for QST, characterizing statistical identifiability through the design tensor. By analyzing the action of quantum Gram superoperators via representation theory \cite{faraut2008analysis}, we establish that \textbf{Unitary Designs} (including Mutually Unbiased Bases \cite{klappenecker2005mutually}), which serve as the quantum analogue of an orthogonal design matrix, are E-optimal experimental designs \cite{pukelsheim2006optimal}. We quantify the fundamental loss of information caused by eigenvalue multiplicities via an explicit damping coefficient, proving analytically why entangled, isotropic measurements are statistically superior to local measurements such as tensorized Pauli observables.

\item \textbf{Minimax Optimality:} Moving away from low-rank and basis-sparse assumptions, we address the fundamental limit of general density estimation. Under an identifiable design with $n$ observables and $r$ independent shots per observable, we derive the exact minimax lower bound for the squared-error risk, establishing that the information-theoretic limit scales as $O(q^{2}/(nr))$ for a state space of dimension $q$. We further prove that our tensorized estimators achieve this optimal parametric rate in the dense regime where classical sparsity-based methods fail.

\item \textbf{Kernel Regression:} We introduce the \textbf{QUAntum Regression with Kernels (QUARK)} estimator, which encodes the metric-aware topology between spectra to accommodate the geometric reality of physical implementations. With a Dirac 0--1 kernel, QUARK natively recovers the standard unconstrained least-squares estimator; with a generic smooth kernel, it enforces spectral smoothness, yielding an estimator analogous to operator-level ridge regression. We establish a Quantum Representer Theorem that bounds the Schatten-norm discrepancy of kernel embeddings, and we derive a formal Central Limit Theorem, along with Bennett-type concentration inequalities.
\end{enumerate}

To ensure our theoretical estimators are practically realizable, we provide algorithmic implementations. We prove that applying the computationally efficient Trace-Preserving Projection algorithm \cite{smolin2012efficient} to our relaxed estimator yields the exact, globally optimal solution to the fully constrained physical problem. Additionally, we demonstrate how the least-squares estimate under Mutually Unbiased Bases can be computed in $O(q \log q)$ time via fast Walsh-Hadamard transforms \cite{fino1976unified}, in contrast to Pauli tomography $O(q^{2})$.

The remainder of the main body is organized as follows. \cref{sec:background} establishes the necessary background on the operator formalism of quantum mechanics, alongside a literature review of QST and kernel embeddings. \cref{sec:RKHS} constructs the RKHS framework, embedding POVMs as operators within a tensorization of the RKHS and the quantum Hilbert space. \cref{sec:QKT} applies this framework to Quantum State Tomography, deriving the tensorized regression, the minimax lower bounds, and the asymptotic theory for the QUARK estimator. Finally, \cref{sec:simstudy} presents numerical simulations confirming our theoretical scaling laws.

\section{Preliminaries}\label{sec:background}
\subsection{Notations}
$\b{F}$ denotes the underlying scalar field, either $\b{R}$ or $\b{C}$. All Hilbert spaces are assumed separable. We adopt the sesquilinear convention that the inner product $\innpr{\cdot}{\cdot}$ is linear in the first argument and conjugate-linear in the second. The symbol $^{*}$ denotes the adjoint operator, and $^{\dagger}$ denotes the Moore-Penrose pseudoinverse. The indicator function for a set $A$ is denoted by $\ds{1}_{A}$. To clarify the hierarchy of mathematical objects, we employ 
\textbf{calligraphic} fonts (e.g., $\c{Q}, \c{H}, \c{B}, \c{R}$) to denote Hilbert or Banach spaces, and \textbf{uppercase roman} (e.g., $\m{O}$) to denote linear operators. 

Given a generic $\b{F}$-Hilbert space $\c{H}$, $\m{\Pi}_{\c{V}}$ denotes the orthogonal projection onto a closed subspace $\c{V} \subset \c{H}$. We utilize the following Banach spaces of operators on $\c{H}$ \cite{conway2019functional}:
\begin{itemize}
    \item $\c{B}_{\infty}(\c{H})$: The space of bounded operators, equipped with the operator norm $\vertiii{\cdot}_{\infty}$.
    \item $\c{B}_{0}(\c{H})$: The space of compact operators, equipped with the same norm $\vertiii{\cdot}_{\infty}$.
    \item $\c{B}_{s}(\c{H})$ (for $s \ge 1$): The Schatten $s$-class, equipped with the norm $\vertiii{\cdot}_{s}$. Specifically, $\c{B}_{1}(\c{H})$ denotes trace-class operators, and $\c{B}_{2}(\c{H})$ denotes Hilbert-Schmidt operators.
\end{itemize}
An operator $\m{T}$ is called self-adjoint if $\m{T} = \m{T}^{*}$, and non-negative definite (n.n.d.) if it is self-adjoint and satisfies $\innpr{\m{T} f}{f} \ge 0$ for all $f$, denoted by $\m{T} \succeq \m{0}$. The trace of $\m{T} \in \c{B}_{1}(\c{H})$ is defined as $\tr{\m{T}} := \sum_{l} \innpr{\m{T} e_{l}}{e_{l}}$ for any orthonormal basis $\{e_{l}\}_{l} \subset \c{H}$. We denote the convex cone of n.n.d. bounded operators by $\c{B}_{\infty}^{+}(\c{H})$, and similarly define $\c{B}_{s}^{+}(\c{H})$. Note that $\tr{\m{T}} = \vertiii{\m{T}}_{1}$ if and only if $\m{T} \in \c{B}_{1}^{+}(\c{H})$.

\subsection{Quantum Measurements}\label{ssec:quant:msr}
To model the probabilistic nature of microscopic phenomena, quantum theory describes the states and observables of a system through operator algebra. We review the minimal axiomatic framework of quantum mechanics required to develop our statistical theory; for more extensive treatments, we refer to \cite{holevo2011probabilistic,hall2013quantum,talagrand2022quantum, cohen2019quantum,nielsen2010quantum}.

In (non-relativistic) quantum mechanics, the state space of a physical system is modeled over a complex separable Hilbert space $\c{Q}$, which we call the \textbf{Quantum Hilbert Space} (QHS). A \textbf{pure state} corresponds to a one-dimensional subspace of $\c{Q}$, uniquely represented by a rank-one orthogonal projection $\mb{\rho} = \psi \psi^{*}$, where $\psi \in \c{Q}$ is a unit vector. 
To account for mixtures of pure states, this framework generalizes to \textbf{density operators}, defined as non-negative definite, unit-trace operators $\mb{\rho} \in \b{S}(\c{B}_{1}^{+}(\c{Q}))$. Geometrically, the space of density operators $\b{S}(\c{B}_{1}^{+}(\c{Q}))$ forms a closed convex set whose extreme points are precisely the pure states \cite{holevo2011probabilistic}. Consequently, if $\mb{\rho}$ has a rank strictly greater than one, it cannot be an extreme point and is termed a \textbf{mixed state}. Notable examples of the underlying QHS include:
\begin{itemize}[leftmargin = *]
    \item $\c{Q} = \b{C}^{q}$ for a $q$-level system, called a qudit. It is called a qubit (e.g., the spin of an electron) when $q=2$. Throughout the work, we assume $q \ge 2$ for non-triviality.
    \item $\c{Q} = \c{L}_{2}(\b{R}, \b{C})$ for wavefunctions $\psi: \b{R} \to \b{C}$ (e.g., the spatial wavefunction of a massive particle on a real line).
\end{itemize}

Crucially, the quantum state itself resides in the microscopic regime and is not directly observable. Information can only be extracted through its interaction with a macroscopic apparatus, which constitutes the physical definition of measurement. 
An \textbf{observable} is a physical quantity measured by an experimental apparatus, associated with a self-adjoint operator $\m{O}$ on $\c{Q}$.
As a motivating example, consider an observable $\m{O} \in \b{C}^{q \times q}$ for the $q$-level system $\c{Q} = \b{C}^{q}$. Assuming a non-degenerate spectrum, $\m{O}$ admits the spectral decomposition:
\begin{align*}
    \m{O} = \sum_{k=1}^{q} \lambda_{k} \mb{\Pi}_{k}, \quad \text{where} \quad \sigma(\m{O}) = \{\lambda_{k} : k =1, \cdots, q \} \subset \b{R}, \quad \mb{\Pi}_{k} = \psi_{k} \psi_{k}^{*}.
\end{align*}
The measurement outcome $Y$ is a random variable taking values in the spectrum $\sigma(\m{O})$. The \textbf{Born rule} dictates that its probability mass, conditioned on the apparatus $\m{O}$ and the input state $\mb{\rho}$, is governed by the trace:
\begin{align*}
    \mb{\nu}^{\m{O}}_{\mb{\rho}}(\{\lambda_{k}\}) := \b{P}(Y = \lambda_{k} \vert \mb{\rho}) = \innpr{\mb{\rho} \psi_{k}}{\psi_{k}}_{\c{Q}} = \tr{\mb{\rho} \mb{\Pi}_{k}}.
\end{align*}
This formulation guarantees a legitimate probability law since $\sum_{k=1}^{q} \mb{\nu}^{\m{O}}_{\mb{\rho}}(\{\lambda_{k}\}) = \tr{\mb{\rho}} = 1$. It also follows that the conditional expectation of the measurement outcome is $\b{E}(Y \vert \mb{\rho}) = \tr{\mb{\rho} \m{O}}$. Throughout the work, all probabilistic statements regarding $Y$ are \emph{conditional} on the input state $\mb{\rho}$.
Note that the possible values of the measurement outcome are solely determined by the apparatus $\m{O}$, while the density operator $\mb{\rho}$ determines the probability mass. To decouple the input state from the apparatus, we consider the operator-valued measure 
\begin{align*}
    \mb{\nu}^{\m{O}} = \sum_{k=1}^{q} \delta_{\lambda_{k}} \mb{\Pi}_{k} : \s{B}(\sigma(\m{O})) \to \b{C}^{q \times q},
\end{align*}
that acts on the density $\mb{\rho} \in \b{S}(\c{B}_{1}^{+}(\c{Q}))$ via $\mb{\nu}^{\m{O}}_{\mb{\rho}}(B) = \tr{\mb{\rho} \mb{\nu}^{\m{O}}(B)}$ for any Borel set $B \subseteq \sigma(\m{O})$. Mathematically, this measure maps subsets of the spectrum to orthogonal projections onto the corresponding eigenspaces.

While this logic extends naturally to compact operators as they possess discrete spectra, physical observables such as position and momentum are represented by operators with continuous spectra. Consequently, the notion of a \textbf{Projection-Valued Measure} (PVM) serves as a necessary generalization, as evaluating an atomless measure at individual points trivially yields zero \cite{hall2013quantum}. A PVM is a Borel measure $\mb{\nu}: \s{B}(X) \to \c{B}_{\infty}^{+} (\c{Q})$ over a locally compact Hausdorff (LCH) space $X$, satisfying $\mb{\nu}(X) = \m{I}_{\c{Q}}$ and the idempotence property $\mb{\nu}(B)^{2} = \mb{\nu}(B)$ for any Borel set $B$. Idempotency implies orthogonality, meaning $\mb{\nu}(B) \mb{\nu}(B') = \m{0}$ whenever $B \cap B' = \emptyset$. By the \textbf{Spectral Theorem}, there is a one-to-one correspondence between self-adjoint observables $\m{O}$ and PVMs $\mb{\nu}^{\m{O}}$ over $X = \sigma(\m{O}) \subseteq \b{R}$ in the following sense:
\begin{align*}
    \mb{\nu}^{\m{O}}_{\mb{\rho}}(B) := \b{P}(Y \in B \vert \mb{\rho}) = \tr{\mb{\rho} \mb{\nu}^{\m{O}}(B)}, \quad B \in \s{B}(\sigma(\m{O})), \quad \mb{\rho} \in \b{S}(\c{B}_{1}^{+}(\c{Q})).
\end{align*}
When there is no risk of confusion regarding the observable $\m{O}$, we will simply denote the measure as $\mb{\nu}$ and omit the superscript.

In practice, measurement processes are frequently subject to noise or involve interactions with a larger environmental system (See \cref{ssec:uq:povm} for concrete examples). This necessitates the following definition, which relaxes the idempotence requirement while preserving the validity of the induced probability distributions.

\begin{definition}[POVM]\label{def:POVM}
Let $\c{Q}$ be a QHS and $X$ be an LCH space equipped with the Borel $\sigma$-algebra $\s{B}(X)$. A set function $\mb{\nu}: \s{B}(X) \to \c{B}_{\infty}^{+} (\c{Q})$ is called a \textbf{Positive Operator-Valued Measure} (POVM) if:
\begin{enumerate}
\item $\mb{\nu}(\emptyset) = \m{0}$ and $\mb{\nu}(X) = \m{I}_{\c{Q}}$.
\item For all countable collections $\{B_{k}\}_{k=1}^{\infty}$ of mutually disjoint Borel sets,
\begin{equation*}
    \mb{\nu} \left(\bigcup_{k=1}^{\infty} B_{k} \right) = \sum_{k=1}^{\infty} \mb{\nu} (B_{k}),
\end{equation*}
where the convergence is with respect to the Strong Operator Topology (SOT) \cite{conway2019functional}.\footnote{Due to Vigier's theorem \cite{conway2025course}, the Weak Operator Topology (WOT) induces the equivalent convergence.}
\end{enumerate}
We denote the space of POVMs by $\s{P}(X, \c{Q})$ and omit the dependence on $\c{Q}$ when $\c{Q} = \b{F}$, as this reduces to the space of classical probability measures $\s{P}(X)$. 
\end{definition}

Given a POVM $\mb{\nu} \in \s{P}(X, \c{Q})$, its action on a density operator $\mb{\rho} \in \b{S}(\c{B}_{1}^{+}(\c{Q}))$ induces a classical probability measure $\mb{\nu}_{\mb{\rho}} \in \s{P}(X)$ via:
\begin{equation}\label{eq:POVM2prob}
    \mb{\nu}_{\mb{\rho}} (B) := \tr{\mb{\rho} \mb{\nu}(B)}, \quad B \in \s{B}(X).
\end{equation}
Additionally, for any $\phi, \psi \in \c{Q}$, we define an $\b{F}$-valued measure:
\begin{equation*}
    \mb{\nu}_{\phi, \psi} (B) := \innpr{\mb{\nu}(B) \phi}{\psi}_{\c{Q}}, \quad B \in \s{B}(X).
\end{equation*}
Note that $\mb{\nu}_{\phi, \psi} = \overline{\mb{\nu}_{\psi, \phi}}$, and for pure states $\mb{\rho} = \phi \phi^{*}$, the induced probability measure \eqref{eq:POVM2prob} reduces to $\mb{\nu}_{\phi, \phi}(B) = \innpr{\mb{\nu}(B) \phi}{\phi}_{\c{Q}}$. This construction allows us to formalize the integration of scalar functions with respect to $\mb{\nu}$ via sesquilinear forms:

\begin{definition}[Functional Calculus]
Given a POVM $\mb{\nu} \in \s{P}(X, \c{Q})$ over an LCH space $X$, and a measurable function $f: X \to \b{F}$, define the domain of integration by
\begin{align*}
    \c{D}_{f} := \Big\{ \phi \in \c{Q} : \int_{X} |f(x)|^{2} \rd \mb\nu_{\phi, \phi}(x) < \infty \Big\},
\end{align*}
which forms a linear subspace. Provided that $\c{D}_{f}$ is dense in $\c{Q}$, we define the operator integral $\int_{X} f(x) \rd \mb{\nu} (x)$ as the unique (potentially unbounded) operator satisfying
\begin{align*}
    \innpr{\left( \int_{X} f(x) \rd \mb{\nu} (x) \right) \phi}{\psi}_{\c{Q}} = \int_{X} f(x) \rd \mb\nu_{\phi, \psi}(x), \quad \phi \in \c{D}_{f}, \quad \psi \in \c{Q}.
\end{align*}
\end{definition}

We remark two cases of immediate relevance. 
When $f$ is bounded ($\sup_{x \in X} |f(x)| < \infty$), $\c{D}_{f} = \c{Q}$ for any POVM $\mb{\nu}$, and $\int_{X} f \rd \mb{\nu} \in \c{B}_{\infty}(\c{Q})$ is uniquely determined by the Riesz representation theorem.
When $\mb{\nu} \in \s{P}(X, \c{Q})$ is a PVM, $\c{D}_{f}$ is unconditionally dense in $\c{Q}$ for any measurable function $f: X \to \b{F}$ \cite{hall2013quantum}. 
For any bounded measurable function $f$ (or unbounded $f$ provided the product $\mb{\rho} \int_{X} f \rd \mb{\nu}$ is trace-class), the quantum expectation matches the classical expectation: 
\begin{align*}
    \b{E}[f(Y) \vert \mb{\rho}] = \int_{X} f \rd \mb{\nu}_{\mb{\rho}} = \tr{\mb{\rho} \left( \int_{X} f \rd \mb{\nu} \right) }, \quad Y \mid \mb{\rho} \sim \mb{\mb{\nu}_{\rho}},
\end{align*}
and we recover \eqref{eq:POVM2prob} by applying the indicator function $f = \ds{1}_{B}$.

\subsection{Related Work}

The POVM framework presents a largely unexplored opportunity for statistical generalization. While there is an abundant literature on kernel \emph{mean} embeddings for scalar-valued probability measures \cite{fukumizu2004dimensionality, muandet2017kernel,smola2007hilbert, sriperumbudur2008injective, sriperumbudur2010hilbert, carmeli2010vector, gretton2012kernel}, a rigorous kernel embedding framework for operator-valued measures is virtually non-existent. 
In contrast to a classical probability measure, a quantum state $\mb{\rho} \in \b{S}(\c{B}_{1}^{+}(\c{Q}))$ is a unit-trace, non-negative definite operator, rendering it structurally analogous to a covariance matrix rather than a mean vector. Consequently, extending kernel methods to the quantum setting fundamentally requires \emph{covariance embedding} proposed in \cite{bach2022information}.

While \cite{bach2022information} leverages quantum information theory tools, 
that analysis assumes the underlying QHS itself possesses a reproducing kernel. Yet, canonical QHSs (such as $\c{Q} = \c{L}_{2}(\b{R}, \b{C})$) generally lack this RKHS structure, and our work addresses a critical theoretical gap by formulating covariance embeddings that capture the non-commutative interplay between general operator-valued measures $\mb{\nu}$ and density operators $\mb{\rho}$ without requiring the QHS to be an RKHS.
Developing this operator-valued embedding theory is our primary focus. We apply this framework to Quantum State Tomography (QST), focusing on the statistical reconstruction of an unknown state $\mb{\rho}$ given a known measurement apparatus $\mb{\nu}$. We select QST because this state-estimation problem naturally parallels classical statistical inference, distinguishing it from dual tasks like quantum detector tomography (estimating an unknown apparatus $\mb{\nu}$), which we leave for future work.

QST is an ill-posed inverse problem. Since individual quantum states are unobservable, reconstruction requires repeated measurements on an ensemble of identically prepared systems. Moreover, a single observable is mathematically insufficient: for $\c{Q} = \b{C}^{q}$, the real affine dimension of $\b{S}(\c{B}_{1}^{+}(\c{Q}))$ is $q^{2}-1$, yet one observable yields at most $q$ distinct spectral values. 
To resolve this dimensional deficit, the quantum physics literature has identified specific measurement schemes, such as Pauli measurements \cite{nielsen2010quantum,cai2016optimal} and Mutually Unbiased Bases \cite{klappenecker2005mutually}. To reconstruct the state from these fixed designs, specialized statistical approaches have been developed, such as nuclear-norm penalized compressed sensing to promote low-rankness \cite{gross2010quantum}, or Bayesian regression utilizing unitarily invariant priors \cite{blume2010optimal, lukens2020practical}. 
However, we caution against the application of standard classical high-dimensional regression techniques (such as LASSO) to this inverse problem. Because quantum mechanics is unitarily invariant, imposing $\ell_1$-parsimony on the coefficients of a fixed design (e.g., assuming the state is sparse in the Pauli basis \cite{cai2016optimal}) may lack physical justification unless the specific environment inherently breaks that symmetry. 
To provide a rigorous foundation free of basis-dependent artifacts, we devote \cref{ssec:identi:dsgn} to translating the physics concept of informational completeness into the formal statistical language of identifiability and design tensors (see \cref{prop:sphe:design}). Furthermore, \cref{thm:minimax} establishes the minimax lower bound applicable to any identifiable design, delineating the fundamental information-theoretic limits in its own right.

\section{Quantum Covariance Embedding}\label{sec:RKHS}

\subsection{RKHS Framework}
\begin{definition}[RKHS]
Let $\c{R}$ be a Hilbert space of $\b{F}$-valued functions defined on a set $X$. A bivariate function $K:X \times X \to \b{F}$ is called a reproducing kernel for $\c{R}$ if:
\begin{enumerate}
    \item For any $x \in X$, the feature map $k_{x}:= K(\cdot, x) : X \to \b{F}$ belongs to $\c{R}$.
    \item For any $f \in \c{R}$ and $x \in X$, the reproducing property holds, i.e., $f(x)=\langle f, k_{x} \rangle_{\c{R}}$.
\end{enumerate}
Such a Hilbert space is called a Reproducing Kernel Hilbert Space (RKHS). 
\end{definition}

The Moore–Aronszajn theorem \cite{aronszajn1950theory} establishes a one-to-one correspondence between positive definite kernels and RKHSs through $K(x,y)= k_{y}(x) = \innpr{k_{y}}{k_{x}}_{\c{R}}$, justifying the notation $\c{R}=\c{R}(K)$ \cite{paulsen2016introduction,hsing2015theoretical}.
The following theorem establishes the fundamental operator that will serve as our embedding by proving its existence and uniqueness via sesquilinear forms.

\begin{theorem}\label{thm:quant:cov:scalar}
Let $X$ be an LCH space and $K$ be a $\c{C}_{0}$-kernel. Then, for any POVM $\mb{\nu} \in \s{P}(X, \c{Q})$, there exists a unique bounded operator $\m{T}_{K}^{\mb{\nu}} \in \c{B}_{\infty}^{+}(\c{R}(K) \otimes \c{Q})$ satisfying
\begin{equation*}
    \innpr{\m{T}_{K}^{\mb{\nu}} (f \otimes \phi)}{g \otimes \psi}_{\c{R}(K) \otimes \c{Q}} = \int_{X} f(x) \overline{g(x)} \ \rd \mb{\nu}_{\phi, \psi}(x), \quad \forall f, g \in \c{R}(K), \, \phi,\psi \in \c{Q}.
\end{equation*}
It holds that $\vertiii{\m{T}_{K}^{\mb{\nu}}}_{\infty} \le \sup_{x \in X} K(x, x)$. Additionally, $\m{T}_{K}^{\mb{\nu}} \in \c{B}_{1}^{+}(\c{R}(K) \otimes \c{Q})$ if and only if $\int_{X} K(x, x) \ \rd \mb{\nu}(x) \in \c{B}_{1}^{+}(\c{Q})$. In this case, the trace-norm is given by:
\begin{align*}
    \vertiii{\m{T}_{K}^{\mb{\nu}}}_{1} = \vertiii{\int_{X} K(x, x) \ \rd \mb{\nu}(x)}_{1}.
\end{align*}
\end{theorem}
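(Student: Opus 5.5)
The plan is to build $\m{T}_{K}^{\mb{\nu}}$ from a bounded sesquilinear form on the algebraic tensor product and then extend by density. For elementary tensors, define
\begin{equation*}
B(f\otimes\phi, g\otimes\psi) := \int_X f\bar g \,\rd\mb{\nu}_{\phi,\psi}
\end{equation*}
and extend sesquilinearly to finite sums $u=\sum_i f_i\otimes\phi_i$. The integral is well defined because $f\bar g$ is bounded and continuous: $|f(x)|\le \|f\|\sqrt{K(x,x)}$, and $K$ is a $\c{C}_0$-kernel, so $\sup_x K(x,x)=:\kappa<\infty$.

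The key identity is positivity. For $u=\sum_i f_i\otimes\phi_i$ we have
\begin{equation*}
B(u,u)=\sum_{i,j}\int_X f_i\bar f_j\,\rd\mb{\nu}_{\phi_i,\phi_j}.
\end{equation*}
We rewrite this using the reproducing property $f_i(x)=\innpr{f_i}{k_x}$. The cleanest route is to expand $f_i=\sum_l a_{il}e_l$ in an orthonormal basis $\{e_l\}$ of $\c{R}(K)$ and set $\Phi_x:=\sum_i f_i(x)\phi_i\in\c{Q}$. Formally $B(u,u)=\int_X \rd\innpr{\mb{\nu}(x)\Phi_x}{\Phi_x}\ge0$. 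To make this rigorous, we approximate $f_i$ uniformly by simple functions, which is possible because the $f_i$ are bounded, continuous and vanish at infinity. For a simple-function approximation with partition $\{B_m\}$ and constants $c_{im}$, the form becomes $\sum_m\innpr{\mb{\nu}(B_m)\Phi_m}{\Phi_m}\ge0$, with $\Phi_m=\sum_i c_{im}\phi_i$, by positivity of $\mb{\nu}(B_m)$. Passing to the limit by dominated convergence against the finite measures $\mb{\nu}_{\phi_i,\phi_j}$ gives $B(u,u)\ge0$.

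The same discretization yields the bound. By the Cauchy--Schwarz inequality in $\b{F}$ applied to $\Phi_m$,
\begin{equation*}
\|\Phi_m\|^2\le\Big(\sum_l|c_{lm}'|^2\Big)\,\Big\|\sum\cdots\Big\|
\end{equation*}
up to bookkeeping. More directly, write $u=\sum_l e_l\otimes\psi_l$ with $\psi_l\in\c{Q}$, so $\|u\|^2=\sum_l\|\psi_l\|^2$ and $\Phi_x=\sum_l e_l(x)\psi_l$. Then
\begin{equation*}
\|\Phi_x\|^2\le\Big(\sum_l|e_l(x)|^2\Big)\Big(\sum_l\|\psi_l\|^2\Big)=K(x,x)\|u\|^2.
\end{equation*}
Since $\sum_m\mb{\nu}(B_m)=\m{I}$ and each $\mb{\nu}(B_m)\preceq\m{I}$, this gives $B(u,u)\le\kappa\|u\|^2$. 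Polarization plus Cauchy--Schwarz for positive forms then shows that $B$ is bounded by $\kappa$. Hence $B$ extends to $\c{R}(K)\otimes\c{Q}$, and the Riesz representation gives a unique $\m{T}_{K}^{\mb{\nu}}\succeq0$ with $\vertiii{\m{T}_{K}^{\mb{\nu}}}_\infty\le\kappa$. Uniqueness holds because elementary tensors span a dense subspace.

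For the trace statement, compute the trace in the product basis $\{e_l\otimes\eta_j\}$, which is legitimate for positive operators in $[0,\infty]$. This gives
\begin{equation*}
\tr{\m{T}_{K}^{\mb{\nu}}}=\sum_{j}\sum_l\int_X|e_l|^2\,\rd\mb{\nu}_{\eta_j,\eta_j}=\sum_j\int_X K(x,x)\,\rd\mb{\nu}_{\eta_j,\eta_j}(x).
\end{equation*}
The inner interchange is by monotone convergence, since $\sum_l|e_l(x)|^2=K(x,x)$ pointwise. The right-hand side is $\sum_j\innpr{\big(\int K(x,x)\,\rd\mb{\nu}\big)\eta_j}{\eta_j}$, the trace of the positive bounded operator $\int_X K(x,x)\,\rd\mb{\nu}$; this operator is bounded because $K(x,x)\le\kappa$. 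Thus one trace is finite iff the other is, and for positive operators the trace equals the trace norm, which gives the equivalence and the norm identity.

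The main obstacle is the rigorous positivity step: handling the operator-valued integral of the vector-valued function $\Phi_x$ without assuming a Bochner theory for POVMs. The discretization via uniform simple-function approximation in $\c{C}_0$ is the device I would rely on.
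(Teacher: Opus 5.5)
Your skeleton matches the paper's: a sesquilinear form on elementary tensors, then the bounded representing operator, then the trace computed in the product basis $\{e_l\otimes\eta_j\}$ using $\sum_l|e_l(x)|^2=K(x,x)$. Your discretization proof that $B(u,u)\ge 0$ for general $u$ is correct. It is in fact more careful than the paper, which checks positivity and the bound $\int|f|^2\,\rd\mb{\nu}_{\phi,\phi}\le c\|f\|^2\|\phi\|^2$ only on elementary tensors $f\otimes\phi$. That does not by itself control a form on the whole tensor product, so working with general $u$ is the right instinct.

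\textbf{The gap is in your boundedness step.} The pointwise estimate $\|\Phi_x\|^2\le K(x,x)\|u\|^2$, together with $\sum_m\mb{\nu}(B_m)=\m{I}$ and $\mb{\nu}(B_m)\preceq\m{I}$, does not imply $\sum_m\innpr{\mb{\nu}(B_m)\Phi_m}{\Phi_m}_{\c{Q}}\le\kappa\|u\|^2$. The reason is that $\Phi_m$ changes from cell to cell. Those facts only give the bound $\kappa\|u\|^2\sum_m\vertiii{\mb{\nu}(B_m)}_{\infty}$, which for a PVM counts the nonzero cells and diverges under refinement. For a concrete failure of the inference, take two orthogonal rank-one projections $\mb{\nu}_1+\mb{\nu}_2=\m{I}$ on $\b{C}^2$ and unit vectors $\Phi_m\in\mathrm{Ran}\,\mb{\nu}_m$. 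Then $\sum_m\innpr{\mb{\nu}_m\Phi_m}{\Phi_m}=2>\max_m\|\Phi_m\|^2$. Your Cauchy--Schwarz is applied before $\mb{\nu}(B_m)$ acts, and so it discards exactly the information needed to exploit $\sum_m\mb{\nu}(B_m)=\m{I}$.

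\textbf{The repair} is to apply Cauchy--Schwarz after inserting $\mb{\nu}(B_m)^{1/2}$.
\begin{itemize}
\item First choose the approximants as $s_i=\sum_m f_i(x_m)\ds{1}_{B_m}$ with $x_m\in B_m$. You can do this by taking preimages of small cubes in the joint range of $(f_1,\dots,f_N)$. Then $\Phi_m=\Phi_{x_m}=\sum_l e_l(x_m)\psi_l$.
\item Next estimate each cell:
\begin{align*}
\innpr{\mb{\nu}(B_m)\Phi_{x_m}}{\Phi_{x_m}}_{\c{Q}}=\Big\|\sum_l e_l(x_m)\,\mb{\nu}(B_m)^{1/2}\psi_l\Big\|^2\le K(x_m,x_m)\sum_l\innpr{\mb{\nu}(B_m)\psi_l}{\psi_l}_{\c{Q}}.
\end{align*}
\item Only now sum over $m$ using $\sum_m\mb{\nu}(B_m)=\m{I}$. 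This gives the bound $\kappa\sum_l\|\psi_l\|^2=\kappa\|u\|^2$.
\end{itemize}
Equivalently, $k_xk_x^*\preceq K(x,x)\,\m{I}_{\c{R}(K)}$ and $\mb{\nu}(B_m)\succeq\m{0}$ together give $\sum_m(k_{x_m}k_{x_m}^*)\otimes\mb{\nu}(B_m)\preceq\kappa\,\m{I}$.

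With this replacement, the rest goes through: the limit passage, the Cauchy--Schwarz extension of the positive form to the completion, and your trace computation (which is the same as the paper's).
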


The bounded operator in \cref{thm:quant:cov:scalar} can be equivalently defined as the tensorized Bochner integral $\m{T}_{K}^{\mb{\nu}} = \int_{X} k_{x} k_{x}^{*} \otimes \rd \mb{\nu} (x)$, as discussed in \cref{sec:tens:boch}.
Also, applying \cref{thm:quant:cov:scalar} to $\mb{\nu}_{\mb{\rho}} \in \s{P}(X)$ guarantees the existence of $\m{T}_{K}^{\mb{\nu}_{\mb{\rho}}} \in \c{B}_{1}^{+}(\c{R}(K))$, given by
\begin{equation}\label{eq:tr:contra:embed:scalar}
    \m{T}_{K}^{\mb{\nu}_{\mb{\rho}}} := \int_{X} k_{x} k_{x}^{*} \ \rd \mb{\nu}_{\mb{\rho}} (x) = \b{E}[k_{Y} k_{Y}^{*} | \mb{\rho}],
\end{equation}
which acts as an integral operator:
\begin{align*}
    &\m{T}_{K}^{\mb{\nu}_{\mb{\rho}}} f(x) = \b{E}[f(Y) \overline{k_{x}(Y)} | \mb{\rho}] = \int_{X} K(x, y) f(y) \ \rd \mb{\nu}_{\mb{\rho}}(y), \quad f \in \c{R}(K), \\
    &\innpr{\m{T}_{K}^{\mb{\nu}_{\mb{\rho}}} f}{g}_{\c{R}(K)} = \int_{X} f(x) \overline{g(x)} \ \rd \mb{\nu}_{\mb{\rho}}(x) = \b{E}[f(Y) \overline{g(Y)} | \mb{\rho}], \quad f, g \in \c{R}(K), \\
    &\vertiii{\m{T}_{K}^{\mb{\nu}_{\mb{\rho}}}}_{1} = \int_{X} K(x, x) \ \rd \mb{\nu}_{\mb{\rho}}(x) = \b{E}[K(Y, Y) | \mb{\rho}].
\end{align*}

Analogous to classical kernel embedding \cite{smola2007hilbert}, a quantum covariance embedding (QCE) can be defined naturally as follows:
\begin{definition}[Quantum Covariance Embedding]
Let $X$ be LCH and $K:X \times X \to \b{F}$ be a $\c{C}_{0}$-kernel. We define the \textbf{Quantum Covariance Embedding} (QCE) as the map:
\begin{equation*}
    \mb{\nu} \in \s{P}(X, \c{Q}) \, \mapsto \m{T}_{K}^{\mb{\nu}} \in \c{B}_{\infty}^{+}(\c{R}(K) \otimes \c{Q}).
\end{equation*}
Furthermore, we define the \textbf{conditional QCE} 
given the density $\mb{\rho} \in \b{S}(\c{B}_{1}^{+}(\c{Q}))$ as the map:
\begin{align}\label{eq:contra:QCE:trace}
    \mb{\nu} \in \s{P}(X, \c{Q}) \, \mapsto \m{T}_{K}^{\mb{\nu}_{\mb{\rho}}} \in \c{B}_{1}^{+}(\c{R}(K)).
\end{align}
\end{definition}

It follows immediately that the conditional QCE is closed under convex combination of density operators: for $\mb{\rho}_{0}, \mb{\rho}_{1} \in \b{S}(\c{B}_{1}^{+}(\c{Q}))$ and $t \in [0, 1]$, we have:
\begin{equation}\label{eq:embed:cvx:combi}
    \m{T}_{K}^{\mb{\nu}_{\mb{\rho}_{t}}} = (1-t) \m{T}_{K}^{\mb{\nu}_{\mb{\rho}_{0}}} + t \m{T}_{K}^{\mb{\nu}_{\mb{\rho}_{1}}}, \quad \mb{\rho}_{t} := (1-t) \mb{\rho}_{0} + t \mb{\rho}_{1} \in \b{S}(\c{B}_{1}^{+}(\c{Q})).
\end{equation}
We also remark that when the PVM $\mb{\nu}^{\m{O}} \in \s{P}(X, \c{Q})$ is associated with a compact observable $\m{O} = \sum_{k=1}^{\infty} \lambda_{k} \mb{\Pi}_{k} \in \c{B}_{0}(\c{Q})$, then both QCEs reduce to
\begin{align*}
    \m{T}_{K}^{\mb{\nu}^{\m{O}}} = \sum_{k=1}^{\infty} (k_{\lambda_{k}} k_{\lambda_{k}}^{*}) \otimes \m{\Pi}_{k}, \quad \m{T}_{K}^{\mb{\nu}^{\m{O}}_{\mb{\rho}}} = \sum_{k=1}^{\infty} k_{\lambda_{k}} k_{\lambda_{k}}^{*} \tr{\mb{\rho} \m{\Pi}_{k}},
\end{align*}
where the convergence is with respect to the SOT due to Vigier's theorem. 
As a guiding example, we provide below a matrix represenation of QCEs for a finite-dimensional case:

\begin{example}[Leveled Quanta]
Consider a qudit $\c{Q} = \b{F}^{q}$ and an observable $\m{O} = \sum_{k=1}^{l} \lambda_{k} \m{\Pi}_{k} \in \b{F}^{q \times q}$,
where $X = \{\lambda_k\}_{k=1}^l \subset \b{R}$ are distinct eigenvalues ($l < q$ in case of multiplicities) and $\m{\Pi}_{k}$ are the associated eigenprojections. The corresponding PVM $\mb{\nu} = \sum_{k=1}^{l} \delta_{\lambda_{k}} \m{\Pi}_{k}$ can be viewed as an $l \times q \times q$ tensor with $\mb{\nu}_{kij} = [\m{\Pi}_{k}]_{ij}$.
Let $K(x, y) = \ds{1} (x = y)$ be the 0--1 kernel so that the evaluation map $f \in \c{R}(K) \cong \m{f} \in \b{F}^{l}$ is isometrically isomorphic, where $\m{f}_{k} := f(\lambda_{k})$ \cite{paulsen2016introduction}. This induces an isomorphism for the tensor product space, equipped with the Frobenius inner product: $f \otimes \phi \in \c{R}(K) \otimes \c{Q} \cong \phi \overline{\m{f}}^{*} \in \b{F}^{q \times l}$.
Under this identification, the QCE acts as a column-wise projection on $\b{F}^{q \times l}$:
\begin{equation*}
    \m{T}_{K}^{\mb{\nu}^{\m{O}}} : [\phi_{1} \vert \dots \vert \phi_{l}] \in \b{F}^{q \times l} \, \mapsto \, [\m{\Pi}_{1} \phi_{1} \vert \dots \vert \m{\Pi}_{l} \phi_{l}] \in \b{F}^{q \times l}.
\end{equation*}
To see this, note that for any $f, g \in \c{R}(K)$ and $\phi, \psi \in \b{F}^{q}$,
\begin{align*}
    \innpr{\m{T}_{K}^{\mb{\nu}^{\m{O}}} (f \otimes \phi)}{g \otimes \psi}_{\c{R}(K) \otimes \c{Q}} 
    &= \sum_{k=1}^{l} \m{f}_{k} \overline{\m{g}_{k}} \innpr{\m{\Pi}_{k} \phi}{\psi}_{\b{F}^{m}} = \int_{X} f(x) \overline{g(x)} \ \rd \mb{\nu}_{\phi, \psi}(x).
\end{align*}
Similarly, one can show that the conditional QCE behaves as a diagonal matrix:
\begin{equation*}
    \m{T}_{K}^{\mb{\nu}^{\m{O}}_{\mb{\rho}}} = \Diag(\tr{\mb{\rho} \m{\Pi}_{1}}, \dots, \tr{\mb{\rho} \m{\Pi}_{l}}) \in \b{F}^{l \times l} \cong \c{B}_{\infty}(\c{R}(K)), \quad \mb{\rho} \in \b{S}(\b{F}_{+}^{q \times q}),
\end{equation*}
of which the trace norm satisfies:
\begin{equation*}
    \vertiii{\m{T}_{K}^{\mb{\nu}^{\m{O}}_{\mb{\rho}}}}_{1} = \sum_{k=1}^l \tr{\mb{\rho} \m{\Pi}_{k}} = \tr{\mb{\rho}} = 1 < q = \sum_{k=1}^l \tr{ \m{\Pi}_{k}} = \vertiii{ \m{T}_{K}^{\mb{\nu}^{\m{O}}} }_{1},
\end{equation*}
consistent with \cref{thm:quant:cov:scalar}. The operator norms satisfy:
\begin{equation*}
    \vertiii{\m{T}_{K}^{\mb{\nu}^{\m{O}}_{\mb{\rho}}}}_{\infty} = \max_{1 \le k \le l} \tr{\mb{\rho} \m{\Pi}_{k}} \le 1 = \max_{\phi_{k} \in \b{F}^{q} \backslash \{0\}} \frac{\sum_{k=1}^{l} \| \m{\Pi}_{k} \phi_{k} \|^{2}}{\sum_{k=1}^{l} \| \phi_{k} \|^{2}}  = \vertiii{\m{T}_{K}^{\mb{\nu}^{\m{O}}}}_{\infty},
\end{equation*}
where equality holds if and only if $\mb{\rho}$ lies in the convex hull of the pure eigenstates of $\m{O}$.
\end{example}
For a general kernel other than the 0--1 kernel, we refer to \cref{ex:disc:kern}.

\subsection{Characteristic Kernel}
Unlike QST where the goal is to estimate $\mb{\rho}$, the focus of this subsection lies in identifying the measurement processes $\mb{\nu}$ (or their marginalizations $\mb{\nu}_{\mb{\rho}}$).

\begin{definition}[Characteristicity]
Let $X$ be an LCH space and $\c{Q}$ be a QHS. A $\c{C}_{0}$-kernel $K$ is called \textbf{characteristic} if the associated QCE is injective. That is, for any two POVMs $\mb{\mu}, \mb{\nu} \in \s{P}(X, \c{Q})$, we have $\m{T}_{K}^{\mb{\mu}} = \m{T}_{K}^{\mb{\nu}}$ if and only if $\mb{\mu} = \mb{\nu}$.
\end{definition}

The following theorem establishes that characteristicity in the quantum setting is equivalent to the usual definition for classical probability measures \cite{fukumizu2004dimensionality, sriperumbudur2010hilbert}.

\begin{theorem}\label{thm:char:ker}
Let $X$ be an LCH space and $K:X \times X \to \b{F}$ be a $\c{C}_{0}$-kernel. 
\begin{enumerate}[leftmargin = *]
\item Given POVMs $\mb{\mu}, \mb{\nu} \in \s{P}(X, \c{Q})$, their QCEs are identical if and only if their conditional QCEs are identical for every density operator $\mb{\rho} \in \b{S}(\c{B}_{1}^{+}(\c{Q}))$, i.e.,
\begin{align*}
    \m{T}_{K}^{\mb{\mu}} = \m{T}_{K}^{\mb{\nu}} \quad \iff \quad \m{T}_{K}^{\mb{\mu}_{\mb{\rho}}} = \m{T}_{K}^{\mb{\nu}_{\mb{\rho}}}, \quad \forall \mb{\rho} \in \b{S}(\c{B}_{1}^{+}(\c{Q})).
\end{align*}
\item $K$ is characteristic for $\s{P}(X, \c{Q})$ if and only if it is characteristic for $\s{P}(X)$.  
\end{enumerate}
\end{theorem}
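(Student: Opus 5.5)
Both parts reduce to the sesquilinear identity of \cref{thm:quant:cov:scalar}, combined with polarization. For part~1, I would first record a formula linking the two embeddings. For a pure state $\mb{\rho} = \phi\phi^{*}$ and any $f, g \in \c{R}(K)$, the defining identities give
\begin{align*}
    \innpr{\m{T}_{K}^{\mb{\nu}_{\mb{\rho}}} f}{g}_{\c{R}(K)} = \int_{X} f \overline{g} \ \rd \mb{\nu}_{\phi,\phi} = \innpr{\m{T}_{K}^{\mb{\nu}} (f \otimes \phi)}{g \otimes \phi}_{\c{R}(K) \otimes \c{Q}}.
\end{align*}
For a general density $\mb{\rho} = \sum_{l} p_{l} \phi_{l}\phi_{l}^{*}$, written in spectral form, this extends by linearity in $\mb{\rho}$; see \eqref{eq:embed:cvx:combi} and its countable analogue, justified by dominated convergence since $|f\overline{g}|$ is bounded for a $\c{C}_{0}$-kernel. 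The direction ``$\Rightarrow$'' is then immediate.

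For ``$\Leftarrow$'', I would set $\m{D} := \m{T}_{K}^{\mb{\mu}} - \m{T}_{K}^{\mb{\nu}}$. The hypothesis applied to pure states gives $\innpr{\m{D}(f\otimes\phi)}{g\otimes\phi} = 0$ for all $f, g, \phi$. Polarization in $\phi$ then yields $\innpr{\m{D}(f\otimes\phi)}{g\otimes\psi} = 0$ for all $\phi, \psi$. This works because, for fixed $f, g$, the map $(\phi,\psi)\mapsto \int f\overline{g}\,\rd(\mb{\mu}-\mb{\nu})_{\phi,\psi}$ is sesquilinear in $(\phi,\psi)$ and vanishes on the diagonal. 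Over $\b{C}$ the polarization identity gives the conclusion directly. Over $\b{R}$, I would additionally use the symmetry $\mb{\nu}_{\phi,\psi} = \mb{\nu}_{\psi,\phi}$, which holds because $\mb{\nu}(B)$ is self-adjoint, so that the form is symmetric and real polarization applies. Since elementary tensors span a dense subspace of $\c{R}(K)\otimes\c{Q}$ and $\m{D}$ is bounded, it follows that $\m{D} = \m{0}$.

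For part~2, suppose first that $K$ is characteristic for $\s{P}(X)$, and let $\m{T}_{K}^{\mb{\mu}} = \m{T}_{K}^{\mb{\nu}}$. By part~1, $\m{T}_{K}^{\mb{\mu}_{\mb{\rho}}} = \m{T}_{K}^{\mb{\nu}_{\mb{\rho}}}$ for every $\mb{\rho}$. Since $\mb{\mu}_{\mb{\rho}}$ and $\mb{\nu}_{\mb{\rho}}$ lie in $\s{P}(X)$, classical characteristicity gives $\mb{\mu}_{\mb{\rho}} = \mb{\nu}_{\mb{\rho}}$. Hence $\innpr{\mb{\mu}(B)\phi}{\phi} = \innpr{\mb{\nu}(B)\phi}{\phi}$ for all unit $\phi$ and all Borel sets $B$. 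Because $\mb{\mu}(B) - \mb{\nu}(B)$ is self-adjoint with vanishing quadratic form, it is zero (in the real case as well), so $\mb{\mu} = \mb{\nu}$.

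Conversely, suppose $K$ is characteristic for $\s{P}(X, \c{Q})$, and let $p_{1}, p_{2} \in \s{P}(X)$ satisfy $\m{T}_{K}^{p_{1}} = \m{T}_{K}^{p_{2}}$. I would form the POVMs $\mb{\mu} := p_{1}\m{I}_{\c{Q}}$ and $\mb{\nu} := p_{2}\m{I}_{\c{Q}}$; these are valid POVMs, and countable additivity holds in norm. Their QCEs factor as $\m{T}_{K}^{p_{i}} \otimes \m{I}_{\c{Q}}$, which one checks on elementary tensors using $\mb{\mu}_{\phi,\psi} = \innpr{\phi}{\psi} p_{1}$. The QCEs therefore coincide, so $\mb{\mu} = \mb{\nu}$, and hence $p_{1} = p_{2}$.

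The main obstacle I expect is the technical justification in part~1 of passing from pure states to general states. This step is not strictly needed for ``$\Leftarrow$'', where pure states suffice. For ``$\Rightarrow$'', however, it requires $\int f\overline{g}\,\rd\mb{\nu}_{\mb{\rho}} = \sum_{l} p_{l}\int f\overline{g}\,\rd\mb{\nu}_{\phi_{l},\phi_{l}}$, which follows from $\mb{\nu}_{\mb{\rho}}(B) = \sum_{l} p_{l}\mb{\nu}_{\phi_{l},\phi_{l}}(B)$ together with the boundedness of $f\overline{g}$. The other point needing care is keeping track of the $\b{F}=\b{R}$ case in the polarization arguments.
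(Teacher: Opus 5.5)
Your proposal is correct and follows essentially the same route as the paper. In Part~1, both arguments identify the conditional QCE at a pure state $\phi\phi^{*}$ with the QCE evaluated on elementary tensors $f\otimes\phi$. In Part~2, both combine Part~1 with classical characteristicity in one direction, and in the other use the embedding $p \mapsto p\,\m{I}_{\c{Q}}$, whose QCE is $\m{T}_{K}^{p}\otimes\m{I}_{\c{Q}}$.

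Your spectral-decomposition argument for passing between pure and mixed states, and your explicit polarization in $\phi$ (including the symmetry check when $\b{F}=\b{R}$), simply spell out what the paper compresses. The paper instead appeals to Krein--Milman and to the fact that a bounded sesquilinear form determines its operator.
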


\cref{thm:char:ker,thm:quant:cov:scalar} naturally motivates us to metrize the space $\s{P}(X, \c{Q})$ via the discrepancy of QCEs in the operator norm:
\begin{definition}[Quantum Maximum Discrepancy]
Let $X$ be LCH, $\c{Q}$ be a QHS, and $K:X \times X \to \b{F}$ be a $\c{C}_{0}$-kernel. For any POVMs $\mb{\mu}, \mb{\nu} \in \s{P}(X, \c{Q})$, we define their \textbf{Quantum Maximum Discrepancy} (QMD) as:
\begin{equation*}
    \QMD_{K}(\mb{\mu}, \mb{\nu}) := \vertiii{\m{T}_{K}^{\mb{\mu}} - \m{T}_{K}^{\mb{\nu}}}_{\infty}.
\end{equation*}
For a fixed density operator $\mb{\rho} \in \b{S}(\c{B}_{1}^{+}(\c{Q}))$, we define the \textbf{conditional QMD} as:
\begin{equation*}
    \QMD_{K}^{\mb{\rho}}(\mb{\mu}, \mb{\nu}) := \vertiii{\m{T}_{K}^{\mb{\mu}_{\mb{\rho}}} - \m{T}_{K}^{\mb{\nu}_{\mb{\rho}}}}_{\infty}.
\end{equation*}
\end{definition}

It follows immediately from \cref{thm:char:ker} that the QMD induces a valid metric on $\s{P}(X, \c{Q})$ if and only if $K$ is characteristic. However, even in this case, the conditional QMD for a fixed $\mb{\rho}$ may not be a metric in general, as $\mb{\mu}_{\mb{\rho}} = \mb{\nu}_{\mb{\rho}}$ does not imply $\mb{\mu} = \mb{\nu}$.

\begin{proposition}\label{prop:ineq:QCE}
Let $X$ be LCH and $\c{Q}$ be a QHS. Then, for any $\c{C}_{0}$-kernel $K$ and POVMs $\mb{\mu}, \mb{\nu} \in \s{P}(X, \c{Q})$, the following inequality holds:
\begin{equation*}
    \QMD_{K}(\mb{\mu}, \mb{\nu}) \ge \sup_{\mb{\rho} \in \b{S}(\c{B}_{1}^{+}(\c{Q}))} \QMD_{K}^{\mb{\rho}}(\mb{\mu}, \mb{\nu}).
\end{equation*}
\end{proposition}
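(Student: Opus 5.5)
Fix a density operator $\mb{\rho}$ and write $\m{\Delta} := \m{T}_{K}^{\mb{\mu}} - \m{T}_{K}^{\mb{\nu}}$, which is bounded and self-adjoint on $\c{R}(K) \otimes \c{Q}$ by \cref{thm:quant:cov:scalar}. Similarly write $\m{D}_{\mb{\rho}} := \m{T}_{K}^{\mb{\mu}_{\mb{\rho}}} - \m{T}_{K}^{\mb{\nu}_{\mb{\rho}}}$, a self-adjoint operator on $\c{R}(K)$. It suffices to show $\vertiii{\m{D}_{\mb{\rho}}}_{\infty} \le \vertiii{\m{\Delta}}_{\infty}$ for each $\mb{\rho}$ and then take the supremum. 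The strategy is to express $\m{D}_{\mb{\rho}}$ as a partial trace of $\m{\Delta}$ against $\mb{\rho}$, i.e.\ a completely positive, unital-in-the-right-sense compression, and then bound the quadratic form.

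First reduce to pure states. By \eqref{eq:embed:cvx:combi} the map $\mb{\rho} \mapsto \m{D}_{\mb{\rho}}$ is affine along segments, and it extends to countable convex combinations. Explicitly, if $\mb{\rho} = \sum_{l} p_{l} \psi_{l} \psi_{l}^{*}$ is the spectral decomposition, then $\m{D}_{\mb{\rho}} = \sum_{l} p_{l} \m{D}_{\psi_{l}\psi_{l}^{*}}$. This is checked directly through the sesquilinear forms: for $f,g \in \c{R}(K)$,
\begin{equation*}
\innpr{\m{D}_{\mb{\rho}} f}{g}_{\c{R}(K)} = \int_{X} f \overline{g} \, \rd(\mb{\mu}_{\mb{\rho}} - \mb{\nu}_{\mb{\rho}}) = \sum_{l} p_{l} \int_{X} f\overline{g}\, \rd(\mb{\mu}_{\psi_{l},\psi_{l}} - \mb{\nu}_{\psi_{l},\psi_{l}}).
\end{equation*}
Here we use $\mb{\mu}_{\mb{\rho}}(B) = \sum_{l} p_{l} \innpr{\mb{\mu}(B)\psi_{l}}{\psi_{l}}$ and dominated convergence, since $f\overline{g}$ is bounded for a $\c{C}_{0}$-kernel. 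By the triangle inequality and $\sum_l p_l = 1$, it then suffices to treat $\mb{\rho} = \psi\psi^{*}$ with $\|\psi\| = 1$.

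For a unit vector $\psi$, the defining identity of \cref{thm:quant:cov:scalar} gives, for all $f, g \in \c{R}(K)$,
\begin{equation*}
\innpr{\m{D}_{\psi\psi^{*}} f}{g}_{\c{R}(K)} = \int_{X} f\overline{g}\,\rd(\mb{\mu}_{\psi,\psi} - \mb{\nu}_{\psi,\psi}) = \innpr{\m{\Delta}(f \otimes \psi)}{g \otimes \psi}_{\c{R}(K)\otimes\c{Q}}.
\end{equation*}
So $\m{D}_{\psi\psi^{*}} = \m{V}_{\psi}^{*} \m{\Delta} \m{V}_{\psi}$, where $\m{V}_{\psi} : f \mapsto f \otimes \psi$ is an isometry from $\c{R}(K)$ into $\c{R}(K)\otimes\c{Q}$. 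Hence
\begin{equation*}
\vertiii{\m{D}_{\psi\psi^{*}}}_{\infty} \le \vertiii{\m{V}_{\psi}}_{\infty}^{2}\vertiii{\m{\Delta}}_{\infty} = \vertiii{\m{\Delta}}_{\infty}.
\end{equation*}
Combining this with the convexity step yields $\QMD_{K}^{\mb{\rho}}(\mb{\mu},\mb{\nu}) \le \QMD_{K}(\mb{\mu},\mb{\nu})$ for every $\mb{\rho}$, which proves the claim.

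The only delicate point is the mixed-state reduction in infinite dimensions. This requires justifying the interchange of the series over $l$ with the integral, and confirming that the series $\sum_l p_l \m{D}_{\psi_l\psi_l^*}$ converges in operator norm to $\m{D}_{\mb{\rho}}$. Operator-norm convergence follows from the uniform bound $\vertiii{\m{D}_{\psi_{l}\psi_{l}^{*}}}_{\infty} \le \vertiii{\m{\Delta}}_{\infty}$ together with $\sum_{l} p_{l} = 1$. The interchange follows from the SOT $\sigma$-additivity of the POVMs applied to the trace $\tr{\mb{\rho}\,\cdot}$. If this becomes cumbersome, an alternative is to write $\m{D}_{\mb{\rho}} = \m{W}^{*}(\m{\Delta}\otimes \m{I})\m{W}$ using a purification $\m{W} f = f \otimes \Omega$ with $\Omega = \sum_{l}\sqrt{p_{l}}\,\psi_{l}\otimes e_{l}$, which gives the bound in one stroke.
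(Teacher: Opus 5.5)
Your proof is correct and is essentially the paper's argument: the paper also lower-bounds $\vertiii{\m{T}_{K}^{\mb{\mu}} - \m{T}_{K}^{\mb{\nu}}}_{\infty}$ by its quadratic form on product vectors $f \otimes \phi$, which is your compression $\m{V}_{\phi}^{*}(\m{T}_{K}^{\mb{\mu}} - \m{T}_{K}^{\mb{\nu}})\m{V}_{\phi}$. It then identifies this with the conditional discrepancy at the pure state $\phi\phi^{*}$ and passes to mixed states by convexity. Your explicit spectral-decomposition step (boundedness of $f\overline{g}$ justifies interchanging sum and integral) is a cleaner justification of that last reduction than the paper's appeal to \eqref{eq:embed:cvx:combi} and extreme points, which by itself only covers finite convex combinations when $\c{Q}$ is infinite-dimensional.
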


Crucially, while the full QMD provides a universal upper bound, we focus on the conditional QMD for two primary reasons. First, it carries far greater statistical significance: $\mb{\mu}_{\mb{\rho}}$ and $\mb{\nu}_{\mb{\rho}}$ represent the actual physical probability distributions governing the observed macroscopic data given an input state $\mb{\rho}$. Second, from a technical standpoint, the full QMD presents analytical challenges as it operates over the larger tensor product space.
Furthermore, the conditional QMD provides a natural framework to address a practical question: \emph{given two physical apparatuses represented by POVMs $\mb{\mu}$ and $\mb{\nu}$, which input state $\mb{\rho}$ maximizes the discrepancy between their induced probability measures?} Because the map $\mb{\rho} \mapsto \m{T}_{K}^{\mb{\mu}_{\mb{\rho}}} - \m{T}_{K}^{\mb{\nu}_{\mb{\rho}}}$ is affine (see \eqref{eq:embed:cvx:combi}), its composition with the operator norm yields a convex function. Consequently, the supremum on the right-hand side of \cref{prop:ineq:QCE} is necessarily achieved at an extreme point of the convex state space $\b{S}(\c{B}_{1}^{+}(\c{Q}))$, which corresponds precisely to a pure state \cite{holevo2011probabilistic}. Thus, there always exists a \textbf{maximally distinguishing} pure state that maximizes the conditional discrepancy, i.e.,
\begin{align*}
    \exists \, \phi \in \c{Q} \quad \text{such that} \quad \|\phi\|_{\c{Q}}=1, \quad \phi \phi^{*} \in \argmax \{\QMD_{K}^{\mb{\rho}}(\mb{\mu}, \mb{\nu}) : \mb{\rho} \in \b{S}(\c{B}_{1}^{+}(\c{Q})) \}.
\end{align*}
\begin{remark}\label{rmk:QMD:schatten}
For later applications in quantum state tomography, we remark that the conditional QMD can be generalized to any $s$-Schatten norm with $s \in [1, \infty)$ instead of the operator norm, since $\m{T}_{K}^{\mb{\mu}_{\mb{\rho}}}, \m{T}_{K}^{\mb{\nu}_{\mb{\rho}}} \in \c{B}_{1}^{+}(\c{R}(K))$ due to \eqref{eq:contra:QCE:trace}. Hence, if another norm is more convenient for statistical inference, one may substitute the operator norm accordingly, e.g., adopting the Hilbert-Schmidt norm with $s=2$ in \cref{thm:loss:max:eigen}. Indeed, because any norm is a convex function, the guarantee that there always exists a maximally distinguishing pure state remains valid under this generalization.
\end{remark}

\subsubsection{Pauli Measurements for Qubits}
A fundamental orthogonal basis for the space of Hermitian matrices on a single qubit $\c{Q} = \b{C}^{2}$ is formed by the identity matrix $\m{I}_{2}$ and the three Pauli matrices \cite{nielsen2010quantum}:
\begin{equation*}
    \setlength{\arraycolsep}{0.3em} 
    \mb{\sigma}_{x} = 
    \begin{pmatrix}
    0 & 1 \\
    1 & 0 \vphantom{-1}
    \end{pmatrix}, \quad
    \mb{\sigma}_{y} = 
    \begin{pmatrix}
    0 & -\imath \\
    \imath & 0
    \end{pmatrix}, \quad
    \mb{\sigma}_{z} = 
    \begin{pmatrix}
    1 & 0 \\
    0 & -1
    \end{pmatrix} \in \b{C}^{2 \times 2}.
\end{equation*}
These matrices are traceless, unitary, and involutory ($\mb{\sigma}_{x}^{2}=\mb{\sigma}_{y}^{2}= \mb{\sigma}_{z}^{2}= \m{I}_{2}$), with spectra $X = \{\pm 1 \}$. 
Consequently, any unit-trace Hermitian matrix admits a unique decomposition known as the \emph{Bloch sphere representation}:
\begin{align}\label{eq:Bloch:repre}
    \mb{\rho} = \frac{1}{2} (\m{I}_{2} + \m{a} \cdot \vec{\mb{\sigma}})
    = \frac{1}{2} (\m{I}_{2} + a_{x} \mb{\sigma}_{x} + a_{y} \mb{\sigma}_{y} + a_{z} \mb{\sigma}_{z}), \quad \m{a} \in \b{R}^{3}.
\end{align}
This establishes a correspondence between the unit-trace Hermitian matrix and a real vector $\m{a} = (a_{x}, a_{y}, a_{z})^{\top} \in \b{R}^{3}$, called the \textbf{Bloch vector}. Since the eigenvalues of $\mb{\rho}$ are $\lambda_{\pm} = (1 \pm \|\m{a}\|_{\b{R}^{3}})/2$, the n.n.d. condition for the density matrix $\mb{\rho} \in \b{S}(\b{C}_{+}^{2 \times 2})$ restricts the Bloch vector $\m{a}$ to the unit ball $\b{B}^{3}$ \cite{sakurai2017modern}. Furthermore, it holds that:
\begin{align*}
    (\m{a} \cdot \vec{\mb{\sigma}}) (\m{b} \cdot \vec{\mb{\sigma}}) = (\m{a} \cdot \m{b}) \m{I}_{2} + \imath (\m{a} \times \m{b}) \cdot \vec{\mb{\sigma}}, \quad \m{a}, \m{b} \in \b{R}^{3},
\end{align*}
thus $\mb{\rho} \in \b{S}(\b{C}_{+}^{2 \times 2})$ represents a pure state if and only if $1= \tr{\mb{\rho}^{2}} = (1 + \|\m{a}\|_{\b{R}^{3}}^{2})/2$, i.e., $\m{a}$ lies on the surface $\b{S}^{2}$. In short, the norm of the Bloch vector $\m{a}$  for any density matrix satisfies $\|\m{a}\|_{\b{R}^{3}} \le 1$ with equality if and only if it represents a pure state. In particular, the maximally mixed state $\mb{\rho} = \m{I}_{2}/2$ corresponds to the origin $\m{a} = \m{0}$.

We now demonstrate a decoupling phenomenon specific to single qubits ($\c{Q} = \b{C}^{2}$). While the absolute magnitude of the conditional QMD is linked to the chosen kernel geometry, the maximally distinguishing pure states are intrinsic to the Pauli measurements and independent of the kernel choice. This invariance arises due to the binary nature of their spectra. Because the measurement outcomes are strictly two-dimensional, the optimal input states that maximize the statistical discrepancy are governed solely by the algebraic difference of the Pauli observables themselves:

\begin{theorem}\label{thm:pauli:QMD}
Let $\c{Q}= \b{C}^{2}$, $X = \{\pm 1 \}$, and $K:X \times X \to \b{C}$ be a kernel with associated Gram matrix $\m{K} \in \b{C}_{+}^{2 \times 2}$. For any distinct pair of Pauli indices $i \ne j \in \{x, y, z\}$, the conditional QMD between the associated PVMs $\mb{\nu}_{i}$ and $\mb{\nu}_{j}$ is bounded by:
\begin{align*}
    \QMD_{K}^{\mb{\rho}}(\mb{\nu}_{i}, \mb{\nu}_{j}) = \frac{\vertiii{\m{K}^{1/2} \mb{\sigma}_{z} \m{K}^{1/2}}_{\infty}}{2} |\tr{\mb{\rho} (\mb{\sigma}_{i} - \mb{\sigma}_{j})}| \le \frac{\vertiii{\m{K}^{1/2} \mb{\sigma}_{z} \m{K}^{1/2}}_{\infty}}{\sqrt{2}}.
\end{align*}
Equality holds (i.e., the state is maximally distinguishing) if and only if $\mb{\rho}$ is either of the two pure states:
\begin{align*}
    \mb{\rho}_{\pm} = \frac{1}{2}\left(\m{I}_{2} \pm \frac{\mb{\sigma}_{i} - \mb{\sigma}_{j}}{\sqrt{2}}\right)\in \b{S}(\b{C}_{+}^{2 \times 2}).
\end{align*}
\end{theorem}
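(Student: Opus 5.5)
Since $X=\{\pm1\}$ has two points, $\c{R}(K)\cong\b{C}^{2}$ with the Gram matrix $\m{K}$ as metric, and the conditional QCE becomes a $2\times 2$ matrix. For a PVM $\mb{\nu}_{i}=\delta_{+1}\m{\Pi}^{i}_{+}+\delta_{-1}\m{\Pi}^{i}_{-}$ with $\m{\Pi}^{i}_{\pm}=(\m{I}_{2}\pm\mb{\sigma}_{i})/2$, the remark after \eqref{eq:embed:cvx:combi} gives
\begin{equation*}
\m{T}_{K}^{(\mb{\nu}_{i})_{\mb{\rho}}} = p^{i}_{+}\, k_{+1}k_{+1}^{*} + p^{i}_{-}\, k_{-1}k_{-1}^{*},\qquad p^{i}_{\pm}=\tr{\mb{\rho}\m{\Pi}^{i}_{\pm}}=\tfrac{1}{2}\left(1\pm\tr{\mb{\rho}\mb{\sigma}_{i}}\right).
\end{equation*}
Subtracting the analogous expression for $j$, the constant parts cancel because $p_{+}+p_{-}=1$ for both. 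This leaves
\begin{equation*}
\m{T}_{K}^{(\mb{\nu}_{i})_{\mb{\rho}}}-\m{T}_{K}^{(\mb{\nu}_{j})_{\mb{\rho}}}=\tfrac{1}{2}\tr{\mb{\rho}(\mb{\sigma}_{i}-\mb{\sigma}_{j})}\,\left(k_{+1}k_{+1}^{*}-k_{-1}k_{-1}^{*}\right).
\end{equation*}
So the difference is a real scalar times a fixed operator $\m{D}:=k_{+1}k_{+1}^{*}-k_{-1}k_{-1}^{*}$. This is the decoupling: the kernel enters only through $\vertiii{\m{D}}_{\infty}$.

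The next step is to compute $\vertiii{\m{D}}_{\infty}$. Let $\m{\Phi}:\b{C}^{2}\to\c{R}(K)$ be the synthesis map $\m{e}_{1}\mapsto k_{+1}$, $\m{e}_{2}\mapsto k_{-1}$. Then $\m{\Phi}^{*}\m{\Phi}=\m{K}$ and $\m{D}=\m{\Phi}\mb{\sigma}_{z}\m{\Phi}^{*}$. The nonzero spectrum of $\m{\Phi}\mb{\sigma}_{z}\m{\Phi}^{*}$ coincides with that of $\mb{\sigma}_{z}\m{\Phi}^{*}\m{\Phi}=\mb{\sigma}_{z}\m{K}$, which in turn matches that of the self-adjoint matrix $\m{K}^{1/2}\mb{\sigma}_{z}\m{K}^{1/2}$. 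Since $\m{D}$ is self-adjoint, its operator norm is its spectral radius, so $\vertiii{\m{D}}_{\infty}=\vertiii{\m{K}^{1/2}\mb{\sigma}_{z}\m{K}^{1/2}}_{\infty}$. This gives the stated equality.

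For the bound, write $\mb{\rho}$ in the Bloch form \eqref{eq:Bloch:repre}. Using tracelessness and $\tr{\mb{\sigma}_{a}\mb{\sigma}_{b}}=2\delta_{ab}$, we get $\tr{\mb{\rho}(\mb{\sigma}_{i}-\mb{\sigma}_{j})}=a_{i}-a_{j}=\m{a}\cdot(\m{e}_{i}-\m{e}_{j})$. By Cauchy--Schwarz and $\|\m{a}\|\le1$ this is at most $\sqrt{2}$ in absolute value, which yields the factor $1/\sqrt{2}$.

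For the equality case, Cauchy--Schwarz is tight exactly when $\m{a}=\pm(\m{e}_{i}-\m{e}_{j})/\sqrt{2}$, a unit vector and hence a pure state; this gives $\mb{\rho}_{\pm}$ as displayed. A degenerate case must be handled separately: if $\m{K}^{1/2}\mb{\sigma}_{z}\m{K}^{1/2}=\m{0}$ (e.g.\ a constant kernel), every state attains equality. The "if and only if" should therefore be stated under the implicit nondegeneracy assumption, which I would flag. The main care point is the norm identification for $\m{D}$ when $\m{K}$ is singular, and the similarity argument above handles it.
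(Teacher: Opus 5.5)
Your proposal is correct and follows essentially the paper's argument. The paper also reduces the difference of conditional QCEs to $\tfrac12\tr{\mb{\rho}(\mb{\sigma}_i-\mb{\sigma}_j)}$ times a fixed kernel contrast, then maximizes $|a_i-a_j|$ over the Bloch ball by Cauchy--Schwarz. The one difference is how the norm is identified: the paper uses the quadratic form $\m{f}^{*}\mb{\sigma}_z\m{f}$ with the isomorphism $\m{u}=\m{K}^{\dagger/2}\m{f}$, whereas you use the nonzero-spectrum identity between $\m{\Phi}\mb{\sigma}_z\m{\Phi}^{*}$ and $\mb{\sigma}_z\m{K}$.

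Your caveat is also right. If $\m{K}^{1/2}\mb{\sigma}_{z}\m{K}^{1/2}=\m{0}$ (e.g.\ $K$ constant, or $K(x,y)=xy$), every state attains the bound. The paper's ``if and only if'' therefore tacitly assumes this factor is nonzero.
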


\begin{remark}[Indistinguishability and the Pauli-Z matrix]
\cref{thm:pauli:QMD} implies that the probability measures induced by Pauli matrices $\mb{\sigma}_{i}$ and $\mb{\sigma}_{j}$ are identical if and only if the Bloch vector $\m{a} \in \b{B}^{3}$ of the input state is orthogonal to the difference vector $\m{e}_{i} - \m{e}_{j} \in \b{R}^{3}$:
\begin{align*}
    \tr{\mb{\rho} (\mb{\sigma}_{i} - \mb{\sigma}_{j})} = \innpr{\m{a}}{\m{e}_{i} - \m{e}_{j}}_{\b{R}^{3}}, \quad \mb{\rho} = \frac{1}{2} (\m{I}_{2} + \m{a} \cdot \vec{\mb{\sigma}}).
\end{align*}
Furthermore, the appearance of the Pauli-Z matrix arises naturally from the binary nature of the spectra. In this context, $\mb{\sigma}_{z} = \Diag(1, -1)$ does not act on the quantum state, but rather computes the contrast $|f(+1)|^{2} - |f(-1)|^{2}$, thus capturing how effectively the kernel discriminates between the two possible measurement outcomes.
\end{remark}

\subsection{Noise Quantification of Quantum Measurements}\label{ssec:uq:povm}
As discussed in \cref{ssec:quant:msr}, physical measurement devices frequently deviate from idealized PVMs due to finite detector efficiency, thermal fluctuations, or environmental coupling. Consequently, the POVM framework is necessary to model the statistical corruption inherent in realistic apparatuses \cite{busch2016quantum, breuer2002theory}. To illustrate the utility of the Quantum Maximum Discrepancy (QMD), we demonstrate how this metric quantifies classical bit-flip noise in qubit systems. An extension of this analysis to discrete modular errors in general qudit systems $\c{Q} = \b{C}^{q}$ is provided in \cref{ssec:qudit:povm}.

\subsubsection{Bit-flip Noise in Qubits}\label{ssec:bit:flip}
Let $\m{O} \in \b{C}^{2 \times 2}$ be an observable in a qubit system $\c{Q} = \b{C}^{2}$. Without loss of generality, we assume the spectrum is $X = \sigma(\m{O}) = \{\pm 1\}$ (e.g., a Pauli matrix), or equivalently, $\m{O} = \m{u} \cdot \vec{\mb{\sigma}}$ for some unit vector $\m{u} \in \b{S}^{2}$. The eigenprojections associated with the eigenvalues $\pm 1$ are given by $\m{\Pi}_{\pm} = (\m{I}_{2} \pm \m{u} \cdot \vec{\mb{\sigma}})/2$.
Consider a scenario where the measurement outcome is subject to a symmetric bit-flip error, a standard noise model in quantum information \cite{nielsen2010quantum}. Let $Y$ denote the ideal measurement outcome for an input state $\mb{\rho} \in \b{S}(\b{C}^{2 \times 2})$, and let $\varepsilon$ be an independent multiplicative noise variable. The observed outcome is given by:
\begin{align*} 
    Y_{\eta} = Y \cdot \varepsilon, \quad \text{where } \quad 
    Y = \begin{cases} +1 & \text{w.p. } \tr{\mb{\rho} \m{\Pi}_{+}} \\ -1 & \text{w.p. } \tr{\mb{\rho} \m{\Pi}_{-}} \end{cases}, \quad 
    \varepsilon = \begin{cases} +1 & \text{w.p. } 1-\eta \\ -1 & \text{w.p. } \eta \end{cases}. 
\end{align*}
Here, $\eta \in [0, 1/2]$ represents the error rate (or flip probability). We restrict our attention to this regime because an error rate $\eta > 1/2$ merely corresponds to re-labeling the outcomes. The boundary case $\eta = 1/2$ represents a \emph{completely} uninformative measurement: the observed outcome $Y_{1/2}$ follows a uniform distribution over $\{\pm 1\}$ regardless of the input state $\mb{\rho}$, rendering any statistical inference on the underlying state fundamentally impossible.
The POVM governing the noisy outcome $Y_{\eta}$ is constructed by mixing the original projections:
\begin{align*}
    \mb{\nu}^{\m{O}, \eta} = \delta_{+1} [(1-\eta) \m{\Pi}_{+} + \eta \m{\Pi}_{-}] + \delta_{-1} [\eta \m{\Pi}_{+} + (1-\eta) \m{\Pi}_{-}].
\end{align*}
Crucially, it is \emph{not} a PVM as it violates the idempotence property. Furthermore, this noise process dampens the effective observable, defined as the first moment of the POVM:
\begin{align*}
    \m{O}_{\eta} := \int_{X} x \rd \mb{\nu}^{\m{O}, \eta} (x) 
    = [(1-\eta) \m{\Pi}_{+} + \eta \m{\Pi}_{-}] - [\eta \m{\Pi}_{+} + (1-\eta) \m{\Pi}_{-}] 
    = (1-2\eta) \m{O}.
\end{align*}
While $\m{O}_{\eta}$ shares the same eigenvectors as $\m{O}$, its spectrum is attenuated by a factor of $|1-2\eta|$. Hence, the POVM $\mb{\nu}^{\m{O}, \eta}$ is distinct from the spectral measure of $\m{O}_{\eta}$ (which is a PVM).

\begin{proposition}\label{prop:QMD:flip}
Given an observable $\m{O} \in \b{C}^{2 \times 2}$ with the spectrum $X = \sigma(\m{O}) = \{\pm 1\}$, the conditional QMD between the ideal PVM $\mb{\nu}^{\m{O}}$ and the noisy POVM $\mb{\nu}^{\m{O}, \eta}$ with error rate $\eta \in [0, 1]$ is bounded by
\begin{align*}
    \QMD_{K}^{\mb{\rho}}(\mb{\nu}^{\m{O}}, \mb{\nu}^{\m{O}, \eta}) = \eta \vertiii{\m{K}^{1/2} \mb{\sigma}_{z} \m{K}^{1/2}}_{\infty} |\tr{\mb{\rho} \m{O}}| \le \eta \vertiii{\m{K}^{1/2} \mb{\sigma}_{z} \m{K}^{1/2}}_{\infty}.
\end{align*}
Moreover, the maximally distinguishing states correspond to the eigenprojections $\m{\Pi}_{\pm}$ of $\m{O}$.
\end{proposition}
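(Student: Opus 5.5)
The plan is to reduce the whole computation to the Leveled Quanta example, generalized to an arbitrary kernel on $X=\{\pm1\}$. For any POVM of the form $\mb{\mu}=\delta_{+1}\m{M}_{+}+\delta_{-1}\m{M}_{-}$, the conditional QCE is the rank-at-most-two operator
\begin{equation*}
\m{T}_{K}^{\mb{\mu}_{\mb{\rho}}} = \tr{\mb{\rho}\m{M}_{+}}\, k_{+1}k_{+1}^{*} + \tr{\mb{\rho}\m{M}_{-}}\, k_{-1}k_{-1}^{*}.
\end{equation*}
This follows directly from \eqref{eq:tr:contra:embed:scalar}. I would take the difference of the two conditional QCEs, one for $\mb{\nu}^{\m{O}}$ (with $\m{M}_{\pm}=\m{\Pi}_{\pm}$) and one for $\mb{\nu}^{\m{O},\eta}$. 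The coefficient of $k_{+1}k_{+1}^{*}$ is
\begin{equation*}
\tr{\mb{\rho}\m{\Pi}_{+}}-(1-\eta)\tr{\mb{\rho}\m{\Pi}_{+}}-\eta\tr{\mb{\rho}\m{\Pi}_{-}} = \eta\,\tr{\mb{\rho}(\m{\Pi}_{+}-\m{\Pi}_{-})} = \eta\,\tr{\mb{\rho}\m{O}}.
\end{equation*}
The coefficient of $k_{-1}k_{-1}^{*}$ is the negative of this. Hence
\begin{equation*}
\m{T}_{K}^{\mb{\nu}^{\m{O}}_{\mb{\rho}}}-\m{T}_{K}^{\mb{\nu}^{\m{O},\eta}_{\mb{\rho}}} = \eta\,\tr{\mb{\rho}\m{O}}\,\bigl(k_{+1}k_{+1}^{*}-k_{-1}k_{-1}^{*}\bigr).
\end{equation*}

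Next I need the operator norm of $\m{D}:=k_{+1}k_{+1}^{*}-k_{-1}k_{-1}^{*}$ on $\c{R}(K)$. Write $\m{\Phi}:\b{C}^{2}\to\c{R}(K)$ for the synthesis map $\m{c}\mapsto c_{+}k_{+1}+c_{-}k_{-1}$. Then $\m{D}=\m{\Phi}\,\mb{\sigma}_{z}\,\m{\Phi}^{*}$ and $\m{\Phi}^{*}\m{\Phi}=\m{K}$ (up to transpose or conjugation conventions, which do not affect norms). The nonzero spectrum of $\m{\Phi}\mb{\sigma}_{z}\m{\Phi}^{*}$ coincides with that of $\mb{\sigma}_{z}\m{\Phi}^{*}\m{\Phi}=\mb{\sigma}_{z}\m{K}$. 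This in turn coincides with that of the Hermitian matrix $\m{K}^{1/2}\mb{\sigma}_{z}\m{K}^{1/2}$. Because $\m{D}$ is self-adjoint, its operator norm is its spectral radius, so
\begin{equation*}
\vertiii{\m{D}}_{\infty}=\vertiii{\m{K}^{1/2}\mb{\sigma}_{z}\m{K}^{1/2}}_{\infty}.
\end{equation*}
This is presumably the same identity used for \cref{thm:pauli:QMD}, and I would invoke or reprove it there. Combining the two steps gives the claimed equality.

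For the bound, $\m{O}=\m{u}\cdot\vec{\mb{\sigma}}$ has eigenvalues $\pm1$, so $|\tr{\mb{\rho}\m{O}}|=|\m{a}\cdot\m{u}|\le\|\m{a}\|\le1$ for every density matrix $\mb{\rho}$.

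The characterization of maximally distinguishing states follows, assuming $\eta>0$ and $\m{K}^{1/2}\mb{\sigma}_{z}\m{K}^{1/2}\neq\m{0}$ (otherwise every state attains the bound). Equality requires $|\m{a}\cdot\m{u}|=1$ with $\|\m{a}\|\le1$. This forces $\m{a}=\pm\m{u}$, that is, $\mb{\rho}=(\m{I}_{2}\pm\m{O})/2=\m{\Pi}_{\pm}$.

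The only delicate step is the spectral identity for $\vertiii{\m{D}}_{\infty}$, specifically the case where $\m{K}$ is singular and $k_{\pm1}$ are linearly dependent. The similarity argument via $\m{\Phi}$ handles this case without inverting $\m{K}$. Everything else is bookkeeping.
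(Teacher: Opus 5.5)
Your proposal is correct and is essentially the paper's proof. The paper likewise reduces the difference of conditional QCEs to $\eta\,\tr{\mb{\rho}\m{O}}$ times the contrast $|f(+1)|^{2}-|f(-1)|^{2}$ and maximizes $|\m{a}\cdot\m{u}|$ over the Bloch ball; it differs only in obtaining $\vertiii{\m{K}^{1/2}\mb{\sigma}_{z}\m{K}^{1/2}}_{\infty}$ from the quadratic form via the isometry $f\mapsto\m{K}^{\dagger/2}\m{f}$, rather than from your $AB$/$BA$ spectral identity for $\m{\Phi}\mb{\sigma}_{z}\m{\Phi}^{*}$. Your caveat that the ``only if'' part of the maximizer characterization needs $\eta>0$ and $\m{K}^{1/2}\mb{\sigma}_{z}\m{K}^{1/2}\neq\m{0}$ (the latter fails, e.g., for the constant kernel) is a correct refinement that the paper leaves implicit.
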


The theorem below unifies \cref{thm:pauli:QMD,prop:QMD:flip}:

\begin{theorem}\label{thm:two:bit:flip}
Let $\m{O}, \tilde{\m{O}} \in \b{C}^{2 \times 2}$ be two observables with spectrum $X = \{\pm 1\}$, i.e., $\m{O} = \m{u} \cdot \vec{\mb{\sigma}}$ and $\tilde{\m{O}} = \tilde{\m{u}} \cdot \vec{\mb{\sigma}}$ for some $\m{u}, \tilde{\m{u}} \in \b{S}^{2}$. Let $\mb{\nu}_{\eta}$ and $\mb{\tilde{\nu}}_{\tilde{\eta}}$ denote the corresponding noisy POVMs subject to error rates $\eta, \tilde{\eta} \in [0, 1]$, respectively. 
Define $\m{v} := (1-2\eta) \m{u} - (1-2 \tilde{\eta}) \tilde{\m{u}} \in \b{R}^{3}$. The conditional QMD between these measurements is given by:
\begin{align}\label{eq:QMD:flip:two}
    \QMD_{K}^{\mb{\rho}}(\mb{\nu}_{\eta}, \mb{\tilde{\nu}}_{\tilde{\eta}}) 
    &= \vertiii{\m{K}^{1/2} \mb{\sigma}_{z} \m{K}^{1/2}}_{\infty} \left|\tr {\mb{\rho} \frac{(1-2 \eta) \m{O} - (1-2 \tilde{\eta}) \tilde{\m{O}}}{2}} \right| \\
    &\le \vertiii{\m{K}^{1/2} \mb{\sigma}_{z} \m{K}^{1/2}}_{\infty} \frac{\|\m{v}\|_{\b{R}^{3}}}{2}, \nonumber 
\end{align}
provided $\m{v} \neq \m{0}$. In this case, the maximally distinguishing pure states are given by:
\begin{align*}
    \mb{\rho}_{\pm} = \frac{1}{2} \left(\m{I}_{2} \pm \frac{\m{v}}{\|\m{v}\|_{\b{R}^{3}}} \cdot \vec{\mb{\sigma}} \right).
\end{align*}
\end{theorem}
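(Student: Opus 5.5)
The plan is to compute both conditional QCEs explicitly in the two-dimensional RKHS and reduce the operator norm of their difference to a scalar multiple of a fixed $2\times 2$ matrix. Since $X=\{\pm1\}$ is finite, $\c{R}(K)$ is spanned by $k_{+1},k_{-1}$, and each conditional QCE has the form $\m{T}_K^{\mb{\mu}_{\mb{\rho}}} = p_{+} k_{+1}k_{+1}^{*} + p_{-} k_{-1}k_{-1}^{*}$ with $p_{\pm} = \mb{\mu}_{\mb{\rho}}(\{\pm1\})$. For the noisy POVM $\mb{\nu}_{\eta}$ we have $p_{+} = \tr{\mb{\rho}((1-\eta)\m{\Pi}_{+}+\eta\m{\Pi}_{-})} = \tfrac12(1+(1-2\eta)\tr{\mb{\rho}\m{O}})$ and $p_{-} = \tfrac12(1-(1-2\eta)\tr{\mb{\rho}\m{O}})$, using $\m{\Pi}_{\pm}=(\m{I}_2\pm\m{O})/2$ and $\tr{\mb{\rho}}=1$. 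The analogous expression holds for $\tilde{\mb{\nu}}_{\tilde\eta}$. Subtracting gives
\begin{equation*}
\m{T}_K^{(\mb{\nu}_\eta)_{\mb{\rho}}} - \m{T}_K^{(\tilde{\mb{\nu}}_{\tilde\eta})_{\mb{\rho}}} = c\,\bigl(k_{+1}k_{+1}^{*} - k_{-1}k_{-1}^{*}\bigr), \qquad c := \tr{\mb{\rho}\,\tfrac{(1-2\eta)\m{O}-(1-2\tilde\eta)\tilde{\m{O}}}{2}}.
\end{equation*}
Hence the QMD equals $|c|$ times $\vertiii{k_{+1}k_{+1}^{*}-k_{-1}k_{-1}^{*}}_{\infty}$.

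Next I identify this operator norm with $\vertiii{\m{K}^{1/2}\mb{\sigma}_z\m{K}^{1/2}}_{\infty}$. Let $\m{S}:\b{C}^2\to\c{R}(K)$ be the synthesis map $\m{S}\m{e}_{\pm} = k_{\pm1}$. Then $k_{+1}k_{+1}^{*}-k_{-1}k_{-1}^{*} = \m{S}\mb{\sigma}_z\m{S}^{*}$ and $\m{S}^{*}\m{S} = \m{K}$ (up to transpose or conjugation, which does not affect the norm). The nonzero spectrum of $\m{S}\mb{\sigma}_z\m{S}^{*}$ coincides with that of $\mb{\sigma}_z\m{S}^{*}\m{S}=\mb{\sigma}_z\m{K}$, which is similar to $\m{K}^{1/2}\mb{\sigma}_z\m{K}^{1/2}$. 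Both operators are self-adjoint, so their operator norms equal their spectral radii and therefore agree. This step also covers a singular $\m{K}$, via the polar decomposition $\m{S} = \m{V}\m{K}^{1/2}$ with $\m{V}$ a partial isometry. This establishes the equality in \eqref{eq:QMD:flip:two}. The same computation specializes to \cref{thm:pauli:QMD}, with $\eta=\tilde\eta=0$, and to \cref{prop:QMD:flip}, with $\tilde{\m{O}}=\m{O}$ and $\tilde\eta=0$, which serves as a consistency check.

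For the bound, I write $\mb{\rho}$ in the Bloch form \eqref{eq:Bloch:repre} with $\m{a}\in\b{B}^3$. Since the Pauli matrices are traceless with $\tr{\mb{\sigma}_i\mb{\sigma}_j}=2\delta_{ij}$, we get $c = \tfrac12\innpr{\m{a}}{\m{v}}_{\b{R}^3}$. By Cauchy--Schwarz and $\|\m{a}\|\le1$, $|c|\le \|\m{v}\|/2$. Equality holds if and only if $\m{a}=\pm\m{v}/\|\m{v}\|$, which requires $\m{v}\neq\m{0}$ and forces $\|\m{a}\|=1$, so $\mb{\rho}$ is pure and equals $\mb{\rho}_{\pm}$. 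This identifies the maximally distinguishing states.

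The main obstacle is the kernel-geometry identification in the second step, namely keeping the conjugation and transpose conventions of $\m{K}$ consistent with the sesquilinear convention. Everything else reduces to two-line trace identities.
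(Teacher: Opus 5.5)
Your proposal is correct and follows essentially the same route as the paper. The paper likewise reduces the difference of the two conditional QCEs to the contrast $\pm\frac{1}{2}\tr{\mb{\rho}\,((1-2\eta)\m{O}-(1-2\tilde{\eta})\tilde{\m{O}})}$ on the two outcomes, identifies the kernel factor with $\vertiii{\m{K}^{1/2}\mb{\sigma}_{z}\m{K}^{1/2}}_{\infty}$ through the quadratic form $\m{f}^{*}\mb{\sigma}_{z}\m{f}$ and the isometry $\m{f}\mapsto\m{K}^{\dagger/2}\m{f}$ (your factorization $\m{S}=\m{V}\m{K}^{1/2}$ in another guise), and finishes with the Bloch representation and Cauchy--Schwarz.
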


Note that $\m{v} = \m{0}$ if and only if two noisy measurement processes are statistically identical ($\mb{\nu}_{\eta} = \mb{\tilde{\nu}}_{\tilde{\eta}}$). Consequently, the QMD is identically zero and the bound vanishes, meaning no quantum state is capable of distinguishing between the two apparatuses. 

\begin{remark}[Discrepancy vs. Uncertainty]
Heisenberg's uncertainty principle characterizes the physical consequences of measurement ordering (non-commutativity), quantified by the Lie bracket $[\m{O}, \tilde{\m{O}}] := \m{O} \tilde{\m{O}} - \tilde{\m{O}} \m{O} = 2 \imath (\m{u} \times \tilde{\m{u}}) \cdot \vec{\mb{\sigma}}$ \cite{talagrand2022quantum}. In contrast, statistical inference for QST rather relies on distinguishability: the capacity to differentiate data-generating processes, which our QMD explicitly captures. To see that these are profoundly different notions, consider the ideal setting without apparatus error ($\eta = \tilde{\eta} = 0$). Let $\theta = \angle(\m{u}, \tilde{\m{u}})$ denote the angle between the unit Bloch vectors. The statistical discrepancy and the quantum uncertainty metrics diverge strictly as a function of this angle:
\begin{align*}
    \text{Conditional QMD Bound} &\propto \|\m{u} - \tilde{\m{u}}\|_{\b{R}^{3}} = 2 |\sin(\theta/2)|, \\
    \text{Uncertainty Bound} &= \sup_{\mb{\rho} \in \b{S}(\b{C}_{+}^{2 \times 2})} \left|\tr{\mb{\rho} [\m{O}, \tilde{\m{O}}]} \right| = 2 \|\m{u} \times \tilde{\m{u}} \|_{\b{R}^{3}} = 2 |\sin(\theta)|.
\end{align*}
While both quantities vanish for identical observables ($\theta=0$), their behaviors diverge elsewhere. Uncertainty peaks at $\theta = \pi/2$ (e.g., Pauli-X vs. Pauli-Z), whereas the QMD peaks for anti-parallel observables ($\theta = \pi$). Because anti-parallel observables strictly commute ($[\m{O}, \tilde{\m{O}}] = \m{0}$) yet yield perfectly disjoint measurement outcomes, this demonstrates that statistical discrepancy is maximized precisely where quantum uncertainty vanishes.
\end{remark}

\section{Quantum State Tomography}\label{sec:QKT}
Building upon the kernel embedding framework in \cref{sec:RKHS}, we now address the statistical inverse problem of estimating an unknown density operator from measurements performed on an ensemble of identically prepared quantum systems - a process known as \textbf{Quantum State Tomography} (QST).
For the remainder of this work, we restrict our attention to finite-dimensional QHSs $\c{Q} = \b{F}^{q}$ with $q \ge 2$. To streamline our statistical derivations, we adopt the following finite-dimensional conventions. The space of linear operators on $\c{Q}$ (which coincides across all Schatten classes $\c{B}_{p}$ in finite dimensions) is simply the space of $q \times q$ matrices, denoted $\b{F}^{q \times q}$. We denote the real vector space of self-adjoint matrices (symmetric if $\b{F}=\b{R}$, Hermitian if $\b{F}=\b{C}$) by $\b{F}_{sa}^{q \times q}$. We further denote its traceless linear subspace by $\b{F}_{sa, 0}^{q \times q}$, and the affine hyperplane of unit-trace matrices by $\b{F}_{sa, 1}^{q \times q}$. The convex cone of n.n.d. matrices is denoted $\b{F}_{+}^{q \times q} \subsetneq \b{F}_{sa}^{q \times q}$. Consequently, the quantum state space is precisely $\b{S}(\b{F}_{+}^{q \times q}) = \b{F}_{+}^{q \times q} \cap \b{F}_{sa, 1}^{q \times q}$.
Finally, we employ \emph{fraktur} fonts (e.g., $\f{D}, \f{H}$) to denote operators acting on matrix spaces, commonly called \textbf{superoperators} \cite{scott2006tight}.

\subsection{Measurement Design}\label{ssec:identi:dsgn}
Our goal is to estimate the unknown density matrix. Since a single observable is mathematically insufficient for full reconstruction, we consider a design consisting of a collection of $n$ self-adjoint observables $\s{O} = \{ \m{O}_{i} \}_{i=1}^n \subset \b{F}_{sa}^{q \times q}$.
For each observable $\m{O}_{i}$, let its spectral decomposition and associated PVM be given by:
\begin{equation*}
    \m{O}_{i} = \sum_{k=1}^{q_{i}} \lambda_{ik} \m{\Pi}_{ik}, \quad \mb{\nu}_{i} =  \sum_{k=1}^{q_{i}} \delta_{\lambda_{ik}} \m{\Pi}_{ik},
\end{equation*}
where $\{\lambda_{ik}\}_{k=1}^{q_i}$ are distinct eigenvalues and $\m{\Pi}_{ik}$ are the corresponding eigenprojections with rank (multiplicity) $m_{ik} = \tr{\m{\Pi}_{ik}} \in \b{N}$. Throughout, we assume $q_{i} \ge 2$ (otherwise $\m{\Pi}_{i1} = \m{I}_{q}$, yielding no information). We note the following orthogonality and trace properties:
\begin{align*}
    \tr{\m{\Pi}_{ik} \m{\Pi}_{ik'}} = m_{ik} \ds{1} (k=k'), \quad \sum_{k=1}^{q_{i}} \sum_{k'=1}^{q_{i'}} \tr{\m{\Pi}_{ik} \m{\Pi}_{i'k'}} = q.
\end{align*}
To handle eigenvalue multiplicities, we define the index map $k_i : \{1, \dots, q\} \to \{1, \dots, q_i\}$, which maps the global flat index $\ell$ to the distinct eigenspace index $k$:
\begin{align*}
    k = k_{i}(\ell) \in \{1, \cdots, q_{i} \} \quad \Longleftrightarrow \quad \sum_{k' =1}^{k-1} m_{ik'} < \ell \le \sum_{k' =1}^{k} m_{ik'}, \quad \ell \in \{1, \cdots, q \}.
\end{align*}

The standard tomography protocol proceeds as follows. For each $\m{O}_{i}$, an ensemble of $r$ independent systems is prepared in the state $\mb{\rho}$. The measurement outcomes $\{Y_{ij}\}_{j=1}^{r}$ are i.i.d. random variables governed by the Born rule:
\begin{equation*}
    \b{P}(Y_{ij} = \lambda_{ik}\mid\mb\rho) = \mb{\nu}_{i}(\{\lambda_{ik}\}) = \tr{\mb{\rho} \m{\Pi}_{ik}}, \quad 1 \le i \le n, \, 1 \le j \le r, \, 1 \le k \le q_{i}.
\end{equation*}
Since the convex cone of density matrices $\b{S}(\b{F}_{+}^{q \times q})$ is not a linear space, we consider a linear operator by extending the domain:
\begin{align*}
    d_{ik} : \b{F}_{sa}^{q \times q} \to \b{R}, \quad \m{S} \mapsto d_{ik}[\m{S}] := \tr{\m{S} \m{\Pi}_{ik}}, \quad 1 \le i \le n, \, 1 \le k \le q_{i}.
\end{align*}
Since $\b{F}_{sa}^{q \times q}$ under the Frobenius inner product is a real Hilbert space, the functional $d_{ik}$ can be identified with the projection $\m{\Pi}_{ik}$ by the Riesz representation theorem.
Because the maximally mixed state $\m{I}_{q}/q \in \b{S}(\b{F}_{+}^{q \times q})$ produces entirely uninformative measurement statistics ($m_{ik}^{-1} d_{ik} [\m{I}_{q}/q] \equiv 1/q$), it acts as a non-directional baseline. In other words, $\mb{\rho} \in \b{S}(\b{F}_{+}^{q \times q})$ is identifiable if and only if the traceless component $\mb{\rho} - \m{I}_{q}/q \in \b{F}_{sa, 0}^{q \times q}$ is identifiable via variations in measurement statistics. This naturally motivates the \textbf{trace-centering} decomposition to isolate the statistically identifiable signal:
\begin{equation}\label{eq:tr:center}
    \m{S} = \tr{\m{S}} \frac{\m{I}_{q}}{q} \oplus \left[ \m{S} - \tr{\m{S}} \frac{\m{I}_{q}}{q} \right] \quad \in \quad \b{F}_{sa}^{q \times q} = \spann\{\m{I}_{q}\} \oplus \b{F}_{sa, 0}^{q \times q}.
\end{equation}

\begin{definition}[Complete Design] \label{def:complete_design}
A collection of observables $\s{O} = \{\m{O}_{i} \in \b{F}_{sa}^{q \times q}: i = 1, \cdots, n\}$ is said to be a \emph{complete design} \cite{busch1991informationally}, if for any traceless self-adjoint $\mb{\Delta} \in \b{F}_{sa, 0}^{q \times q}$:
\begin{equation*}
    d_{ik}[\mb{\Delta}] = 0 \, \text{ for all } \, 1 \le i \le n, \, 1 \le k \le q_{i} \quad \iff \quad \mb{\Delta} = \m{0}.
\end{equation*}
\end{definition}

\emph{Informational completeness} is the physical equivalent of statistical identifiability:
\begin{proposition}[Identifiability]\label{prop:identifi}
Let $\mb{\rho}, \tilde{\mb{\rho}} \in \b{S}(\b{F}_{+}^{q \times q})$ be two density matrices. Then the following conditions are equivalent:
\begin{enumerate}
\item $\s{O} = \{\m{O}_{i} \in \b{F}_{sa}^{q \times q}: i = 1, \cdots, n\}$ is a complete design.
\item $d_{ik}[\mb{\rho}] = d_{ik}[\tilde{\mb{\rho}}]$ for all $i = 1, \cdots, n$, and $k = 1, \cdots, q_{i}$, implies $\mb{\rho} = \tilde{\mb{\rho}}$.
\end{enumerate}
\end{proposition}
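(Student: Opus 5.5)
The plan is to reduce both implications to linearity of the functionals $d_{ik}$ together with the trace-centering decomposition \eqref{eq:tr:center}. The only quantity passing between the two statements is the difference $\mb{\Delta} := \mb{\rho} - \tilde{\mb{\rho}}$. Since both matrices are self-adjoint with unit trace, $\mb{\Delta}$ is self-adjoint and traceless, i.e. $\mb{\Delta} \in \b{F}_{sa, 0}^{q \times q}$. By linearity, $d_{ik}[\mb{\rho}] = d_{ik}[\tilde{\mb{\rho}}]$ holds exactly when $d_{ik}[\mb{\Delta}] = 0$.

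\emph{(1) $\Rightarrow$ (2).} Assume $\s{O}$ is a complete design and that the two density matrices produce identical statistics. Then $d_{ik}[\mb{\Delta}] = 0$ for every $i$ and $k$. \cref{def:complete_design} forces $\mb{\Delta} = \m{0}$, so $\mb{\rho} = \tilde{\mb{\rho}}$.

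\emph{(2) $\Rightarrow$ (1).} I read condition (2) as holding for all pairs of density matrices. Let $\mb{\Delta} \in \b{F}_{sa, 0}^{q \times q}$ satisfy $d_{ik}[\mb{\Delta}] = 0$ for all $i,k$. The task is to realize $\mb{\Delta}$ as a scaled difference of two states, which is the one non-formal step.
\begin{itemize}
\item Take the baseline $\tilde{\mb{\rho}} = \m{I}_{q}/q$, which lies in the interior of the state space.
\item Set $\mb{\rho} = \m{I}_{q}/q + t\mb{\Delta}$ with $0 < t \le 1/(q\vertiii{\mb{\Delta}}_{\infty})$; if $\mb{\Delta} = \m{0}$ there is nothing to prove.
\item Then $\mb{\rho}$ is self-adjoint with unit trace, since $\mb{\Delta}$ is traceless.
\item Its smallest eigenvalue is at least $1/q - t\vertiii{\mb{\Delta}}_{\infty} \ge 0$, so $\mb{\rho} \in \b{S}(\b{F}_{+}^{q \times q})$.
\end{itemize}
Linearity gives $d_{ik}[\mb{\rho}] = d_{ik}[\tilde{\mb{\rho}}] + t\, d_{ik}[\mb{\Delta}] = d_{ik}[\tilde{\mb{\rho}}]$ for all $i,k$. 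Condition (2) then yields $\mb{\rho} = \tilde{\mb{\rho}}$, so $t\mb{\Delta} = \m{0}$ and $\mb{\Delta} = \m{0}$. This is exactly the defining property of a complete design.

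The main obstacle is mild: it is the positivity check in the second direction. It works because the maximally mixed state is an interior point of $\b{S}(\b{F}_{+}^{q \times q})$ relative to the affine hyperplane $\b{F}_{sa, 1}^{q \times q}$, so every traceless direction can be traversed a short distance without leaving the state space. The remark preceding \eqref{eq:tr:center}, that $\m{I}_{q}/q$ serves as a non-directional baseline, is precisely the fact being used here.
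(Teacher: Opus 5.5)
Your proposal is correct and matches the paper's proof. In both, (1)$\Rightarrow$(2) applies completeness to the traceless difference $\mb{\rho}-\tilde{\mb{\rho}}$. In both, (2)$\Rightarrow$(1) perturbs the maximally mixed state $\m{I}_q/q$ along $\mb{\Delta}$ by a step small enough relative to $\vertiii{\mb{\Delta}}_{\infty}$ to remain non-negative definite. The only difference is cosmetic: the paper phrases the second direction as a contradiction, whereas you argue it directly.
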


Analogous to the design matrix in linear regression, we define the \emph{quantum design tensor} $\f{D}: \b{F}_{sa}^{q \times q} \to \b{R}^{n \times q}$ as the linear map: for a self-adjoint matrix $\m{S} \in \b{F}_{sa}^{q \times q}$,
\begin{equation}\label{eq:quant:des:op}
    \f{D}[\m{S}] = 
    \begin{pmatrix}
        \m{d}_{1}[\m{S}] \vert \m{d}_{2}[\m{S}] \vert   \cdots \vert \m{d}_{n}[\m{S}]
    \end{pmatrix}^{\top}, \quad \m{d}_{i}[\m{S}] := \left[d_{i k_{i}(\ell)}[\m{S}]/m_{i k_{i}(\ell)}  \right]_{1 \le \ell \le q}, 
\end{equation}
where each vector $\m{d}_{i}[\m{S}] \in \b{R}^{q}$ is formed by taking the weighted expectation $d_{ik}[\m{S}]/m_{ik}$ and repeating it exactly $m_{ik}$ times for each eigenspace index $k = 1, \dots, q_{i}$. We now characterize the complete design:

\begin{proposition}\label{prop:sphe:design}
Let $\c{Q} = \b{F}^{q}$. The adjoint of the quantum design tensor is given by:
\begin{equation*}
    \f{D}^{*}: \b{R}^{n \times q} \to \b{F}_{sa}^{q \times q}, \, \m{A} \mapsto \sum_{i=1}^{n} \sum_{k=1}^{q_{i}} \bar{a}_{ik} \m{\Pi}_{ik}, \quad \bar{a}_{ik} = \frac{1}{m_{ik}} \sum_{\{\ell : k_{i}(\ell) = k\}} a_{i\ell}, 
\end{equation*}
i.e., $\bar{a}_{ik}$ is the average of the $m_{ik}$ entries in the $i$-th row of $\m{A}$ corresponding to the eigenspace $\m{\Pi}_{ik}$. Additionally, the following are equivalent:
\begin{enumerate}
\item $\s{O} = \{\m{O}_{i} \in \b{F}_{sa}^{q \times q}: i = 1, \cdots, n\}$ is a complete design.
\item $\spann \{\m{\Pi}_{ik} : i = 1, \cdots, n, \, k = 1, \cdots, q_{i} \} = \b{F}_{sa}^{q \times q}$.
\item The quantum design tensor $\f{D}: \b{F}_{sa}^{q \times q} \to \b{R}^{n \times q}$ is injective.
\end{enumerate}
\end{proposition}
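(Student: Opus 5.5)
The plan is to compute the adjoint directly and then obtain the three equivalences by finite-dimensional linear algebra. Equip $\b{F}_{sa}^{q \times q}$ with the real Frobenius inner product $\innpr{\m{S}}{\m{T}} = \tr{\m{S}\m{T}}$, which is real for self-adjoint matrices, and $\b{R}^{n \times q}$ with the entrywise inner product.

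For $\m{A} \in \b{R}^{n \times q}$, expand $\innpr{\f{D}[\m{S}]}{\m{A}} = \sum_{i}\sum_{\ell} a_{i\ell}\, d_{i k_{i}(\ell)}[\m{S}]/m_{ik_i(\ell)}$. Group the inner sum over $\ell$ according to the eigenspace index $k = k_{i}(\ell)$. Each fiber $\{\ell : k_{i}(\ell)=k\}$ has exactly $m_{ik}$ elements, so this equals $\sum_i\sum_k \bar a_{ik}\, \tr{\m{S}\m{\Pi}_{ik}} = \tr{\m{S} \sum_{i,k}\bar a_{ik}\m{\Pi}_{ik}}$. The matrix $\sum_{i,k}\bar a_{ik}\m{\Pi}_{ik}$ is self-adjoint because the $\bar a_{ik}$ are real. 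Uniqueness of the adjoint then identifies $\f{D}^{*}$ as stated.

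For the equivalences, I first prove (2)$\iff$(3). By finite-dimensional duality, $\f{D}$ is injective iff $\f{D}^{*}$ is surjective, since $\ker \f{D} = (\operatorname{ran}\f{D}^{*})^{\perp}$. The range of $\f{D}^{*}$ is exactly $\spann\{\m{\Pi}_{ik}\}$ over the reals. Indeed, given arbitrary real coefficients $c_{ik}$, set $a_{i\ell} = c_{ik_i(\ell)}$; then $\bar a_{ik} = c_{ik}$, so every real combination is attained. Since $\b{F}_{sa}^{q\times q}$ is a real space, the span in (2) is understood over $\b{R}$, which gives (2)$\iff$(3).

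Next I prove (1)$\iff$(3). Injectivity of $\f{D}$ is the same as the condition $d_{ik}[\m{S}]=0$ for all $i,k$ implying $\m{S}=\m{0}$, over all of $\b{F}_{sa}^{q\times q}$, because the entries of $\f{D}[\m{S}]$ are the $d_{ik}[\m{S}]/m_{ik}$ and $m_{ik}\ge 1$. Completeness asks the same thing only on the traceless subspace, so (3)$\Rightarrow$(1) is immediate.

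The converse (1)$\Rightarrow$(3) is the one step needing care, and it uses the trace-centering decomposition \eqref{eq:tr:center}. Suppose $d_{ik}[\m{S}]=0$ for all $i,k$. Because $\sum_k \m{\Pi}_{ik} = \m{I}_q$ for any fixed $i$ (and $n\ge1$), summing over $k$ gives $\tr{\m{S}} = 0$. Hence $\m{S}$ is traceless, and completeness forces $\m{S}=\m{0}$.

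No real obstacle is expected. The only points needing care are the fiber-counting in the adjoint computation and remembering that the vanishing of all $d_{ik}$ automatically kills the trace component.
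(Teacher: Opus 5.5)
Your proof is correct and follows the paper's argument essentially step for step: you compute the adjoint by grouping each row's entries by eigenspace fibers, you get (2)$\iff$(3) from $\ker\f{D} = (\operatorname{ran}\f{D}^{*})^{\perp}$ with $\operatorname{ran}\f{D}^{*}$ equal to the real span of the $\m{\Pi}_{ik}$, and you get (1)$\Rightarrow$(3) by showing that any kernel element is traceless. The only cosmetic difference is that you derive $\tr{\m{S}}=0$ directly from $\sum_k \m{\Pi}_{ik} = \m{I}_q$, whereas the paper argues by contradiction via $\tr{\f{D}^{*}\f{D}\mb{\Delta}} = n\tr{\mb{\Delta}}$, which rests on the same identity.
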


\cref{prop:identifi,prop:sphe:design} imply that QST is ill-posed if $\dim \c{Q} = \infty$ without structural restrictions (e.g., low rank), justifying our finite-dimensional focus. 

\begin{remark}
Given a fixed design $\s{O}$, the unidentifiable directions are given by
\begin{align}\label{eq:null:mat}
    \ker(\f{D}) = \spann \{\m{\Pi}_{ik} : i = 1, \cdots, n, \, k = 1, \cdots, q_{i} \}^{\perp}  \subset \b{F}_{sa, 0}^{q \times q},
\end{align}
For instance, measuring only Pauli-X and Y on $\c{Q} = \b{C}^{2}$ leaves $\mb{\sigma}_{z}$ unidentifiable because $\ker(\f{D}) = \spann \{\m{\Pi}_{x, \pm}, \m{\Pi}_{y, \pm} \}^{\perp} = \spann \{\mb{\sigma}_{z}\}$.
\end{remark}

Consider the following map, which we call the \textbf{Gram superoperator}:
\begin{align}\label{eq:design2spd}
    \frac{\f{D}^{*} \f{D}}{n}: \b{F}_{sa}^{q \times q} \to \b{F}_{sa}^{q \times q}, \,  \m{S} \mapsto \frac{1}{n} \sum_{i=1}^{n} \sum_{k=1}^{q_{i}} \frac{\tr{\m{S} \m{\Pi}_{ik}}}{m_{ik}} \m{\Pi}_{ik},
\end{align}
Regardless of the design of observables, $\b{F}_{sa, 0}^{q \times q}$ and $\spann\{\m{I}_{q}\}$ form invariant subspaces:
\begin{align*}
    \frac{\f{D}^{*} \f{D}}{n} : \m{S} = \omega \frac{\m{I}_{q}}{q} \oplus \mb{\Delta} \ \mapsto \ \omega \frac{\m{I}_{q}}{q} \oplus \frac{\f{D}^{*} \f{D}}{n} \mb{\Delta}, \quad \omega \in \b{R}, \, \mb{\Delta} \in \b{F}_{sa, 0}^{q \times q}.
\end{align*}
Roughly speaking, a complete design amounts to a full-rank matrix in linear regression, where $\m{I}_{q}/q$ acts as the intercept term. In this regard, we now introduce the quantum analogue of an orthogonal design matrix so that $\b{F}_{sa, 0}^{q \times q}$ and $\spann\{\m{I}_{q}\}$ become irreducible subspaces.

\begin{definition}[Unitary Design]
Let $\s{O} = \{\m{O}_{i} \in \b{F}_{sa}^{q \times q}: i = 1, \cdots, n\}$ be a collection of observables on $\c{Q} = \b{F}^{q}$. The collection is said to be an $\alpha$-\emph{unitary design} if there exists a constant $\alpha \in \b{R}$ such that $\left(\tfrac{1}{n}\f{D}^{*} \f{D}\right) \mb{\Delta} = \alpha \mb{\Delta}$
for any $\mb{\Delta} \in \b{F}_{sa, 0}^{q \times q}$.
\end{definition}

Unitary designs can be characterized as follows. 

\begin{theorem}[Invariance]\label{thm:const:unit:dsgn}
Let $\s{O} = \{\m{O}_{i} \in \b{F}_{sa}^{q \times q}: i = 1, \cdots, n\}$.
\begin{enumerate}[leftmargin = *]
\item $\s{O}$ is an $\alpha$-unitary design if and only if the rotated collection $\{\m{V} \m{O}_{i} \m{V}^{*} \in \b{F}_{sa}^{q \times q}: i = 1, \cdots, n\}$ is also an $\alpha$-unitary design for any matrix $\m{V} \in \b{F}^{q \times q}$ with $\m{V} \m{V}^{*} = \m{I}_{q}$ (i.e., orthogonal if $\b{F} = \b{R}$ and unitary if $\b{F} = \b{C}$).
\item The constant $\alpha$ is exclusively determined by the QHS $\c{Q}$ and the average number of distinct eigenvalues $\bar{q} := \sum_{i=1}^{n} q_i / n$:
\begin{align*}
    \alpha = \eta_{\s{O}} \cdot \alpha_{\c{Q}}, \quad \text{where} \quad \eta_{\s{O}} = \frac{\bar{q} - 1}{q-1}, \quad \alpha_{\c{Q}} = \begin{cases} (q/2+1)^{-1} & \b{F}=\b{R} \\ (q+1)^{-1} & \b{F}=\b{C} \end{cases}.
\end{align*}
\end{enumerate}
\end{theorem}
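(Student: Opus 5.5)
For part 1, I will use the conjugation superoperator $\f{U}_{\m{V}}: \m{S} \mapsto \m{V}\m{S}\m{V}^{*}$. It is an isometry of $\b{F}_{sa}^{q \times q}$ under the Frobenius inner product, it preserves tracelessness, and it maps $\b{F}_{sa,0}^{q\times q}$ onto itself. Conjugation preserves the spectrum, so the rotated observable $\m{V}\m{O}_i\m{V}^{*}$ has the same eigenvalues and multiplicities $m_{ik}$, with eigenprojections $\m{V}\m{\Pi}_{ik}\m{V}^{*}$.

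Substituting into \eqref{eq:design2spd} and using $\tr{\m{S}\,\m{V}\m{\Pi}\m{V}^{*}} = \tr{\m{V}^{*}\m{S}\m{V}\,\m{\Pi}}$, I expect the Gram superoperator $\f{G}_{\m{V}}$ of the rotated design to satisfy
\begin{equation*}
\f{G}_{\m{V}} = \f{U}_{\m{V}} \circ \f{G} \circ \f{U}_{\m{V}}^{-1}.
\end{equation*}
Hence if $\f{G}\mb{\Delta} = \alpha\mb{\Delta}$ for all traceless $\mb{\Delta}$, then $\f{G}_{\m{V}}\mb{\Delta} = \m{V}\,\alpha\,\m{V}^{*}\mb{\Delta}\m{V}\,\m{V}^{*} = \alpha\mb{\Delta}$. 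The converse follows by applying the same argument with $\m{V}^{*}$.

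For part 2, I will compute $\alpha$ by a trace identity rather than by representation theory. Let $\f{G} = \f{D}^{*}\f{D}/n$. On the orthogonal decomposition \eqref{eq:tr:center}, $\f{G}$ acts as $1$ on $\spann\{\m{I}_q\}$ and as $\alpha$ on $\b{F}_{sa,0}^{q\times q}$. The first fact holds because $\f{G}\m{I}_q = \frac1n\sum_i\sum_k \m{\Pi}_{ik} = \m{I}_q$. Therefore
\begin{equation*}
\operatorname{Tr}(\f{G}) = 1 + \alpha\,(\dim_{\b{R}} \b{F}_{sa}^{q\times q} - 1),
\end{equation*}
where the real dimension is $q(q+1)/2$ for $\b{F}=\b{R}$ and $q^2$ for $\b{F}=\b{C}$.

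I will then compute the superoperator trace directly. Each summand $\m{S} \mapsto m_{ik}^{-1}\tr{\m{S}\m{\Pi}_{ik}}\m{\Pi}_{ik}$ is a rank-one map $\m{\Pi}_{ik}\otimes\m{\Pi}_{ik}/m_{ik}$ in the Frobenius geometry, so its trace is $\|\m{\Pi}_{ik}\|_F^2/m_{ik} = 1$. Summing over $k$ and averaging over $i$ gives $\operatorname{Tr}(\f{G}) = \bar q$. Solving, $\alpha = (\bar q - 1)/(\dim - 1)$. For $\b{F}=\b{C}$ this is $(\bar q-1)/(q^2-1) = \eta_{\s{O}}(q+1)^{-1}$. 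For $\b{F}=\b{R}$ it is $(\bar q-1)\cdot 2/((q-1)(q+2)) = \eta_{\s{O}}(q/2+1)^{-1}$, which matches the claimed formula.

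The main subtlety is the trace identity on the real vector space $\b{F}_{sa}^{q\times q}$. I need to check that the rank-one map $\m{S}\mapsto \innpr{\m{S}}{\m{P}}\m{P}$ has trace $\|\m{P}\|_F^2$ there. This holds since $\tr{\m{S}\m{\Pi}}$ is real for self-adjoint arguments, so the Frobenius form restricted to $\b{F}_{sa}^{q\times q}$ is a genuine real inner product. I also need the fact that $\spann\{\m{I}_q\}$ is an eigenspace with eigenvalue exactly $1$, which is the completeness relation $\sum_k\m{\Pi}_{ik} = \m{I}_q$. The rest is bookkeeping of the two real dimensions.
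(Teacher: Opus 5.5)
Your proof is correct. Part 1 is the same conjugation argument the paper uses. For Part 2 you take a genuinely different route.

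The paper assumes $\s{O}$ is $\alpha$-unitary and invokes Part 1 to conclude that every rotated design $\{\m{U}\m{O}_i\m{U}^{*}\}$ is $\alpha$-unitary with the same $\alpha$. It then averages the Gram superoperator over the Haar measure on $O(q)$ or $U(q)$. The average is evaluated with the Stiefel-manifold fourth-moment formulas of \cref{lem:haar:expec} and \cref{lem:haar:expec:v2}. There, the cross terms between orthogonal eigenvectors of a rank-$m_{ik}$ projection produce the factor $\frac{q-m_{ik}}{q-1}$, and summing over $k$ yields $\frac{q_i-1}{q-1}\alpha_{\c{Q}}$.

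You instead compute the trace of $\f{D}^{*}\f{D}/n$ in two ways: spectrally as $1+\alpha(\dim_{\b{R}}\b{F}_{sa}^{q\times q}-1)$, and as a sum of rank-one maps equal to $\bar q$. This is precisely the computation the paper performs later in \cref{prop:fixed:bud}. Your argument is shorter and needs neither integration nor Part 1. It handles $\b{R}$ and $\b{C}$ uniformly and shows that $\alpha_{\c{Q}}=(q-1)/\dim_{\b{R}}\b{F}_{sa,0}^{q\times q}$, so the real/complex difference is purely dimensional. It also makes the bound $\tau_{\min}\le\alpha$ of \cref{prop:fixed:bud} immediate, since $\alpha$ is just the average eigenvalue on the traceless subspace.

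What the Haar computation buys beyond this is a statement about arbitrary designs. For every $\s{O}$, the Haar-twirled Gram superoperator is exactly $\eta_{\s{O}}\alpha_{\c{Q}}$-unitary, and this is what underlies \cref{rmk:approx:unit}. Even this could be obtained from your trace identity plus Schur's lemma. The twirl is self-adjoint, commutes with all conjugations, and preserves the superoperator trace, and the conjugation action on $\b{F}_{sa,0}^{q\times q}$ is irreducible.
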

Note that $\alpha > 0$ since $\eta_{\s{O}} \ge 1/(q-1)$. For rank-1 measurements ($q_{i} \equiv q$), $\eta_{\s{O}} = 1$ and $\alpha_{\c{Q}}$ recovers the known frame bounds for projective 2-designs in the context of the Clifford algebra \cite{scott2006tight, hoggar1982t}. \cref{thm:const:unit:dsgn} generalizes this, isolating the attenuation of signal detection via the \emph{multiplicity damping factor} $\eta_{\s{O}}$ caused by utilizing observables with degenerate eigenvalues.

\begin{remark}[Mutually Unbiased Bases]\label{rmk:MUB}
For $\c{Q} = \b{C}^{q}$, a collection of $n =  q+1$ orthonormal bases $\{\phi_{ik} : 1 \le k \le q\}_{i=1}^{n}$ satisfying the overlap condition:
\begin{align*}
    |\innpr{\phi_{ik}}{\phi_{i'k'}}|^2 = \begin{cases} 
    \delta_{kk'}  & \quad i=i' \\
    1/q & \quad i \ne i',
    \end{cases}
\end{align*}
is known as a complete set of Mutually Unbiased Bases (MUBs) \cite{klappenecker2005mutually}. \footnote{While the existence of MUBs for prime powers ($q=p^{k}$) is known, its existence for arbitrary natural number is an open problem, e.g., Zauner's conjecture for $q=6$ \cite{zauner1999grundzuge}.}
Letting $\m{\Pi}_{ik} = \phi_{ik} \phi_{ik}^{*}$ (thus $m_{ik} \equiv 1, \bar{q} = q$), we obtain the trace inner products $\tr{\m{\Pi}_{ik} \m{\Pi}_{i'k'}} = \delta_{kk'} \delta_{ii'} + (1-\delta_{ii'})/q$. Using a property for the sandwich form of complete MUBs:
\begin{align*} 
    \frac{\f{D}^{*} \f{D}}{n} \m{S} = \frac{1}{q+1} \sum_{i=1}^{q+1} \sum_{k=1}^{q} \m{\Pi}_{ik} \m{S} \m{\Pi}_{ik} = \alpha_{\c{Q}} \m{S} + (1-\alpha_{\c{Q}}) \tr{\m{S}} \frac{\m{I}_{q}}{q} , \quad \m{S} \in \b{F}_{sa}^{q \times q},
\end{align*}  
demonstrating that MUBs form a $\alpha_{\c{Q}}$-unitary design. In particular, for $\c{Q} = \b{C}^{2}$, the Pauli measurements $\{\mb{\sigma}_{x}, \mb{\sigma}_{y}, \mb{\sigma}_{z}\}$ is a $1/3$-unitary design, as their eigenprojections $\{\m{\Pi}_{i, \pm}: i \in \{x, y, z\}\}$ satisfy this MUB condition. 
Consequently, $\s{O} = \{\m{I}_{2}, \mb{\sigma}_{x}, \mb{\sigma}_{y}, \mb{\sigma}_{z}\}$ forms a $1/4$-unitary design as $\eta_{\s{O}} = 3/4$, which corresponds to the setting in \cite{cai2016optimal,gross2010quantum} when $q=2$.
\end{remark}

In \cref{ssec:mub}, we develop the theory of our tensorized LSE estimator under MUB observables. We characterize the complete and unitary designs for a single qubit:
\begin{theorem}\label{thm:supop:qubit}
Let $\s{O} = \{\m{O}_{i} \in \b{C}_{sa}^{2 \times 2}: i = 1, \cdots, n\}$ be observables on a qubit system with $q_{i} \equiv 2$. Consider their Bloch representations
\begin{align*}
    \m{O}_{i} = w_{i} \m{I}_{2} + \m{v}_{i} \cdot \vec{\mb{\sigma}}, \quad w_{i} \in \b{R}, \, \m{v}_{i} \in \b{R}^{3} - \{\m{0}\},
\end{align*}
and define $\m{u}_{i} = \m{v}_{i} / \|\m{v}_{i}\|_{\b{R}^{3}} \in \b{S}^{2}$.
Then, the action of the Gram superoperator over traceless self-adjoint matrices is given by:
\begin{align}\label{eq:supop:qubit}
    \frac{\f{D}^{*} \f{D}}{n}: \b{C}_{sa, 0}^{2 \times 2} \to \b{C}_{sa, 0}^{2 \times 2}, \,  \m{a} \cdot \vec{\mb{\sigma}} \mapsto \left[\frac{1}{n} \sum_{i=1}^{n} \m{u}_{i} \m{u}_{i}^{\top} \m{a} \right] \cdot \vec{\mb{\sigma}}, \quad \m{a} \in \b{R}^{3}.
\end{align}
Consequently, the design $\s{O}$ is complete if and only if $\spann \{\m{v}_{i}\}_{i=1}^{n} = \b{R}^{3}$, and is $\alpha$-unitary if and only if $\alpha =1/3$ and $\frac{1}{3} \m{I}_{3} = \frac{1}{n} \sum_{i=1}^{n} \m{u}_{i} \m{u}_{i}^{\top} \in \b{R}^{3 \times 3}$. \footnote{When the bloch vectors of detectors are arranged as a regular tetrahedron on $\b{S}^{2}$, it is called SIC-POVM \cite{renes2004symmetric}.}
\end{theorem}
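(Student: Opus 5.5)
Since each $\m{O}_{i}$ has $q_{i}=2$ distinct eigenvalues on $\b{C}^{2}$, both eigenprojections have rank one ($m_{i\pm}=1$). Writing $\m{O}_{i} = w_{i}\m{I}_{2} + \|\m{v}_{i}\|\,\m{u}_{i}\cdot\vec{\mb{\sigma}}$, the eigenvalues are $w_{i}\pm\|\m{v}_{i}\|$, and the eigenprojections are $\m{\Pi}_{i,\pm} = (\m{I}_{2} \pm \m{u}_{i}\cdot\vec{\mb{\sigma}})/2$. This is exactly as in \cref{ssec:bit:flip}, and it holds because $(\m{u}_{i}\cdot\vec{\mb{\sigma}})^{2} = \m{I}_{2}$ by the product identity for Pauli vectors. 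The first step is to insert these projections into the explicit formula \eqref{eq:design2spd} for the Gram superoperator, with $\m{S} = \m{a}\cdot\vec{\mb{\sigma}}$.

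Using $\tr{\m{I}_{2}}=2$, $\tr{\mb{\sigma}_{j}}=0$ and $\tr{(\m{a}\cdot\vec{\mb{\sigma}})(\m{b}\cdot\vec{\mb{\sigma}})} = 2\,\m{a}\cdot\m{b}$, we get $\tr{\m{S}\m{\Pi}_{i,\pm}} = \pm\, \m{u}_{i}\cdot\m{a}$. Hence
\begin{align*}
\sum_{\pm} \tr{\m{S}\m{\Pi}_{i,\pm}}\,\m{\Pi}_{i,\pm} = \frac{(\m{u}_{i}\cdot\m{a})}{2}\big[(\m{I}_{2}+\m{u}_{i}\cdot\vec{\mb{\sigma}}) - (\m{I}_{2}-\m{u}_{i}\cdot\vec{\mb{\sigma}})\big] = (\m{u}_{i}\m{u}_{i}^{\top}\m{a})\cdot\vec{\mb{\sigma}}.
\end{align*}
Averaging over $i$ yields \eqref{eq:supop:qubit}. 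In particular, $\b{C}_{sa,0}^{2\times 2}$ is invariant, consistent with the general remark after \eqref{eq:design2spd}.

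The two consequences then reduce to linear algebra on $\b{R}^{3}$ through the isometry $\m{a}\mapsto \m{a}\cdot\vec{\mb{\sigma}}$ (isometric up to the factor $2$ in the Frobenius norm). The Gram superoperator restricted to traceless matrices is represented by the real positive semidefinite matrix $\m{M} := \frac1n\sum_{i}\m{u}_{i}\m{u}_{i}^{\top}$. Because $\f{D}^{*}\f{D}$ fixes $\spann\{\m{I}_{2}\}$, the map $\f{D}$ is injective on $\b{C}_{sa}^{2\times2}$ if and only if it is injective on traceless matrices. By \cref{prop:sphe:design}, completeness is therefore equivalent to $\m{M}$ being nonsingular. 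For a sum of positive semidefinite rank-one terms, $\ker \m{M} = \bigcap_{i}\m{u}_{i}^{\perp} = (\spann\{\m{u}_{i}\})^{\perp}$, which is trivial if and only if $\spann\{\m{v}_{i}\} = \b{R}^{3}$. For the unitary-design claim, $\alpha$-unitarity means $\m{M}\m{a} = \alpha\m{a}$ for all $\m{a}$, that is, $\m{M} = \alpha\m{I}_{3}$. Taking traces and using $\tr{\m{u}_{i}\m{u}_{i}^{\top}} = \|\m{u}_{i}\|^{2}=1$ forces $3\alpha = 1$. This agrees with \cref{thm:const:unit:dsgn}, which gives $\eta_{\s{O}}=1$ and $\alpha_{\c{Q}}=1/3$. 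The converse direction is immediate.

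No step is a real obstacle. The only point needing care is the bookkeeping in the trace computation: the identity component of $\m{\Pi}_{i,\pm}$ must cancel between the $\pm$ terms, and the weights $m_{ik}=1$ must be justified from $q_{i}=2$. The rest follows from the Pauli algebra recalled with \eqref{eq:Bloch:repre}.
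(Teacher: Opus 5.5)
Your proposal is correct and follows the paper's proof essentially step for step. You identify $\m{\Pi}_{i,\pm} = (\m{I}_{2} \pm \m{u}_{i}\cdot\vec{\mb{\sigma}})/2$, compute $\tr{\m{S}\m{\Pi}_{i,\pm}} = \pm\,\m{u}_{i}\cdot\m{a}$ so the identity parts cancel to $(\m{u}_{i}\m{u}_{i}^{\top}\m{a})\cdot\vec{\mb{\sigma}}$, then read completeness off the range/kernel of $\frac{1}{n}\sum_{i}\m{u}_{i}\m{u}_{i}^{\top}$ and fix $\alpha = 1/3$ by taking its trace; your extra remarks on $m_{i\pm}=1$ and on the fixed subspace $\spann\{\m{I}_{2}\}$ only make explicit what the paper leaves implicit.
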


\begin{remark}[Approximate Unitary Design]\label{rmk:approx:unit}
For arbitrary dimensions $n$ and $q$, constructing an exact unitary design can be theoretically daunting and practically infeasible. However, the proof of \cref{thm:const:unit:dsgn} implies that a unitary design can be efficiently approximated via random sampling.
If the design $\s{O} = \{\m{V}_{i} \m{O} \m{V}_{i}^{*}: i = 1, \cdots, n\}$ is generated by applying random Haar-distributed unitaries $\m{V}_{i}$ to a fixed seed observable $\m{O} \in \b{F}_{sa}^{q \times q}$ that has $q$ distinct eigenvalues, the Strong Law of Large Numbers guarantees that the empirical Gram superoperator converges almost surely to the unitary design condition:
\begin{align*}
    \lim_{n \to \infty} \frac{\f{D}^{*} \f{D}}{n} \m{S} = \alpha_{\c{Q}} \m{S} + (1-\alpha_{\c{Q}}) \tr{\m{S}} \frac{\m{I}_{q}}{q} , \quad \m{S} \in \b{F}_{sa}^{q \times q}.
\end{align*}
In particular for a qubit system $\c{Q} = \b{C}^{2}$, it suffices to generate the i.i.d. unit bloch vectors $\m{u}_{i}$ from the uniform distribution on $\b{S}^{2}$ due to \cref{thm:supop:qubit} to approximate $1/3$-unitary design.
\end{remark}

We formally show that the unitary design condition is stronger than completeness:
\begin{proposition}\label{prop:unit:design}
Let $\alpha > 0$ and $\c{Q} = \b{F}^{q}$. A collection of observables $\{\m{O}_{i} \in \b{F}_{sa}^{q \times q}: i = 1, \cdots, n\}$ is an $\alpha$-unitary design if and only if
\begin{align*}
    \frac{\f{D}^{*} \f{D}}{n} \m{S} = \alpha \m{S} + (1-\alpha) \tr{\m{S}} \frac{\m{I}_{q}}{q} , \quad \m{S} \in \b{F}_{sa}^{q \times q}.
\end{align*}
In this case, the Gram superoperator is invertible (thus a complete design), with its inverse:
\begin{align*}
    \left( \frac{\f{D}^{*} \f{D}}{n} \right)^{-1} \m{S} = \frac{1}{\alpha} \left[ \m{S} - (1-\alpha) \tr{\m{S}} \frac{\m{I}_{q}}{q} \right], \quad \m{S} \in \b{F}_{sa}^{q \times q}.
\end{align*}
\end{proposition}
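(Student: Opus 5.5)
The plan is to combine the trace-centering decomposition \eqref{eq:tr:center} with the fact, noted just after \eqref{eq:design2spd}, that the Gram superoperator always fixes the identity direction. For the forward direction, take $\m{S} \in \b{F}_{sa}^{q \times q}$ and write $\m{S} = \tr{\m{S}} \m{I}_{q}/q + \mb{\Delta}$ with $\mb{\Delta} := \m{S} - \tr{\m{S}}\m{I}_{q}/q \in \b{F}_{sa,0}^{q \times q}$. By linearity,
\begin{align*}
\frac{\f{D}^{*}\f{D}}{n}\m{S} = \tr{\m{S}}\frac{\f{D}^{*}\f{D}}{n}\frac{\m{I}_{q}}{q} + \frac{\f{D}^{*}\f{D}}{n}\mb{\Delta}.
\end{align*}

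The first term is handled by a one-line check. Since $\tr{\m{I}_{q}\m{\Pi}_{ik}}/m_{ik} = 1$, we get $\frac{1}{n}\sum_{i}\sum_{k}\m{\Pi}_{ik} = \frac{1}{n}\sum_{i}\m{I}_{q} = \m{I}_{q}$. So $\frac{\f{D}^{*}\f{D}}{n}\m{I}_{q}/q = \m{I}_{q}/q$.

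The second term equals $\alpha\mb{\Delta}$ by the definition of an $\alpha$-unitary design. Adding the two terms gives $\tr{\m{S}}\m{I}_{q}/q + \alpha(\m{S} - \tr{\m{S}}\m{I}_{q}/q)$, which rearranges to the claimed formula $\alpha\m{S} + (1-\alpha)\tr{\m{S}}\m{I}_{q}/q$. For the converse, restrict the formula to traceless $\mb{\Delta}$: the second term vanishes and we get $\alpha\mb{\Delta}$, which is exactly the defining property.

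For invertibility, observe that in the decomposition $\spann\{\m{I}_{q}\} \oplus \b{F}_{sa,0}^{q \times q}$ the operator acts as $1$ on the first summand and as $\alpha$ on the second. Since $\alpha > 0$, both scalars are nonzero, so the operator is bijective. Its inverse acts as $1$ and $1/\alpha$ respectively, that is,
\begin{align*}
\m{S} \mapsto \tr{\m{S}}\frac{\m{I}_{q}}{q} + \frac{1}{\alpha}\Big(\m{S} - \tr{\m{S}}\frac{\m{I}_{q}}{q}\Big) = \frac{1}{\alpha}\Big[\m{S} - (1-\alpha)\tr{\m{S}}\frac{\m{I}_{q}}{q}\Big].
\end{align*}
One can also confirm this directly by composing the two formulas, using that the Gram superoperator preserves trace.

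Completeness then follows from \cref{prop:sphe:design}. Invertibility of $\f{D}^{*}\f{D}$ forces $\f{D}$ to be injective, because $\f{D}\m{S} = 0$ implies $\f{D}^{*}\f{D}\m{S} = 0$. There is no real obstacle in this argument; the only point needing care is the identity-action computation, which relies on the eigenprojections of each observable summing to $\m{I}_{q}$.
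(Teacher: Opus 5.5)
Your proof is correct and follows essentially the same route as the paper: you use the trace-centering decomposition together with the fact that the Gram superoperator fixes $\m{I}_{q}/q$ to get the equivalence, and the inverse is $\m{S} \mapsto \frac{1}{\alpha}[\m{S} - (1-\alpha)\tr{\m{S}}\m{I}_{q}/q]$. The differences are cosmetic. The paper writes this candidate $\f{T}$ down and checks $(\f{D}^{*}\f{D}/n)\f{T} = \mathrm{id}$ via trace preservation, whereas you read the inverse off the scalar action on $\spann\{\m{I}_{q}\} \oplus \b{F}_{sa,0}^{q \times q}$. You also explicitly verify the identity action, which the paper only asserts.
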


\cref{prop:unit:design} can be understood through the lens of Schur's Lemma \cite{faraut2008analysis} from representation theory. Because any irreducible representation on a finite-dimensional space is proportional to the identity, the Gram superoperator acts as a scalar multiple on its irreducible invariant subspaces, yielding an eigenvalue of $1$ on the one-dimensional subspace $\spann\{\m{I}_{q}\}$ and an eigenvalue of $\alpha$ on the traceless subspace $\b{F}_{sa, 0}^{q \times q}$.

\begin{corollary}\label{prop:sq:sum:proj}
Let $\{\m{O}_{i} \in \b{F}_{sa}^{q \times q}: i = 1, \cdots, n\}$ be an $\alpha$-unitary design on $\c{Q} = \b{F}^{q}$ with $\alpha > 0$. Then for any $\m{S} \in \b{F}_{sa}^{q \times q}$, it holds that
\begin{equation*}
    \frac{1}{n} \vertiii{\f{D} [\m{S}]}_{2}^{2} = \frac{1}{n} \sum_{i=1}^{n} \sum_{k=1}^{q_{i}} \frac{d_{ik}[\m{S}]^{2}}{m_{ik}} = \alpha \tr{\m{S}^{2}} + \frac{1-\alpha}{q} \tr{\m{S}}^{2}.
\end{equation*}
\end{corollary}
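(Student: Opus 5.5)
The plan is to compute the quadratic form $\innpr{\f{D}^{*}\f{D}\m{S}}{\m{S}}$ in two ways: once from the definition of $\f{D}$, and once from the closed form of the Gram superoperator in \cref{prop:unit:design}.

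First, from \eqref{eq:quant:des:op}, the $i$-th row $\m{d}_{i}[\m{S}]$ contains the value $d_{ik}[\m{S}]/m_{ik}$ repeated exactly $m_{ik}$ times for each $k$. The squared Euclidean (Frobenius) norm of $\f{D}[\m{S}] \in \b{R}^{n\times q}$ therefore equals
\begin{align*}
    \sum_{i=1}^{n}\sum_{k=1}^{q_i} m_{ik}\left(\frac{d_{ik}[\m{S}]}{m_{ik}}\right)^2 = \sum_{i=1}^{n}\sum_{k=1}^{q_i}\frac{d_{ik}[\m{S}]^2}{m_{ik}}.
\end{align*}
This gives the first equality, and it holds for any design.

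For the second equality, write $\vertiii{\f{D}[\m{S}]}_2^2=\innpr{\f{D}^{*}\f{D}\m{S}}{\m{S}}$, using the Frobenius inner product on the real Hilbert space $\b{F}_{sa}^{q\times q}$. Since the design is $\alpha$-unitary with $\alpha>0$, \cref{prop:unit:design} gives
\begin{align*}
    \tfrac1n\f{D}^{*}\f{D}\m{S}=\alpha\m{S}+(1-\alpha)\tr{\m{S}}\m{I}_q/q.
\end{align*}
Pairing this with $\m{S}$ uses $\innpr{\m{S}}{\m{S}}=\tr{\m{S}^2}$, valid because $\m{S}$ is self-adjoint, and $\innpr{\m{I}_q}{\m{S}}=\tr{\m{S}}$. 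The result is $\alpha\tr{\m{S}^2}+\frac{1-\alpha}{q}\tr{\m{S}}^2$.

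An alternative route is to directly compute $\innpr{\f{D}^{*}\f{D}\m{S}}{\m{S}}$ from \eqref{eq:design2spd}, which reproduces $\sum_{i,k} d_{ik}[\m{S}]^2/m_{ik}$ and cross-checks the first step.

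There is no real obstacle here. The only point needing care is the bookkeeping of multiplicities: the $m_{ik}$ repetitions must cancel one factor of $m_{ik}$ in the denominator, and this should be checked against the adjoint formula in \cref{prop:sphe:design}.
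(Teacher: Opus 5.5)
Your proposal is correct and follows essentially the same route as the paper: both pair the closed form $\tfrac{1}{n}\f{D}^{*}\f{D}\m{S}=\alpha\m{S}+(1-\alpha)\tr{\m{S}}\m{I}_q/q$ from \cref{prop:unit:design} with $\m{S}$, and both identify that quadratic form with $\frac{1}{n}\sum_{i,k} d_{ik}[\m{S}]^{2}/m_{ik}$ using \eqref{eq:design2spd}. The paper leaves the first equality implicit, whereas your explicit count of the $m_{ik}$ repeated entries in each row of $\f{D}[\m{S}]$ spells out that step.
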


For a density matrix $\mb{\rho} \in \b{S}(\b{F}_{+}^{q \times q})$, this quantity linearly maps to the purity $\tr{\mb{\rho}^2}$, reaching its minimum for the maximally mixed state and its maximum for pure states:
\begin{equation*}
    \frac{1}{n} \vertiii{\f{D} [\mb{\rho}]}_{2}^{2} = \alpha \left(\tr{\mb{\rho}^{2}} - \frac{1}{q} \right) + \frac{1}{q}.
\end{equation*}

\subsection{Tensorized Linear Regression}\label{ssec:est:sch}
While Maximum Likelihood Estimation is a valid choice for a complete design, it becomes computationally prohibitive as the system dimension $q$ grows; 8 qubits ($q=2^{8}$) can require weeks of computation \cite{gross2010quantum, haffner2005scalable}. Consequently, a version of Least Squares Estimation (LSE) \cite{smolin2012efficient} has emerged as a standard, tractable alternative in state tomography. We now demonstrate how our framework provides a rigorous, tensorized formulation of this classical approach. By expressing the inverse problem strictly through basis-independent linear superoperators, we seamlessly recover the standard unconstrained LSE while providing closed-form geometric insights into its covariance structure, mean-square error, and statistical optimality.

For each $i= 1, \cdots, n$, we denote the empirical frequency for the outcome $\lambda_{ik}$ by:
\begin{align*}
    p_{ik} := \frac{\sharp \{1 \le j \le r : Y_{ij} = \lambda_{ik} \}}{r} \ge 0, \quad k=1, 2, \cdots, q_{i}, \quad p_{i1} + \cdots + p_{iq_{i}} = 1.
\end{align*}
Consequently, the empirical distribution $\hat{\mb{\nu}}_{i, \mb{\rho}} = \frac{1}{r} \sum_{j=1}^{r} \delta_{Y_{ij}} = \sum_{k=1}^{q_{i}} p_{ik} \delta_{\lambda_{ik}}$
serves as a proxy for the true probability measure $\mb{\nu}_{i, \mb{\rho}}$ generated by the unknown state $\mb{\rho} \in \b{S}(\b{F}_{+}^{q \times q})$. The expectations satisfy $d_{ik}[\mb{\rho}] = \tr{\mb{\rho} \m{\Pi}_{ik}} = \b{E}[p_{ik} \vert \mb{\rho}]$. Since the counts $r \cdot (p_{i1}, \dots, p_{iq_{i}})$ follow  $\text{Multinomial}(r, (d_{i1}[\mb{\rho}], \dots, d_{iq_{i}}[\mb{\rho}]))$, the covariance structure is given by:
\begin{align}\label{eq:multinom:cov}
    \Cov (p_{ik}, p_{i'k'}  \vert \mb{\rho}) 
    &= \begin{cases}
        (d_{ik}[\mb{\rho}] - d_{ik}[\mb{\rho}]^{2})/r & \quad i=i', k=k' \\
        - d_{ik}[\mb{\rho}] d_{ik'}[\mb{\rho}]/r & \quad i=i', k \neq k' \\
        0 & \quad i \neq i'.
    \end{cases}
\end{align}

To formulate the estimation as a linear regression problem, we construct the data matrix $\m{P} \in \b{R}^{n \times q}$ incorporating the multiplicities, analogous to the design tensor \eqref{eq:quant:des:op}:
\begin{align*}
    \m{P} = 
    \begin{pmatrix}
        \m{p}_{1} \vert \m{p}_{2} \vert   \cdots \vert \m{p}_{n}
    \end{pmatrix}^{\top} \in \b{R}^{n \times q}, \quad \m{p}_{i} := \left[p_{i k_{i}(\ell)}/m_{i k_{i}(\ell)}  \right]_{1 \le \ell \le q}, 
\end{align*}
where each vector $\m{p}_{i} \in \b{R}^{q}$ is formed by taking the scaled empirical frequency $p_{ik}/m_{ik}$ and repeating it exactly $m_{ik}$ times for each eigenspace index $k = 1, \dots, q_{i}$. We then define the quadratic loss function:
\begin{align}\label{eq:def:loss}
    \mathrm{Loss}[\m{S}] := \vertiii{\f{D}[\m{S}] - \m{P}}_{2}^{2}, \quad \m{S} \in \b{F}_{sa}^{q \times q}.
\end{align}
The estimator is defined as the minimizer over the unit-trace affine hyperplane:
\begin{align}\label{eq:def:density:est}
    \hat{\mb{\rho}}^{\text{LSE}} \in \argmin \{\mathrm{Loss}[\m{S}] : \m{S} \in \b{F}_{sa, 1}^{q \times q}\}.
\end{align}
Note that we explicitly relax the n.n.d. constraint $\mb{\rho} \succeq 0$. If the resulting estimator $\hat{\mb{\rho}}^{\text{LSE}}$ lies outside the convex set $\b{S}(\b{F}_{+}^{q \times q})$, it can be projected back onto the set of valid density matrices, which can be computed straightforwardly using the Trace-Preserving Projection algorithm in \cref{sec:tpp}. Since this projection is a contraction in the Frobenius norm, it preserves the statistical consistency and asymptotic convergence rates of the unconstrained estimator $\hat{\mb{\rho}}^{\text{LSE}}$.

\begin{theorem}[Linear Regression]\label{thm:est:normal:eq}
Given the data matrix $\m{P}$ and a collection of observables $\s{O} = \{\m{O}_{i}: i = 1, \cdots, n\}$, the set of minimizers in $\b{F}_{sa, 1}^{q \times q}$ is given by the affine space:
\begin{equation*}
    \argmin \{\mathrm{Loss}[\m{S}] : \m{S} \in \b{F}_{sa, 1}^{q \times q}\} = \hat{\mb{\rho}}^{\text{LSE}} \oplus \ker(\f{D}),
\end{equation*}
where $\hat{\mb{\rho}}^{\text{LSE}} = (\f{D}^{*} \f{D})^{\dagger} \f{D}^{*} \m{P} \in \b{F}_{sa, 1}^{q \times q}$ is the minimum Frobenius norm solution, and the null space corresponds to the unobservable directions from \eqref{eq:null:mat}. If the design $\s{O}$ is complete, the minimizer is uniquely given by $\hat{\mb{\rho}}^{\text{LSE}} = (\f{D}^{*} \f{D})^{-1} \f{D}^{*} \m{P}$.
\end{theorem}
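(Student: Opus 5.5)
The plan is to solve the unconstrained least-squares problem over the whole real Hilbert space $\b{F}_{sa}^{q \times q}$ (Frobenius inner product) and then show that its solution set already lies in the unit-trace hyperplane $\b{F}_{sa, 1}^{q \times q}$. Since $\mathrm{Loss}$ is a convex quadratic, its unconstrained minimizers are exactly the solutions of the normal equations $\f{D}^{*}\f{D}\m{S} = \f{D}^{*}\m{P}$. Standard finite-dimensional linear algebra then gives the solution set as $(\f{D}^{*}\f{D})^{\dagger}\f{D}^{*}\m{P} + \ker(\f{D}^{*}\f{D})$, where the first term is the unique solution of minimum Frobenius norm, because it lies in $\mathrm{ran}(\f{D}^{*}\f{D}) = \ker(\f{D})^{\perp}$. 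Moreover $\ker(\f{D}^{*}\f{D}) = \ker(\f{D})$, since $\langle \f{D}^{*}\f{D}\m{S}, \m{S}\rangle = \vertiii{\f{D}\m{S}}_2^2$.

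The key step is to verify that $(\f{D}^{*}\f{D})^{\dagger}\f{D}^{*}\m{P}$ has unit trace and that $\ker(\f{D})$ consists of traceless matrices.

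\emph{Kernel is traceless.} By \eqref{eq:null:mat}, $\ker(\f{D}) \subset \b{F}_{sa,0}^{q\times q}$. Directly: $\f{D}[\m{I}_q]$ has all entries equal to $1$, so $\f{D}$ is nonzero on $\spann\{\m{I}_q\}$.

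\emph{Block structure of the Gram superoperator.} From \eqref{eq:design2spd} and $\sum_k \m{\Pi}_{ik} = \m{I}_q$, we get $\f{D}^{*}\f{D}\,\m{I}_q = n\m{I}_q$. The text preceding the theorem shows that $\spann\{\m{I}_q\}$ and $\b{F}_{sa,0}^{q\times q}$ are orthogonal invariant subspaces of the self-adjoint operator $\f{D}^{*}\f{D}$. Hence its Moore--Penrose pseudoinverse is block diagonal in the same decomposition and acts as $1/n$ on $\spann\{\m{I}_q\}$.

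\emph{Trace of the data term.} By \cref{prop:sphe:design}, $\f{D}^{*}\m{P} = \sum_{i,k} (p_{ik}/m_{ik})\m{\Pi}_{ik}$, since the average of the $m_{ik}$ repeated entries is $p_{ik}/m_{ik}$. Therefore $\tr{\f{D}^{*}\m{P}} = \sum_i\sum_k p_{ik} = n$, so the $\m{I}_q$-component of $\f{D}^{*}\m{P}$ in the decomposition \eqref{eq:tr:center} is $n\,\m{I}_q/q$.

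\emph{Unit trace of the estimator.} Applying the block-diagonal pseudoinverse, the $\m{I}_q$-component of $(\f{D}^{*}\f{D})^{\dagger}\f{D}^{*}\m{P}$ is $\m{I}_q/q$, and the traceless block contributes zero trace. Hence $\hat{\mb{\rho}}^{\text{LSE}} \in \b{F}_{sa,1}^{q\times q}$.

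\emph{Conclusion.} Every unconstrained minimizer $\hat{\mb{\rho}}^{\text{LSE}} + \mb{\Delta}$ with $\mb{\Delta} \in \ker(\f{D})$ has unit trace. The constrained minimum over $\b{F}_{sa,1}^{q\times q}$ is bounded below by the unconstrained minimum, and that value is attained inside the hyperplane. So the two argmin sets coincide, giving $\hat{\mb{\rho}}^{\text{LSE}} \oplus \ker(\f{D})$, with the sum orthogonal because $\hat{\mb{\rho}}^{\text{LSE}} \in \ker(\f{D})^{\perp}$. The minimum-norm property transfers verbatim. If the design is complete, \cref{prop:sphe:design} gives $\ker(\f{D}) = \{\m{0}\}$, so $\f{D}^{*}\f{D}$ is invertible, the pseudoinverse is the inverse, and the minimizer is unique.

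The only delicate point is the pseudoinverse block computation. It requires that the pseudoinverse respects the orthogonal invariant decomposition, which holds for any self-adjoint operator, and that $\m{P}$ has rows summing to one. Everything else is routine.
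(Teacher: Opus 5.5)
Your proof is correct and is essentially the paper's argument: both reduce to the unconstrained normal equations and show that every solution already lies in $\b{F}_{sa,1}^{q\times q}$, because $\f{D}^{*}\f{D}\m{I}_q = n\m{I}_q$ (equivalently $\tr{\f{D}^{*}\f{D}\m{S}} = n\tr{\m{S}}$) and $\tr{\f{D}^{*}\m{P}} = n$; the paper applies this trace identity directly to an arbitrary normal-equation solution, whereas you go through the block-diagonal pseudoinverse. The only loose spot is your ``direct'' remark: $\f{D}[\m{I}_q] \neq \m{0}$ only excludes multiples of $\m{I}_q$ from $\ker(\f{D})$, so tracelessness of the kernel should instead be justified by $0 = \tr{\f{D}^{*}\f{D}\mb{\Delta}} = n\tr{\mb{\Delta}}$ (or by the invariant-subspace splitting you use next).
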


For a complete design $\s{O}$, the LSE is unbiased, i.e., $\b{E}[\hat{\mb{\rho}}^{\text{LSE}} \vert \mb{\rho}] = (\f{D}^{*} \f{D})^{-1} \f{D}^{*} \b{E}[\m{P} \vert \mb{\rho}] = (\f{D}^{*} \f{D})^{-1} \f{D}^{*} \f{D} [\mb{\rho}] = \mb{\rho}$ for any density matrix $\mb{\rho} \in \b{S}(\b{F}_{+}^{q \times q})$.
Surprisingly, the two-step procedure, unconstrained estimation followed by trace-preserving projection, is mathematically equivalent to solving the fully constrained optimization problem for unitary designs.

\begin{corollary}\label{cor:equiv:proj}
Let $\s{O}$ be an $\alpha$-unitary design on $\c{Q} = \b{F}^{q}$ with $\alpha > 0$. The unique projection of the relaxed estimator $\hat{\mb{\rho}}^{\text{LSE}} \in \b{F}_{sa, 1}^{q \times q}$ onto $\b{S}(\b{F}_{+}^{q \times q})$ is precisely the exact global solution to the constrained regression problem:
\begin{align*}
    \argmin \{\vertiii{\mb{\rho} - \hat{\mb{\rho}}^{\text{LSE}}}_{2} : \mb{\rho} \in \b{S}(\b{F}_{+}^{q \times q})\} = 
    \argmin \{\mathrm{Loss}[\mb{\rho}] : \mb{\rho} \in \b{S}(\b{F}_{+}^{q \times q})\}.
\end{align*}
\end{corollary}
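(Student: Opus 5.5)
The plan is to show that, on the affine hyperplane $\b{F}_{sa,1}^{q\times q}$, the regression loss coincides with a scaled squared Frobenius distance to $\hat{\mb{\rho}}^{\text{LSE}}$, up to an additive constant. Once this is established, minimizing $\mathrm{Loss}$ over the convex subset $\b{S}(\b{F}_{+}^{q\times q}) \subset \b{F}_{sa,1}^{q\times q}$ is the same problem as projecting $\hat{\mb{\rho}}^{\text{LSE}}$ onto that set.

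The first step is a Pythagorean expansion of the loss around the unconstrained minimizer. For $\m{S} \in \b{F}_{sa,1}^{q\times q}$, write $\m{S} = \hat{\mb{\rho}}^{\text{LSE}} + \mb{\Delta}$ with $\mb{\Delta} \in \b{F}_{sa,0}^{q\times q}$. Then
\begin{align*}
\mathrm{Loss}[\m{S}] = \mathrm{Loss}[\hat{\mb{\rho}}^{\text{LSE}}] + 2\,\mathrm{Re}\innpr{\f{D}[\hat{\mb{\rho}}^{\text{LSE}}] - \m{P}}{\f{D}[\mb{\Delta}]} + \vertiii{\f{D}[\mb{\Delta}]}_{2}^{2}.
\end{align*}
The cross term equals $\innpr{\f{D}^{*}\f{D}\hat{\mb{\rho}}^{\text{LSE}} - \f{D}^{*}\m{P}}{\mb{\Delta}}$. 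By \cref{prop:unit:design} the design is complete, so \cref{thm:est:normal:eq} gives $\hat{\mb{\rho}}^{\text{LSE}} = (\f{D}^{*}\f{D})^{-1}\f{D}^{*}\m{P}$. Hence $\f{D}^{*}\f{D}\hat{\mb{\rho}}^{\text{LSE}} = \f{D}^{*}\m{P}$ and the cross term vanishes.

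The one point needing care is this: \cref{thm:est:normal:eq} defines $\hat{\mb{\rho}}^{\text{LSE}}$ as the minimizer over the affine trace-one hyperplane, so the first-order condition a priori only says the gradient is orthogonal to $\b{F}_{sa,0}^{q\times q}$. That weaker statement still suffices, because $\mb{\Delta}$ is traceless. I will therefore argue directly from first-order optimality on the hyperplane and not depend on the exact form of the closed formula. As a consistency check, the row sums of $\m{P}$ equal those of $\f{D}[\m{S}]$ for trace-one $\m{S}$, so the formula is in fact trace-one.

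The second step applies \cref{prop:sq:sum:proj} to the traceless $\mb{\Delta}$:
\begin{align*}
\vertiii{\f{D}[\mb{\Delta}]}_{2}^{2} = n\alpha \,\tr{\mb{\Delta}^{2}} = n\alpha\, \vertiii{\m{S} - \hat{\mb{\rho}}^{\text{LSE}}}_{2}^{2}.
\end{align*}
This yields $\mathrm{Loss}[\m{S}] = \mathrm{Loss}[\hat{\mb{\rho}}^{\text{LSE}}] + n\alpha\,\vertiii{\m{S} - \hat{\mb{\rho}}^{\text{LSE}}}_{2}^{2}$ for every $\m{S} \in \b{F}_{sa,1}^{q\times q}$, and in particular for every density matrix. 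Since $n\alpha > 0$, the two objectives differ by a positive affine transformation on $\b{S}(\b{F}_{+}^{q\times q})$, so their argmin sets coincide.

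Finally, $\b{S}(\b{F}_{+}^{q\times q})$ is a nonempty, closed, convex subset of the real Hilbert space $\b{F}_{sa}^{q\times q}$ with the Frobenius inner product. The metric projection onto it therefore exists and is unique, which gives uniqueness of the common minimizer. I expect no real obstacle: the only delicate points are the orthogonality of the cross term, handled via tracelessness of $\mb{\Delta}$, and remembering that the identity holds only on the trace-one hyperplane. The latter causes no trouble, since the constraint set lies inside that hyperplane.
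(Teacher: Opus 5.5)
Your proposal is correct and is essentially the paper's proof: the paper also writes $\mb{\rho} = \hat{\mb{\rho}}^{\text{LSE}} + \mb{\Delta}$ with $\mb{\Delta}$ traceless, drops the cross term, and invokes \cref{prop:sq:sum:proj} to turn $\vertiii{\f{D}[\mb{\Delta}]}_{2}^{2}$ into a positive multiple of $\vertiii{\mb{\Delta}}_{2}^{2}$. Your constant $n\alpha$ is the correct one; the paper's display writes $\alpha$ and drops the factor $n$, which does not affect the argmin. You also spell out two points the paper leaves implicit: why the cross term vanishes (first-order optimality on the trace-one hyperplane), and uniqueness via the metric projection onto a closed convex set.
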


While the trace-preserving projection in \cite{smolin2012efficient} was originally proposed as a computationally efficient proxy for the MLE, \cref{cor:equiv:proj} rigorously proves that it induces the exact constrained least-squares solution. For unitary designs, we can further derive the explicit expression for the mean-square error (MSE):
\begin{align*}
    \mathrm{MSE}[\hat{\mb{\rho}}^{\text{LSE}} \vert \mb{\rho}] := \b{E} \left[\vertiii{\mb{\rho} - \hat{\mb{\rho}}^{\text{LSE}}}_{2}^{2} \vert \mb{\rho} \right].
\end{align*}

\begin{theorem}\label{cor:unit:mse}
Let $\s{O}$ be an $\alpha$-unitary design with $\alpha > 0$. Then, the LSE simplifies to:
\begin{align}\label{eq:unit:des:LSE}
    \hat{\mb{\rho}}^{\text{LSE}} = \frac{\f{D}^{*} \m{P}}{n \alpha} - \frac{1-\alpha}{\alpha} \frac{\m{I}_{q}}{q}, \quad \text{where} \quad \f{D}^{*} \m{P} = \sum_{i=1}^{n} \sum_{k=1}^{q_{i}} \frac{p_{ik}}{m_{ik}} \m{\Pi}_{ik}.
\end{align}
If the design consists exclusively of rank-1 projections ($m_{ik} \equiv 1$), then $\alpha = \alpha_{\c{Q}}$ in \cref{thm:const:unit:dsgn}, and the mean-squared error for any density matrix $\mb{\rho} \in \b{S}(\b{F}_{+}^{q \times q})$ is given by:
\begin{align*}
    \mathrm{MSE}[\hat{\mb{\rho}}^{\text{LSE}} \vert \mb{\rho}] = \frac{1}{n r \alpha_{\c{Q}}^{2}} \left[ \left(1 - \frac{1}{q} \right) -  \alpha_{\c{Q}} \left(\tr{\mb{\rho}^{2}} - \frac{1}{q} \right) \right].
\end{align*}
\end{theorem}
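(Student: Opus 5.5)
The plan has two parts: derive the closed form \eqref{eq:unit:des:LSE} from the explicit inverse of the Gram superoperator, then compute the MSE as the trace of a covariance, using the unitary-design identity of \cref{prop:sq:sum:proj}.

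\textbf{Closed form.} By \cref{prop:unit:design}, an $\alpha$-unitary design is complete. Hence \cref{thm:est:normal:eq} gives $\hat{\mb{\rho}}^{\text{LSE}} = (\f{D}^{*}\f{D})^{-1}\f{D}^{*}\m{P}$. I will apply the inverse formula of \cref{prop:unit:design}, rescaled by $1/n$, to $\m{S} = \f{D}^{*}\m{P}$.

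From \cref{prop:sphe:design}, the row averages of $\m{P}$ over each eigenspace are $\bar{a}_{ik} = p_{ik}/m_{ik}$. This yields $\f{D}^{*}\m{P} = \sum_{i,k} (p_{ik}/m_{ik})\,\m{\Pi}_{ik}$. Its trace is $\sum_{i,k} p_{ik} = n$, since $\tr{\m{\Pi}_{ik}} = m_{ik}$ and each row of frequencies sums to $1$. Substituting $\tr{\m{S}} = n$ into $\frac{1}{n\alpha}\bigl[\m{S} - (1-\alpha)\tr{\m{S}}\,\m{I}_{q}/q\bigr]$ gives \eqref{eq:unit:des:LSE}. For rank-one projections we have $\bar{q} = q$, so $\eta_{\s{O}} = 1$ and $\alpha = \alpha_{\c{Q}}$ by \cref{thm:const:unit:dsgn}.

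\textbf{MSE.} The estimator is unbiased, so
\begin{align*}
\mathrm{MSE}[\hat{\mb{\rho}}^{\text{LSE}} \vert \mb{\rho}] = \frac{1}{n^{2}\alpha^{2}}\, \b{E}\vertiii{\textstyle\sum_{i,k} (p_{ik}-d_{ik})\,\m{\Pi}_{ik}}_{2}^{2},
\end{align*}
where $d_{ik} := d_{ik}[\mb{\rho}]$; the constant identity term cancels. Expanding the square gives $\sum \Cov(p_{ik}, p_{i'k'})\,\tr{\m{\Pi}_{ik}\m{\Pi}_{i'k'}}$.

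By \eqref{eq:multinom:cov}, the terms with $i \neq i'$ vanish. For rank-one projections with $i = i'$, the overlap is $\tr{\m{\Pi}_{ik}\m{\Pi}_{ik'}} = \delta_{kk'}$. So only diagonal variances survive, and the sum equals
\begin{align*}
\frac{1}{r}\sum_{i=1}^{n}\sum_{k=1}^{q}\bigl(d_{ik} - d_{ik}^{2}\bigr) = \frac{1}{r}\Bigl(n - \sum_{i,k} d_{ik}^{2}\Bigr).
\end{align*}
\cref{prop:sq:sum:proj} with $\m{S} = \mb{\rho}$ gives $\sum_{i,k} d_{ik}^{2} = n\bigl[\alpha\tr{\mb{\rho}^{2}} + (1-\alpha)/q\bigr]$. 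Therefore $n - \sum_{i,k} d_{ik}^{2} = n\bigl[(1 - 1/q) - \alpha(\tr{\mb{\rho}^{2}} - 1/q)\bigr]$. Dividing by $n^{2}\alpha^{2} r$ yields the claimed formula.

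\textbf{Main obstacle.} The only delicate step is the cross terms. One must use independence across distinct observables to kill the $i \neq i'$ terms, and rank-one orthogonality within an observable to kill the $k \neq k'$ terms. Without the rank-one assumption the negative multinomial covariances would still drop out by orthogonality of the $\m{\Pi}_{ik}$. However, the weights $m_{ik}^{-2}\tr{\m{\Pi}_{ik}} = 1/m_{ik}$ would then break the clean application of \cref{prop:sq:sum:proj}, which is why that assumption is imposed.
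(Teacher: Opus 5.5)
Your proof is correct and follows the paper's argument essentially step for step: the closed form comes from the explicit inverse in \cref{prop:unit:design} together with $\tr{\f{D}^{*}\m{P}} = n$, and the error $\f{D}^{*}(\m{P}-\f{D}\mb{\rho})/(n\alpha)$ has a second moment that collapses to the diagonal multinomial variances, which \cref{prop:sq:sum:proj} then evaluates. One small correction to your closing remark: \cref{prop:sq:sum:proj} already carries the $1/m_{ik}$ weights, so it still evaluates the quadratic term for general multiplicities; what the rank-one assumption actually buys is $\sum_{i,k} d_{ik}[\mb{\rho}]/m_{ik} = n$ (in general this sum is only $\le n$ and depends on $\mb{\rho}$), together with $\alpha = \alpha_{\c{Q}}$.
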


Recall that $1/q \le \tr{\mb{\rho}^{2}} \le 1$, with equality holding if and only if $\mb{\rho}$ is the maximally mixed state ($\m{I}_{q}/q$) or a pure state, respectively. Hence, under the $\alpha_{\c{Q}}$-unitary design, the MSE strictly varies with the state purity:
\begin{align*}
    \frac{1-\alpha_{\c{Q}}}{n r \alpha_{\c{Q}}^{2}} \left(1 - \frac{1}{q} \right) \le \mathrm{MSE}[\hat{\mb{\rho}}^{\text{LSE}} \vert \mb{\rho}] \le \frac{1}{n r \alpha_{\c{Q}}^{2}} \left(1 - \frac{1}{q} \right).
\end{align*}  
This demonstrates that the MSE achieves the standard parametric rate $O((nr)^{-1})$ and scales with the dimension of the operator space as $O(q^{2})$. 
We briefly remark on the \emph{curse of dimensionality}: even with a unitary design where the measurement sensitivity is geometrically isotropic, the signal is inherently dampened by the factor $\alpha_{\c{Q}}$, causing the MSE to scale inversely with $\alpha_{\c{Q}}^{2} \asymp q^{2}$. Furthermore, the relative gap between the best-case (pure state) and worst-case (maximally mixed state) error is determined precisely by $1 - \alpha_{\c{Q}}$, a quantity which vanishes asymptotically as $q \to \infty$.

\begin{proposition}[Fixed Budget]\label{prop:fixed:bud}
Let $\s{O} = \{\m{O}_{i} \in \b{F}_{sa}^{q \times q}: i = 1, \cdots, n\}$ be a design on $\c{Q} = \b{F}^{q}$. The trace of the superoperator is determined solely by the average number of distinct eigenvalues $\tr{\tfrac{1}{n}\f{D}^{*} \f{D}} = \bar{q} = \tfrac{1}{n} \sum_{i=1}^{n} q_{i}$.
Consequently, the smallest eigenvalue $\tau_{\min}$ of the Gram superoperator satisfies the upper bound $\tau_{\min} \le \alpha = \eta_{\s{O}} \cdot \alpha_{\c{Q}}$, with equality holding if and only if $\s{O}$ is $\alpha$-unitary. 
\end{proposition}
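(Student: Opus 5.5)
We will compute the trace of the Gram superoperator $\tfrac1n\f{D}^*\f{D}$ on the real Hilbert space $\b{F}_{sa}^{q\times q}$ with the Frobenius inner product. By \eqref{eq:design2spd}, it is the average over $i$ of the maps $\f{G}_i:\m{S}\mapsto \sum_{k=1}^{q_i} m_{ik}^{-1}\tr{\m{S}\m{\Pi}_{ik}}\m{\Pi}_{ik}$. Each summand $\m{S}\mapsto m_{ik}^{-1}\innpr{\m{S}}{\m{\Pi}_{ik}}\m{\Pi}_{ik}$ is a rank-one operator $\m{\Pi}_{ik}\m{\Pi}_{ik}^*/m_{ik}$ in the superoperator sense, so its trace is $\vertiii{\m{\Pi}_{ik}}_2^2/m_{ik}=\tr{\m{\Pi}_{ik}^2}/m_{ik}=1$. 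Summing over $k$ gives $\operatorname{Tr}\f{G}_i=q_i$, and averaging gives $\tr{\tfrac1n\f{D}^*\f{D}}=\bar q$. (Equivalently, $\f{G}_i$ is the orthogonal projection onto $\spann\{\m{\Pi}_{ik}\}_k$, which has dimension $q_i$, since the normalized $\m{\Pi}_{ik}/\sqrt{m_{ik}}$ are orthonormal.)

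Next, we use the invariant decomposition $\b{F}_{sa}^{q\times q}=\spann\{\m{I}_q\}\oplus\b{F}_{sa,0}^{q\times q}$ noted after \eqref{eq:design2spd}. On $\spann\{\m{I}_q\}$ the eigenvalue is $1$, since $\sum_k\m{\Pi}_{ik}=\m{I}_q$. Hence the trace restricted to the traceless part is $\bar q-1$. Let $D=\dim\b{F}_{sa,0}^{q\times q}$, which equals $q^2-1$ for $\b{F}=\b{C}$ and $q(q+1)/2-1=(q-1)(q+2)/2$ for $\b{F}=\b{R}$.

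The restriction to the traceless part is self-adjoint and n.n.d., so its eigenvalues $\tau_1\le\dots\le\tau_D$ satisfy $D\tau_1\le\sum_j\tau_j=\bar q-1$. Thus
\[
\tau_{\min}\le(\bar q-1)/D .
\]
We check that this equals $\eta_{\s{O}}\alpha_{\c{Q}}$: in the complex case $(\bar q-1)/((q-1)(q+1))$, and in the real case $2(\bar q-1)/((q-1)(q+2))=\eta_{\s{O}}(q/2+1)^{-1}$. The smallest eigenvalue of the full operator is the minimum of $1$ and $\tau_1$. Since $\alpha\le 1$ (because $\bar q\le q$ and $\alpha_{\c{Q}}\le1$), the global $\tau_{\min}$ is also at most $\alpha$.

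For the equality case, equality in the averaging inequality holds iff all $\tau_j$ are equal to $(\bar q-1)/D=\alpha$. That is, $\tfrac1n\f{D}^*\f{D}$ acts as $\alpha\,\mathrm{Id}$ on the traceless subspace, which is precisely the definition of an $\alpha$-unitary design. Conversely, an $\alpha'$-unitary design forces $\alpha'=\alpha$ by this same trace computation, consistent with \cref{thm:const:unit:dsgn}.

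The only subtle point is the global $\tau_{\min}$ when $\alpha=1$ (i.e. $q=2$ real with $\bar q=q$). There the eigenvalue $1$ on $\m{I}_q$ does not break equality, so the characterization still holds. No real obstacle is expected beyond the dimension bookkeeping for the real case.
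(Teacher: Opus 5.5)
Your proof is correct and follows essentially the paper's route. The paper computes the same trace via Parseval over an orthonormal basis of $\b{F}_{sa}^{q\times q}$, which is equivalent to your rank-one/orthogonal-projection count; it then removes the eigenvalue $1$ on $\spann\{\m{I}_{q}\}$ and bounds $\tau_{\min}$ by the average $(\bar q-1)/(\dim\b{F}_{sa}^{q\times q}-1)=\alpha$ of the traceless spectrum, with equality iff that spectrum is flat. Your explicit dimension check and equality argument fill in details the paper leaves terse.

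One harmless slip: $\alpha=1$ never occurs, since $\alpha_{\c{Q}}\le 1/2$ (for $q=2$, $\b{F}=\b{R}$ one has $\alpha_{\c{Q}}=(q/2+1)^{-1}=1/2$). So your closing ``subtle point'' is vacuous, and the global minimum is always $\tau_1\le\alpha<1$.
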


We now establish the minimax lower bound of QST. The result below applies to any complete design and any unknown density, with the lower bound scaling precisely as $q^{2}/(nr)$. 

\begin{theorem}[Lower Bound]\label{thm:minimax}
Let $\{\m{O}_{i}\}_{i=1}^{n}$ be a complete design on $\c{Q} = \b{F}^{q}$. In the asymptotic regime where the sample size is sufficiently large relative to the dimension, specifically $r \ge q c_{0} \tr{(\f{D}^{*} \f{D} )^{-1}}$ for some universal constant $c_{0} > 0$, the minimax risk for estimating the density matrix satisfies:
\begin{align*}
    \inf_{\hat{\mb{\rho}} \in \b{F}^{q \times q}} \sup_{\mb{\rho} \in \b{S}(\b{F}_{+}^{q \times q})} \mathrm{MSE}[\hat{\mb{\rho}} \vert \mb{\rho}] \ge C \frac{q^{2}}{nr},
\end{align*}
where $C > 0$ is a universal constant independent of $q, n, r$.
\end{theorem}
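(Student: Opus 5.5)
I would prove the bound with a local packing argument around the maximally mixed state, combined with Fano's inequality (or Assouad's lemma). Then I would use the design's Gram superoperator to control the Kullback--Leibler divergences between the induced multinomial laws.

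\textbf{Step 1 (hypotheses).} Set $\mb{\rho}_0=\m{I}_q/q$. For $\mb{\Delta}\in\b{F}_{sa,0}^{q\times q}$ with $\vertiii{\mb{\Delta}}_\infty\le 1/(2q)$, the matrix $\mb{\rho}_0+\mb{\Delta}$ is a valid density. By Varshamov--Gilbert, choose a family $\{\mb{\Delta}_\omega\}_{\omega\in\Omega}$ with $\log|\Omega|\gtrsim q^2$. I take $\mb{\Delta}_\omega=\epsilon\sum_j \omega_j \m{E}_j$, where $\{\m{E}_j\}$ is a Frobenius-orthonormal basis of $\b{F}_{sa,0}^{q\times q}$ of dimension $\asymp q^2$ and $\omega\in\{\pm1\}^{\dim}$. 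Pairwise distances then satisfy $\vertiii{\mb{\Delta}_\omega-\mb{\Delta}_{\omega'}}_2^2\gtrsim \epsilon^2 q^2$.

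The operator-norm admissibility is the first delicate point. Generic $\pm1$ combinations of $q^2$ orthonormal matrices have operator norm about $\epsilon\sqrt q$, not $\epsilon q$, by matrix Rademacher concentration. So I would select sign vectors from a random subset on which $\vertiii{\mb{\Delta}_\omega}_\infty\lesssim\epsilon\sqrt{q}$, and enforce $\epsilon\sqrt q\lesssim 1/q$.

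\textbf{Step 2 (KL bound).} For state $\mb{\rho}$, the data are independent multinomials with probabilities $d_{ik}[\mb{\rho}]$. Using $\mathrm{KL}\le\chi^2$, I get
\[
\mathrm{KL}(P_\omega\|P_{0}) \le r\sum_{i,k}\frac{d_{ik}[\mb{\Delta}_\omega]^2}{d_{ik}[\mb{\rho}_0]} = rq\sum_{i,k}\frac{d_{ik}[\mb{\Delta}_\omega]^2}{m_{ik}} = rq\,\innpr{\f{D}^*\f{D}\mb{\Delta}_\omega}{\mb{\Delta}_\omega}.
\]
Here I used $d_{ik}[\mb{\rho}_0]=m_{ik}/q$. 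At first sight this quadratic form can be as large as $\epsilon^2 q^2 \tau_{\max}$, which does not yield the claimed rate directly. Instead I would choose the basis $\{\m{E}_j\}$ to be the eigenbasis of $\f{D}^*\f{D}$ restricted to the traceless space, and average over $\omega$. This gives $\b{E}_\omega\innpr{\f{D}^*\f{D}\mb{\Delta}_\omega}{\mb{\Delta}_\omega}=\epsilon^2\tr{\f{D}^*\f{D}|_{0}}=\epsilon^2 n(\bar q-1)\le \epsilon^2 nq$ by \cref{prop:fixed:bud}. I would then restrict to the (still exponentially large) subset of $\omega$ where this is at most twice its mean. The average KL is therefore $\lesssim rq\cdot nq\,\epsilon^2$.

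\textbf{Step 3 (calibration).} Fano's inequality requires $rnq^2\epsilon^2\lesssim q^2$, that is, $\epsilon^2\asymp 1/(nr)$. This yields risk $\gtrsim \epsilon^2q^2= q^2/(nr)$, which is the target. Admissibility from Step 1 needs $\epsilon\lesssim q^{-3/2}$, i.e.\ $nr\gtrsim q^3$.

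This is where the hypothesis $r\ge c_0 q\,\tr{(\f{D}^*\f{D})^{-1}}$ should enter. Since $\tr{(\f{D}^*\f{D})^{-1}}\ge (q^2-1)^2/\tr{\f{D}^*\f{D}}\gtrsim q^3/n$ by Cauchy--Schwarz and \cref{prop:fixed:bud}, the condition implies $nr\gtrsim q^4\ge q^3$. So admissibility holds with room to spare.

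\textbf{Main obstacle.} I expect the hardest part to be Step 1 together with the averaging in Step 2. One must simultaneously keep the perturbations positive-semidefinite (operator norm control), keep the packing large, and keep every pairwise KL divergence of order $rnq\epsilon^2$ in a non-isotropic design. If the uniform selection is awkward, my fallback is Assouad's lemma in the eigenbasis of $\f{D}^*\f{D}$. There only per-coordinate KL is needed, namely $rq\epsilon^2\tau_j$. I would choose coordinate-wise amplitudes $\epsilon_j^2\asymp 1/(rq\tau_j)$, which gives risk $\gtrsim\sum_j 1/(rq\tau_j)=\tr{(\f{D}^*\f{D})^{-1}}/(rq)$. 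This is $\gtrsim q^3/(n r q)=q^2/(nr)$, again via the Cauchy--Schwarz inequality above. In that version the sample-size hypothesis is exactly what keeps $\sum_j\epsilon_j$ small enough for positivity.
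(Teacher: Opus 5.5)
Your proposal is correct and is essentially the paper's argument. Both use Fano with a Varshamov--Gilbert packing around $\m{I}_q/q$ in the eigenbasis of the Gram superoperator, the bound $\mathrm{KL}\le\chi^2$ with $d_{ik}[\mb{\rho}_0]=m_{ik}/q$ giving $rq\innpr{\f{D}^*\f{D}\mb{\Delta}}{\mb{\Delta}}_2$, and the trace budget of \cref{prop:fixed:bud}.

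The paper differs only in weighting coordinate $j$ by $1/\sqrt{D\tau_j}$, so that the $\chi^2$ bound on every KL equals $nrq\varepsilon^2$, and in applying Cauchy--Schwarz to the separation instead; it reaches the same rate. With your uniform amplitudes, $\innpr{\f{D}^*\f{D}\mb{\Delta}_\omega}{\mb{\Delta}_\omega}_2=\epsilon^2 n(\bar q-1)$ is exactly constant in $\omega$, and the hypothesis already forces $nr\gtrsim c_0q^4$, so the crude bound $\vertiii{\mb{\Delta}_\omega}_\infty\le\vertiii{\mb{\Delta}_\omega}_2$ gives positivity; neither the averaging step nor the matrix-Rademacher selection is needed.
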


A closer inspection of the proof reveals that the unitary design is optimal (up to constant), as it maximizes the separation of the packing set for a fixed budget $\tr{\f{D}^{*} \f{D}/n} = \bar{q}$ in \cref{prop:fixed:bud}. Specifically, the unitary design maximizes the minimum eigenvalue of the Gram superoperator, a property analogous to E-optimality in classical experimental design \cite{pukelsheim2006optimal}, despite the non-Gaussian nature of the quantum setting.
We also note that a similar minimax rate for the trace norm discrepancy has been considered in \cite{butucea2025sample} albeit without explicit dependency on the dimension of the QHS, $q$.

\begin{example}[Real Qubit System]
Although usual quantum mechanics uses $\b{F} = \b{C}$, we consider the lowest dimensional QHS, the real qubit (\emph{rebit}) system $\c{Q} = \b{R}^{2}$ to explicitly illustrate the geometry of the regression. We adopt the \emph{Bloch disk representation} to parametrize the space of unit-trace symmetric matrices:
\begin{align*}
    &\m{S} = \frac{1}{2} \m{I}_{2} + \frac{s}{2} \begin{pmatrix}
        \cos(\theta) & \sin(\theta) \\
        \sin(\theta) & -\cos(\theta)
    \end{pmatrix}  \in \b{R}_{sa, 1}^{2 \times 2}, \quad z := s e^{\imath \theta} \in \b{C},
\end{align*}
where $\theta \in [0, 2\pi)$ represents the orientation. Note that $\m{S} \succeq \m{0}$ is a valid density matrix if and only if $s \in [0, 1]$, where $s$ corresponds to the purity of the state ($s=1$ for pure states).
We construct a measurement design using $n$ rotation matrices:
\begin{equation*}
    \m{U}_{i} = \m{U}(\theta_{i}) := \begin{pmatrix}
        \cos (\theta_{i}/2) & \sin (\theta_{i}/2) \\
        -\sin (\theta_{i}/2) & \cos (\theta_{i}/2)
    \end{pmatrix} \in SO(2), \quad \theta_{i} \in [0, 2\pi), \quad i = 1, \dots, n.
\end{equation*}
Let the seed observable be $\m{O} = \Diag[+1, -1]$ with eigenvectors $\m{e}_{+} = [1, 0]^{\top}$ and $\m{e}_{-} = [0, 1]^{\top}$. The rotated observables are $\m{O}_{i} = \m{U}_{i}^{\top} \m{O} \m{U}_{i}$, which form a complete design provided the angles $\theta_{i}$ are not all identical. The expected outcome probabilities are given by:
\begin{align*}
    d_{i,\pm}[\m{S}] = \langle (\m{U}_{i}^{\top} \m{e}_{\pm}), \m{S} (\m{U}_{i}^{\top} \m{e}_{\pm}) \rangle_{\b{R}^{2}} = \frac{1 \pm s \cos (\theta - \theta_{i})}{2}.
\end{align*}
Due to the symmetry $|d_{i, +}[\m{S}] - p_{i, +}| = |d_{i, -}[\m{S}] - p_{i, -}|$, the loss function simplifies to:
\begin{align*}
    \mathrm{Loss}[\m{S}] &= 2 \sum_{i=1}^{n} |d_{i, +}[\m{S}] - p_{i, +}|^{2} = \sum_{i=1}^{n} \frac{\left|s \cos (\theta - \theta_{i}) - \left(2 p_{i, +} - 1\right) \right|^{2}}{2} .
\end{align*}

Using the Bloch disk representation, the loss can be rewritten in terms of two complex parameters $\bar{D} = \frac{1}{n} \sum_{i=1}^{n} e^{\imath 2\theta_{i}}$ and $\bar{P} = \frac{1}{n} \sum_{i=1}^{n} (2 p_{i, +}-1) e^{\imath \theta_{i}}$:
\begin{align*} 
    \text{Loss}(z) = \frac{n}{4} |z|_{\b{C}}^2 + \frac{1}{4} \Re(z^2 \bar{D}^*) - \Re(z \bar{P}^*) + \text{const}.
\end{align*}
Only for this example, we reserve the bar notation $\bar{\cdot}$ for sample averages to align with statistical convention; thus, we use $^{*}$ to denote the complex conjugate, and $\Re$ and $\Im$ for the real and imaginary parts. The data average variable $\bar{P} \in \b{C}$, being an average of complex numbers in the unit disk, also satisfies $|\bar{P}|_{\b{C}} \le 1$. 
Solving the loss function for $z$ yields the explicit LSE from \cref{thm:est:normal:eq}:
\begin{align*}
    \hat{z}^{\text{LSE}} = \frac{2(\bar{P} - \bar{D} \bar{P}^*)}{1 - |\bar{D}|^2} \in \b{C} \quad \iff \quad
    \hat{\mb{\rho}}^{\text{LSE}} = \frac{1}{2} \m{I}_{2} + \frac{1}{2} \begin{pmatrix}
        \Re (\hat{z}^{\text{LSE}}) & \Im (\hat{z}^{\text{LSE}}) \\
        \Im (\hat{z}^{\text{LSE}}) & - \Re (\hat{z}^{\text{LSE}})
    \end{pmatrix} \in \b{R}^{2 \times 2}.
\end{align*}
In particular, if the angles $\theta_{i} = 2\pi i/n$ are uniformly spaced, then $\bar{D} = 0$. This corresponds to a $1/2$-unitary design, and the LSE reduces to $\hat{z}^{\text{LSE}} = 2 \bar{P}$.
Therefore, all we need for the estimation procedure to compute is essentially $\bar{P}$, as displayed in the bottom left of \cref{fig:figure_rebit} for a single realization.
In this case, \cref{cor:unit:mse} gives:
\begin{equation*}
    \mathrm{MSE}[\hat{\mb{\rho}}^{\text{LSE}} \vert \mb{\rho}] = \frac{2}{nr} \left( 1 - \frac{s^{2}}{2} \right).
\end{equation*}
Note that the LSE violates the n.n.d. condition if and only if $|\hat{z}^{\text{LSE}}|_{\b{C}} > 1$ (or, equivalently, $|\bar{P}|_{\b{C}} > 1/2$). In this case, the exact constrained optimal solution is obtained by simple radial shrinkage due to \cref{cor:equiv:proj}: $\hat{z}_{+}^{\text{LSE}} = \hat{z}^{\text{LSE}}/|\hat{z}^{\text{LSE}}|_{\b{C}}$, which yields a pure state.

\begin{figure}[h!]
    \begin{minipage}[c]{0.35\linewidth}
        \includegraphics[width=\linewidth]{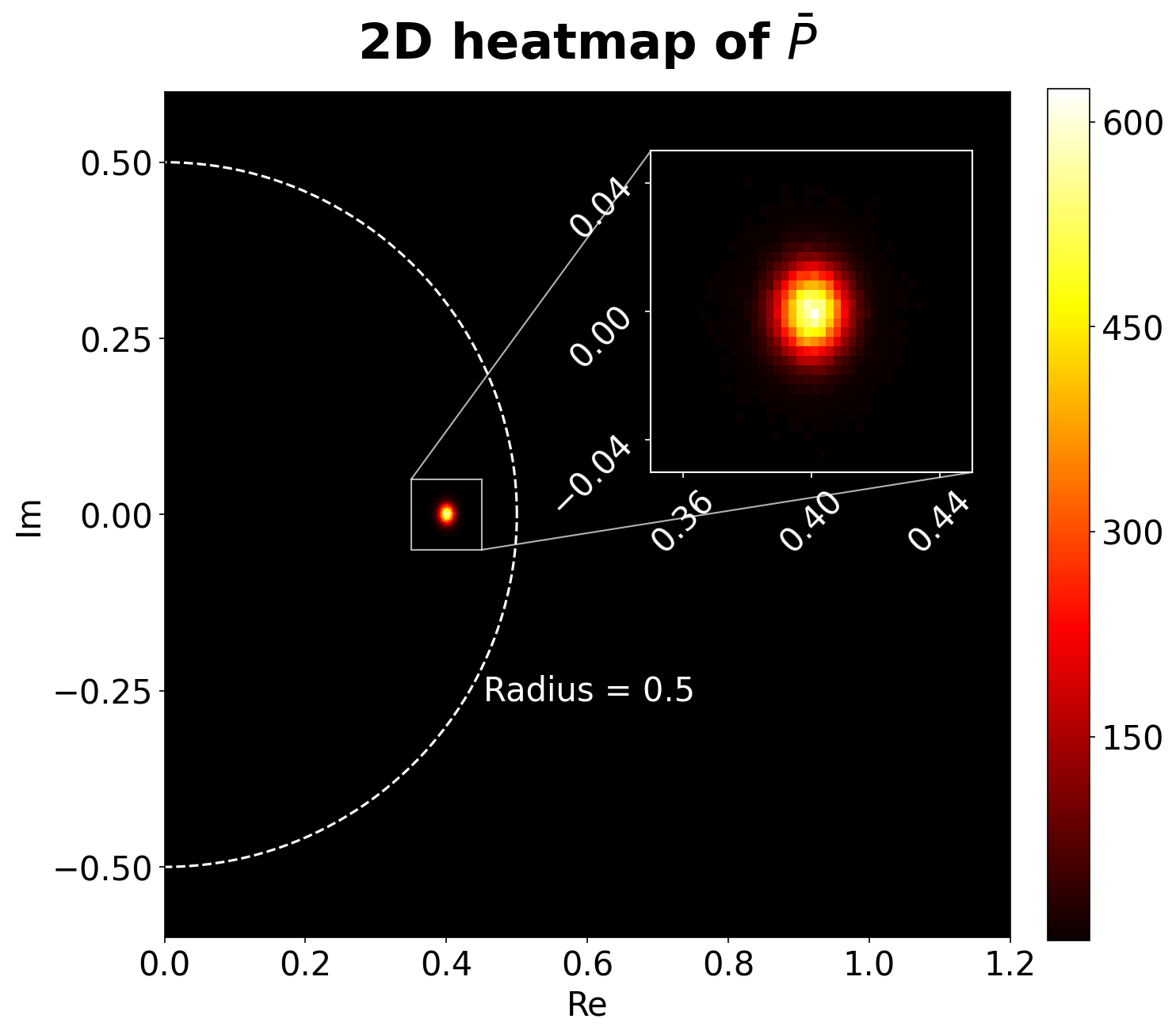}
        \vspace{2mm}
        \includegraphics[width=\linewidth]{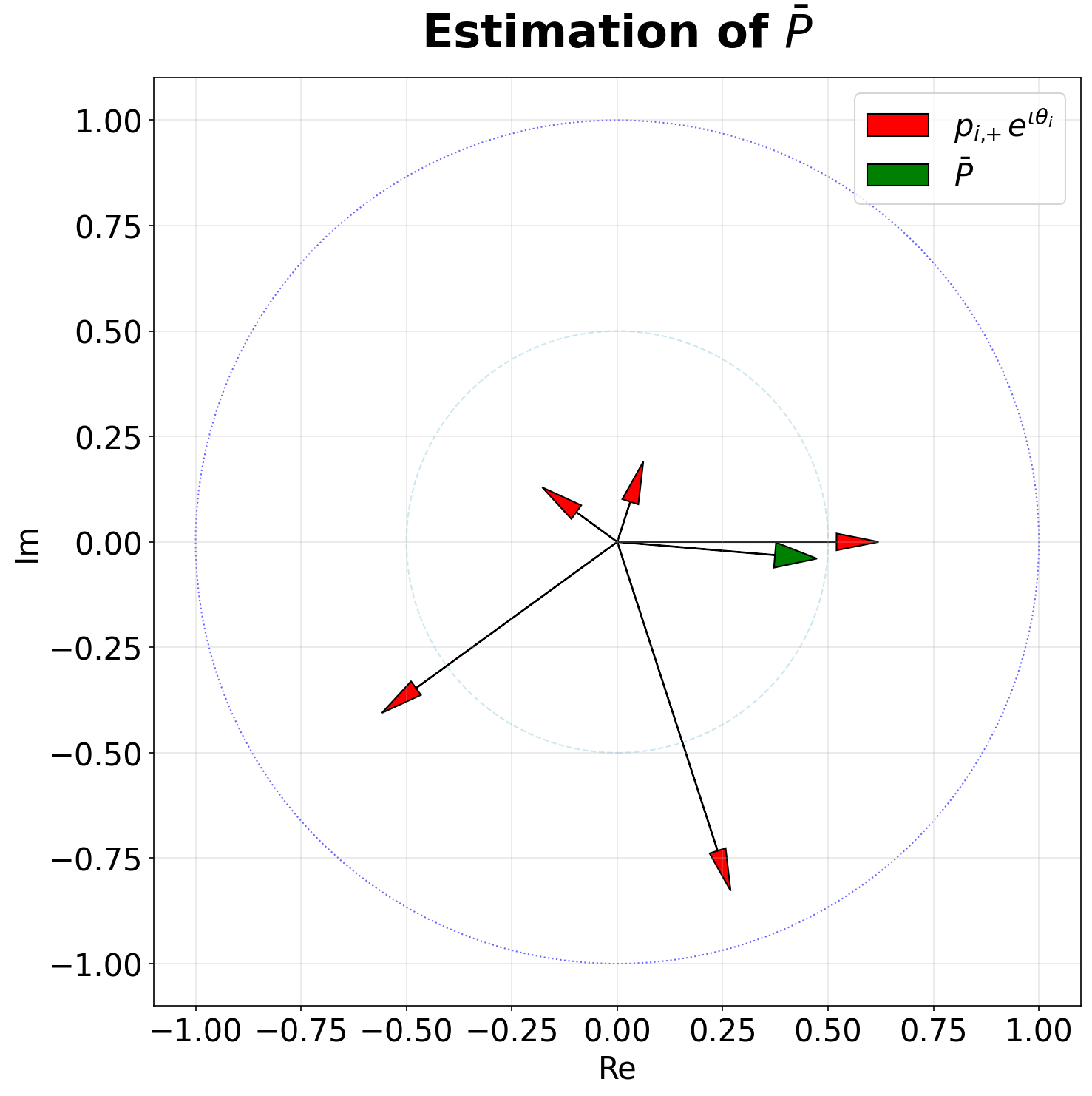}
    \end{minipage}%
    \hfill%
    \begin{minipage}[c]{0.63\linewidth}
        \includegraphics[width=\linewidth]{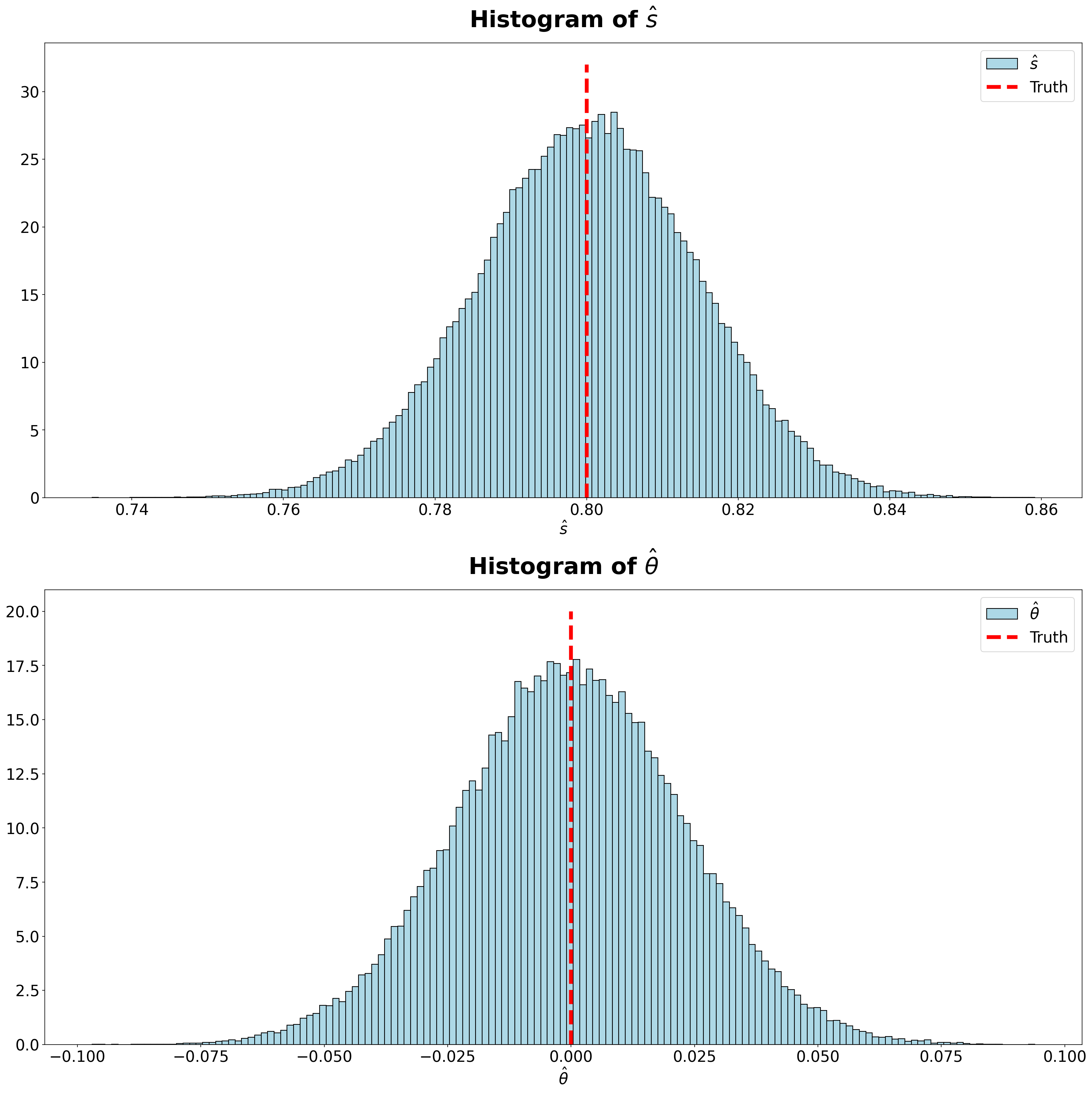}
    \end{minipage}
    \caption{
        Estimation behavior with $\mb{\rho}=\Diag[0.9,0.1]$.
        \textbf{Bottom left:} Single realization with $n=5$ observables and $r=50$ measurements per observable,
        showing $p_{i,+}e^{\imath\theta_i} \in \b{C}$ and the resulting estimator $\bar{P}$.
        \textbf{Top left:} Two-dimensional histogram of $\bar{P}$ over $10^5$ repetitions
        with $r=50$ measurements per $n=100$ observables.
        \textbf{Right:} Histograms of $\hat{s}$ and $\hat{\theta}$, respectively;
        dashed red lines indicate the true parameter values $s=0.8$ and $\theta=0$.}
    \label{fig:figure_rebit}
\end{figure}
We report the behavior of the Bloch disk representation of estimated density matrix $\hat{\mb{\rho}}_{+}^{\text{LSE}}$:
\begin{align*}
    \hat{z}_{+}^{\text{LSE}} = \hat{s} e^{\imath \hat{\theta}} \in \b{C}, \quad \hat{s} = \min\left(\lvert\hat{z}^{\text{LSE}}\rvert, 1\right), \quad \hat{\theta} = \arg\left(\hat{z}^{\text{LSE}}\right).
\end{align*}
Here, we conduct the Monte Carlo simulation $10^{5}$ times, where the data was generated with $n=100$ observables with uniformly spaced angles $\theta_{i} = 2\pi i/n$. To demonstrate that radial shrinkage is rarely required in practice, the underlying density matrix was chosen to be $\mb{\rho} = \Diag(0.9, 0.1)$ (thus $s= 0.8, \theta = 0$), closed to a pure state. Under this setup, for each observable $\m{O}_{i}$, we have generated the measurement outcomes $r=50$ times according to the Born rule, respectively. 
The right panels of \cref{fig:figure_rebit} display the empirical histograms of $\hat{s}$ and $\hat{\theta}$ obtained from the Monte Carlo simulation; both exhibit a strong concentration around their true values, resulting in an accurate performance of $\hat{\mb{\rho}}_{+}^{\text{LSE}}$. In particular, the edge value of the histogram for $\hat{s}$ is $[0.74, 0.86]$, meaning radial shrinkage is completely inactive.
\end{example}

In \cref{ex:comp:qubit:LSE}, we derive the Bloch vector representation of our tensorized LSE estimator for a complex qubit system and radial shrinkage for projection to yield a valid density matrix. Additionally, we also derive the MSE error in the presence of the bit-flip noise.

\subsection{Tensorized Kernel Regression}\label{ssec:est:sch:ker}
While standard linear regression in \cref{ssec:est:sch} suffices when spectra are treated as abstract logical labels, it fails to capture the geometric reality of physical implementations. In hardware based on continuous variables --- such as angular momentum \cite{arecchi1972atomic} or discrete phase \cite{pegg1989phase} --- measurement outcomes possess an intrinsic metric where errors manifest as local displacements rather than categorical flips \cite{gottesman2001encoding}. To reflect this structure, we introduce a reproducing kernel $K:\b{R} \times \b{R} \to \b{F}$ to encode the geometry over the spectrum. In this view, standard linear regression is recovered as the specific choice of the 0--1 kernel $K(x, y) = \ds{1} (x = y)$, corresponding to a ``metric-free'' topology over spectra.

We define the $i$-th Gram matrix associated with $\m{O}_{i}$ incorporating the multiplicities:
\begin{align}\label{eq:ith:Gram}
    \m{K}_{i} \in \b{F}_{+}^{q \times q}, \quad [\m{K}_{i}]_{\ell \ell'} := K(\lambda_{i k_{i}(\ell)}, \lambda_{i k_{i}(\ell')}), \quad 1 \le \ell, \ell' \le q.
\end{align}
Note that we index by $\ell \in \{1, \dots, q\}$ to account for eigenvalue multiplicities. We now relate the functional discrepancy of conditional QCE to a spectral matrix norm.

\begin{theorem}[Quantum Representer Theorem]\label{thm:loss:max:eigen}
Let $\c{Q} = \b{F}^{q}$. For each $i = 1, \dots, n$, let $\m{K}_{i}$ be the Gram matrix defined in \eqref{eq:ith:Gram}. Given any $\m{S} \in \b{F}_{sa}^{q \times q}$, define the diagonal matrices of expectations and empirical frequencies:
\begin{align*}
    \m{D}_{i}[\m{S}] := \Diag(\m{d}_{i}[\m{S}])  \in \b{R}^{q \times q}, \quad \m{P}_{i} := \Diag(\m{p}_{i}) \in \b{R}^{q \times q},
\end{align*}
where $\m{d}_{i}[\m{S}], \m{p}_{i} \in \b{R}^{q}$ are the $i$-th rows of the design tensor $\f{D}[\m{S}]$ and data matrix $\m{P}$ respectively as in \eqref{eq:quant:des:op}. Then, for any $s \in [1, \infty]$, the $s$-Schatten norm of the discrepancy between the population QCE $\m{T}_{K}^{\mb{\nu}_{i, \m{S}}}$ and the empirical QCE $\m{T}_{K}^{\hat{\mb{\nu}}_{i, \mb{\rho}}}$ is given by:
\begin{align*}
    \vertiii{\m{T}_{K}^{\mb{\nu}_{i, \m{S}}} - \m{T}_{K}^{\hat{\mb{\nu}}_{i, \mb{\rho}}}}_{s} = \vertiii{\m{K}_{i}^{1/2} (\m{D}_{i}[\m{S}] - \m{P}_{i}) \m{K}_{i}^{1/2}}_{s} < \infty.
\end{align*}
\end{theorem}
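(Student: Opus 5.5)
The plan is to write both conditional QCEs as finite-rank operators on $\c{R}(K)$ supported on the span of the feature maps $\{k_{\lambda_{ik}}\}_{k=1}^{q_i}$. From there, a Gram factorization converts the RKHS operator into the $q\times q$ matrix $\m{K}_{i}^{1/2}(\m{D}_{i}[\m{S}]-\m{P}_{i})\m{K}_{i}^{1/2}$ without changing singular values.

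First, for a signed discrete measure $\mu=\sum_{k=1}^{q_i} c_k \delta_{\lambda_{ik}}$, the sesquilinear characterization of \cref{thm:quant:cov:scalar} (which extends linearly to signed combinations) gives $\m{T}_{K}^{\mu}=\sum_k c_k\, k_{\lambda_{ik}}k_{\lambda_{ik}}^{*}$. Take $c_k = d_{ik}[\m{S}]-p_{ik}$. Because $\m{S}$ is only self-adjoint, $\mb{\nu}_{i,\m{S}}$ is understood as the signed measure $\sum_k d_{ik}[\m{S}]\delta_{\lambda_{ik}}$. The difference is then
\[
\m{T}_{K}^{\mb{\nu}_{i,\m{S}}}-\m{T}_{K}^{\hat{\mb{\nu}}_{i,\mb{\rho}}}=\sum_{k=1}^{q_i} c_k\, k_{\lambda_{ik}}k_{\lambda_{ik}}^{*}.
\]
Next, I flatten multiplicities by writing $c_k=\sum_{\ell:k_i(\ell)=k} c_k/m_{ik}$. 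This yields the same operator as
\[
\sum_{\ell=1}^{q} \delta_\ell\, k_{\lambda_{ik_i(\ell)}}k_{\lambda_{ik_i(\ell)}}^{*},
\]
where $\delta_\ell$ are the diagonal entries of $\m{D}_{i}[\m{S}]-\m{P}_{i}$; this is exactly where the repeated entries in $\m{d}_i,\m{p}_i$ enter. Define the synthesis map $\m{A}:\b{F}^{q}\to\c{R}(K)$ by $\m{A}e_\ell = k_{\lambda_{ik_i(\ell)}}$. The operator becomes $\m{A}\m{\Delta}\m{A}^{*}$ with $\m{\Delta}=\m{D}_{i}[\m{S}]-\m{P}_{i}$, and by the reproducing property $\m{A}^{*}\m{A}=\m{K}_{i}$ (up to entrywise conjugation, which for a real diagonal $\m{\Delta}$ leaves singular values unchanged).

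The key step is the polar decomposition $\m{A}=\m{W}\m{K}_{i}^{1/2}$, with $\m{W}$ a partial isometry that is isometric on $\operatorname{ran}\m{K}_{i}^{1/2}$. This gives $\m{A}\m{\Delta}\m{A}^{*}=\m{W}(\m{K}_{i}^{1/2}\m{\Delta}\m{K}_{i}^{1/2})\m{W}^{*}$. The inner matrix has range and co-range inside $\operatorname{ran}\m{K}_{i}^{1/2}$, on which $\m{W}$ is isometric, so conjugation by $\m{W}$ preserves its nonzero singular values. Equality of all Schatten $s$-norms for $s\in[1,\infty]$ follows, and finiteness holds because both sides are finite-rank (rank $\le q$). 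An alternative route compares $(\m{A}\m{\Delta}\m{A}^{*})^{*}(\m{A}\m{\Delta}\m{A}^{*})$ with $\m{\Delta}\m{K}_{i}\m{\Delta}\m{K}_{i}$ via the identity that $\m{X}\m{Y}$ and $\m{Y}\m{X}$ share nonzero spectra.

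I expect the main obstacle to be bookkeeping rather than analysis. Two points need care: the Gram matrix $\m{K}_{i}$ has repeated rows and columns and is therefore singular when multiplicities occur, so the partial-isometry argument must avoid inverting $\m{K}_{i}$; and the complex-conjugation convention for $\m{A}^{*}\m{A}$ versus $\m{K}_{i}$ must be tracked. I would handle the latter by noting that $\overline{\m{K}_{i}}=\m{K}_{i}^{\top}$ and that transposition preserves singular values.
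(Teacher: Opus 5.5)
Your proposal is correct and follows essentially the same route as the paper. Both arguments reduce the discrepancy to the finite-rank operator $\sum_{\ell}(\tilde d_{i\ell}[\m{S}]-\tilde p_{i\ell})\,k_{\tilde\lambda_{i\ell}}k_{\tilde\lambda_{i\ell}}^{*}$ supported on $\c{V}_i=\spann\{k_{\tilde\lambda_{i\ell}}\}_{\ell}$, and then transport it isometrically to $\mathrm{Ran}[\m{K}_i^{1/2}]$; the paper's map $\m{i}:\sum_\ell\alpha_\ell k_{\tilde\lambda_{i\ell}}\mapsto\m{K}_i^{1/2}\mb{\alpha}$ is exactly your $\m{W}^{*}$ from $\m{A}=\m{W}\m{K}_i^{1/2}$, restricted to $\c{V}_i$ (and with the paper's conventions $\m{A}^{*}\m{A}=\m{K}_i$ holds exactly, so your conjugation hedge is unnecessary).
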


The finite-dimensional reduction effectively projects out the orthogonal complement, conceptually mirroring the classical representer theorem in kernel ridge regression \cite{paulsen2016introduction}. As noted previously in \cref{rmk:QMD:schatten}, we adopt the Hilbert-Schmidt norm ($s=2$) for the conditional Quantum Maximum Discrepancy to yield a tractable convex loss function:
\begin{align*}
    \mathrm{Loss}_{K}[\m{S}] := \sum_{i=1}^{n} \vertiii{\m{T}_{K}^{\mb{\nu}_{i, \m{S}}} - \m{T}_{K}^{\hat{\mb{\nu}}_{i, \mb{\rho}}}}_{2}^{2} = \sum_{i=1}^{n} \vertiii{\m{K}_{i}^{1/2} (\m{D}_{i}[\m{S}] - \m{P}_{i}) \m{K}_{i}^{1/2}}_{2}^{2}, \quad \m{S} \in \b{F}_{sa}^{q \times q}.
\end{align*}
We then define the \textbf{QUARK (QUAntum Regression with Kernels)} estimator by relaxing the n.n.d. constraint:
\begin{align}\label{eq:def:density:est:ker}
    \hat{\mb{\rho}}^{K} \in \argmin \{\mathrm{Loss}_{K}[\m{S}] : \m{S} \in \b{F}_{sa, 1}^{q \times q}\},  
\end{align}
which generalizes the tensorized linear regression derived in \cref{ssec:est:sch}. Specifically, if we select the 0--1 kernel, the loss reduces to the standard squared error provided $m_{ik} \equiv 1$:
\begin{align*}
    \mathrm{Loss}[\m{S}] = \sum_{i=1}^{n} \vertiii{ \m{D}_{i}[\m{S}] - \m{P}_{i} }_{2}^{2} = \sum_{i=1}^{n} \|\m{d}_{i}[\m{S}] - \m{p}_{i}\|_{\b{R}^{q}}^{2} = \vertiii{\f{D}[\m{S}] - \m{P}}_{2}^{2}.
\end{align*}

To derive the normal equation for a general kernel, recall the linear map $\m{S} \mapsto \m{d}_{i}[\m{S}]$. Unlike the tensorized linear regression, we will shortly see that the multiplicities $m_{ik}$ cancel out. Consider the following operator and its adjoint: 
\begin{align*}
    \f{D}_{i}: \b{F}_{sa}^{q \times q} \to \b{R}^{q_{i}}, \, \m{S} \mapsto 
    \left[d_{ik}[\m{S}]  \right]_{1 \le k \le q_{i}}, \quad
    \f{D}_{i}^{*}: \b{R}^{q_{i}} \to \b{F}_{sa}^{q \times q}, \, \m{a} \mapsto \sum_{k=1}^{q_{i}} a_{ik} \m{\Pi}_{ik}.
\end{align*}
For estimation within the RKHS framework, the squared kernel $|K|^2 : \b{R} \times \b{R} \to \b{R}$ naturally governs the estimation procedure. Letting $\mb{\Omega}_{i} \in \b{R}_{+}^{q_{i} \times q_{i}}$ be the Gram matrices evaluated at the spectra of observables $\m{O}_{i}$, i.e., $[\mb{\Omega}_{i}]_{kk'} = |K(\lambda_{ik}, \lambda_{ik'})|^{2}$, then the \textbf{kernel Gram superoperator} and the effective data matrix become:
\begin{align*}
    &\f{H}_{K} = \frac{1}{n} \sum_{i=1}^{n} \f{D}_{i}^* \mb{\Omega}_{i} \f{D}_{i} : \b{F}_{sa}^{q \times q} \to \b{F}_{sa}^{q \times q}, \ \m{S} \mapsto \frac{1}{n} \sum_{i=1}^{n} \sum_{k, k'=1}^{q_{i}} [\mb{\Omega}_i]_{kk'} \tr{\m{S} \m{\Pi}_{ik}} \m{\Pi}_{ik'}, \\ 
    &\m{P}_{K} = \frac{1}{n} \sum_{i=1}^{n} \f{D}_{i}^* \mb{\Omega}_{i} \left[p_{i k} \right]_{1 \le k \le q_{i}} = \frac{1}{n} \sum_{i=1}^{n} \sum_{k, k'=1}^{q_{i}} [\mb{\Omega}_i]_{kk'} p_{ik} \m{\Pi}_{ik'} \in \b{F}_{sa}^{q \times q}.
\end{align*}

\begin{proposition}\label{prop:superoperator_trace_norm}
Analogous to \cref{prop:fixed:bud}, the trace of the kernel Gram superoperator is determined solely by the eigenvalue multiplicities and the diagonal evaluation of the kernel, independent of the specific eigenprojections:
\begin{align*}
    \vertiii{\f{H}_{K}}_{1} = \frac{1}{n} \sum_{i=1}^{n} \sum_{k=1}^{q_{i}} m_{ik} [\mb{\Omega}_i]_{kk} .
\end{align*}
\end{proposition}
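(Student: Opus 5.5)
The plan is to first show that $\f{H}_{K}$ is a non-negative definite superoperator on the real Hilbert space $\b{F}_{sa}^{q \times q}$ with the Frobenius inner product. Then its trace norm coincides with its trace, and we can compute that trace directly in an orthonormal basis. For positivity, note that for each $i$ and any $\m{S}$,
\begin{align*}
\innpr{\f{D}_{i}^{*} \mb{\Omega}_{i} \f{D}_{i} \m{S}}{\m{S}} = \innpr{\mb{\Omega}_{i} \f{D}_{i}\m{S}}{\f{D}_{i}\m{S}}_{\b{R}^{q_{i}}} \ge 0.
\end{align*}
This holds because $\mb{\Omega}_{i}$, with entries $|K(\lambda_{ik},\lambda_{ik'})|^{2}$, is the Hadamard product of the positive semidefinite Gram matrix $[K(\lambda_{ik},\lambda_{ik'})]$ with its entrywise conjugate. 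By the Schur product theorem it is therefore positive semidefinite, and it is real symmetric. Self-adjointness of $\f{H}_{K}$ follows from the same factorization $\f{D}_{i}^{*}\mb{\Omega}_{i}\f{D}_{i}$. Hence $\vertiii{\f{H}_{K}}_{1} = \tr{\f{H}_{K}}$.

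Next we compute the trace using linearity and cyclicity. Write
\begin{align*}
\tr{\f{H}_{K}} = \frac{1}{n}\sum_{i} \tr{\f{D}_{i}^{*}\mb{\Omega}_{i}\f{D}_{i}} = \frac{1}{n}\sum_{i}\tr{\mb{\Omega}_{i}\f{D}_{i}\f{D}_{i}^{*}},
\end{align*}
where the last trace is of a $q_i\times q_i$ real matrix. The composition $\f{D}_{i}\f{D}_{i}^{*}:\b{R}^{q_{i}}\to\b{R}^{q_{i}}$ acts by $\m{a}\mapsto [\sum_{k'} a_{k'}\tr{\m{\Pi}_{ik'}\m{\Pi}_{ik}}]_{k}$. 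By the orthogonality relation $\tr{\m{\Pi}_{ik}\m{\Pi}_{ik'}} = m_{ik}\ds{1}(k=k')$, this composition is the diagonal matrix $\Diag(m_{i1},\dots,m_{iq_{i}})$. Consequently $\tr{\mb{\Omega}_{i}\f{D}_{i}\f{D}_{i}^{*}} = \sum_{k} m_{ik}[\mb{\Omega}_{i}]_{kk}$, and averaging over $i$ gives the claim. The eigenprojections enter only through their traces and mutual orthogonality within a single observable, which explains the stated independence from the specific eigenprojections.

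The main point needing care is the cyclicity step $\tr{\f{D}_{i}^{*}\mb{\Omega}_{i}\f{D}_{i}}=\tr{\mb{\Omega}_{i}\f{D}_{i}\f{D}_{i}^{*}}$ for maps between the different real spaces $\b{F}_{sa}^{q\times q}$ and $\b{R}^{q_i}$, with adjoints taken in the Frobenius inner product. Both spaces are finite-dimensional real inner product spaces, so cyclicity holds. Alternatively, we can expand $\f{D}_{i}^{*}\mb{\Omega}_{i}\f{D}_{i}=\sum_{k,k'}[\mb{\Omega}_{i}]_{kk'}\,\m{\Pi}_{ik'}\innpr{\cdot}{\m{\Pi}_{ik}}$ as a sum of rank-one superoperators, each with trace $\innpr{\m{\Pi}_{ik'}}{\m{\Pi}_{ik}}$. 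This gives the same result directly. As a consistency check, for the 0--1 kernel $\mb{\Omega}_{i}=\m{I}$ and the formula gives $\frac{1}{n}\sum_i\sum_k m_{ik}=q$. This differs from \cref{prop:fixed:bud} only because that superoperator carries the weights $1/m_{ik}$, which is consistent.
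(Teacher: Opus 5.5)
Your proof is correct and is essentially the paper's argument. The paper sums $\innpr{\f{H}_{K}\m{E}_{\ell}}{\m{E}_{\ell}}_{2}$ over an orthonormal basis of $\b{F}_{sa}^{q\times q}$ and applies Plancherel, $\sum_{\ell}\tr{\m{E}_{\ell}\m{\Pi}_{ik}}\tr{\m{E}_{\ell}\m{\Pi}_{ik'}} = m_{ik}\delta_{kk'}$. That is exactly your rank-one trace identity, and your cyclicity step with $\f{D}_{i}\f{D}_{i}^{*}=\Diag(m_{i1},\dots,m_{iq_{i}})$ is the same fact moved to $\b{R}^{q_{i}}$. Your Schur-product argument that $\mb{\Omega}_{i}\succeq\m{0}$, hence $\f{H}_{K}\succeq\m{0}$ and $\vertiii{\f{H}_{K}}_{1}$ equals the trace, supplies a step the paper leaves implicit, since it simply declares $\mb{\Omega}_{i}\in\b{R}_{+}^{q_{i}\times q_{i}}$.
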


However, unlike the Gram superoperator for tensorized linear regression, the kernel Gram superoperator generally preserves neither the identity nor the trace, despite being a n.n.d. tensor. Consequently, the inversion inherently induces a trace shrinkage:
\begin{theorem}\label{thm:quark:supop:pd}
Let $\s{O} = \{\m{O}_{i} \in \b{F}_{sa}^{q \times q}: i = 1, \cdots, n\}$ be a complete design. If the squared kernel $|K|^2 : \b{R} \times \b{R} \to \b{R}$ is strictly positive definite (p.d.), then so is the kernel Gram superoperator, and the QUARK estimator in \eqref{eq:def:density:est:ker} is uniquely given by
\begin{align*}
    \hat{\mb{\rho}}^{K} = \f{H}_{K}^{-1}(\m{P}_{K}) + (1 - \tr{\f{H}_{K}^{-1}(\m{P}_{K})}) \frac{\f{H}_{K}^{-1}(\m{I}_{q})}{\tr{\f{H}_{K}^{-1}(\m{I}_{q})}} \in \b{F}_{sa, 1}^{q \times q}.
\end{align*}
\end{theorem}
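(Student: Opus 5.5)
The plan has three steps: rewrite the loss as a quadratic form in $\m{S}$, show the quadratic part is strictly positive, and then minimize over the unit-trace hyperplane with a Lagrange multiplier.

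\textbf{Step 1: the loss as a quadratic form.} Fix $i$ and use \cref{thm:loss:max:eigen} with $s=2$. Write $\m{A}_i := \m{D}_i[\m{S}]-\m{P}_i$, a diagonal matrix. Then
\[
\vertiii{\m{K}_i^{1/2}\m{A}_i\m{K}_i^{1/2}}_2^2=\tr{\m{A}_i\m{K}_i\m{A}_i\m{K}_i^{*}}=\sum_{\ell,\ell'}a_\ell a_{\ell'}|[\m{K}_i]_{\ell\ell'}|^2 .
\]
The entries of $\m{A}_i$ are constant on each eigenspace block of index $k$, with value $(d_{ik}[\m{S}]-p_{ik})/m_{ik}$. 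Summing over a block of sizes $m_{ik}\times m_{ik'}$ contributes the factor $m_{ik}m_{ik'}$, which cancels the multiplicities. We obtain
\[
\mathrm{Loss}_K[\m{S}]=\sum_i \m{e}_i^\top\mb{\Omega}_i\m{e}_i,\qquad \m{e}_i:=\f{D}_i\m{S}-\m{p}_i .
\]
Expanding, this equals $n\langle \f{H}_K\m{S},\m{S}\rangle-2n\langle\m{P}_K,\m{S}\rangle+\mathrm{const}$, where the inner product is the real Frobenius one on $\b{F}_{sa}^{q\times q}$.

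\textbf{Step 2: positive definiteness of $\f{H}_K$.} Since $|K|^2$ is strictly p.d. and the $\lambda_{ik}$ are distinct, each $\mb{\Omega}_i$ is positive definite. Hence
\[
\langle\f{H}_K\m{S},\m{S}\rangle=\frac1n\sum_i (\f{D}_i\m{S})^\top\mb{\Omega}_i(\f{D}_i\m{S})\ge 0,
\]
with equality only if $\f{D}_i\m{S}=0$ for all $i$, i.e. $\tr{\m{S}\m{\Pi}_{ik}}=0$ for all $i,k$. By \cref{prop:sphe:design}(2) the $\m{\Pi}_{ik}$ span $\b{F}_{sa}^{q\times q}$, so this forces $\m{S}=\m{0}$. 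This is the only substantive step.

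\textbf{Step 3: constrained minimization.} The loss is strictly convex on the affine hyperplane $\b{F}_{sa,1}^{q\times q}$, so the minimizer is unique. The Lagrange condition is $\f{H}_K\m{S}-\m{P}_K=\mu\m{I}_q$ for some $\mu\in\b{R}$, which gives $\m{S}=\f{H}_K^{-1}\m{P}_K+\mu\f{H}_K^{-1}\m{I}_q$. Because $\f{H}_K^{-1}$ is positive definite, $\tr{\f{H}_K^{-1}\m{I}_q}=\langle\f{H}_K^{-1}\m{I}_q,\m{I}_q\rangle>0$. The trace constraint therefore determines $\mu=(1-\tr{\f{H}_K^{-1}\m{P}_K})/\tr{\f{H}_K^{-1}\m{I}_q}$, which yields exactly the stated formula. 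Sufficiency follows from convexity, since the KKT conditions are sufficient for a convex problem with a linear constraint.

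The main obstacle is the bookkeeping in Step 1. One must track the conjugation in $\m{K}_i^{1/2}$ so that $|K|^2$ appears rather than $K^2$, and verify that the multiplicities cancel exactly.
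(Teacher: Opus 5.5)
Your proposal is correct and follows essentially the same route as the paper: the same trace identity reducing $\mathrm{Loss}_K$ to $\sum_i \m{e}_i^\top\mb{\Omega}_i\m{e}_i$ with the multiplicities cancelling, the same injectivity argument for $\f{H}_K$ from strict positive definiteness of each $\mb{\Omega}_i$, and the same Lagrange-multiplier solution. The only small differences are that you invoke the spanning property in \cref{prop:sphe:design} directly, where the paper first deduces tracelessness and then applies completeness, and that you make explicit two points the paper leaves implicit: that $\tr{\f{H}_K^{-1}(\m{I}_q)}>0$, and that the KKT conditions are sufficient by convexity.
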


As the kernel Gram superoperator is not trace-preserving, the QUARK estimator performs a version of shrinkage estimation at the matrix level:
\begin{align*}
    \min_{\m{S} \in \b{F}_{sa}^{q \times q}} \mathrm{Loss}_{K}[\m{S}] \quad \text{with respect to} \quad \tr{\m{S}}=1,
\end{align*}
and the penalty parameter in the Lagrange multiplier is explicitly fixed by the unit-trace constraint to achieve the shrinkage effect.
Indeed, the QUARK estimation procedure is homogeneous with respect to the kernel scaling; inference depends on the relative differences of the kernel values rather than their absolute magnitudes. To see this, consider a scalar multiple of a kernel $c K$ for some $c \in \b{F}$. Then, $\f{H}_{cK} = |c|^{2} \f{H}_{K}$ and $\m{P}_{cK} = |c|^{2} \m{P}_{K}$, thus $\hat{\mb{\rho}}^{c K} = \hat{\mb{\rho}}^{K}$.

\begin{corollary}[Unbiased]\label{cor:unbiased}
Under the assumptions of \cref{thm:quark:supop:pd}, the QUARK estimator in \eqref{eq:def:density:est:ker} is unbiased, i.e., $\b{E}[\hat{\mb{\rho}}^{K} \vert \mb{\rho}] = \mb{\rho}$ for any density matrix $\mb{\rho} \in \b{S}(\b{F}_{+}^{q \times q})$.
\end{corollary}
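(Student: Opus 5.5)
The plan is to take expectations in the closed form of \cref{thm:quark:supop:pd} and exploit linearity. The only random quantity in
\[
\hat{\mb{\rho}}^{K} = \f{H}_{K}^{-1}(\m{P}_{K}) + \bigl(1 - \tr{\f{H}_{K}^{-1}(\m{P}_{K})}\bigr) \frac{\f{H}_{K}^{-1}(\m{I}_{q})}{\tr{\f{H}_{K}^{-1}(\m{I}_{q})}}
\]
is the effective data matrix $\m{P}_{K}$. This matrix enters affinely: the superoperator $\f{H}_{K}$, the matrix $\f{H}_{K}^{-1}(\m{I}_{q})$ and the scalar $\tr{\f{H}_{K}^{-1}(\m{I}_{q})}$ are all deterministic.

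We also need the denominator to be nonzero. Strict positive definiteness of $\f{H}_{K}$, supplied by \cref{thm:quark:supop:pd}, gives $\tr{\f{H}_{K}^{-1}(\m{I}_{q})} = \innpr{\f{H}_{K}^{-1}\m{I}_{q}}{\m{I}_{q}} > 0$. Hence expectation passes through the formula:
\[
\b{E}[\hat{\mb{\rho}}^{K}\vert\mb{\rho}] = \f{H}_{K}^{-1}(\b{E}[\m{P}_{K}\vert\mb{\rho}]) + \bigl(1 - \tr{\f{H}_{K}^{-1}(\b{E}[\m{P}_{K}\vert\mb{\rho}])}\bigr) \frac{\f{H}_{K}^{-1}(\m{I}_{q})}{\tr{\f{H}_{K}^{-1}(\m{I}_{q})}}.
\]

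The key step is to identify $\b{E}[\m{P}_{K}\vert\mb{\rho}]$. By the Born rule, $\b{E}[p_{ik}\vert\mb{\rho}] = d_{ik}[\mb{\rho}] = \tr{\mb{\rho}\m{\Pi}_{ik}}$. Substituting into the definition of $\m{P}_{K}$ gives
\[
\b{E}[\m{P}_{K}\vert\mb{\rho}] = \frac{1}{n}\sum_{i=1}^{n}\sum_{k,k'=1}^{q_{i}} [\mb{\Omega}_{i}]_{kk'} \tr{\mb{\rho}\m{\Pi}_{ik}}\m{\Pi}_{ik'},
\]
which is exactly $\f{H}_{K}(\mb{\rho})$. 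Therefore $\f{H}_{K}^{-1}(\b{E}[\m{P}_{K}\vert\mb{\rho}]) = \mb{\rho}$.

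Since $\tr{\mb{\rho}} = 1$, the correction coefficient $1 - \tr{\mb{\rho}}$ vanishes. This leaves $\b{E}[\hat{\mb{\rho}}^{K}\vert\mb{\rho}] = \mb{\rho}$.

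An equivalent viewpoint is the Lagrangian one. The minimizer of the population loss over $\b{F}_{sa,1}^{q\times q}$ is $\mb{\rho}$ itself, because the population loss vanishes there. That argument, however, requires interchanging expectation with the argmin, which is only justified because the estimator is affine in the data. So the explicit formula route is cleaner.

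I expect no real obstacle in this argument. The only delicate points are the nonvanishing of the normalizing trace and the fact that the weights $[\mb{\Omega}_{i}]_{kk'}$ are the same in $\f{H}_{K}$ and in $\m{P}_{K}$. The latter is what makes the multiplicities $m_{ik}$ cancel, as remarked before \cref{prop:superoperator_trace_norm}.
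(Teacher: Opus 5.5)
Your proof is correct and is essentially the paper's argument. The paper also identifies $\b{E}[\m{P}_K\vert\mb{\rho}]=\f{H}_K[\mb{\rho}]$; its displayed $\f{H}_K[\m{I}_q]$ is a typo, as \cref{prop:eff:data:cov} confirms. It then uses $\tr{\mb{\rho}}=1$ to write $\hat{\mb{\rho}}^{K}-\mb{\rho}$ as a linear map applied to the centered data $\m{P}_K-\f{H}_K[\mb{\rho}]$, which is exactly your affine-expectation computation. Your explicit check that $\tr{\f{H}_K^{-1}(\m{I}_q)}>0$ is a detail the paper leaves implicit.
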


As in \cref{ssec:est:sch}, in the case where $\hat{\mb{\rho}}^{K} \in \b{F}_{sa, 1}^{q \times q}$ does not satisfy the n.n.d. constraint, one may apply the trace-preserving projection algorithm, which preserves consistency and asymptotic convergence rates.
To tackle the asymptotic behavior of the QUARK estimator under the fixed complete design, we introduce two superoperators with respect to the kernel:
\begin{proposition}\label{prop:ker:tr:centre}
Let $\{\m{O}_{i}\}_{i=1}^n$ be a complete design, and the squared kernel $|K|^2 : \b{R} \times \b{R} \to \b{R}$ be strictly p.d.. Define the \textbf{kernel trace-centering superoperator} by
\begin{align*}
    \f{C}_{K} : \b{F}_{sa}^{q \times q} \to \b{F}_{sa}^{q \times q}, \, \m{S} \mapsto \m{S} - \frac{ \tr{\m{S}}}{\tr{\f{H}_K^{-1}(\m{I}_{q})}} \f{H}_K^{-1}(\m{I}_{q}).
\end{align*}
Then, its adjoint is given by
\begin{align*}
    \f{C}_{K}^{*} : \b{F}_{sa}^{q \times q} \to \b{F}_{sa}^{q \times q}, \, \m{S} \mapsto \m{S} - \frac{ \tr{\f{H}_K^{-1}(\m{S})}}{\tr{\f{H}_K^{-1}(\m{I}_{q})}} \m{I}_{q}.
\end{align*}
Consequently, the \textbf{kernel centered-inversion superoperator} $\f{A}_{K}:= \f{H}_{K}^{-1} \f{C}_{K}^{*}$ is self-adjoint, and the QUARK estimator can be decomposed as:
\begin{align*}
    \hat{\mb{\rho}}^{K} =  \frac{\m{I}_{q}}{q} \oplus \f{A}_{K} \left[\m{P}_K - \frac{\m{I}_{q}}{q} \right] \quad \in \quad \b{F}_{sa}^{q \times q} = \spann\{\m{I}_{q}\} \oplus \b{F}_{sa, 0}^{q \times q}.
\end{align*}
\end{proposition}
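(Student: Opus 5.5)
The plan is to verify the three claims in order by direct algebra on $\b{F}_{sa}^{q \times q}$ with the Frobenius inner product $\innpr{\m{S}}{\m{T}} = \tr{\m{S}\m{T}}$. Two facts are used throughout. First, $\f{H}_{K}$ is self-adjoint, since each term $\f{D}_{i}^{*}\mb{\Omega}_{i}\f{D}_{i}$ is self-adjoint with $\mb{\Omega}_{i}$ real symmetric. Second, by \cref{thm:quark:supop:pd} it is strictly positive definite, so $\f{H}_{K}^{-1}$ exists and is self-adjoint. Write $c_{K} := \tr{\f{H}_{K}^{-1}(\m{I}_{q})} = \innpr{\f{H}_{K}^{-1}\m{I}_{q}}{\m{I}_{q}} > 0$.

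\textbf{Adjoint.} For $\m{S}, \m{T} \in \b{F}_{sa}^{q \times q}$,
\begin{align*}
\innpr{\f{C}_{K}\m{S}}{\m{T}} = \tr{\m{S}\m{T}} - \frac{\tr{\m{S}}}{c_{K}}\innpr{\f{H}_{K}^{-1}\m{I}_{q}}{\m{T}}.
\end{align*}
By self-adjointness of $\f{H}_{K}^{-1}$, $\innpr{\f{H}_{K}^{-1}\m{I}_{q}}{\m{T}} = \innpr{\m{I}_{q}}{\f{H}_{K}^{-1}\m{T}} = \tr{\f{H}_{K}^{-1}(\m{T})}$. Substituting and writing $\tr{\m{S}} = \innpr{\m{S}}{\m{I}_{q}}$ gives the stated $\f{C}_{K}^{*}$.

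\textbf{Self-adjointness of $\f{A}_{K}$.} We have $\f{A}_{K}^{*} = \f{C}_{K}\f{H}_{K}^{-1}$, so it suffices to check $\f{C}_{K}\f{H}_{K}^{-1} = \f{H}_{K}^{-1}\f{C}_{K}^{*}$. Expanding,
\begin{align*}
\f{C}_{K}\f{H}_{K}^{-1}\m{S} = \f{H}_{K}^{-1}\m{S} - \frac{\tr{\f{H}_{K}^{-1}\m{S}}}{c_{K}}\f{H}_{K}^{-1}\m{I}_{q} = \f{H}_{K}^{-1}\Big(\m{S} - \frac{\tr{\f{H}_{K}^{-1}\m{S}}}{c_{K}}\m{I}_{q}\Big),
\end{align*}
which is exactly $\f{H}_{K}^{-1}\f{C}_{K}^{*}\m{S}$.

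\textbf{Decomposition.} I will record two byproducts. First, $\tr{\f{A}_{K}\m{S}} = \tr{\f{H}_{K}^{-1}\m{S}} - \tr{\f{H}_{K}^{-1}\m{S}} = 0$, so $\f{A}_{K}$ maps into $\b{F}_{sa, 0}^{q \times q}$. Second, $\f{C}_{K}^{*}\m{I}_{q} = \m{0}$, so $\f{A}_{K}[\m{P}_{K} - \m{I}_{q}/q] = \f{A}_{K}\m{P}_{K}$. Expanding $\f{A}_{K}\m{P}_{K}$ and comparing with the closed form in \cref{thm:quark:supop:pd} gives
\begin{align*}
\hat{\mb{\rho}}^{K} = \frac{\f{H}_{K}^{-1}(\m{I}_{q})}{c_{K}} + \f{A}_{K}\Big[\m{P}_{K} - \frac{\m{I}_{q}}{q}\Big],
\end{align*}
with the second summand traceless. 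The remaining task is to identify the unit-trace anchor $\f{H}_{K}^{-1}(\m{I}_{q})/c_{K}$ with $\m{I}_{q}/q$ in the trace-centering decomposition \eqref{eq:tr:center}.

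\textbf{Expected obstacle.} This identification is where I expect the main difficulty. The anchor equals $\m{I}_{q}/q$ exactly when $\f{H}_{K}\m{I}_{q} \propto \m{I}_{q}$. That holds for the 0--1 kernel, for unitary designs, and whenever the row sums $\sum_{k}[\mb{\Omega}_{i}]_{kk'}m_{ik}$ are constant in $k'$, but it is not automatic from the definition of $\f{H}_{K}$.

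First, I would try to confirm this using the identity-preservation structure, along the lines of the invariant-subspace discussion before \cref{prop:unit:design}. If it fails in general, I would state the decomposition in the form
\begin{align*}
\hat{\mb{\rho}}^{K} - \frac{\m{I}_{q}}{q} = \Big(\frac{\f{H}_{K}^{-1}\m{I}_{q}}{c_{K}} - \frac{\m{I}_{q}}{q}\Big) + \f{A}_{K}\Big[\m{P}_{K} - \frac{\m{I}_{q}}{q}\Big] \in \b{F}_{sa, 0}^{q \times q},
\end{align*}
which is what the subsequent asymptotic analysis actually needs. The first bracket is deterministic, so the fluctuation of $\hat{\mb{\rho}}^{K}$ is carried entirely by $\f{A}_{K}$ applied to $\m{P}_{K}$.
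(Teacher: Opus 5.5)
Your computation of $\f{C}_{K}^{*}$ and your proof that $\f{A}_{K}$ is self-adjoint via $\f{C}_{K}\f{H}_{K}^{-1} = \f{H}_{K}^{-1}\f{C}_{K}^{*}$ are the same as the paper's. On the decomposition, the obstacle you flag is genuine and cannot be removed. Your identity $\hat{\mb{\rho}}^{K} = \f{H}_{K}^{-1}(\m{I}_{q})/c_{K} + \f{A}_{K}[\m{P}_{K} - \m{I}_{q}/q]$ holds exactly. Hence the displayed decomposition is true if and only if $\f{H}_{K}(\m{I}_{q}) \propto \m{I}_{q}$. That fails in general, so the last claim of the proposition is false as stated.

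The paper's proof reaches it through an invalid step. It correctly derives $\hat{\mb{\rho}}^{K} - \m{I}_{q}/q = \f{C}_{K}[\f{H}_{K}^{-1}(\m{P}_{K}) - \m{I}_{q}/q]$. It then equates this with $\f{A}_{K}[\m{P}_{K} - \m{I}_{q}/q] = \f{C}_{K}[\f{H}_{K}^{-1}(\m{P}_{K}) - \f{H}_{K}^{-1}(\m{I}_{q})/q]$, silently replacing $\f{H}_{K}^{-1}(\m{I}_{q})$ by $\m{I}_{q}$. The two sides differ by $-\f{C}_{K}[\m{I}_{q}/q] = \f{H}_{K}^{-1}(\m{I}_{q})/c_{K} - \m{I}_{q}/q$, which is exactly the deterministic bracket in your fallback.

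So do not try to confirm the identification. State your fallback, or the equivalent and cleaner $\hat{\mb{\rho}}^{K} - \mb{\rho} = \f{A}_{K}[\m{P}_{K} - \f{H}_{K}[\mb{\rho}]]$. This follows from your formula together with $\f{A}_{K}\f{H}_{K}[\mb{\rho}] = \f{C}_{K}[\mb{\rho}] = \mb{\rho} - \f{H}_{K}^{-1}(\m{I}_{q})/c_{K}$. It holds unconditionally and is all that \cref{prop:eff:data:cov}, \cref{thm:quark:clt} and \cref{thm:innpr:concen} need. The paper's proof of \cref{prop:eff:data:cov} in fact starts from the valid intermediate expression, so those results are unaffected.

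For a concrete failure, take $q = 3$ with a complete set of $n = 4$ MUBs containing the computational basis; this is an $\alpha_{\c{Q}}$-unitary design by \cref{rmk:MUB}. Give all observables spectrum $\{0, 1, 2\}$, and use $K(x,y) = e^{-(x-y)^{2}}$, whose square is strictly p.d. Let $\m{\Pi}_{i}^{(1)}$ be the eigenprojection of the eigenvalue $1$ in the $i$-th basis. The row sums of $\mb{\Omega}_{i}$ are $w_{0} = w_{2} = 1 + e^{-2} + e^{-8}$ and $w_{1} = 1 + 2e^{-2}$, hence
\[
\f{H}_{K}(\m{I}_{3}) = w_{0}\m{I}_{3} + (w_{1} - w_{0}) \, \frac{1}{4}\sum_{i=1}^{4}\m{\Pi}_{i}^{(1)}.
\]
Label so that $\m{\Pi}_{1}^{(1)} = \m{e}_{1}\m{e}_{1}^{*}$. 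Unbiasedness then gives $[\sum_{i}\m{\Pi}_{i}^{(1)}]_{11} = 1 + 3\cdot\tfrac{1}{3} = 2 \neq \tfrac{4}{3}$, so $\f{H}_{K}(\m{I}_{3}) \not\propto \m{I}_{3}$. The claimed decomposition therefore misses by the nonzero matrix $\f{H}_{K}^{-1}(\m{I}_{3})/c_{K} - \m{I}_{3}/3$ for every data set.

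This example also corrects your list of sufficient cases. Your row-sum criterion is the right one, but it does not follow from unitarity of the design. Unitarity constrains only $\f{D}^{*}\f{D}$, which treats all eigenprojections of an observable alike, whereas $\f{H}_{K}(\m{I}_{q})$ weights them by eigenvalue-dependent row sums. For the 0--1 kernel, the criterion requires $m_{ik}$ to be constant in $k$ for each $i$.
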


To derive the Central Limit Theroem (CLT) of the QUARK estimator $\hat{\mb{\rho}}^{K}$ in the sample size per observable $r$, we first compute the covariance structure. In what follows, given a self-adjoint operator $\m{M} \in \b{F}_{sa}^{q \times q}$, we denote the superoperator via outer product by
\begin{align*}
    \m{M}^{\otimes 2} : \b{F}_{sa}^{q \times q} \to \b{F}_{sa}^{q \times q}, \, \m{S} \mapsto \innpr{\m{S}}{\m{M}}_{2} \m{M}.
\end{align*}

\begin{proposition}\label{prop:eff:data:cov}
Under the assumptions of \cref{thm:quark:supop:pd}, the mean of the effective data matrix $\m{P}_{K}$ is given by $\b{E}(\m{P}_{K} \vert \mb{\rho}) = \f{H}_{K} [\mb{\rho}]$, and its \emph{scaled} covariance superoperator $\f{S}_{K, \mb{\rho}} := r \cdot \b{E}[(\m{P}_{K} - \f{H}_{K} [\mb{\rho}])^{\otimes 2}] $
is given by
\begin{align*}
    \f{S}_{K, \mb{\rho}} = \frac{1}{n^{2}} \sum_{i=1}^{n} \Bigg[ \sum_{k=1}^{q_{i}} d_{ik}[\mb{\rho}] (\m{M}_{ik}^{K})^{\otimes 2} - \Big( \sum_{k=1}^{q_{i}} d_{ik}[\mb{\rho}] \m{M}_{ik}^{K} \Big)^{\otimes 2} \Bigg], 
\end{align*}
where $\m{M}_{ik}^{K} := \sum_{k'=1}^{q_i}[\mb{\Omega}_i]_{kk'}\mb{\Pi}_{ik'} \in \b{F}_{sa}^{q \times q}$.
\end{proposition}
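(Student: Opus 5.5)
Since $\m{P}_{K}$ is affine in the empirical frequencies, the whole statement comes down to the multinomial mean and covariance \eqref{eq:multinom:cov}. First we rewrite the effective data matrix by exchanging the order of summation:
\begin{align*}
    \m{P}_{K} = \frac{1}{n}\sum_{i=1}^{n}\sum_{k=1}^{q_i} p_{ik}\,\m{M}_{ik}^{K}, \qquad \m{M}_{ik}^{K} = \sum_{k'=1}^{q_i}[\mb{\Omega}_i]_{kk'}\m{\Pi}_{ik'} .
\end{align*}
Taking expectations with $\b{E}[p_{ik}\vert\mb{\rho}] = d_{ik}[\mb{\rho}] = \tr{\mb{\rho}\m{\Pi}_{ik}}$ gives $\frac{1}{n}\sum_{i}\sum_{k,k'}[\mb{\Omega}_i]_{kk'}\tr{\mb{\rho}\m{\Pi}_{ik}}\m{\Pi}_{ik'}$. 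This is exactly $\f{H}_K[\mb{\rho}]$ by the definition of the kernel Gram superoperator, which proves the mean identity.

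For the covariance, write $\m{P}_K - \f{H}_K[\mb{\rho}] = \frac1n\sum_i\sum_k (p_{ik}-d_{ik}[\mb{\rho}])\m{M}_{ik}^K$. The matrices $\m{M}_{ik}^K$ are real combinations of Hermitian projections, hence self-adjoint, so the real Frobenius inner product is real and the outer-product superoperators behave bilinearly. Using the bilinear extension $\m{A}\otimes\m{B}:\m{S}\mapsto\innpr{\m{S}}{\m{B}}_2\m{A}$, we expand
\begin{align*}
    \b{E}\big[(\m{P}_K-\f{H}_K[\mb{\rho}])^{\otimes 2}\big] = \frac{1}{n^2}\sum_{i,i'}\sum_{k,k'}\Cov(p_{ik},p_{i'k'}\vert\mb{\rho})\,\m{M}_{ik}^K\otimes\m{M}_{i'k'}^K .
\end{align*}
The cross terms with $i\neq i'$ vanish by independence across observables (third case of \eqref{eq:multinom:cov}). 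For fixed $i$ the two remaining multinomial cases combine into
\begin{align*}
    \Cov(p_{ik},p_{ik'}\vert\mb{\rho}) = \frac{d_{ik}[\mb{\rho}]\,\delta_{kk'} - d_{ik}[\mb{\rho}]\,d_{ik'}[\mb{\rho}]}{r}.
\end{align*}
The diagonal part gives $\sum_k d_{ik}[\mb{\rho}](\m{M}_{ik}^K)^{\otimes2}$. The product part factors as $\big(\sum_k d_{ik}[\mb{\rho}]\m{M}_{ik}^K\big)\otimes\big(\sum_{k'} d_{ik'}[\mb{\rho}]\m{M}_{ik'}^K\big) = \big(\sum_k d_{ik}[\mb{\rho}]\m{M}_{ik}^K\big)^{\otimes 2}$, because the $d_{ik}[\mb{\rho}]$ are real. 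Multiplying by $r$ removes the $1/r$ and yields the stated formula for $\f{S}_{K,\mb{\rho}}$.

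None of this is genuinely hard. The one point needing care is the bookkeeping in the first step: the exchange of summation has to put $[\mb{\Omega}_i]_{kk'}$ in the correct index order relative to the definitions of $\m{P}_K$ and $\f{H}_K$. This order is in fact irrelevant, since $\mb{\Omega}_i$ is symmetric ($|K|^2$ is real and symmetric). The other thing to check is that $\m{S}\mapsto\innpr{\m{S}}{\m{M}}_2\m{M}$ is the intended notion of second moment on the real Hilbert space $\b{F}_{sa}^{q\times q}$.
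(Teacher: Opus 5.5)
Your proof is correct and follows essentially the same route as the paper. Both rewrite the centered effective data matrix in terms of the multinomial fluctuations $p_{ik}-d_{ik}[\mb{\rho}]$, drop the cross-observable terms by independence, and split the within-observable covariance $\left(d_{ik}[\mb{\rho}]\delta_{kk'}-d_{ik}[\mb{\rho}]d_{ik'}[\mb{\rho}]\right)/r$ into a diagonal term and a rank-one product term, regrouped via $\m{M}_{ik}^{K}$. Your upfront identity $\m{P}_{K}=\frac{1}{n}\sum_{i,k}p_{ik}\m{M}_{ik}^{K}$ packages the index bookkeeping that the paper carries out explicitly in its terms (A) and (B).
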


It follows from \cref{prop:ker:tr:centre,prop:eff:data:cov} that the operator MSE of the QUARK estimator is given by:
\begin{align*}
    \mathrm{MSE}[\hat{\mb{\rho}}^{K} \vert \mb{\rho}] = \b{E} \left[\vertiii{\hat{\mb{\rho}}^{K} - \mb{\rho}}_{2}^{2} \vert \mb{\rho} \right] = \tr{\Cov(\hat{\mb{\rho}}^{K})} = \frac{\tr{\f{A}_{K} \f{S}_{K, \mb{\rho}} \f{A}_{K}}}{r}.
\end{align*}
Moreover, the following central limit theorem is trivial:
\begin{theorem}[CLT]\label{thm:quark:clt}
Under the assumptions of \cref{prop:eff:data:cov}, it holds that: 
\begin{align*}
    \sqrt{r}(\hat{\mb{\rho}}^{K}-\mb{\rho}) \xrightarrow{d}  N\left(\m{0}, \f{A}_{K} \f{S}_{K, \mb{\rho}} \f{A}_{K} \right),
\end{align*}
as $r\rightarrow\infty$, where $\f{A}_{K}$ is the kernel centered-inversion in \cref{prop:ker:tr:centre}.
\end{theorem}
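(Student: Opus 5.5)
The proof writes the estimator as a fixed affine image of an average of independent multinomial frequency vectors, so the result follows from the multivariate CLT and the continuous mapping theorem. By \cref{prop:ker:tr:centre},
\begin{align*}
    \hat{\mb{\rho}}^{K} = \frac{\m{I}_{q}}{q} + \f{A}_{K}\left[\m{P}_{K} - \frac{\m{I}_{q}}{q}\right],
\end{align*}
where $\f{A}_{K} = \f{H}_{K}^{-1}\f{C}_{K}^{*}$ is a deterministic linear superoperator. It depends only on the design and the kernel, not on the data or on $r$. By \cref{cor:unbiased}, $\b{E}[\hat{\mb{\rho}}^{K}\vert\mb{\rho}] = \mb{\rho}$, and by \cref{prop:eff:data:cov}, $\b{E}[\m{P}_{K}\vert\mb{\rho}] = \f{H}_{K}[\mb{\rho}]$. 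Taking expectations of the display and subtracting gives the exact identity
\begin{align*}
    \sqrt{r}\,(\hat{\mb{\rho}}^{K} - \mb{\rho}) = \f{A}_{K}\big[\sqrt{r}\,(\m{P}_{K} - \f{H}_{K}[\mb{\rho}])\big].
\end{align*}
It therefore suffices to show that $\sqrt{r}(\m{P}_{K} - \f{H}_{K}[\mb{\rho}])$ converges in distribution to $N(\m{0}, \f{S}_{K,\mb{\rho}})$ on the finite-dimensional real Hilbert space $\b{F}_{sa}^{q\times q}$. The continuous mapping theorem for the linear map $\f{A}_{K}$ then yields $N(\m{0}, \f{A}_{K}\f{S}_{K,\mb{\rho}}\f{A}_{K}^{*})$, and $\f{A}_{K}^{*} = \f{A}_{K}$ because \cref{prop:ker:tr:centre} shows $\f{A}_{K}$ is self-adjoint.

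For the convergence of $\m{P}_{K}$, I would write it as an empirical mean of i.i.d.\ bounded summands:
\begin{align*}
    \m{P}_{K} = \frac{1}{r}\sum_{j=1}^{r} \m{Z}_{j}, \qquad \m{Z}_{j} := \frac{1}{n}\sum_{i=1}^{n}\sum_{k=1}^{q_i}\ds{1}(Y_{ij} = \lambda_{ik})\,\m{M}_{ik}^{K}.
\end{align*}
Each shot index $j$ bundles one measurement from every observable. Since the $Y_{ij}$ are independent across $i$ and $j$, the vectors $\m{Z}_{1},\dots,\m{Z}_{r}$ are i.i.d. They are bounded, since they take finitely many values, so they have finite second moments. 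Their mean is $\f{H}_{K}[\mb{\rho}]$, and their covariance superoperator is exactly $\f{S}_{K,\mb{\rho}}$; this is the computation already carried out in \cref{prop:eff:data:cov} using \eqref{eq:multinom:cov}. The classical multivariate Lindeberg--Lévy CLT, applied after fixing any orthonormal basis of $\b{F}_{sa}^{q\times q}$ (dimension $q(q+1)/2$ or $q^{2}$), then gives the required convergence. A degenerate limiting covariance causes no problem, because it is allowed in the Gaussian limit.

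There is no serious obstacle, which matches the statement's remark that the CLT is trivial. Two points need care. First, the affine representation above must be exact, with no remainder term; otherwise a Slutsky argument would be needed. This exactness comes from the unit-trace constraint being absorbed into the deterministic centering built from $\f{C}_{K}$. Second, the identification of the covariance depends on the convention $\m{M}^{\otimes 2}\m{S} = \innpr{\m{S}}{\m{M}}_{2}\m{M}$, so that the covariance of a linear image is $\f{A}_{K}\f{S}_{K,\mb{\rho}}\f{A}_{K}^{*}$.
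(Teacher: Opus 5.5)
Your proposal is correct and follows the paper's route: derive the exact identity $\hat{\mb{\rho}}^{K}-\mb{\rho}=\f{A}_{K}\big[\m{P}_{K}-\f{H}_{K}[\mb{\rho}]\big]$, prove a multivariate CLT for $\m{P}_{K}$, then apply the continuous mapping theorem with the self-adjoint $\f{A}_{K}$; you justify the CLT for $\m{P}_{K}$ more carefully than the paper's ``direct consequence'', by grouping the $j$-th shots of all observables into i.i.d.\ bounded summands with covariance $\f{S}_{K,\mb{\rho}}$. One remark: from \cref{thm:quark:supop:pd} one actually gets $\hat{\mb{\rho}}^{K}=\f{H}_{K}^{-1}(\m{I}_{q})/\tr{\f{H}_{K}^{-1}(\m{I}_{q})}+\f{A}_{K}[\m{P}_{K}]$ while $\f{A}_{K}[\m{I}_{q}]=\m{0}$, so the constant $\m{I}_{q}/q$ in the decomposition of \cref{prop:ker:tr:centre} is right only when $\f{H}_{K}[\m{I}_{q}]\propto\m{I}_{q}$; your subtract-the-expectation step uses only the linear part $\f{A}_{K}[\m{P}_{K}]$, so the identity you derive, and hence the CLT, is unaffected.
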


Finally, we provide a Bennett-type non-asymptotic bound for the QUARK estimator:

\begin{theorem}[Directional Concentration Inequality]\label{thm:innpr:concen}
Assume the setup of \cref{thm:quark:supop:pd}. Then, for any $\m{S} \in \b{F}_{sa}^{q\times q}$ and $t>0$, the directional concentration inequality holds:
\begin{align*}
    \b{P} \left[\left. \innpr{\hat{\mb{\rho}}^{K} - \mb{\rho}}{\m{S}}_{2} >t \right\vert \mb{\rho}\right] \le \exp \left( 
    - \frac{\f M_{K,\mb \rho}[\m S]}{r} h \left( \frac{t}{\f M_{K,\mb \rho}[\m S]/ r} \right) 
    \right),
\end{align*}
where $h(x) := (1+x) \log(1+x) - x$ and $\f M_{K,\mb \rho}[\m S] = \innpr{\f{A}_{K} \f{S}_{K, \mb{\rho}} \f{A}_{K} \m{S}}{\m{S}}_{2}$.
\end{theorem}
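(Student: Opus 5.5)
The plan is to reduce the directional deviation to a normalized sum of independent, centered, bounded scalar random variables indexed by the individual shots $Y_{ij}$, and then apply the classical Bennett inequality.

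\textbf{Step 1: linearize the estimator.} By \cref{cor:unbiased} and the decomposition in \cref{prop:ker:tr:centre}, and since $\b{E}(\m{P}_{K}\vert\mb{\rho})=\f{H}_{K}[\mb{\rho}]$ by \cref{prop:eff:data:cov}, taking expectations gives $\mb{\rho}=\frac{\m{I}_{q}}{q}\oplus\f{A}_{K}[\f{H}_{K}[\mb{\rho}]-\frac{\m{I}_{q}}{q}]$. Subtracting this from the decomposition of $\hat{\mb{\rho}}^{K}$ yields the exact linear identity $\hat{\mb{\rho}}^{K}-\mb{\rho}=\f{A}_{K}[\m{P}_{K}-\f{H}_{K}[\mb{\rho}]]$. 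Because $\f{A}_{K}$ is self-adjoint (\cref{prop:ker:tr:centre}),
\begin{align*}
    \innpr{\hat{\mb{\rho}}^{K}-\mb{\rho}}{\m{S}}_{2}=\innpr{\m{P}_{K}-\b{E}\m{P}_{K}}{\f{A}_{K}\m{S}}_{2}.
\end{align*}

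\textbf{Step 2: expand over individual shots.} Write $p_{ik}=\frac1r\sum_{j}\ds{1}(Y_{ij}=\lambda_{ik})$ and $\m{P}_{K}=\frac1n\sum_{i,k}p_{ik}\m{M}^{K}_{ik}$. Define $g_{i}(\lambda_{ik}):=\innpr{\m{M}^{K}_{ik}}{\f{A}_{K}\m{S}}_{2}$, which is real since all operators involved are self-adjoint. Then
\begin{align*}
    \innpr{\hat{\mb{\rho}}^{K}-\mb{\rho}}{\m{S}}_{2}=\sum_{i=1}^{n}\sum_{j=1}^{r}Z_{ij},\qquad Z_{ij}:=\frac{g_{i}(Y_{ij})-\b{E}[g_{i}(Y_{ij})\vert\mb{\rho}]}{nr}.
\end{align*}
The $Z_{ij}$ are independent and centered, and the sum has variance
\begin{align*}
    \frac{1}{n^{2}r}\sum_{i}\Big[\sum_{k}d_{ik}[\mb{\rho}]g_{i}(\lambda_{ik})^{2}-\Big(\sum_{k}d_{ik}[\mb{\rho}]g_{i}(\lambda_{ik})\Big)^{2}\Big].
\end{align*}
Comparing with the formula for $\f{S}_{K,\mb{\rho}}$ in \cref{prop:eff:data:cov}, this equals $\innpr{\f{S}_{K,\mb{\rho}}\f{A}_{K}\m{S}}{\f{A}_{K}\m{S}}_{2}/r=\f{M}_{K,\mb{\rho}}[\m{S}]/r$, again using the self-adjointness of $\f{A}_{K}$.

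\textbf{Step 3: apply Bennett.} For independent centered $Z_{ij}\le b$ with total variance $\sigma^{2}$, Bennett's inequality gives
\begin{align*}
    \b{P}\Big(\sum Z_{ij}>t\Big)\le\exp\big(-(\sigma^{2}/b^{2})\,h(bt/\sigma^{2})\big).
\end{align*}
Substituting $\sigma^{2}=\f{M}_{K,\mb{\rho}}[\m{S}]/r$ reproduces the displayed bound exactly when $b=1$.

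\textbf{Main obstacle.} The delicate step is justifying the almost-sure bound $Z_{ij}\le1$, i.e. $\max_{k}g_{i}(\lambda_{ik})-\min_{k}g_{i}(\lambda_{ik})\le nr$. The stated inequality is not invariant under rescaling $\m{S}\mapsto c\m{S}$: the variance scales like $c^{2}$ and $t$ like $c$. So this boundedness is a genuine constraint that must be verified (or imposed) rather than obtained by normalization; note that, by the kernel homogeneity remark, it does not depend on the scale of $K$. I would first try to bound $|g_{i}|$ by $\vertiii{\m{M}^{K}_{ik}}_{2}\vertiii{\f{A}_{K}\m{S}}_{2}$ using Cauchy--Schwarz. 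If that does not suffice, the fallback is to retain an explicit $b$ equal to $\max_{i}\operatorname{osc}(g_{i})/(nr)$, which yields the general Bennett form and specializes to the statement whenever $b\le1$.
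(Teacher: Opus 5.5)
Your argument is the paper's argument. The paper sets $\m{T}=\f{A}_{K}[\m{S}]$ and writes $\innpr{\hat{\mb{\rho}}^{K}-\mb{\rho}}{\m{S}}_{2}=\sum_{i,j}Z_{ij}$, with $Z_{ij}=\frac{1}{nr}\innpr{Y_{ij}-\m{d}_{i}}{\mb{\beta}_{i}}$ and $\beta_{ik}=\innpr{\m{M}^{K}_{ik}}{\m{T}}_{2}$ (your $g_{i}(\lambda_{ik})$). It then identifies $r\sum_{i,j}\b{E}[Z_{ij}^{2}\vert\mb{\rho}]=\f{M}_{K,\mb{\rho}}[\m{S}]$ and runs Cram\'er--Chernoff, which is Bennett with $b=1$. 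The step you single out is exactly where the paper's proof has a hole. It uses $\b{E}[e^{\lambda Z_{ij}}-\lambda Z_{ij}-1\vert\mb{\rho}]\le\b{E}[Z_{ij}^{2}\vert\mb{\rho}](e^{\lambda}-\lambda-1)$ without comment, and this pointwise bound requires $Z_{ij}\le 1$ almost surely.

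You are right that this is not a normalization issue. No estimate, Cauchy--Schwarz or otherwise, can give it for every $\m{S}$, because the displayed inequality is false for large $\m{S}$.

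Let $\sigma^{2}=\f{M}_{K,\mb{\rho}}[\m{S}]/r$ and replace $(\m{S},t)$ by $(c\m{S},ct)$. The left-hand side is unchanged, while the exponent becomes $\frac{t^{2}}{\sigma^{2}}\frac{h(u)}{u^{2}}$ with $u=t/(c\sigma^{2})$, which increases to $t^{2}/(2\sigma^{2})$ as $c\to\infty$. So the theorem would give $\b{P}[X>t\vert\mb{\rho}]\le e^{-t^{2}/(2\sigma^{2})}$ for $X=\innpr{\hat{\mb{\rho}}^{K}-\mb{\rho}}{\m{S}}_{2}$. That is a sub-Gaussian tail whose proxy is the exact variance, and it fails for skewed outcome probabilities.

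For instance, take the qubit Pauli design, the 0--1 kernel, Bloch vector $\m{a}=-(1,1,1)^{\top}/\sqrt{3}$ and $\m{S}=\mb{\sigma}_{x}+\mb{\sigma}_{y}+\mb{\sigma}_{z}$. Then $\f{A}_{K}\m{S}=3\m{S}$ and $X=\frac{2}{r}\sum_{i,j}(B_{ij}-p)$, with $3r$ i.i.d.\ $\mathrm{Bernoulli}(p)$ variables and $p=(1-1/\sqrt{3})/2\approx 0.21$. For $t$ just below $\max X=6(1-p)$ we get $\b{P}[X>t\vert\mb{\rho}]=p^{3r}\approx e^{-1.55\cdot 3r}$, while $e^{-t^{2}/(2\sigma^{2})}\approx e^{-3r(1-p)/(2p)}\approx e^{-1.87\cdot 3r}$, which is smaller.

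So your fallback is the correct theorem: Bennett with $b:=\max_{i,k}\big(g_{i}(\lambda_{ik})-\sum_{k'}d_{ik'}[\mb{\rho}]\,g_{i}(\lambda_{ik'})\big)/(nr)$. Your oscillation condition is sufficient for $b\le 1$, not equivalent to it. Since Bennett's bound is increasing in $b$, it implies the displayed bound whenever $b\le 1$. Your Cauchy--Schwarz estimate then supplies a checkable sufficient hypothesis, $2\max_{i,k}\vertiii{\m{M}^{K}_{ik}}_{2}\vertiii{\f{A}_{K}\m{S}}_{2}\le nr$. A hypothesis of this kind, or the explicit $b$, must be added to the statement.

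A smaller point on Step 1: the intermediate identity $\mb{\rho}=\frac{\m{I}_{q}}{q}+\f{A}_{K}\big[\f{H}_{K}[\mb{\rho}]-\frac{\m{I}_{q}}{q}\big]$ is false in general. Because $\f{A}_{K}[\m{I}_{q}]=\m{0}$, the decomposition in \cref{prop:ker:tr:centre} is off by the constant $\f{C}_{K}[\m{I}_{q}/q]$, which vanishes only if $\f{H}_{K}[\m{I}_{q}]\propto\m{I}_{q}$. The constant cancels in your subtraction, so $\hat{\mb{\rho}}^{K}-\mb{\rho}=\f{A}_{K}[\m{P}_{K}-\f{H}_{K}[\mb{\rho}]]$ is correct. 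Still, derive it directly as $\hat{\mb{\rho}}^{K}-\mb{\rho}=\f{C}_{K}[\f{H}_{K}^{-1}\m{P}_{K}-\mb{\rho}]=\f{C}_{K}\f{H}_{K}^{-1}[\m{P}_{K}-\f{H}_{K}[\mb{\rho}]]$, as in the proof of \cref{prop:eff:data:cov}.
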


Furthermore, using the inequality $h(\tfrac{t}{m_2})\geq \tfrac{t^2}{2(m_2+t/3)}$ for every $t>0$ and $m_2\geq 0$, the simpler Bernstein-type inequality follows from the previous result:
\begin{align*}
    \b{P} [ \innpr{\m{P}_{K} - \f{H}_{K} [\mb{\rho}]}{\m{S}}_{2}>t \vert \mb{\rho}] \leq \exp\left(-\frac{t^{2}}{2(m_{2} + \frac{t}{3})}\right),
\end{align*}
where
\begin{align*}
    m_{2} = \frac{1}{n^{2} r} \sum_{i=1}^{n} \Big[ \sum_{k=1}^{q_i} d_{ik}[\mb{\rho}]\,
\innpr{ \m{M}_{ik}^{K} }{\m{S}}_{2}^{2} - \big( \sum_{k=1}^{q_i} d_{ik}[\mb{\rho}]\,
\innpr{ \m{M}_{ik}^{K} }{\m{S}}_{2} \big)^{2} \Big].
\end{align*}

\section{Simulation Study}\label{sec:simstudy}
All simulations in this section are performed in the complex setting $\b{F}=\b{C}$ (see \cref{sec:supp_material_simulation_study} for $\b{F}=\b{R}$). 
Given a kernel $K$, the estimated eigenpairs for the LSE and QUARK estimators are respectively denoted by $(\hat \lambda_{j}^{\text{LSE}},\hat{\m{v}}_{j}^{\text{LSE}})$ and $(\hat \lambda_j^{K},\hat{\m v}_j^{K})$.

\subsection{Empirical Validation}\label{ssec:emp:val}
In this subsection, the QHS is fixed to $\c{Q}=\b{C}^{8}$, and the underlying density matrix is chosen to be diagonal:
$\mb{\rho} = \Diag (\lambda_{1}, \dots, \lambda_{8}) \in \b{S}(\b{C}_{+}^{8 \times 8})$, with eigenvalues 
$0.4 \geq 0.2 \geq 0.15 \geq 0.08 \geq 0.06 \geq 0.05 \geq 0.04 \geq 0.02$. Without loss of generality, the associated eigenvectors are the canonical basis vectors $\m{v}_{i}=(\delta_{ij})_{j=1}^{8}$. We sample a design of $n=100$ observables uniformly from $U(\c{Q})$ to approximate a unitary design (see \cref{rmk:approx:unit}). Throughout this study, we utilize the Gaussian radial basis function kernel $K_{c}(x,y) := \exp (-c \|x-y\|^{2})$ for $c>0$, and write $K:=K_{1}$ for brevity.

\vspace{1em} \noindent
\textbf{Spectral Behavior.}
We first investigate the estimation errors for the eigenvalues and eigenvectors based on $1000$ Monte Carlo repetitions, each with $r=50$ independent shots per observable.
\cref{fig:spectral_behavior_complex} reports the spectral estimation errors for both the LSE and QUARK estimator with respect to $K=K_{1}$. 
For both estimators, eigenvalue estimation errors remain of comparable magnitude across the spectrum. The same can be said for eigenvectors, with the notable exception that the leading eigenvector exhibits substantially reduced error and variance, similar to the Davis-Kahan theorem \cite{davis1970rotation,yu2015useful}: the largest eigenvalue possesses the largest spectral gap ($0.4-0.2=0.2$) compared to the second largest gap ($0.15-0.08=0.07$), rendering its associated eigenspace estimation significantly more stable. Comparing the two estimators, we observe that the LSE achieves steadier reconstruction error for the eigenvalues. Conversely, for the eigenvectors, the QUARK estimator achieves slightly lower variance compared to the LSE, while maintaining a comparable median error.

\begin{figure}[h!]
    \centering
    \begin{minipage}[t]{0.48\textwidth}
        \centering
        \includegraphics[width=\textwidth]{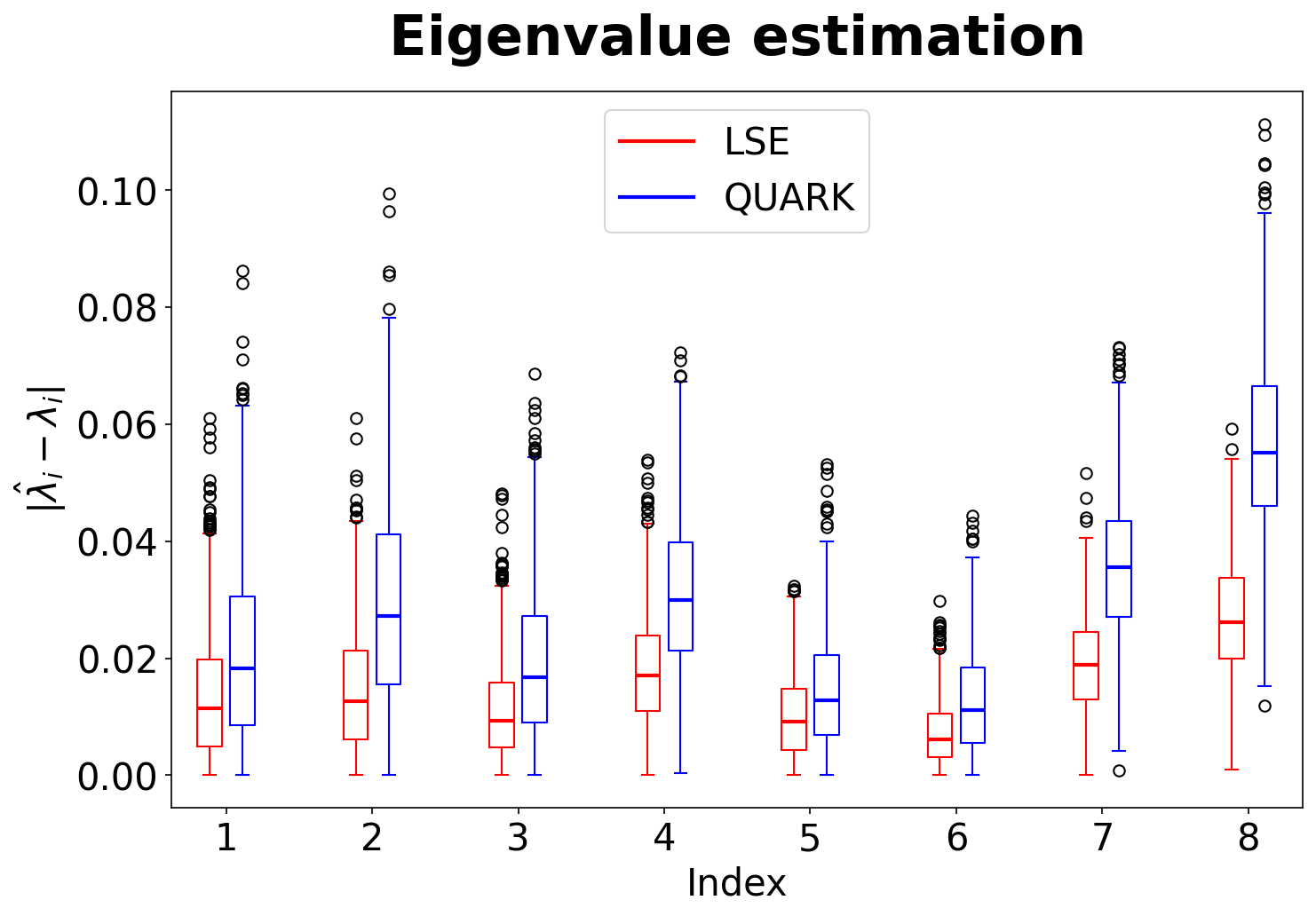}
    \end{minipage}
    \hfill
    \begin{minipage}[t]{0.48\textwidth}
        \centering
        \includegraphics[width=\textwidth]{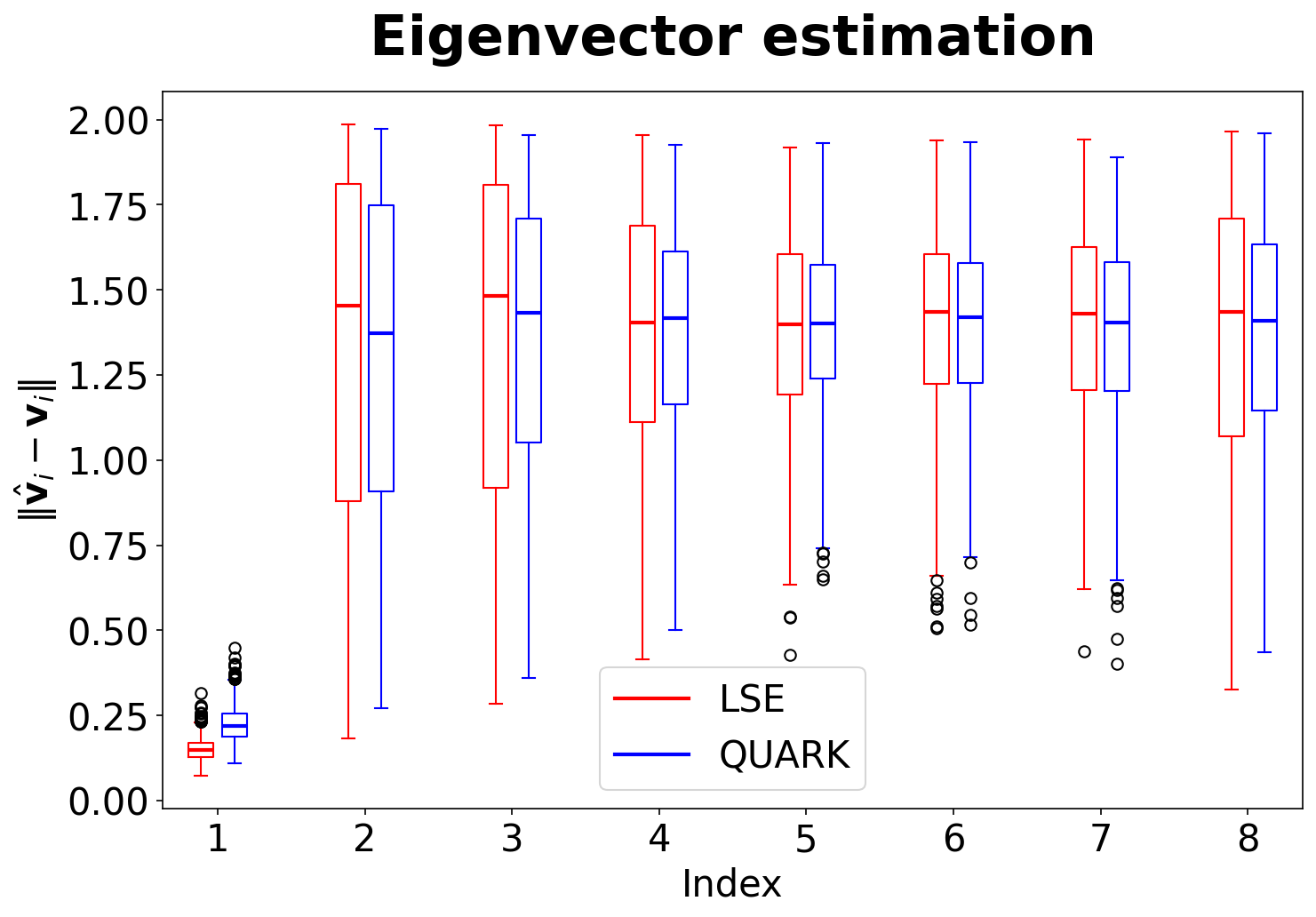}
    \end{minipage}
    \caption{Spectral behavior of the LSE and QUARK estimator with kernel $K$, based on $1000$ Monte Carlo repetitions. The boxplots are ordered by decreasing eigenvalue magnitude. \textbf{Left:} Absolute eigenvalue errors $|\hat{\lambda}^{\text{LSE}}_{i}-\lambda_{i}|$ (red) and $|\hat{\lambda}^{K}_{i}-\lambda_{i}|$ (blue). \textbf{Right:} Absolute eigenvector errors $\|\hat{\m{v}}^{\text{LSE}}_{i}-\m{v}_{i}\|$ (red) and $\| \hat{\m{v}}_{i}^{K}-\m{v}_{i}\|$ (blue).}
    \label{fig:spectral_behavior_complex}
\end{figure}

\noindent
\textbf{Asymptotic Performances.}
We study the asymptotic behavior of the estimators as the number of samples per observable $r$ increases, varying $r\in \{2^j:j=2,\dots,11\}$. 
We evaluate the reconstructions for the LSE (black), along with various QUARK estimators: $K$ (blue), $K_{100}$ (orange), $K_{0.01}$ (purple), and the polynomial kernel $K_{\text{poly}}(x,y)=(1+\langle x,y\rangle)^2$ (green). These empirical results are compared to the benchmark theoretical MSE under an exact unitary design (with damping factor $\alpha_{\c Q}=1/9$) established in \cref{cor:unit:mse}, denoted by the red dashed line. As shown in \cref{fig:mse_vs_r_complex}, the empirical behavior of the LSE closely tracks this theoretical scaling. While all QUARK estimators exhibit larger errors than the LSE, this gap diminishes as the bandwidth parameter $c$ increases. This reflects the fact that the Gaussian kernel approaches the $0$--$1$ kernel (and thus the LSE behavior) as $c \to \infty$. Nonetheless, \cref{fig:mse_vs_r_complex} validates the asymptotic decay rate predicted by the theory.

\begin{figure}[h!]
    \centering
    \includegraphics[width=\textwidth]{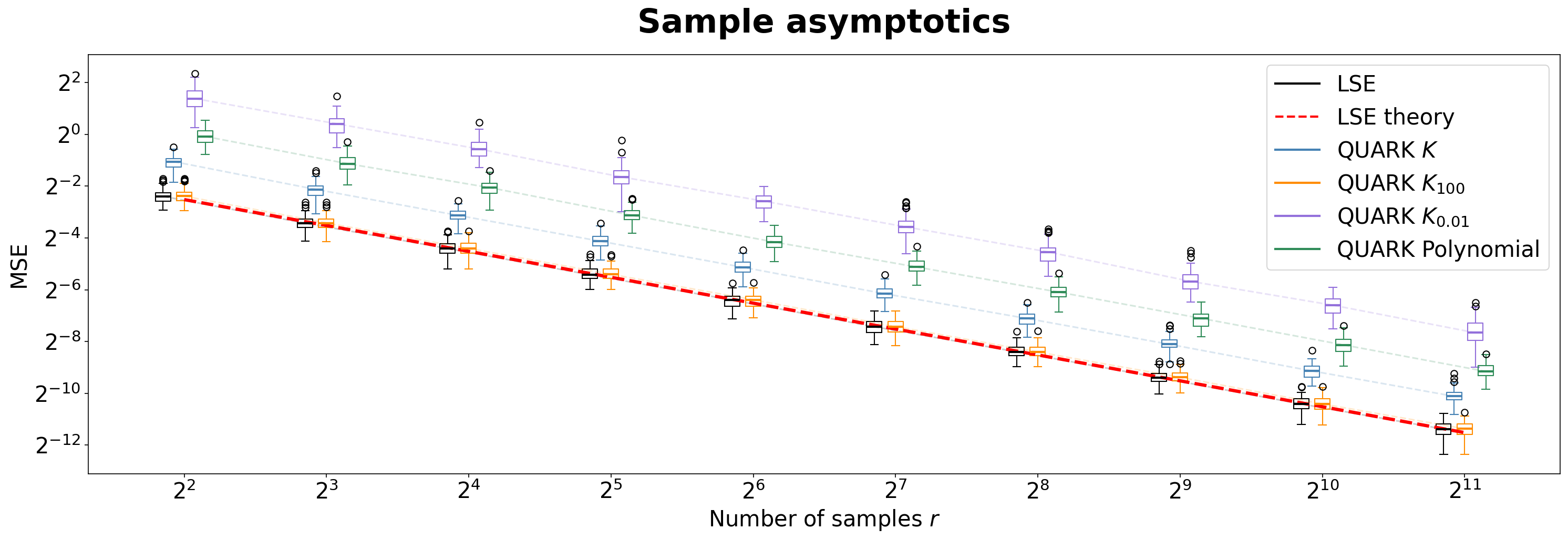}
    \caption{Asymptotic performances in terms of MSE of the LSE and various QUARK estimators.}
    \label{fig:mse_vs_r_complex}
\end{figure}

\subsection{Entangled MUB versus Local Pauli}\label{sec:simstudy_vary_q}
We compare the MUB design to Pauli design for a $k$-qubit system ($\c{Q} = \b{C}^{2^{k}}$). Standard approaches in the literature \cite{cai2016optimal, gross2010quantum, guctua2020fast} often employs the full set of $4^{k}$ tensorized Pauli observables:
\begin{align}\label{eq:tens:pauli}
    \mb{\sigma}_{(i_{1}, \dots, i_{k})} = \bigotimes_{j=1}^{k} \mb{\sigma}_{i_{j}}, \quad i_{j} \in \{0, x, y, z\}, \quad \mb{\sigma}_{0} = \m{I}_{2}.
\end{align}

We generate a true density matrix of low rank $3$, and perform $20$ Monte Carlo simulations per setup. The left panel of \cref{fig:comparison_to_other_estimators} fixes $k=6$ and compares the accuracy of our LSE estimator under a MUB design to the thresholding estimators under a Pauli design from \cite{cai2016optimal}, across varying sample sizes $r$. 
The right panel illustrates the scaling behavior when the number of qubits increases ($k=2, \cdots, 11$). For a fair comparison, we fix the total sample size $n \cdot r$ across setups. As shown, our LSE under a MUB design tightly adheres to the theoretical performance of an exact unitary design. It outperforms the Pauli-based estimators by orders of magnitude (note the log scale), while being computationally exponentially faster: $O(k \cdot 2^{k})$ via the fast WHT (\cref{ssec:mub}) versus $O(4^{k})$.

\begin{figure}[h!]
\begin{minipage}{0.48\textwidth}
\raggedright
\includegraphics[width=\linewidth]{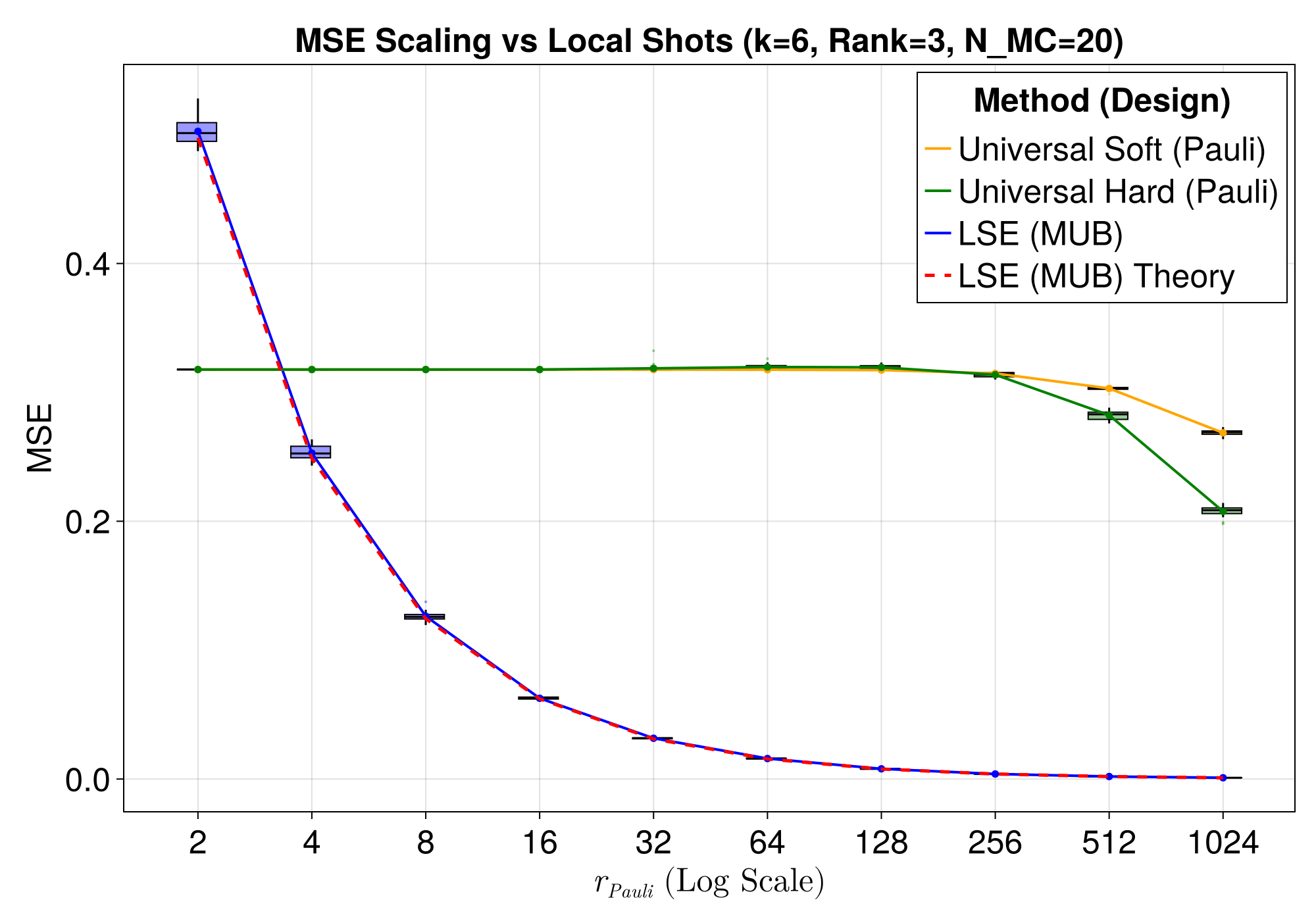}
\end{minipage}
\hfill
\begin{minipage}{0.48\textwidth}
\raggedleft
\includegraphics[width=\linewidth]{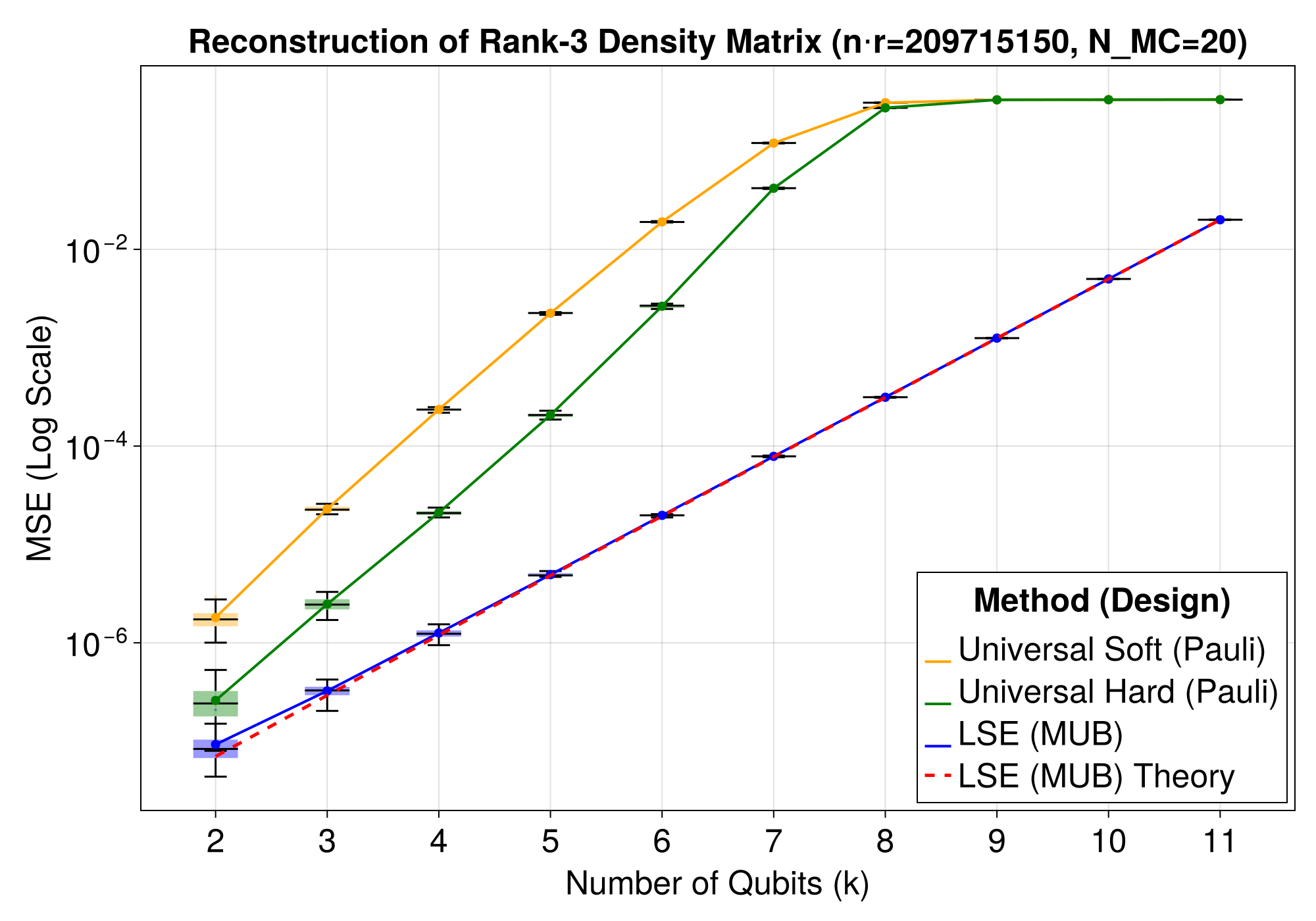}
\end{minipage}
\caption{Comparison between the proposed LSE estimator with MUB design and the Universal soft and hard thresholding estimators of \cite{cai2016optimal} with Pauli designs. Left: dependence on the number of samples per observable. Right: dependence on the dimension of the quantum Hilbert space.}
\label{fig:comparison_to_other_estimators}
\end{figure}

This performance gap is due to the high multiplicity inherent to Pauli observables: the identity yields a single trivial eigenvalue, while the remaining $4^{k}-1$ observables have spectra $\{\pm 1\}$ with multiplicity $2^{k-1}$. 
Crucially, for $k \ge 2$, the tensor product design in \eqref{eq:tens:pauli} performs only \emph{local} measurements and does not constitute a unitary design. To illustrate this, consider the trace-centering decomposition for $k=2$: the space $\b{C}_{sa, 0}^{4 \times 4}$ splits into two local sectors ($[\b{C}_{sa, 0}^{2 \times 2} \otimes \spann\{\m{I}_{2}\}]$ and $[\spann\{\m{I}_{2}\} \otimes \b{C}_{sa, 0}^{2 \times 2}]$) and one pure correlation sector ($[\b{C}_{sa, 0}^{2 \times 2} \otimes \b{C}_{sa, 0}^{2 \times 2}]$).
The Gram superoperator $\f{D}^{*} \f{D}/n$ acts diagonally on these subspaces with distinct eigenvalues. Since the local qubit design has $\alpha_{\b{C}^{2}} = 1/3$, the eigenvalues for the local sectors are $1/3$, whereas the eigenvalue for the correlation sector is the product $(1/3)^{2}$:
\begin{align*}
    \frac{\f{D}^{*} \f{D}}{n} \mb{\rho} = 1 \cdot \frac{\m{I}_{4}}{4} \oplus \frac{1}{3} \left[\mb{\Delta}_{A} \otimes \frac{\m{I}_{2}}{2}\right] \oplus \frac{1}{3} \left[\frac{\m{I}_{2}}{2} \otimes \mb{\Delta}_{B} \right] \oplus \frac{1}{9} \mb{\Delta}_{AB}, 
\end{align*}
where $\mb{\Delta}_{A} = \mathrm{Tr}_{B}[\mb{\rho}] - \m{I}_{2}/2$ and $\mb{\Delta}_{B} = \mathrm{Tr}_{A}[\mb{\rho}] - \m{I}_{2}/2$ are the traceless marginals, and $\mb{\Delta}_{AB}$ is the pure correlation tensor. This spectral gap widens exponentially with the number of qubits; the eigenvalue governing the $k$-body correlation sector is $(1/3)^{k}$. Consequently, repeating the analysis of \cref{cor:unit:mse} reveals that the MSE scales as $O(9^{k}/(nr))$ for a ``worst-case'' state dominated by global correlations.
In contrast, the MUB design is an $\alpha_{\c{Q}}$-unitary design with $\alpha_{\c{Q}}=(2^{k}+1)^{-1}$ achieves $O(4^{k}/(nr))$.

\section{Discussion}
This work establishes the kernel embedding theory to quantify how the geometry of the measurement process affects statistical efficiency. We demonstrate its utility for Quantum State Tomography, natively incorporating the least-squares approach under a metric-free topology. 

Our theory of unitary design and minimax analysis, providing an analytical bound for the penalty incurred by eigenvalue degeneracies, clarifies the statistical price of quantum state recovery. Crucially, our analysis sheds a light on the fundamental tension between hardware implementation and statistical optimality: while local measurement schemes (e.g., tensorized Pauli) may yield optimal accuracy given strict sparsity assumptions tailored to that design, they incur a severe, exponential variance penalty. Achieving fundamental statistical efficiency strictly requires entangled observables, such as Mutually Unbiased Bases.

In contrast, by leveraging the spectral geometry of the measurement process rather than design-dependent sparsity, the proposed QUARK estimator provides a minimax-optimal framework for density estimation even in the highly entangled regimes where genuine quantum supremacy is realized.

\begin{acks}[Acknowledgments]
Ho Yun is supported by a Swiss National Science Foundation granted to Victor Panaretos.
\end{acks}

\bibliographystyle{imsart-number}
\bibliography{bibliography}

\newpage

\begin{appendix}

\section{Tensorized Bochner integral}\label{sec:tens:boch}
The usual Bochner integral for a general Hilbert (or Banach) space with respect to a scalar probability measure is typically constructed via a limiting argument over step functions \cite{yosida2012functional, hsing2015theoretical}. Here, we develop a \emph{tensorized} Bochner integration where the measure is a Positive Operator-Valued Measure (POVM) and the Banach space of interest is the space of bounded operators on a Hilbert space, a formulation that natively supports our operator-theoretic framework.

Let $\c{H}_{1}$ and $\c{H}_{2}$ be Hilbert spaces. We define the inner product on elementary tensors by
\begin{equation*}
    \innpr{f_{1} \otimes f_{2}}{g_{1} \otimes g_{2}}_{\c{H}_{1} \otimes \c{H}_{2}} := \innpr{f_{1}}{g_{1}}_{\c{H}_{1}} \innpr{f_{2}}{g_{2}}_{\c{H}_{2}},  \quad f_{1}, \, g_{1} \in \c{H}_{1}, \ f_{2}, \, g_{2} \in \c{H}_{2}.
\end{equation*}
For $\m{T}_{1} \in \c{B}_{\infty}(\c{H}_{1})$ and $\m{T}_{2} \in \c{B}_{\infty}(\c{H}_{2})$, there exists a unique bounded operator, denoted by $\m{T}_{1} \otimes \m{T}_{2} \in \c{B}_{\infty}(\c{H}_{1} \otimes \c{H}_{2})$, such that
\begin{equation*}
    (\m{T}_{1} \otimes \m{T}_{2})(f_{1} \otimes f_{2}) = (\m{T}_{1} f_{1}) \otimes (\m{T}_{2} f_{2}),
\end{equation*}
and it holds that $\vertiii{\m{T}_{1} \otimes \m{T}_{2}}_{\infty} = \vertiii{\m{T}_{1}}_{\infty} \vertiii{\m{T}_{2}}_{\infty}$ \cite[Proposition A.67]{hall2013quantum}. Therefore, the quadratic form with respect to the elementary tensor entirely characterizes the tensorized operators. That is, for an operator $\m{T}: (\c{H}_{1} \otimes \c{H}_{2}) \to (\c{H}_{1} \otimes \c{H}_{2})$, it holds that $\m{T} = \m{T}_{1} \otimes \m{T}_{2}$ if and only if
\begin{align*}
    \innpr{(\m{T}_{1} \otimes \m{T}_{2}) (f_{1} \otimes f_{2})}{g_{1} \otimes g_{2}}_{\c{H}_{1} \otimes \c{H}_{2}} = \innpr{\m{T}_{1} f_{1}}{g_{1}}_{\c{H}_{1}} \innpr{\m{T}_{2} f_{2}}{g_{2}}_{\c{H}_{2}},
\end{align*}
for any $f_{1}, g_{1} \in \c{H}_{1}$ and $f_{2}, g_{2} \in \c{H}_{2}$. Hence, the formal definition below is well-posed:

\begin{definition}
Let $\c{Q}, \c{H}$ be Hilbert spaces. Let $(X, \s{B}(X))$ be an LCH space equipped with Borel $\sigma$-algebra, and $\mb{\nu} \in \s{P}(X, \c{Q})$ be a POVM. Consider the locally convex topological vector space $\c{B}_{\infty}(\c{H})$ equipped with the Weak Operator Topology (WOT). A measurable function $\m{F}: X \to \c{B}_{\infty}(\c{H})$ is said to be \textit{tensorized Bochner integrable} if there exists a bounded 
operator on $\c{H} \otimes \c{Q}$, denoted by $\int_{X} \m{F}(x) \otimes \rd \mb{\nu}(x)$, such that
\begin{align}\label{eq:tens:Boch}
    \innpr{\left( \int_{X} \m{F}(x) \otimes \rd \mb{\nu}(x) \right) (f \otimes \phi)}{g \otimes \psi}_{\c{H} \otimes \c{Q}} = \int_{X} \innpr{\m{F}(x) f}{g}_{\c{H}} \rd \mb{\nu}_{\phi, \psi} (x),
\end{align}
for any $f, g \in \c{H}$ and $\phi, \psi \in \c{Q}$. We call $\int_{X} \m{F}(x) \otimes \rd \mb{\nu}(x)$ the tensorized Bochner integral of $\m{F}$ with respect to $\mb{\nu}$.
\end{definition}

In the case where $\c{H} = \b{F}$ or $\c{Q} = \b{F}$, this naturally reduces to the standard Bochner integral and functional calculus, respectively, where it is customary to write $\int_{X} \m{F}(x) \rd \mb{\nu}(x)$ instead of $\int_{X} \m{F}(x) \otimes \rd \mb{\nu}(x)$.
Unlike the standard Bochner integral, we employ a topology strictly weaker than the norm topology. The WOT is sufficient to uniquely determine the integrated operator via its associated quadratic form. Furthermore, this choice allows us to leverage the Banach--Alaoglu theorem, which guarantees that the closed unit ball in $(\c{B}_{\infty}(\c{H}), \vertiii{\cdot}_{\infty})$ is compact with respect to the WOT. Consequently, if $\m{F}$ is uniformly bounded, i.e., $\sup_{x \in X} \vertiii{\m{F}(x)}_{\infty} < \infty$, then it is automatically tensorized Bochner integrable since \eqref{eq:tens:Boch} remains bounded for all $f, g \in \c{H}$ and $\phi, \psi \in \c{Q}$. In this bounded regime, we obtain the inequality: $\vertiii{\int_{X} \m{F} \otimes \rd \mb{\nu}}_{\infty} \le \sup_{x \in X} \vertiii{\m{F}(x)}_{\infty}$. For a measurable simple function
\begin{equation*}
    \m{S}(x) = \sum_{i=1}^{n} \ds{1}_{B_{i}}(x) \m{S}_{i}, \quad B_{i} \in \s{B}(X), \quad \m{S}_{i} \in \c{B}_{\infty}(\c{H}), \quad n \in \b{N},
\end{equation*}
its tensorized Bochner integral is trivially given by $\int_{X} \m{S}(x) \otimes \rd \mb{\nu}(x) = \sum_{i=1}^{n} \m{S}_{i} \otimes \mb{\nu}(B_{i}) \in \c{B}_{\infty}(\c{H} \otimes \c{Q})$, which is independent of the particular representation of the simple function $\m{S}$.
Alternatively, one can construct the tensorized Bochner integral globally using a standard limiting argument:

\begin{theorem}[Lebesgue Dominated Convergence]\label{thm:tens:lebesgue}
Let $\c{Q}$ be a Hilbert space, and $\mb{\nu} \in \s{P}(X, \c{Q})$. An operator-valued measurable function $\m{F}: X \to (\c{B}_{\infty}(\c{H}), \mathrm{WOT})$ is tensorized Bochner integrable if there exists a sequence of measurable simple functions $\{\m{S}_{n}: X \to (\c{B}_{\infty}(\c{H}), \mathrm{WOT}) \}_{n \in \b{N}}$ such that:
\begin{enumerate}
    \item There exists a scalar function $h \in \c{L}_{1}(X, \mb{\nu})$ (meaning $h$ is integrable with respect to the scalar positive measures $\mb{\nu}_{\phi, \phi}$ for all $\phi \in \c{Q}$) satisfying $\vertiii{\m{S}_{n}(x)}_{\infty} \le h(x)$ for any $n \in \b{N}$ and $x \in X$.
    \item For each $x \in X$, $\m{S}_{n}(x) \to \m{F}(x)$ as $n \to \infty$ with respect to the WOT in $\c{B}_{\infty}(\c{H})$.
\end{enumerate}
In this case, we have $\vertiii{\m{F}(x)}_{\infty} \le h(x)$ for any $x \in X$, and
\begin{align*}
    \int_{X} \m{F}(x) \otimes \rd \mb{\nu}(x) = \lim_{n \to \infty} \int_{X} \m{S}_{n}(x) \otimes \rd \mb{\nu}(x),
\end{align*}
where the limit converges with respect to the WOT in $\c{B}_{\infty}(\c{H} \otimes \c{Q})$.
\end{theorem}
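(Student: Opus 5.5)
The idea is to identify the limit weakly on elementary tensors, then upgrade to a bounded operator on all of $\c{H}\otimes\c{Q}$ by a uniform bound.

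\textbf{Step 1: domination passes to $\m{F}$.} For each $x$, $\m{S}_{n}(x)\to\m{F}(x)$ in WOT. Hence for unit $f,g$ we have $|\innpr{\m{F}(x)f}{g}| = \lim_n |\innpr{\m{S}_{n}(x)f}{g}| \le h(x)$. Taking the supremum over $f,g$ gives $\vertiii{\m{F}(x)}_{\infty}\le h(x)$.

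\textbf{Step 2: convergence on elementary tensors.} Fix $f,g\in\c{H}$ and $\phi,\psi\in\c{Q}$. The scalar functions $x\mapsto\innpr{\m{S}_{n}(x)f}{g}$ are measurable. They converge pointwise to $\innpr{\m{F}(x)f}{g}$ and are dominated by $h(x)\|f\|\|g\|$.

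The complex measure $\mb{\nu}_{\phi,\psi}$ has total variation controlled through the polarization identity. This writes it as a combination of the positive measures $\mb{\nu}_{\phi\pm\psi,\phi\pm\psi}$ and $\mb{\nu}_{\phi\pm \imath\psi,\phi\pm\imath\psi}$, all of which integrate $h$ by hypothesis. So $h\in\c{L}_{1}(|\mb{\nu}_{\phi,\psi}|)$.

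The classical dominated convergence theorem then yields
\begin{align*}
    \int_X \innpr{\m{S}_n(x) f}{g}\,\rd\mb{\nu}_{\phi,\psi}(x)\to\int_X\innpr{\m{F}(x)f}{g}\,\rd\mb{\nu}_{\phi,\psi}(x).
\end{align*}
The left side equals $\innpr{(\int \m{S}_n\otimes\rd\mb{\nu})(f\otimes\phi)}{g\otimes\psi}$ by the simple-function formula $\sum_i \m{S}_{i}\otimes\mb{\nu}(B_{i})$.

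\textbf{Step 3 (main obstacle): extension to a bounded operator.} The sesquilinear form $B(f\otimes\phi,g\otimes\psi):=\int\innpr{\m{F}f}{g}\,\rd\mb{\nu}_{\phi,\psi}$ is so far defined only on elementary tensors. We need it to extend to a bounded form on $\c{H}\otimes\c{Q}$, and we need the integrals of the simple functions to be uniformly bounded, so that weak convergence on the dense algebraic tensor product upgrades to WOT convergence.

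I would bound the simple-function integrals on finite sums $u=\sum_a f_a\otimes\phi_a$ and $v=\sum_b g_b\otimes\psi_b$, aiming for
\begin{align*}
    |\innpr{(\textstyle\int \m{S}_n\otimes\rd\mb{\nu}) u}{v}|\le \Big(\int h\,\rd\mb{\nu}_{u}\Big)^{1/2}\Big(\int h\,\rd\mb{\nu}_{v}\Big)^{1/2}.
\end{align*}
Here $\mb{\nu}_{u}$ is the positive measure $B\mapsto\innpr{(\m{I}\otimes\mb{\nu}(B))u}{u}$. The bound comes from applying Cauchy--Schwarz to each block $\m{S}_{i}\otimes\mb{\nu}(B_{i})$, using positivity of $\mb{\nu}(B_{i})$ and $\vertiii{\m{S}_{i}}\le\sup_{B_i}h$.

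If $h$ is only integrable rather than bounded, $\int h\,\rd\mb{\nu}_{u}$ is still finite by the hypothesis applied to each $\phi$ together with polarization. This gives a well-defined limit form on the algebraic tensor product.

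To obtain a genuinely bounded operator I would first treat the case $\sup h<\infty$. There the form is bounded by $\sup h\,\|u\|\|v\|$, so it extends by density and Riesz representation. Uniform boundedness together with convergence on a dense set then gives WOT convergence. This case covers the application in \cref{thm:quant:cov:scalar}, where $h=\sup K(x,x)$.

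For general integrable $h$ I would require $\int h\,\rd\mb{\nu}\in\c{B}_{\infty}(\c{Q})$, which the definition of $\c{L}_{1}(X,\mb{\nu})$ implicitly provides via the uniform boundedness principle. The bound then becomes $\vertiii{\int h\,\rd\mb{\nu}}_{\infty}\|u\|\|v\|$, and the same density argument applies.

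Finally, identify the limit with $\int\m{F}\otimes\rd\mb{\nu}$ through \eqref{eq:tens:Boch}.
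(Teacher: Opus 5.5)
Your proposal is correct. Steps 1 and 2 match the paper's proof. The paper gets $h\in\c{L}_{1}(|\mb{\nu}_{\phi,\psi}|)$ from the POVM Cauchy--Schwarz inequality $\rd|\mb{\nu}_{\phi,\psi}|\le\sqrt{\rd\mb{\nu}_{\phi,\phi}\,\rd\mb{\nu}_{\psi,\psi}}$ rather than from polarization, but either works.

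The genuine difference is Step 3. The paper sets $\m{H}:=\int_X h\,\rd\mb{\nu}$ and asserts without argument that it is bounded. It then bounds the matrix coefficients only on elementary tensors, $|\innpr{(\int_X\m{S}_n\otimes\rd\mb{\nu})(f\otimes\phi)}{g\otimes\psi}|\le\vertiii{\m{H}}_{\infty}\|f\|\|g\|\|\phi\|\|\psi\|$, and concludes directly that the limit form defines a bounded operator. That inference fails for general sesquilinear forms. For example, $(f\otimes\phi,g\otimes\psi)\mapsto(\sum_j f_j\phi_j)\overline{(\sum_j g_j\psi_j)}$ on $\b{C}^{d}\otimes\b{C}^{d}$ is bounded by $1$ on product vectors but has norm $d$, so its analogue is unbounded in infinite dimensions.

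Your argument closes this gap. You bound the form on arbitrary finite sums $u,v$, block by block, using $\mb{\nu}(B_i)\succeq\m{0}$ and the fact that $\m{S}_i\otimes\m{I}$ commutes with $\m{I}\otimes\mb{\nu}(B_i)^{1/2}$. You combine this with $\int_X h\,\rd\mb{\nu}_u=\innpr{(\m{I}\otimes\m{H})u}{u}\le\vertiii{\m{H}}_{\infty}\|u\|^{2}$. You also justify $\m{H}\in\c{B}_{\infty}(\c{Q})$ by the uniform boundedness principle, e.g.\ applied to the truncations $\int_X\min(h,N)\,\rd\mb{\nu}$. Together these give $\sup_n\vertiii{\int_X\m{S}_n\otimes\rd\mb{\nu}}_{\infty}\le\vertiii{\m{H}}_{\infty}$. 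That uniform bound is what is needed both for boundedness of the limit and for upgrading convergence on the algebraic tensor product to WOT convergence on $\c{H}\otimes\c{Q}$. So your route is longer, but it proves what the paper's final sentence only asserts.

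One small correction. In the block estimate, first pass to a representation with disjoint $B_i$, then use $\vertiii{\m{S}_i}_{\infty}\le h(x)$ for every $x\in B_i$, i.e.\ $\vertiii{\m{S}_i}_{\infty}\le\inf_{B_i}h$, not $\le\sup_{B_i}h$. Only the former yields $\sum_i\vertiii{\m{S}_i}_{\infty}\mb{\nu}_u(B_i)=\int_X\vertiii{\m{S}_n(x)}_{\infty}\rd\mb{\nu}_u(x)\le\int_X h\,\rd\mb{\nu}_u$.
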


\begin{proof}
First, it is trivial that $\vertiii{\m{F}(x)}_{\infty} \le h(x)$ for any $x \in X$, since for any $f, g \in \c{H}$,
\begin{equation*}
    |\innpr{\m{F}(x) f}{g}_{\c{H}}| = \lim_{n \to \infty} |\innpr{\m{S}_{n}(x) f}{g}_{\c{H}}| \le h(x) \|f\|_{\c{H}} \|g\|_{\c{H}}.
\end{equation*}
Fix $f, g \in \c{H}$ and $\phi, \psi \in \c{Q}$. Because $|\innpr{\m{S}_{n}(x) f}{g}_{\c{H}}| \le h(x) \|f\|_{\c{H}} \|g\|_{\c{H}}$ and $h \in \c{L}_{1}(X, \mb{\nu}_{\phi, \psi})$, we can apply the standard Lebesgue dominated convergence theorem to obtain:
\begin{align*}
    \int_{X} \innpr{\m{F}(x) f}{g}_{\c{H}} \rd \mb{\nu}_{\phi, \psi} (x) &= \lim_{n \to \infty} \int_{X} \innpr{\m{S}_{n}(x) f}{g}_{\c{H}} \rd \mb{\nu}_{\phi, \psi} (x) \\
    &= \lim_{n \to \infty} \innpr{\left( \int_{X} \m{S}_{n}(x) \otimes \rd \mb{\nu}(x) \right) (f \otimes \phi)}{g \otimes \psi}_{\c{H} \otimes \c{Q}}.
\end{align*}
It suffices to show that the bounding limit is well-behaved for all $f, g \in \c{H}$ and $\phi, \psi \in \c{Q}$. Define the bounded positive operator $\m{H} := \int_{X} h(x) \rd \mb{\nu}(x) \in \c{B}_{\infty}^{+}(\c{Q})$, which is well-defined by the assumption $h \in \c{L}_{1}(X, \mb{\nu})$. By the Cauchy-Schwarz inequality for POVMs, the total variation of the complex measure satisfies $\rd |\mb{\nu}_{\phi, \psi}|(x) \le \sqrt{\rd\mb{\nu}_{\phi, \phi}(x) \rd\mb{\nu}_{\psi, \psi}(x)}$. 

Therefore, applying the Cauchy-Schwarz inequality for integrals, for any $n \in \b{N}$ we have:
\begin{align*}
    &\left| \int_{X} \innpr{\m{S}_{n}(x) f}{g}_{\c{H}} \rd \mb{\nu}_{\phi, \psi} (x) \right| 
    \le \|f\|_{\c{H}} \|g\|_{\c{H}} \int_{X} h(x) \rd |\mb{\nu}_{\phi, \psi}| (x) \\ 
    &\le \|f\|_{\c{H}} \|g\|_{\c{H}} \left( \int_{X} h(x) \rd \mb{\nu}_{\phi, \phi} (x) \right)^{1/2} \left( \int_{X} h(x) \rd \mb{\nu}_{\psi, \psi} (x) \right)^{1/2} \\
    &= \|f\|_{\c{H}} \|g\|_{\c{H}} \innpr{\m{H} \phi}{\phi}_{\c{Q}}^{1/2} \innpr{\m{H} \psi}{\psi}_{\c{Q}}^{1/2} \le \|f\|_{\c{H}} \|g\|_{\c{H}} \vertiii{\m{H}}_{\infty} \|\phi\|_{\c{Q}} \|\psi\|_{\c{Q}}.
\end{align*}
This uniform bound over all $n$ strictly guarantees that the sesquilinear form defines a bounded operator on the tensor product space, completing the proof.
\end{proof}

\section{Algorithms for Quantum Tomography}

\subsection{Trace-Preserving Projection}\label{sec:tpp}

\begin{theorem}[Sum-Preserving Threshold Algorithm (SPTA)]\label{thm:SPTA}
Let $\m{a} = (a_{1}, \cdots, a_{q})$ be a non-increasing real-valued sequence with $a_{1} +  \cdots + a_{q} > 0$. Consider the following constrained optimization problem:
\begin{equation*}
    \min \sum_{k=1}^{q} (a_{k} - b_{k})^{2} \quad \text{ w.r.t. } \quad b_{1} \ge  \cdots \ge b_{q} \ge 0, \text{ and } b_{1} +  \cdots + b_{q} = a_{1} +  \cdots + a_{q}.
\end{equation*}
Define the truncation index and the truncated negative tail average by
\begin{align*}
    t := t(\m{a}) = \max \{ 1 \le k \le q : k a_{k} + a_{k+1} + \dots + a_{q} \ge 0 \}, \,
    v := v(\m{a}) = - \frac{a_{t+1} + \cdots + a_{q}}{t}.
\end{align*}
Then, we have $0 \le v \le a_{t}$, and the unique solution of the primal problem is achieved at $\mathrm{SPTA}(\m{a}):= \m{b}^{*} = (b^{*}_{1}, \cdots, b^{*}_{q})$ with
\begin{equation*}
    b^{*}_{k} = \begin{cases}
        a_{k} - v &, \quad k = 1, 2, \cdots, t, \\
        0 &, \quad k = t+1, t+2, \cdots, q.
    \end{cases}
\end{equation*}
\end{theorem}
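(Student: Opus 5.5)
The plan is to treat this as a Euclidean projection of $\m{a}$ onto the scaled simplex $\{\m{b} \ge 0, \sum_k b_k = s\}$, with $s := a_{1} + \cdots + a_{q} > 0$, and to solve it through the KKT conditions.

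\textbf{Step 1: drop the ordering constraint.} The objective is strictly convex and the feasible set is convex and compact, so a unique minimizer exists. I will show that the ordering constraint $b_{1} \ge \cdots \ge b_{q}$ is inactive. If $\m{b}$ is feasible for the relaxed problem (without ordering) and $a_{k} > a_{k'}$ but $b_{k} < b_{k'}$, swapping $b_{k}$ and $b_{k'}$ changes the objective by $-2(a_{k}-a_{k'})(b_{k'}-b_{k}) < 0$, a strict decrease. Ties $a_k = a_{k'}$ are harmless by uniqueness and symmetry. Hence the relaxed minimizer is non-increasing, and it suffices to solve the simplex projection.

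\textbf{Step 2: KKT conditions.} The constraints are affine, so the KKT conditions are necessary and sufficient. With multiplier $v$ for the sum constraint, they read
\[
b_{k} = \max(a_{k} - v, 0)
\quad\text{with}\quad
\sum_k \max(a_k - v, 0) = s .
\]
The function $g(v) := \sum_k \max(a_k - v, 0)$ is continuous and non-increasing. It is strictly decreasing wherever it is positive, so it takes the value $s$ at a unique $v$. It therefore suffices to verify that the proposed $v$ and $t$ satisfy three things:
\begin{itemize}
\item $a_{k} - v \ge 0$ for $k \le t$;
\item $a_{k} - v \le 0$ for $k > t$;
\item $\sum_{k \le t}(a_{k} - v) = s$.
\end{itemize}
The last is immediate from the definition $t v = -(a_{t+1} + \cdots + a_{q})$.

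\textbf{Step 3: verify the thresholds.} This is the main obstacle, since it must be extracted from the definition of $t$ as a maximum. Write $T_{k} := a_{k+1} + \cdots + a_{q}$, so that $t$ is the largest $k$ with $k a_{k} + T_{k} \ge 0$.
\begin{itemize}
\item The index set is nonempty because $k=1$ gives $a_1 + T_1 = s > 0$.
\item The inequality $a_{t} \ge v$ is just $t a_{t} + T_{t} \ge 0$ divided by $t$.
\item For $v \ge 0$: if $t = q$ then $v = 0$. Otherwise, maximality gives $(t+1)a_{t+1} + T_{t+1} < 0$. If $a_{t+1} \ge 0$, then every $a_j$ with $j > t+1$ would need to be negative, while monotonicity forces $a_j \le a_{t+1}$. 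I will instead argue directly that $a_{t+1} < 0$, which gives $T_t < 0$ and hence $v > 0$. Suppose $a_{t+1} \ge 0$. Then $(t+1)a_{t+1} + T_{t+1} \ge t a_{t+1} + T_{t}$. Since $a_{t} \ge a_{t+1}$ and $T_t = a_{t+1} + T_{t+1}$, a comparison of $ta_t + T_t \ge 0$ with the failing index needs care. The cleaner route is to note that $h(k) := k a_{k} + T_{k}$ satisfies $h(k) - h(k+1) = k(a_{k} - a_{k+1}) \ge 0$, so $h$ is non-increasing. The set of valid indices is therefore $\{1, \dots, t\}$, and $h(t+1) < 0$.
\item For $a_{t+1} \le v$: the inequality $h(t+1) = (t+1)a_{t+1} + T_{t+1} < 0$ rearranges to $t\,a_{t+1} < -T_{t} = t v$, so $a_{t+1} < v$. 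Monotonicity then gives $a_{k} \le v$ for all $k > t$.
\item $v \ge 0$: if $t = q$ then $v = 0$. Otherwise $h(t+1) < 0$, and $h(q) = q a_q$; if $a_{t+1} \ge 0$ then all $a_k \ge 0$, which would make $h \ge 0$ everywhere, a contradiction. So $a_{t+1} < 0$, and iterating $a_k \le a_{t+1}$ gives $T_{t} < 0$, hence $v > 0$.
\end{itemize}

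Finally, the resulting $\m{b}^{*}$ is non-increasing, matching Step 1, and uniqueness follows from strict convexity.
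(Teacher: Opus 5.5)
Your route differs from the paper's. The paper keeps the ordering constraints and folds them, together with nonnegativity, into the chain $b_k \ge b_{k+1}$ ($k=1,\dots,q$, with $b_{q+1}:=0$). It then writes down explicit multipliers $u_k^{*} = kv - \sum_{i\le k}(a_i - b_i^{*})$ and checks stationarity, dual feasibility and complementary slackness by hand.

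You instead discard the ordering and reduce to the textbook Euclidean projection onto the scaled simplex, where KKT immediately gives the soft-threshold form $b_k = \max(a_k - v,0)$. Your Step 1 swap argument is correct, but it is not even needed: $\max(a_k - v,0)$ is visibly non-increasing, so the relaxed minimizer is feasible for the ordered problem and is therefore its unique minimizer.

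Your approach is shorter and more transparent. It also shows that the sign of $v$ is irrelevant to optimality, since $v$ is the multiplier of an equality constraint. In the paper's formulation, by contrast, $v\ge 0$ is genuinely needed, because $u_q^{*}=qv$ is the multiplier of $b_q\ge 0$. Your verifications that $h(k)=ka_k+T_k$ is non-increasing, that $a_t\ge v$, that $a_{t+1}<v$, and the sum identity are all correct.

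The gap is in your final bullet proving $v\ge 0$. The step ``if $a_{t+1}\ge 0$ then all $a_k\ge 0$'' is false: monotonicity only controls $a_k$ for $k\le t+1$, not the tail. The conclusion $a_{t+1}<0$ can also fail. For $\m{a}=(1,0,-1/2)$ you get $h(1)=1/2$ and $h(2)=-1/2$, so $t=1$, yet $a_{t+1}=0$. Your abandoned attempt in the third bullet was aiming at the same false claim.

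The inequality $v\ge0$ is still true, and the repair uses what you already proved. When $t<q$ you have $a_{t+1}<v$. If $a_{t+1}\ge 0$, this gives $v>0$ at once. If $a_{t+1}<0$, then every $a_k$ with $k>t$ is negative, so $T_t<0$ and $v=-T_t/t>0$. This is the same case split the paper uses. Since $0\le v$ is part of the statement you must include this fix, but it does not affect your optimality argument.
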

\begin{proof}
We may assume that $a_{1} +  \cdots + a_{q} = 1$, as we can normalize the problem. We may further assume that $a_{q} < 0$, otherwise the result is trivial as $t = q$, $v = 0$ and $\m{b}^* = \m{a}$. Consider the sequence
\begin{equation*}
    s_{k} := k a_{k} + a_{k+1} + \dots + a_{q}, \quad k = 1, 2, \cdots, q.
\end{equation*}
Then $\{s_{k} : k = 1, 2, \cdots, q \}$ is a monotonically non-increasing sequence with $s_{1} = 1$, since $s_{k} - s_{k+1} = k (a_{k} - a_{k+1}) \ge 0$. This shows that $t := t(\m{a}) = \max \{ 1 \le k \le q : s_{k} \ge 0 \}$ is well-defined. Also, the definition of $t$ yields $v = - (a_{t+1} + \cdots + a_{q})/t \le a_{t}$. Note that 
\begin{equation*}
    (t+1) a_{t+1} + a_{t+2} + \dots + a_{q} < 0 \quad \iff \quad v = - \frac{a_{t+1} + \cdots + a_{q}}{t} > a_{t+1},
\end{equation*}
hence $v \ge 0$. Otherwise, we have $a_{t+1} < 0$, but the monotonicity of $\m{a}$ yields $v > 0$, which is a contradiction.

We now show that the sequence $\m{b}^{*}$ satisfies the KKT conditions for the dual problem \cite{boyd2004convex}:
\begin{align*}
    L(\m{b}, \m{u}, v) = \frac{1}{2} \sum_{k=1}^{q} (a_{k} - b_{k})^{2} + \sum_{k=1}^{q} u_{k}(-b_{k} + b_{k+1}) + v (\m{1}^{\top} \m{b} - 1),
\end{align*}
where $b_{q+1} := 0$. Let $v^{*} := v$, $u_{0} := 0$, and $u^{*}_{k} := k v - \sum_{i=1}^{k} (a_{i} - b^{*}_{i})$ for $k = 1, 2, \cdots, q$.

\begin{enumerate}
\item (Stationarity) For each $k = 1, \cdots, q$,
\begin{equation*}
    \frac{\partial L(\m{b}, \m{u}^{*}, v^{*})}{\partial b_{k}} \Big\vert_{b_{k} = b^{*}_{k}} = - (a_{k} - b^{*}_{k}) - (u^{*}_{k} - u^{*}_{k-1}) + v^{*} = 0.
\end{equation*}

\item (Primal Feasibility) We have
\begin{align*}
    b^{*}_{1} \ge \cdots \ge b^{*}_{t} = a_{t} - v \ge 0 = b^{*}_{t+1} = \cdots = b^{*}_{q}, \, \sum_{k=1}^{q} b^{*}_{k} = \sum_{k=1}^{t} a_{k} - tv = \sum_{k=1}^{q} a_{k} = 1.
\end{align*}
\item (Dual Feasibility) For $k = 1, \cdots, t$, $u^{*}_{k} = 0$. Also, we have $u^{*}_{q} = q v \ge 0$ and
\begin{align*}
    u^{*}_{t+1} = u_{t}^{*} + (v - a_{t+1} + b^{*}_{t+1}) = v - a_{t+1} = - \frac{(t+1) a_{t+1} + a_{t+2} + \cdots + a_{q}}{t} \ge 0,
\end{align*}
by the construction of $t$, which yields 
\begin{align*}
    u^{*}_{k} = k v - \sum_{i=1}^{k} (a_{i} - b^{*}_{i}) = (k-t)v - \sum_{i=t+1}^{k} a_{i} \ge (k-t) (v -  a_{t+1}) = (k-t) u^{*}_{t+1} \ge 0,
\end{align*}
for $k = t+2, \cdots, q-1$.

\item (Complementary Slackness) For $k = 1, \cdots, t$, we have $u^{*}_{k}(-b^{*}_{k} + b^{*}_{k+1}) = 0$ since $u^{*}_{k} = 0$. For $k = t+1, \cdots, q$, we also have $u^{*}_{k}(-b^{*}_{k} + b^{*}_{k+1}) = 0$ since $b^{*}_{k} = b^{*}_{k+1} = 0$.
\end{enumerate}
\end{proof}

\begin{algorithm}[h!]
\caption{Sum-Preserving Threshold Algorithm (SPTA)}\label{alg::spta}
\begin{algorithmic}[1]
\Require $\m{a} = (a_{1}, \cdots, a_{q}) \in \mathbb{R}^{q}$ with $a_{1} \ge \cdots \ge a_{q}$ and $a_{1} +  \cdots + a_{q} > 0$
\Ensure $\m{b}= (b_{1}, \cdots, b_{q}) \in \mathbb{R}^{q}$ with $b_{1} \ge \cdots \ge b_{q} \ge 0$ and $\m{1}^{\top} \m{b} = \m{1}^{\top} \m{a}$
\Procedure{SPTA}{$\m{a}$}
    \If{$a_q \ge 0$} \Comment{If all elements are non-negative, $\m{a}$ is the solution} 
        \State \textbf{return} $\m{a}$
    \EndIf

    \State $\text{Sum} \gets 0$ \Comment{Stores the running sum of the tail, $a_{k+1} + \dots + a_{q}$}
    \State $k \gets q$
    
    \Statex \Comment{\textbf{Phase 1:} Efficiently scan past the negative tail.}
    \While{$a_{k} \le 0$}
        \State $\text{Sum} \gets \text{Sum} + a_k$
        \State $k \gets k-1$
    \EndWhile

    \Statex \Comment{\textbf{Phase 2:} Find the true truncation index, starting from the first positive value.}
    \While{$k \cdot a_{k} + \text{Sum} < 0$}
        \State $\text{Sum} \gets \text{Sum} + a_{k}$
        \State $k \gets k-1$
    \EndWhile

    \State $\m{b} \gets (0, 0, \cdots, 0)$ \Comment{Initialize as a zero vector.}
    
    \State $\text{Shift} \gets \text{Sum}/k$ \Comment{Shift is $-v$, where $v$ is the corrective offset}
    \For{$i \gets 1$ \textbf{to} $k$}
        \State $b_{i} \gets a_{i} + \text{Shift}$ \Comment{Apply the shift to the head of the vector}
    \EndFor
    \State \textbf{return} $\m{b}$ \Comment{The tail of $\m{b}$ remains zero}
\EndProcedure
\end{algorithmic}
\end{algorithm}

In the case where $\m{a}$ contains only a small number of negative values, it is computationally more efficient to compute $s_{k}$ backwards, starting from the first non-negative value. The SPTA algorithm can readily be used to project a self-adjoint matrix onto the convex set of n.n.d. matrices with respect to the Frobenius norm:

\begin{table}[h!]
\centering
\caption{Execution trace of SPTA (\cref{alg::spta}) with $q=7$ and $\m{a} = (1.1, 0.3, 0.1, 0.1, -0.1, -0.2, -0.3)$.}
\label{tab:spta_optimized_trace}
\begin{tabular}{|l|c|c|c|l|c|}
\hline
\textbf{Phase} & $k$ & \textbf{$a_k$} & \textbf{Sum} & \textbf{Condition Check} & \textbf{Result} \\ \hline
\multicolumn{6}{|c|}{\textbf{Phase 1: Scan non-positive tail ($a_k \le 0$)}} \\ \hline
Start & 7 & -0.3 & 0 & $a_7 \le 0$ & True \\
Iter 1 & 6 & -0.2 & -0.3 & $a_6 \le 0$ & True \\
Iter 2 & 5 & -0.1 & -0.5 & $a_5 \le 0$ & True \\
Iter 3 & 4 & 0.1 & -0.6 & $a_4 \le 0$ & \textbf{False} \\ \hline
\multicolumn{6}{|c|}{\textbf{Phase 2: Find truncation index ($k \cdot a_k + \text{Sum} < 0$)}} \\ \hline
Start & 4 & 0.1 & -0.6 & $4(0.1)+(-0.6) = -0.2$ & True \\
Iter 1 & 3 & 0.1 & -0.5 & $3(0.1)+(-0.5) = -0.2$ & True \\
Iter 2 & 2 & 0.3 & -0.4 & $2(0.3)+(-0.4) = 0.2$ & \textbf{False} \\ \hline
\multicolumn{6}{|c|}{\textbf{Final Calculation}} \\ \hline
\multicolumn{6}{|p{12cm}|}{ \textbf{Loop terminates with} $k=2$ and $\texttt{Sum}=-0.4$. \newline \textbf{Calculate Shift:} $\texttt{Shift} = \texttt{Sum} / k = -0.2$. \newline 
\textbf{Construct $\m{b}$:} $b_1 = 1.1 - 0.2 = 0.9$, $b_2 = 0.3 - 0.2 = 0.1$. The rest are 0. \newline 
\textbf{Result:} $\m{b} = (0.9, 0.1, 0, 0, 0, 0, 0)$. } \\ \hline
\end{tabular}
\end{table}

\begin{theorem}[Trace-preserving projection]
Let $\m{S} = \m{U} \Diag[\m{a}] \m{U}^{*} \in \b{F}_{sa}^{q \times q}$ be the spectral decomposition of a self-adjoint matrix, where $\m{U}$ is unitary (orthogonal if $\b{F} = \b{R}$) matrix and $\m{a} = (a_1, \dots, a_q)$ is the vector of its real eigenvalues, sorted in non-increasing order.
Then the optimization problem
\begin{equation*}
    \argmin_{\m{T} \in \b{F}_{sa}^{q \times q}} \vertiii{\m{S} - \m{T}}_{2}^{2} \quad \text{ w.r.t. } \quad \m{T} \succeq \m{0} \quad \text{and} \quad \tr{\m{T}} = \tr{\m{S}},
\end{equation*}
has a unique solution given by $\m{U} \Diag[\mathrm{SPTA}(\m{a})] \m{U}^{*}$.
\end{theorem}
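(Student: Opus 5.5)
The plan is to reduce the matrix problem to the vector problem solved by \cref{thm:SPTA}, using unitary invariance of the Frobenius norm together with a von Neumann / Hoffman--Wielandt type eigenvalue inequality. First, note that the feasible set $\{\m{T} \succeq \m{0}, \tr{\m{T}} = \tr{\m{S}}\}$ is closed and convex, and the objective is strictly convex in $\m{T}$. So a minimizer exists (the set is compact when $\tr{\m{S}} > 0$, and it is the singleton $\{\m{0}\}$ when $\tr{\m{S}} = 0$) and is unique. It therefore suffices to exhibit one feasible $\m{T}$ attaining the minimum. We assume $\tr{\m{S}} > 0$ as in \cref{thm:SPTA}, and we handle $\tr{\m{S}} = 0$ trivially, since then $\m{T} = \m{0}$.

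For the lower bound, take any feasible $\m{T}$ with eigenvalues $b_1 \ge \cdots \ge b_q \ge 0$ summing to $\tr{\m{S}} = \sum_k a_k$. Expand
\begin{align*}
    \vertiii{\m{S} - \m{T}}_{2}^{2} = \tr{\m{S}^{2}} + \tr{\m{T}^{2}} - 2\tr{\m{S}\m{T}},
\end{align*}
and invoke von Neumann's trace inequality for self-adjoint matrices, $\tr{\m{S}\m{T}} \le \sum_k a_k b_k$ with both spectra sorted non-increasingly. This gives $\vertiii{\m{S} - \m{T}}_{2}^{2} \ge \sum_k (a_k - b_k)^2$. The vector $\m{b}$ is feasible for the primal problem of \cref{thm:SPTA}, so the right-hand side is at least $\sum_k (a_k - b_k^*)^2$ with $\m{b}^* = \mathrm{SPTA}(\m{a})$.

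For attainment, set $\m{T}^* := \m{U} \Diag[\m{b}^*] \m{U}^*$. By \cref{thm:SPTA}, $b^*_k \ge 0$ and $\sum_k b^*_k = \sum_k a_k$, so $\m{T}^* \succeq \m{0}$ and $\tr{\m{T}^*} = \tr{\m{S}}$. Unitary invariance gives $\vertiii{\m{S} - \m{T}^*}_{2}^{2} = \sum_k (a_k - b^*_k)^2$, which matches the lower bound. Uniqueness then follows from strict convexity. Alternatively, one can argue via the equality cases, which are equality in von Neumann's inequality plus uniqueness of $\m{b}^*$ in \cref{thm:SPTA}; this is more delicate when $\m{S}$ has repeated eigenvalues, so I would rely on strict convexity instead.

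The only nontrivial ingredient is von Neumann's trace inequality. If it is not taken as known, I would prove it by writing $\m{T} = \m{W} \Diag[\m{b}] \m{W}^*$. Then $\tr{\m{S}\m{T}} = \sum_{k,l} a_k b_l |(\m{U}^*\m{W})_{kl}|^2$, and the matrix $(|(\m{U}^*\m{W})_{kl}|^2)$ is doubly stochastic. By Birkhoff's theorem the maximum of this linear functional is attained at a permutation, and the rearrangement inequality shows the sorted pairing is optimal. This step is the main obstacle, although it is a standard argument.
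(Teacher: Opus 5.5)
Your proposal is correct and follows the same route as the paper: you use von Neumann's trace inequality to reduce the matrix problem to the vector problem of \cref{thm:SPTA}, and then check that $\m{U}\Diag[\mathrm{SPTA}(\m{a})]\m{U}^{*}$ attains the resulting bound. Your uniqueness argument is cleaner than the paper's, since strict convexity of $\m{T}\mapsto\vertiii{\m{S}-\m{T}}_{2}^{2}$ on the convex feasible set needs no equality analysis. The paper instead claims that equality in von Neumann's inequality forces $\m{V}=\m{U}$, which is not literally true when $\m{S}$ has repeated eigenvalues.
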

\begin{proof}
Using spectral decomposition, we solve the optimization problem in two steps:
\begin{equation*}
    \min_{\m{D} = \Diag[\m{b}]} \left( \min_{\m{V} \in SU(q)} \vertiii{\m{V} \m{D} \m{V}^{*} - \m{S}}_{2}^{2} \right) \quad \text{ w.r.t. } \quad \m{D} \succeq \m{0} \text{ and } \tr{\m{D}} = \tr{\m{S}},
\end{equation*}
where $\m{b}$ is also sorted in non-increasing order. For a fixed diagonal matrix $\m{D} \succeq \m{0}$ with $\tr{\m{D}} = \tr{\m{S}}$, the inner optimization problem becomes
\begin{align*}
    \min_{\m{V} \in SU(q)} \vertiii{\m{V} \m{D} \m{V}^{*} - \m{S}}_{2}^{2} &= \vertiii{\m{D}}_{2}^{2} + \vertiii{\m{S}}_{2}^{2} - 2 \max_{\m{V} \in SU(q)} \innpr{\m{V} \m{D} \m{V}^{*}}{\m{S}}_{2}
\end{align*}
by the von-Neumann trace inequality \cite{horn2012matrix}, and equality holds if and only if $\m{U} = \m{V}$. Consequently, the outer optimization problem simplifies to \cref{thm:SPTA}.
\end{proof}

\subsection{Tomography with Mutually Unbiased Bases}\label{ssec:mub}
Let $GF(2^{k})$ be the Galois Field of order $q=2^{k}$ equipped with operations $(\bullet, +)$. We view elements of $GF(2^k)$ strictly as polynomials modulo an irreducible polynomial $p_{k}(x)$ of degree $k$. To identify field operations with vector operations, we assume the choice of a \emph{self-dual basis} for $GF(2^k)$. Under this basis, the field trace creates an isomorphism to the standard Euclidean dot product:
\begin{align*}
    \text{Tr}_{GF}(\m{a} \bullet \m{b}) = \m{a}^{\top}\m{b} = \sum_{j=1}^{k} a_j b_j \pmod 2, \quad \m{a}, \m{b}, \m{c}, \m{d} \in \{0, 1\}^{k} 
\end{align*}
For instance, $(011) \bullet (111) = (010)$, because $(x+1)(x^{2}+x+1) \equiv x \pmod{p_{3}}$, where the arithmetic is modulo the irreducible polynomial $p_{3}(x)=x^{3}+x+1$.
The phase space is the vector space $GF(2^k) \times GF(2^k)$, and its elements are denoted by $\m{u} = (\m{a} | \m{b})$. Under the self-dual basis, we can define the \textbf{symplectic inner product} consistently for both the field and the vector space: \begin{equation} \innpr{(\m{a}|\m{b})}{(\m{c}|\m{d})}_{sp} = \m{a}^{\top} \m{d} + \m{b}^{\top} \m{c} \pmod 2. \end{equation}
This phase space corresponds to the Pauli operators of the $k$-qubit system $\c{Q} = \b{C}^{q}$, having size $q^{2} = 2^{2k}$:
\begin{equation*}
    \m{I}_{2} = \begin{pmatrix}
        1 &0 \\ 0 &1
    \end{pmatrix}, \quad
    \m{X} = \begin{pmatrix}
        0 &1 \\ 1 &0
    \end{pmatrix}, \quad
    \m{Y}^{phy} = \begin{pmatrix}
        0 &-\imath \\ \imath &0
    \end{pmatrix}, \quad
    \m{Z} = \begin{pmatrix}
        1 &0 \\ 0 &-1
    \end{pmatrix} \in \b{C}_{sa, 0}^{2 \times 2}.
\end{equation*}
We define the \emph{mathematical} Pauli Y matrix by $ \m{Y}^{math} = \m{X} \m{Z}$ so that $\m{Y}^{phy} = \imath \m{Y}^{math}$. For the $k$-qubit system $\c{Q} = \b{C}^{q}$ with $q=2^{k}$, the \emph{mathematical} tensorized Pauli operators can be identified with the symplectic vector space $(\b{Z}_{2}^{2k}, \innpr{\cdot}{\cdot}_{sp})$:
\begin{align*}
    \m{P}_{\m{u}}^{math} = \bigotimes_{j=1}^{k} \m{X}_{j}^{a_{j}} \m{Z}_{j}^{b_{j}}, \quad \m{u} = (\m{a}|\m{b}).
\end{align*}
For instance, as $\m{I}_{2} \leftrightarrow (0 | 0)$, $\m{X} \leftrightarrow (1 | 0)$, $\m{Y}^{math} \leftrightarrow (1 | 1)$, $\m{Z} \leftrightarrow (0 | 1)$, the vector $\m{u} = (1010 \vert 0011)$ determines $\m{P}_{\m{u}}^{math} = \m{X} \m{I}_{2} \m{Y}^{math} \m{Z}$. In this association, the commutative relationship between Pauli operators can be written as
\begin{align}\label{eq:phase:corr}
    \m{P}_{\m{u}} \m{P}_{\m{v}} = \imath^{\innpr{\m{u}}{\m{v}}_{sp}} \m{P}_{\m{u} + \m{v}} = (-1)^{\innpr{\m{u}}{\m{v}}_{sp}} \m{P}_{\m{v}} \m{P}_{\m{u}} \, \Rightarrow \, \innpr{\m{u}}{\m{v}}_{sp} = \begin{cases}
        0 &, \quad \text{commute}, \\
        1 &, \quad \text{anti-commute},
    \end{cases}
\end{align}
and here the choice of physical or mathematical convention does not matter for commutativity.

A clique is a maximal subgroup of mutually commutative Pauli operators. There are $q+1$ cliques consisting of $q-1$ traceless Pauli operators and the identity $\m{I}_{q}$. Each clique is associated with a label $\mb{\lambda} \in \{0, 1\}^{k} \cup \{\infty\}$. The generators for these cliques are derived from the field structure. In the bitwise representation with $\m{e}_{0} = (100\cdots0), \m{e}_{1} = (010\cdots0), \cdots, \m{e}_{k-1} = (000\cdots01)$, these are given by:
\begin{itemize}[leftmargin = *]
\item \textbf{($\mb{\lambda} = \infty$)} The clique is given by $\c{C}_{\infty} = \{(\m{0} | \m{b}) : \m{b} \in \{0, 1\}^{k} \}$, generated by $\c{G}_{\infty} = \{(\m{0} | \m{e}_{j}) : 1 \le j \le k\}$. This is the computational basis, as the generator simply matches the Pauli Z operators for each qubit.
\item \textbf{($\mb{\lambda} \in \{0, 1\}^{k}$)} The clique is given by $\c{C}_{\mb{\lambda}} = \{(\m{a} | \lambda \bullet \m{a}) : \m{a} \in \{0, 1\}^{k} \}$, generated by $\c{G}_{\mb{\lambda}} = \{(\m{e}_{j} | \mb{\lambda} \bullet \m{e}_{j}) : 0 \le j \le k-1\}$.
\end{itemize}

Let us denote the \emph{physical} generators by $\m{G}_{\mb{\lambda}, j}^{phy} = \m{P}_{(\m{e}_{j} | \mb{\lambda} \bullet \m{e}_{j})}^{phy}$ (it is obvious for $\m{Z}_{j} = \m{G}_{\infty, j}^{phy}$). Because the operators in a clique commute, they share a common eigenbasis, which serves as an orthonormal basis of $\b{C}^{q}$. Each clique determined by $\mb{\lambda} \in \{0, 1\}^{k} \cup \{\infty\}$ and $\m{a} \in \{0, 1\}^{k}$ amounts to the \emph{physical} eigenspace:
\begin{align*}
    \m{\Pi}_{\mb{\lambda}, \m{a}}^{phy} = \prod_{j=1}^{k} \frac{\m{I}_{2} + (-1)^{a_{j}} \m{G}_{\mb{\lambda}, j}^{phy}}{2} = \frac{1}{2^{k}} \sum_{\m{b}\in GF(2^{k})} (-1)^{\m{a}^{\top}\m{b}} \phi_{\mb{\lambda}}(\m{b}) \m{P}_{(\m{b} | \mb{\lambda} \bullet \m{b})}^{phy}
\end{align*}
where $\phi_{\mb{\lambda}}(\m{b}) \in \{\pm 1\}$ is the phase correction arising from \eqref{eq:phase:corr}:
\begin{align*}
    \prod_{j: b_{j} = 1} \m{G}_{\mb{\lambda}, j}^{phy} = \phi_{\mb{\lambda}}(\m{b}) \m{P}_{(\m{b} | \mb{\lambda} \bullet \m{b})}^{phy}.
\end{align*}
The $q+1$ collection of orthonormal bases (with identifications via the eigenspace) $\{\m{\Pi}_{\mb{\lambda}, \m{a}}^{phy} : \m{a} \in \{0, 1\}^{k} \}_{\mb{\lambda}}$ is called the MUB.

Any self-adjoint matrix on the $k$-qubit system $\c{Q} = \b{C}^{q}$ can be decomposed into the Pauli basis
\begin{align*}
    \mb{\rho} = \frac{1}{2^{k}} \sum_{\m{u} \in \{0, 1\}^{2k}} \m{r}_{\m{u}} \m{P}_{\m{u}}^{phy}  \in \b{C}_{sa}^{q \times q}.
\end{align*}
Hence, $\m{r} \in \b{R}^{q^{2}}$ is called the generalized Bloch vector. If $\mb{\rho} \in \b{S}(\b{C}_{+}^{q \times q})$ is a density matrix, then $\m{r}_{(\m{0} \vert \m{0})} = \tr{\mb{\rho}}= 1$, and the coefficients give the expected probability of the Bernoulli distribution under the Pauli observable. In contrast, under the MUB tomography, the expected probabilities are given by the Walsh--Hadamard Transform (WHT) of the Bloch vector: for $\mb{\lambda} \in \{0, 1\}^{k} \cup \{\infty\},  \m{a} \in \{0, 1\}^{k}$,
\begin{align*}
    d_{\mb{\lambda}, \m{a}}[\mb{\rho}] = \text{Tr}(\mb{\rho} \m{\Pi}_{\mb{\lambda}, \m{a}}^{phy}) = \frac{1}{2^k} \sum_{\m{b} \in \{0,1\}^k} (-1)^{\m{a}^{\top}\m{b}} \phi_{\mb{\lambda}}(\m{b}) \m{r}_{(\m{b} | \mb{\lambda} \bullet \m{b})},
\end{align*}
which is the Walsh--Hadamard Transform of the Bloch vector restricted to the clique defined by $\mb{\lambda}$.
If $p_{\mb{\lambda}, \m{a}} \sim \text{Binom}(r, d_{\mb{\lambda}, \m{a}}[\mb{\rho}])/r$ is the estimate for $d_{\mb{\lambda}, \m{a}}[\mb{\rho}]$, then the LSE under the MUB tomography is given by
\begin{align*}
    \hat{\mb{\rho}}^{\text{LSE}} = \sum_{\mb{\lambda} \in \{0, 1\}^{k} \cup \{\infty\}} \sum_{\m{a} \in \{0, 1\}^{k}} p_{\mb{\lambda}, \m{a}} \m{\Pi}_{\mb{\lambda}, \m{a}} - \m{I}_{q}.
\end{align*}
To simply compute its generalized Bloch vector, we can use the inverse WHT: for $\mb{\lambda} \in \{0, 1\}^{k} \cup \{\infty\},  \m{b} \in \{0, 1\}^{k} \backslash \{\m{0}\}$,
\begin{align*}
    \hat{\m{r}}_{(\m{0} \vert \m{0})}^{\text{LSE}} = 1, \quad \hat{\m{r}}_{(\m{b} | \mb{\lambda} \bullet \m{b})}^{\text{LSE}} = \phi_{\mb{\lambda}}(\m{b}) \sum_{\m{a} \in \{0,1\}^k} (-1)^{\m{a}^{\top}\m{b}} p_{\mb{\lambda}, \m{a}}.
\end{align*}
Finally, similar to the FFT algorithm, we employ the fast WHT algorithm that reduces the computational complexity from $O(2^{2k})$ to $O(k \cdot 2^{k})$.

\section{Additional Examples}

\begin{example}[Generic Kernel]\label{ex:disc:kern}
Let $X = \{1, 2, \cdots, l \}$ to simplify the notation, and consider a POVM $\mb{\nu} = \{\mb{\nu}_1, \dots, \mb{\nu}_l\}$ on a quantum system $\c{Q}$. Given a positive semi-definite Gram matrix $\m{K} = [K(i, j)]_{i, j =1}^{l} \in \b{F}^{l \times l}$ defining a scalar-valued reproducing kernel $K$ on $X$, it is convenient to associate any function $f: X \to \b{F}$ with a vector via evaluation:
\begin{equation*}
    (f: X \to \b{F}) \, \cong \, \m{f} \in \b{F}^{l}, \quad \m{f}_{i} = f(i), \ i=1, 2, \cdots, l.
\end{equation*}
We can identify the RKHS $\c{R}(K)$ with the column space of the Gram matrix, $\mathrm{Ran}(\m{K}) \subseteq \b{F}^{l}$, equipped with the inner product $\innpr{\m{f}}{\m{g}}_{\c{R}(K)} = \m{g}^{*} \m{K}^{\dagger} \m{f}$. Under this vectorization, the point evaluation feature map $k_i = K(\cdot, i) \in \c{R}(K)$ simply corresponds to the $i$-th column of $\m{K}$, denoted by $\m{K}_{\cdot i}$. 
For any density matrix $\mb{\rho} \in \b{S}(\b{F}_{sa, 1}^{q \times q})$, the classical covariance embedding of the induced probability measure $\mb{\nu}_{\mb{\rho}}$ is given by:
\begin{align*}
    \m{T}_{K}^{\mb{\nu}_{\mb{\rho}}} = \sum_{i=1}^{l} \m{w}_{i} k_{i} k_{i}^{*} \in \c{B}_{\infty}^{+}(\c{R}(K)), \quad \m{w}_{i} = \tr{\mb{\rho} \mb{\nu}_{i}} 
\end{align*}
Because the rank-one operator $k_{i} k_{i}^{*}$ acts on $f$ via $\innpr{f}{k_i}_{\c{R}(K)} k_{i} = f(i) k_{i}$, its matrix representation on $\b{F}^{l}$ maps $\m{f} \mapsto \m{f}_{i} \m{K}_{\cdot i}$. Consequently, the full operator acts as $\sum_{i=1}^{l} \tr{\mb{\rho} \mb{\nu}_{i}} \m{f}_{i} \m{K}_{\cdot i}$, which precisely corresponds to the matrix multiplication $\m{K} \Diag(\m{w})$.
Lifting this to the Quantum Covariance Embedding (QCE) framework, the tensorized operator becomes:
\begin{align*}
    \m{T}_{K}^{\mb{\nu}} = \sum_{i=1}^{l} (k_{i} k_{i}^{*}) \otimes \mb{\nu}_{i} \in \c{B}_{\infty}^{+}(\c{R}(K) \otimes \c{Q}).
\end{align*}
Its action on an elementary tensor $f \otimes \phi \in \c{R}(K) \otimes \c{Q}$ yields:
\begin{align*}
    \m{T}_{K}^{\mb{\nu}} (f \otimes \phi) = \sum_{i=1}^{l} (k_{i} k_{i}^{*} f) \otimes (\mb{\nu}_{i} \phi) = \sum_{i=1}^{l} (f(i) k_{i}) \otimes (\mb{\nu}_{i} \phi).
\end{align*}
We can explicitly verify that this construction recovers the tensorized Bochner integral identity. For any $f, g \in \c{R}(K)$ and $\phi, \psi \in \c{Q}$:
\begin{align*}
    &\innpr{\m{T}_{K}^{\mb{\nu}} (f \otimes \phi)}{g \otimes \psi}_{\c{R}(K) \otimes \c{Q}} = \sum_{i=1}^{l} \innpr{(f(i) k_{i}) \otimes (\mb{\nu}_{i} \phi)}{g \otimes \psi}_{\c{R}(K) \otimes \c{Q}} \\
    &= \sum_{i=1}^{l} f(i) \innpr{k_{i}}{g}_{\c{R}(K)} \innpr{\mb{\nu}_{i} \phi}{\psi}_{\c{Q}} = \sum_{i=1}^{l} f(i) \overline{g(i)} \innpr{\mb{\nu}_{i} \phi}{\psi}_{\c{Q}} = \int_{X} f(x) \overline{g(x)} \ \rd \mb{\nu}_{\phi, \psi}(x).
\end{align*}
\end{example}

\begin{example}[Complex Qubit System]\label{ex:comp:qubit:LSE}
For a standard complex qubit system $\c{Q} = \b{C}^{2}$, the tensorized linear regression for density matrix estimation geometrically reduces to classical linear regression on the Bloch vector. Consider a design composed of $n$ observables $\{\m{O}_{i} = \m{u}_{i} \cdot \vec{\mb{\sigma}} \in \b{F}_{sa, 0}^{2 \times 2} : \m{u}_{i} \in \b{S}^{2} \}_{i=1}^{n}$. Recall from \cref{thm:supop:qubit} that it is a complete design if and only if $\spann \{\m{u}_{i}\}_{i=1}^{n} = \b{R}^{3}$.
Given an unknown density $\mb{\rho} = \frac{1}{2} (\m{I}_{2} + \m{a} \cdot \vec{\mb{\sigma}}) \in \b{S}(\b{C}_{+}^{2 \times 2})$ with Bloch vector $\m{a} \in \b{B}^{3}$, the expected outcome probabilities are:
\begin{align*}
    d_{i, \pm}[\mb{\rho}] = \tr{\mb{\rho} \frac{\m{I}_{2} \pm \m{u}_{i} \cdot \vec{\mb{\sigma}}}{2}} = \frac{1 \pm \innpr{\m{a}}{\m{u}_{i}}_{\b{R}^{3}}}{2}.
\end{align*}
Then, the estimated Bloch vector $\hat{\m{a}}^{\text{LSE}}$ for $\hat{\mb{\rho}}^{\text{LSE}}$ is obtained by the standard OLS inversion:
\begin{align*}
    \hat{\m{a}}^{\text{LSE}} = \bar{\m{D}}^{-1} \bar{\m{p}} \in \b{R}^{3}, \quad \bar{\m{D}} = \frac{1}{n} \sum_{i=1}^{n} \m{u}_{i} \m{u}_{i}^{\top}, \quad \bar{\m{p}} := \frac{1}{n} \sum_{i=1}^{n} (2 p_{i, +}-1) \m{u}_{i} \in \b{B}^{3},
\end{align*}
which follows from the geometric action of the Gram superoperator derived in \cref{thm:supop:qubit}:
\begin{align*}
    \hat{\mb{\rho}}^{\text{LSE}} = \left( \frac{\f{D}^{*} \f{D}}{n} \right)^{-1} \left(\frac{\f{D}^{*} \m{P}}{n} \right) 
    = \left( \frac{\f{D}^{*} \f{D}}{n} \right)^{-1} \frac{\m{I}_{2} + \bar{\m{p}} \cdot \vec{\mb{\sigma}}}{2} 
    = \frac{\m{I}_{2} + \hat{\m{a}}^{\text{LSE}} \cdot \vec{\mb{\sigma}}}{2}.
\end{align*}
Because $\mathrm{MSE}[\hat{\mb{\rho}}^{\text{LSE}} \vert \mb{\rho}] = \frac{1}{2} \b{E} \left[\|\hat{\m{a}}^{\text{LSE}} - \m{a}\|_{\b{R}^{3}}^{2} \vert \m{a} \right]$, the operator MSE can be explicitly mapped to the design geometry:
\begin{align*}
    \mathrm{MSE}[\hat{\mb{\rho}}^{\text{LSE}} \vert \mb{\rho}] 
    = \frac{1}{2nr} \left( \tr{ \bar{\m{D}}^{-1}} - \tr{ \bar{\m{D}}^{-2} \left(\frac{1}{n} \sum_{i=1}^{n} (\m{u}_{i}^{\top} \m{a})^{2} \m{u}_{i} \m{u}_{i}^{\top} \right)} \right).
\end{align*}
In the special case where $\{\m{O}_{i} \}_{i=1}^{n}$ forms a $1/3$-unitary design (i.e., $\bar{\m{D}} = \m{I}_{3}/3$), the estimator simplifies directly to $\hat{\m{a}}^{\text{LSE}} = 3 \bar{\m{p}}$, and the MSE elegantly reduces to the isotropic bound derived in \cref{cor:unit:mse}:
\begin{align*}
    \mathrm{MSE}[\hat{\mb{\rho}}^{\text{LSE}} \vert \mb{\rho}] = \frac{3(3- \|\m{a}\|_{\b{R}^{3}}^{2})}{2nr}.
\end{align*}
Analogous to the rebit case ($|\hat{z}^{\text{LSE}}|_{\b{C}} > 1$), if $\|\hat{\m{a}}^{\text{LSE}}\|_{\b{R}^{3}} > 1$, the constrained optimal solution is strictly obtained by mapping the vector back to the physical boundary: $\hat{\m{a}}_{+}^{\text{LSE}} = \hat{\m{a}}^{\text{LSE}} / \|\hat{\m{a}}^{\text{LSE}}\|_{\b{R}^{3}} \in \b{S}^{2}$, amounting to a pure state.

Finally, incorporating the classical bit-flip noise defined in \cref{ssec:bit:flip}, the attenuated expectation becomes $\b{E}[\hat{\m{a}}^{\text{LSE}} \vert \m{a}] = (1-2\eta)\m{a}$, and the corresponding corrupted $\mathrm{MSE}[\hat{\mb{\rho}}^{\text{LSE}} \vert \mb{\rho}]$ is:
\begin{align*}
    \frac{1}{2nr} \left( \tr{ \bar{\m{D}}^{-1}} - (1-2\eta)^{2} \tr{ \bar{\m{D}}^{-2} \left(\frac{1}{n} \sum_{i=1}^{n} (\m{u}_{i}^{\top} \m{a})^{2} \m{u}_{i} \m{u}_{i}^{\top} \right)} \right) + 4\eta^{2} \|\m{a}\|_{\b{R}^{3}}^{2}.
\end{align*}

\begin{figure}[h!]
    \centering
    \includegraphics[width=\textwidth]{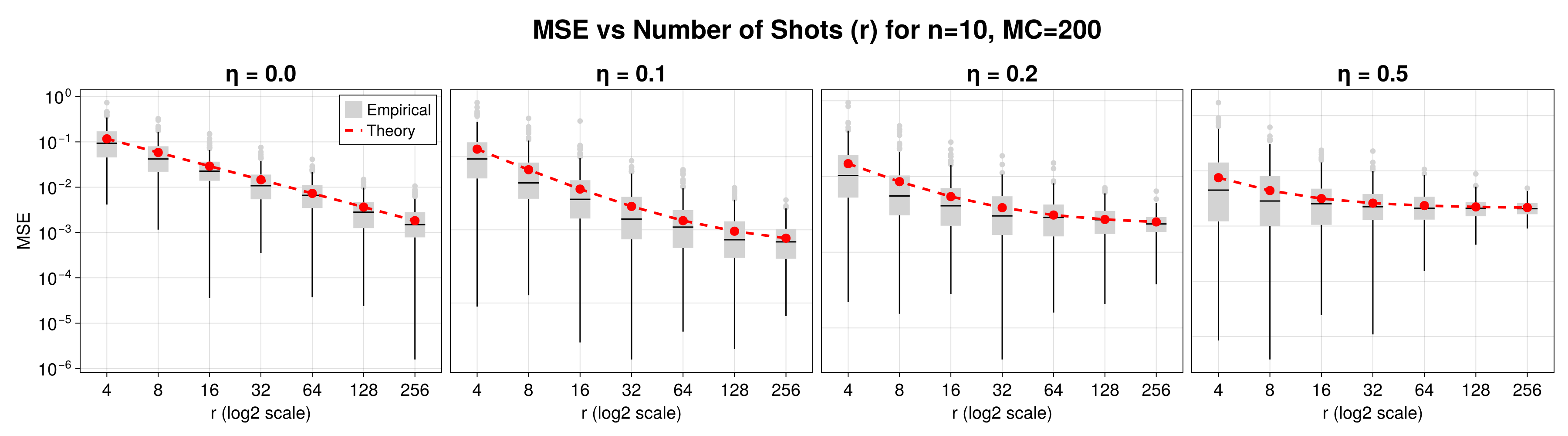}
    
    \caption{Operator MSE after performing Monte Carlo Simulation with $200$ repetitions for each setup. Here, we have used the approximate unitary design, i.e., $\m{u}_{i}$'s are randomly generated on the unit sphere $\b{S}^{2}$. The theoretical trend lines amounts for that under the exact unitary design.}
    \label{fig:mse:eta}
\end{figure}

Observe that as the noise rate $\eta$ approaches the totally mixed channel $0.5$, variance convergence happens faster, but the asymptotic limiting MSE sharply increases.
\end{example}

\subsection{Discrete Modular Error in Qudits}\label{ssec:qudit:povm} 
Consider a qudit system $\c{Q} = \b{C}^{q}$ with the orthonormal basis $\{\phi_{0}, \cdots, \phi_{q-1}\}$. Let $\m{N}$ be the number operator defined by: 
\begin{align}\label{eq:numb:op}
    \m{N} = \sum_{k=0}^{q-1} k \ \m{\Pi}_k, \quad \m{\Pi}_k = \phi_{k} \phi_{k}^{*}, \ X = \sigma(\m{N}) = \{0, 1, \dots, q-1\} \cong \b{Z}_{q}.
\end{align}
While the ideal PVM is $\mb{\nu}^{\m{N}} = \sum_{k} \delta_k \m{\Pi}_k$, consider a scenario standard in high-dimensional quantum communication where detector outcomes are subject to \emph{modular additive noise} \cite{gottesman2001encoding}:
\begin{align*}
    Y_{\mb{\eta}} := (Y + \varepsilon) \pmod q, \quad \b P(Y=\lambda_k\mid\mb\rho)=\tr{\mb{\rho} \m{\Pi}_{k}}, \quad \varepsilon \sim \text{Categorical}(\mb{\eta}) ,
\end{align*} 
where $\mb{\eta} = (\eta_0, \dots, \eta_{q-1})$ represents the noise profile satisfying $\eta_{k} \ge 0$ and $\sum_{k=0}^{q-1} \eta_{k} = 1$. The resulting POVM $\mb{\nu}^{\m{N}, \mb{\eta}}$ is the convolution of the ideal PVM with the noise: 
\begin{align*}
    \mb{\nu}^{\m{N}, \mb{\eta}}(\{j\}) = \sum_{k=0}^{q-1} \b{P}(Y_{\mb{\eta}}=j | Y=k) \m{\Pi}_{k} = \sum_{k \in \b{Z}_{q}} \eta_{(j-k)}  \m{\Pi}_{k}, \quad j \in \b{Z}_{q}.
\end{align*}

\begin{proposition}[QMD of Modular Noise]\label{QMD_modular_noise}
Let $\m{K} \in \b{C}_{+}^{q \times q}$ be a Gram matrix associated to a kernel $K: \b{Z}_q \times \b{Z}_{q} \to \b{C}$. For each basis index $k \in \b{Z}_{q}$, define the diagonal discrepancy matrix 
$\m{E}_{k} \in \b{R}^{q \times q}$ with entries $[\m{E}_{k}]_{jj} = \delta_{jk} - \eta_{(j-k) \pmod q} $.
The conditional QMD between the ideal PVM $\mb{\nu}^{\m{N}}$ and the noisy POVM in \eqref{eq:numb:op} is given by:
\begin{align*}
    \QMD_{K}^{\mb{\rho}}(\mb{\nu}^{\m{N}}, \mb{\nu}^{\m{N}, \mb{\eta}}) = 
    \vertiii{\sum_{k=0}^{q-1} \tr{\mb{\rho} \m{\Pi}_k} (\m{K}^{1/2} \m{E}_{k} \m{K}^{1/2})}_{\infty} \le \max_{k \in \b{Z}_{q}} \vertiii{\m{K}^{1/2} \m{E}_{k} \m{K}^{1/2}}_{\infty}.
\end{align*}
The upper bound is achieved by the pure state $\m{\Pi}_{j}$ where 
\begin{align*}
    j = \argmax\limits_{k \in \b{Z}_{q}} \vertiii{\m{K}^{1/2} \m{E}_{k} \m{K}^{1/2}}_{\infty}.
\end{align*}
In particular, if $\m{K} = \m{I}_{q}$, the maximum is attained by \emph{any} basis state $\m{\Pi}_{k}$ for any $k \in \b{Z}_{q}$.
\end{proposition}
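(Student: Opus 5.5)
The plan is to reduce the conditional QMD to a $q\times q$ matrix computation, using the finite-spectrum description of the conditional QCE from \cref{ex:disc:kern}. I will then handle the bound and its attainment by a convexity argument in the state weights.

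\emph{Step 1 (matrix representation).} For any POVM on $X=\b{Z}_q$ with induced weights $\m{w}_j=\mb{\nu}_{\mb{\rho}}(\{j\})$, \cref{ex:disc:kern} gives $\m{T}_K^{\mb{\nu}_{\mb{\rho}}}=\sum_j \m{w}_j k_jk_j^*$. This acts on $\c{R}(K)\cong\mathrm{Ran}(\m{K})$ as $\m{K}\Diag(\m{w})$.

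Consider the map $\m{f}\mapsto(\m{K}^{1/2})^{\dagger}\m{f}$ from $\mathrm{Ran}(\m{K})$, with inner product $\m{g}^*\m{K}^\dagger\m{f}$, into $\mathrm{Ran}(\m{K})\subseteq\b{F}^q$ with the Euclidean inner product. It is an isometric isomorphism. Conjugating by it turns $\m{K}\Diag(\m{w})$ into $\m{K}^{1/2}\Diag(\m{w})\m{K}^{1/2}$, restricted to $\mathrm{Ran}(\m{K})$. This operator vanishes on $\ker(\m{K})$, so extending it by zero there leaves every Schatten norm unchanged. This is exactly the mechanism behind \cref{thm:loss:max:eigen}, which one may alternatively invoke with $s=\infty$, replacing the two diagonal weight matrices by arbitrary ones.

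By linearity in $\m{w}$ it follows that
\[
\QMD_K^{\mb{\rho}}(\mb{\nu}^{\m{N}},\mb{\nu}^{\m{N},\mb{\eta}})=\vertiii{\m{K}^{1/2}\Diag(\m{w}-\m{w}')\m{K}^{1/2}}_\infty .
\]
The one point needing care, and what I expect to be the main obstacle, is singular $\m{K}$. There I must check that the pseudo-inverse square root really intertwines the operators and that the zero extension on $\ker\m{K}$ is harmless.

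\emph{Step 2 (weights).} The ideal weights are $\m{w}_j=\tr{\mb{\rho}\m{\Pi}_j}$. The noisy weights are $\m{w}'_j=\sum_k\eta_{(j-k)}\tr{\mb{\rho}\m{\Pi}_k}$. Hence
\[
\m{w}_j-\m{w}'_j=\sum_k\tr{\mb{\rho}\m{\Pi}_k}\bigl(\delta_{jk}-\eta_{(j-k)}\bigr)=\sum_k\tr{\mb{\rho}\m{\Pi}_k}[\m{E}_k]_{jj},
\]
so $\Diag(\m{w}-\m{w}')=\sum_k\tr{\mb{\rho}\m{\Pi}_k}\m{E}_k$. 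Substituting this into Step 1 yields the stated equality.

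\emph{Step 3 (bound and attainment).} The coefficients $c_k=\tr{\mb{\rho}\m{\Pi}_k}$ are nonnegative and sum to $\tr{\mb{\rho}}=1$. The triangle inequality therefore gives the bound by $\sum_k c_k\vertiii{\m{K}^{1/2}\m{E}_k\m{K}^{1/2}}_\infty\le\max_k\vertiii{\m{K}^{1/2}\m{E}_k\m{K}^{1/2}}_\infty$.

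For $\mb{\rho}=\m{\Pi}_j$ we have $c_k=\delta_{jk}$, so the QMD equals $\vertiii{\m{K}^{1/2}\m{E}_j\m{K}^{1/2}}_\infty$. Choosing $j$ as the argmax attains the bound.

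Finally, if $\m{K}=\m{I}_q$, then $\vertiii{\m{E}_k}_\infty=\max_j|\delta_{jk}-\eta_{(j-k)}|=\max\bigl(1-\eta_0,\max_{m\neq0}\eta_m\bigr)$, using the substitution $m=j-k$. This value is independent of $k$, so every basis state $\m{\Pi}_k$ attains the maximum.
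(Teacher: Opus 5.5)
Your proposal is correct and follows essentially the paper's route: reduce the conditional QMD to $\vertiii{\sum_k \tr{\mb{\rho}\m{\Pi}_k}\m{K}^{1/2}\m{E}_k\m{K}^{1/2}}_\infty$ through the $\m{K}^{1/2}$ isometry, then bound over the probability simplex by convexity (your triangle inequality) with attainment at a vertex. The only difference in execution is that the paper uses the quadratic form $\m{f}^*(\sum_k p_k\m{E}_k)\m{f}$ with the substitution $\m{f}=\m{K}^{1/2}\m{u}$, and you additionally handle singular $\m{K}$ and check that $\vertiii{\m{E}_k}_\infty=\max(1-\eta_0,\max_{m\neq 0}\eta_m)$ is independent of $k$ when $\m{K}=\m{I}_q$, both of which the paper leaves implicit.
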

\begin{proof}[Proof of \cref{QMD_modular_noise}]
For each $j \in \b{Z}_{q}$, let $\m{\Pi}_{j} = \phi_j \phi_j^*$ be the ideal projections. The effective operators for the noisy POVM are $\m{\Pi}_{\mb{\eta}, j} = \sum_{k=0}^{q-1} \eta_{(j-k)} \m{\Pi}_k$.
Given a density $\mb{\rho}$, let $\m{p} \in \b{R}^q$ be the probability vector where $p_k = \tr{\mb{\rho} \m{\Pi}_k}$. The distribution of the ideal outcome is $\mb{\nu}_{\mb{\rho}}(\{j\}) = p_j$, and the noisy outcome is the convolution $\mb{\nu}^{\m{N}, \mb{\eta}}_{\mb{\rho}}(\{j\}) = (\mb{\eta} \ast \m{p})_j$. For any function $f \in \c{R}(K)$, the inner product is:
\begin{align*}
    &\innpr{(\m{T}_{K}^{\mb{\nu}^{\m{N}}} - \m{T}_{K}^{\mb{\nu}^{\m{N}, \mb{\eta}}}) f}{f}_{\c{R}(K)} 
    = \sum_{j=0}^{q-1} |f(j)|^2 (\mb{\nu}^{\m{N}}(\{j\}) - \mb{\nu}^{\m{N}, \mb{\eta}}(\{j\})) \\
    &= \sum_{j=0}^{q-1} |f(j)|^2 \left( p_j - \sum_{k=0}^{q-1} \eta_{(j-k)} p_k \right) = \sum_{k=0}^{q-1} p_k \left[ |f(k)|^2 - \sum_{j=0}^{q-1} \eta_{(j-k)} |f(j)|^2 \right] \\ &= \m{f}^{*} \left(\sum_{k=0}^{q-1} p_k  \m{E}_{k} \right) \m{f},
\end{align*}
where $\m{f} = [f(0), \dots, f(q-1)]^\top \in \b{C}^{q}$. Using the Hilbert space isomorphism $\m{f} = \m{K}^{1/2}\m{u}$ with $\|\m{u}\|_2 \le 1$ as in the proof of \cref{thm:pauli:QMD}, the conditional QMD for a fixed $\mb{\rho}$ is:
\begin{align*}
    \QMD_{K}^{\mb{\rho}}(\mb{\nu}, \mb{\nu}_{\mb{\eta}}) &= 
    \vertiii{\sum_{k=0}^{q-1} p_{k} (\m{K}^{1/2} \m{E}_{k} \m{K}^{1/2})}_{\infty}.
\end{align*}
To find the maximally distinguishing state, we maximize this norm over the probability simplex $\m{p}$. The spectral norm is a convex function; thus, its maximum over the simplex is attained at one of the vertices, and the bound follows immediately.
\end{proof}

Recall from \cref{thm:pauli:QMD} that for qubits ($\c{Q} = \b{C}^{2}$), the maximally distinguishing states are independent of the kernel choice. However, for higher-dimensional qudits $(q>2)$, this invariance breaks down, and the optimal state index becomes coupled to the geometry of the kernel. Specifically, the kernel imposes a non-uniform metric over the outcome space $\b{Z}_{q}$, assigning different statistical weights to the discrepancy of each eigenstate.

\section{Proofs}\label{sec:proof}

\begin{proof}[Proof of \cref{thm:quant:cov:scalar}]
Define a bivariate function on $\c{R}(K) \otimes \c{Q}$ by
\begin{equation}\label{eq:sesqui:embed}
    \m{L}_{K}^{\mb{\nu}}(f \otimes \phi, g \otimes \psi) := \int_{X} f(x) \overline{g(x)} \ \rd \mb{\nu}_{\phi, \psi}(x), \quad f, g \in \c{R}(K), \, \phi,\psi \in \c{Q}.
\end{equation}
As the right-hand side is linear in $f$ and $\phi$ and conjugate linear in $g$ and $\psi$, \eqref{eq:sesqui:embed} is a well-defined sesquilinear form due to the universal property \cite{kadison1986fundamentals}.
We claim that $\m{L}_{K}^{\mb{\nu}}$ in \eqref{eq:sesqui:embed} is bounded. Without loss of generality, we assume $\|\phi\|_{\c{Q}} = 1$ so that $\mb{\nu}_{\phi,\phi} \in \s{P}(X)$, and let $c := \sup_{x \in X} K(x, x) < \infty$. Then, for any $f \in \c{R}(K), \, \phi \in \c{Q}$,
\begin{equation*}
    \m{L}_{K}^{\mb{\nu}}(f \otimes \phi, f \otimes \phi) = \int_{X} |f(x)|^{2} \ \rd \mb{\nu}_{\phi,\phi}(x) \le c \|f\|_{\c{R}(K)}^{2} \|\phi\|_{\c{Q}}^{2} =  c \|f \otimes \phi\|_{\c{R}(K) \otimes \c{Q}}^{2}, 
\end{equation*}
thus $\m{L}_{K}^{\mb{\nu}}$ is bounded due to \cite[Proposition A.61]{hall2013quantum}. Therefore, from \cite[Proposition A.63]{hall2013quantum}, there is a unique $\m{T}_{K}^{\mb{\nu}} \in \c{B}_{\infty}(\c{R}(K) \otimes \c{Q})$ with $\vertiii{\m{T}_{K}^{\mb{\nu}}}_{\infty} \le c$ such that 
\begin{equation*}
    \m{L}_{K}^{\mb{\nu}}(f \otimes \phi, g \otimes \psi) = \innpr{\m{T}_{K}^{\mb{\nu}} (f \otimes \phi)}{g \otimes \psi}_{\c{R}(K) \otimes \c{Q}}.
\end{equation*}
Additionally, it is trivial that $\m{T}_{K}^{\mb{\nu}} \succeq \m{0}$ since $\mb{\nu}_{\phi,\phi}$ is a non-negative measure for any $\phi \in \c{Q}$. 

Let $\{f_{i}: i \in \b{N} \}$ and $\{\phi_{j}: j \in \b{N}\}$ be countable orthonormal bases for $\c{R}(K)$ and $\c{Q}$, respectively. Then, we obtain
\begin{align*}
    \tr{\m{T}_{K}^{\mb{\nu}}} &= \sum_{j = 1}^{\infty} \sum_{i=1}^{\infty} \innpr{\m{T}_{K}^{\mb{\nu}} (f_{i} \otimes \phi_{j})}{f_{i} \otimes \phi_{j}}_{\c{R}(K) \otimes \c{Q}} \\
    &= \sum_{j = 1}^{\infty} \int_{X} \left( \sum_{i=1}^{\infty} |f_{i}(x)|^{2} \right) \ \rd \mb{\nu}_{\phi_{j}, \phi_{j}}(x) 
    = \sum_{j = 1}^{\infty} \int_{X} K(x, x) \ \rd \mb{\nu}_{\phi_{j}, \phi_{j}}(x) \\
    &= \sum_{j = 1}^{\infty} \innpr{\left(\int_{X} K(x, x) \ \rd \mb{\nu}(x) \right) \phi_{j}}{\phi_{j}}_{\c{Q}} = \vertiii{\int_{X} K(x, x) \ \rd \mb{\nu}(x)}_{1},
\end{align*}
provided that either quantity is finite. This completes the proof.
\end{proof}

\begin{proof}[Proof of \cref{thm:char:ker}]
\begin{enumerate}[leftmargin = *]
\item Let $\mb{\mu}, \mb{\nu}  \in \s{P}(X, \c{Q})$ be two POVMs. Then, \eqref{eq:embed:cvx:combi}, the Krein--Milman theorem, and \cite[Proposition A.63]{hall2013quantum} establish the following equivalence:
\begin{align*}
    &\m{T}_{K}^{\mb{\mu}_{\mb{\rho}}} = \m{T}_{K}^{\mb{\nu}_{\mb{\rho}}} \in \c{B}_{\infty}^{+}(\c{R}(K)), \quad \forall \mb{\rho} \in \b{S}(\c{B}_{1}^{+}(\c{Q})) \\ 
    &\Leftrightarrow \quad \m{T}_{K}^{\mb{\mu}_{\phi \phi^{*}}} = \m{T}_{K}^{\mb{\nu}_{\phi \phi^{*}}}, \quad \forall \phi \in \c{Q}, \, \|\phi\|_{\c{Q}} = 1 \\
    &\Leftrightarrow \quad \innpr{(\m{T}_{K}^{\mb{\mu}_{\phi \phi^{*}}} - \m{T}_{K}^{\mb{\nu}_{\phi \phi^{*}}}) f}{f}_{\c{R}(K)} = 0, \quad \forall f \in \c{R}(K), \, \|\phi\|_{\c{Q}} = 1 \\
    &\Leftrightarrow \quad \innpr{(\m{T}_{K}^{\mb{\mu}} - \m{T}_{K}^{\mb{\nu}}) (f \otimes \phi)}{f \otimes \phi}_{\c{R}(K) \otimes \c{Q}} = 0, \quad \forall f \in \c{R}(K), \, \phi \in \c{Q} \quad \\ 
    &\Leftrightarrow \quad \m{T}_{K}^{\mb{\mu}} = \m{T}_{K}^{\mb{\nu}}.
\end{align*}
Note that the restriction $\|\phi\|_{\c{Q}} = 1$ can be dropped in the penultimate step due to the scale-invariance of the zero element in the sesquilinear form.
\item Assume $K$ is characteristic for scalar probability measures in $\s{P}(X)$. If $\m{T}_{K}^{\mb{\mu}} = \m{T}_{K}^{\mb{\nu}}$, then by Part 1, $\m{T}_{K}^{\mb{\mu}_{\mb{\rho}}} = \m{T}_{K}^{\mb{\nu}_{\mb{\rho}}}$ for all density operators $\mb{\rho} \in \b{S}(\c{B}_{1}^{+}(\c{Q}))$. Because $\mb{\mu}_{\mb{\rho}}, \mb{\nu}_{\mb{\rho}} \in \s{P}(X)$ are scalar probability measures, the characteristic property of $K$ implies $\mb{\mu}_{\mb{\rho}} = \mb{\nu}_{\mb{\rho}}$ for all $\mb{\rho}$. This means $\tr{\mb{\mu}(B) \mb{\rho}} = \tr{\mb{\nu}(B) \mb{\rho}}$ for all Borel sets $B \in \s{B}(X)$ and all density matrices $\mb{\rho}$. Consequently, $\mb{\mu}(B) = \mb{\nu}(B)$ identically, meaning $\mb{\mu} = \mb{\nu}$. Thus, $K$ is characteristic for $\s{P}(X, \c{Q})$.

Conversely, assume $K$ is characteristic for $\s{P}(X, \c{Q})$. Let $\mu, \nu \in \s{P}(X)$ be scalar probability measures such that their classical kernel embeddings match: $\m{T}_{K}^{\mu} = \m{T}_{K}^{\nu}$. We can construct the associated POVMs via $\mb{\mu}(\cdot) := \mu(\cdot) \m{I}_{\c{Q}}$ and $\mb{\nu}(\cdot) := \nu(\cdot) \m{I}_{\c{Q}}$. Their corresponding QCEs identically match because $\m{T}_{K}^{\mb{\mu}} = \m{T}_{K}^{\mu} \otimes \m{I}_{\c{Q}} = \m{T}_{K}^{\nu} \otimes \m{I}_{\c{Q}} = \m{T}_{K}^{\mb{\nu}}$. This immediately forces $\mu = \nu$. Thus, $K$ is characteristic for $\s{P}(X)$.
\end{enumerate}
\end{proof}

\begin{proof}[Proof of \cref{prop:ineq:QCE}]
For any bounded self-adjoint operator $\m{A} \in \c{B}_{\infty}(\c{H})$, it holds that $\vertiii{\m{A}}_{\infty} = \sup_{\|f\|_{\c{H}}=1} \left| \innpr{\m{A} f}{f}_{\c{H}} \right|$. Now, since $\m{T}_{K}^{\mb{\mu}} - \m{T}_{K}^{\mb{\nu}} \in \c{B}_{\infty}(\c{R}(K) \otimes \c{Q})$ is self-adjoint, we have
\begin{align*}
    \vertiii{\m{T}_{K}^{\mb{\mu}} - \m{T}_{K}^{\mb{\nu}}}_{\infty} &\ge \sup_{\|\phi\|_{\c{Q}}=1} \sup_{\|f\|_{\c{R}(K)}=1} \left| \innpr{(\m{T}_{K}^{\mb{\mu}} - \m{T}_{K}^{\mb{\nu}}) (f \otimes \phi)}{f \otimes \phi}_{\c{R}(K) \otimes \c{Q}} \right| \\
    &= \sup_{\|\phi\|_{\c{Q}}=1} \sup_{\|f\|_{\c{R}(K)}=1} \left| \innpr{(\m{T}_{K}^{\mb{\mu}_{\phi \phi^{*}}} - \m{T}_{K}^{\mb{\nu}_{\phi \phi^{*}}}) f}{f}_{\c{R}(K)} \right| \\
    &= \sup_{\|\phi\|_{\c{Q}}=1} \vertiii{\m{T}_{K}^{\mb{\mu}_{\phi \phi^{*}}} - \m{T}_{K}^{\mb{\nu}_{\phi \phi^{*}}}}_{\infty} = \sup_{\mb{\rho} \in \b{S}(\c{B}_{1}^{+}(\c{Q}))} \vertiii{\m{T}_{K}^{\mb{\mu}_{\mb{\rho}}} - \m{T}_{K}^{\mb{\nu}_{\mb{\rho}}}}_{\infty},
\end{align*}
where we used \eqref{eq:embed:cvx:combi} alongside the fact that pure states (rank-one projections $\phi\phi^*$) are the extreme points of the convex set of density operators $\b{S}(\c{B}_{1}^{+}(\c{Q}))$. 
\end{proof}

\begin{proof}[Proof of \cref{thm:pauli:QMD}]
The eigenprojections for the Pauli-X matrix $\mb{\sigma}_{x}$ corresponding to eigenvalues $\pm 1$ are given by $\m{\Pi}_{x, \pm} = (\m{I}_{2} \pm \mb{\sigma}_{x})/2$, with analogous expressions for $\mb{\sigma}_{y}$ and $\mb{\sigma}_{z}$.  
For any function $f \in \c{R}(K)$, we get
\begin{align*}
    &\innpr{(\m{T}_{K}^{\mb{\nu}_{i, \mb{\rho}}} - \m{T}_{K}^{\mb{\nu}_{j, \mb{\rho}}}) f}{f}_{\c{R}(K)} 
    = \int_{X} |f(x)|^{2} \ \rd (\mb{\nu}_{i, \mb{\rho}}-\mb{\nu}_{j, \mb{\rho}})(x) \\
    &= |f(+1)|^{2} \tr{\mb{\rho} (\m{\Pi}_{i, +}- \m{\Pi}_{j, +})} + |f(-1)|^{2} \tr{\mb{\rho} (\m{\Pi}_{i, -}- \m{\Pi}_{j, -})}.
\end{align*}
Using the relation $\m{\Pi}_{i, \pm} - \m{\Pi}_{j, \pm} = \pm (\mb{\sigma}_{i} - \mb{\sigma}_{j})/2$, this simplifies to:
\begin{align*}
    \innpr{(\m{T}_{K}^{\mb{\nu}_{i, \mb{\rho}}} - \m{T}_{K}^{\mb{\nu}_{j, \mb{\rho}}}) f}{f}_{\c{R}(K)} 
    &= \frac{\m{f}^{*} \mb{\sigma}_{z} \m{f}}{2} \tr{\mb{\rho} (\mb{\sigma}_{i} - \mb{\sigma}_{j})},
\end{align*}
where $\m{f} = [f(+1), f(-1)]^{\top} \in \b{C}^{2}$ is the vector of point evaluations. Thus
\begin{align*}
    \QMD_{K}^{\mb{\rho}}(\mb{\nu}_{i}, \mb{\nu}_{j}) 
    &= \frac{|\tr{\mb{\rho} (\mb{\sigma}_{i} - \mb{\sigma}_{j})}|}{2} \sup_{\|f\|_{\c{R}(K)} \le 1} \left| \m{f}^* \mb{\sigma}_z \m{f} \right|.
\end{align*}
Substituting the isomorphism $f \in \c{R}(K) \ \cong \ \m{u} = \m{K}^{\dagger/2} \m{f} \in (\text{Ran}(\m{K}), \innpr{\cdot}{\cdot})$ yields:
\begin{align*}
    \sup_{\|f\|_{\c{R}(K)} \le 1} \left| \m{f}^* \mb{\sigma}_z \m{f} \right| = \sup_{\|\m{u}\| \le 1} \left| \m{u}^* \m{K}^{1/2} \mb{\sigma}_{z} \m{K}^{1/2} \m{u} \right| 
    = \vertiii{\m{K}^{1/2} \mb{\sigma}_{z} \m{K}^{1/2}}_{\infty},
\end{align*}
where we utilized that the operator norm of a Hermitian matrix is its spectral radius.
Finally, consider the Bloch sphere representation \eqref{eq:Bloch:repre} to maximize the trace term. Using the orthogonality of Pauli matrices ($\tr{\mb{\sigma}_k \mb{\sigma}_l} = 2\delta_{kl}$), we obtain:
\begin{align*}
    \sup_{\mb{\rho} \in \b{S}(\b{C}_{+}^{2 \times 2})} |\tr{\mb{\rho} (\mb{\sigma}_{i} - \mb{\sigma}_{j})}| = \sup_{\m{a} \in \b{B}^{3}} |a_{i} - a_{j}| = \sup_{\m{a} \in \b{B}^{3}} |\innpr{\m{a}}{\m{e}_{i} - \m{e}_{j}}| = \sqrt{2}.
\end{align*}
The supremum is attained if and only if the Bloch vector is either $\m{a}_{\pm} = \pm (\m{e}_{i} - \m{e}_{j})/\sqrt{2} \in \b{S}^{2}$, which corresponds to $\mb{\rho}_\pm$.
\end{proof}

\begin{proof}[Proof of \cref{prop:QMD:flip}]
Let us define the effective operators for the noisy POVM as: $\m{\Pi}_{\eta, +} = (1-\eta) \m{\Pi}_{+} + \eta \m{\Pi}_{-}, \quad \m{\Pi}_{\eta, -} = \eta \m{\Pi}_{+} + (1-\eta) \m{\Pi}_{-}$.
Following the derivation in the proof of \cref{thm:pauli:QMD}:
\begin{align*}
    \innpr{(\m{T}_{K}^{\mb{\nu}^{\m{O}}_{\mb{\rho}}} - \m{T}_{K}^{\mb{\nu}^{\m{O}, \eta}_{\mb{\rho}}}) f}{f}_{\c{R}(K)} 
    = \frac{\m{f}^* \mb{\sigma}_{z} \m{f}}{2} \tr{\mb{\rho} (\m{O} - \m{O}_{\eta})} = \eta \tr{\mb{\rho} \m{O}} (\m{f}^* \mb{\sigma}_{z} \m{f}), \, f \in \c{R}(K).
\end{align*}
Taking the absolute value and the supremum over the unit ball $\|f\|_{\c{R}(K)} \le 1$ yields:
\begin{align*}
    \QMD_{K}^{\mb{\rho}}(\mb{\nu}^{\m{O}}, \mb{\nu}^{\m{O}, \eta}) 
    &= \eta \vertiii{\m{K}^{1/2} \mb{\sigma}_{z} \m{K}^{1/2}}_{\infty} |\tr{\mb{\rho} \m{O}}|.
\end{align*}
To maximize $|\tr{\mb{\rho} \m{O}}|$, let $\m{O} = \m{u} \cdot \vec{\mb{\sigma}}$ with $\m{u} \in \b{S}^{2}$. Then:
\begin{align*}
    \sup_{\mb{\rho} \in \b{S}(\b{C}_{+}^{2 \times 2})} |\tr{\mb{\rho} \m{O}}| = \sup_{\m{a} \in \b{B}^{3}} |\innpr{\m{a}}{\m{u}}| = 1.
\end{align*}
The supremum is attained if and only if $\m{a} = \pm \m{u}$, which are the pure eigenstates of $\m{O}$.
\end{proof}

\begin{proof}[Proof of \cref{thm:two:bit:flip}]
Consider the Hermitian matrices $\m{\Pi}_{\eta, \pm}$ and $\tilde{\m{\Pi}}_{\tilde{\eta}, \pm}$ defined as in the proof of \cref{prop:QMD:flip}. Specifically, we have:
\begin{align*}
    \tilde{\m{\Pi}}_{\tilde{\eta}, +} = (1-\tilde{\eta}) \tilde{\m{\Pi}}_{+} + \tilde{\eta} \tilde{\m{\Pi}}_{-} = (1-\tilde{\eta}) \frac{\m{I}_{2} + \tilde{\m{O}}}{2} + \tilde{\eta} \frac{\m{I}_{2} - \tilde{\m{O}}}{2} = \frac{\m{I}_{2} + (1-2 \tilde{\eta}) \tilde{\m{O}}}{2}. 
\end{align*}
The difference between the positive operators is:
\begin{align*}
    \m{\Pi}_{\eta, +} - \tilde{\m{\Pi}}_{\tilde{\eta}, +} &= \frac{(1-2 \eta) \m{O} - (1-2 \tilde{\eta}) \tilde{\m{O}}}{2} \\
    &= \frac{(1-2\eta) \m{u} - (1-2 \tilde{\eta}) \tilde{\m{u}}}{2} \cdot \vec{\mb{\sigma}} = - (\m{\Pi}_{\eta, -} - \tilde{\m{\Pi}}_{\tilde{\eta}, -}).
\end{align*}
Repeating the steps of \cref{thm:pauli:QMD} leads directly to the equality in \eqref{eq:QMD:flip:two}. 
To find the upper bound, let $\m{v} := (1-2\eta) \m{u} - (1-2 \tilde{\eta}) \tilde{\m{u}}$. Then:
\begin{align*}
    \sup_{\mb{\rho} \in \b{S}(\b{C}_{+}^{2 \times 2})} \left|\tr {\mb{\rho} \frac{(1-2 \eta) \m{O} - (1-2 \tilde{\eta}) \tilde{\m{O}}}{2}} \right| 
    = \frac{1}{2} \sup_{\m{a} \in \b{B}^{3}} |\innpr{\m{a}}{\m{v}}_{\b{R}^{3}}| 
    = \frac{\|\m{v}\|_{\b{R}^{3}}}{2}.
\end{align*}
The supremum is attained if and only if the Bloch vector $\m{a} = \pm \m{v}/ \|\m{v}\|_{\b{R}^{3}}$.
\end{proof}

\begin{proof}[Proof of \cref{prop:identifi}] ($1 \Rightarrow 2$) is trivial since $(\mb{\rho} - \tilde{\mb{\rho}}) \in \b{F}_{sa, 0}^{q \times q}$. We prove ($2 \Rightarrow 1$) by contradiction. Suppose that there exists a non-zero $\mb{\Delta} \in \b{F}_{sa, 0}^{q \times q}$ such that $d_{ik}[\mb{\Delta}] = 0$ for all $i = 1, \cdots, n$ and $k = 1, \cdots, q_{i}$.
Let $\mb{\rho} = \m{I}_{q}/q$ and consider 
\begin{align*}
    \tilde{\mb{\rho}} := (\m{I}_{q} - \varepsilon \mb{\Delta})/q, \quad 0 < \varepsilon < 1/ \vertiii{\mb{\Delta}}_{\infty},
\end{align*}
which is a valid density matrix. Then $\mb{\rho} - \tilde{\mb{\rho}} = \varepsilon \mb{\Delta}/q$, hence
\begin{align*}
    d_{ik}[\mb{\rho}] - d_{ik}[\tilde{\mb{\rho}}] = \frac{\varepsilon}{q} d_{ik}[\mb{\Delta}] = 0, \quad i = 1, \cdots, n, \, k = 1, \cdots, q_{i}.
\end{align*} 
However, this implies that $\mb{\rho} = \tilde{\mb{\rho}}$, i.e., $\mb{\Delta} = \m{0}$, which is a contradiction.
\end{proof}

\begin{proof}[Proof of \cref{prop:sphe:design}]
For any $\m{S} \in \b{F}_{sa}^{q \times q}$ and $\m{A} \in \b{R}^{n \times q}$, we get
\begin{equation*}
    \innpr{\f{D} \m{S}}{\m{A}} = \sum_{i=1}^{n} \sum_{k=1}^{q_{i}} \frac{\tr{\m{S} \m{\Pi}_{ik}}}{m_{ik}} \sum_{\{\ell : k_{i}(\ell) = k\}} a_{i\ell} = \tr{\m{S} \left( \sum_{i=1}^{n} \sum_{k=1}^{q_{i}} \bar{a}_{ik} \m{\Pi}_{ik} \right)} = \innpr{\m{S}}{\f{D}^{*} \m{A}}, 
\end{equation*}
which establishes the adjoint.
We now show the equivalence of the three conditions:
\begin{itemize}
\item [$(2 \Leftrightarrow 3)$] Since $\text{Ran}(\f{D}^{*}) = \spann \{\m{\Pi}_{ik} : i = 1, \cdots, n, \, k = 1, \cdots, q_{i} \}$, (3) is equivalent to the fact that $\f{D}^{*}$ is surjective, i.e., $\f{D}$ is injective since $\ker(\f{D}) = \text{Ran}(\f{D}^{*})^{\perp}$.
\item [$(3 \Rightarrow 1)$] Trivial.
\item [$(1 \Rightarrow 3)$] Suppose $\f{D}$ is not injective. Then, there exists a non-zero self-adjoint matrix $\mb{\Delta} \in \ker(\f{D}) = \ker(\f{D}^{*} \f{D})$. First, note that $\mb{\Delta} \in \b{F}_{sa, 0}^{q \times q}$  since 
\begin{equation*}
    0 = \tr{\f{D}^{*} \f{D}(\mb{\Delta})} = \tr{\sum_{i=1}^{n} \sum_{k=1}^{q_{i}} \frac{d_{ik}[\mb{\Delta}]}{m_{ik}} \m{\Pi}_{ik}} = \sum_{i=1}^{n} \sum_{k=1}^{q_{i}} d_{ik}[\mb{\Delta}] = n \tr{\mb{\Delta}}.
\end{equation*}
Meanwhile, we have $d_{ik}[\mb{\Delta}] = 0$ for any $i = 1, \cdots, n$ and $k = 1, \cdots, q_{i}$. Thus, the identifiability of $\{\m{O}_{i}: i = 1, \cdots, n\}$ implies that $\mb{\Delta} = \m{0}$, which is a contradiction.
\end{itemize}
\end{proof}

\begin{lemma}[Haar Measure Expectations on the Stiefel Manifold]\label{lem:haar:expec} 
Let $\phi, \psi \in \b{F}^{q}$ be mutually orthogonal unit vectors ($\innpr{\phi}{\psi}_{\b{F}^{q}}=0$) and let $\m{S} \in \b{F}_{sa, 0}^{q \times q}$.
\begin{enumerate}
\item If $\b{F} = \b{R}$ and $\m{U}$ follows a Haar measure on $O(q)$, then
\begin{align*}
    \b{E}\left[\innpr{\m{S} (\m{U}^{\top} \phi)}{(\m{U}^{\top} \phi)}_{\b{R}^{q}} (\m{U}^{\top} \phi) (\m{U}^{\top} \phi)^{\top}\right] &= \frac{2 \m{S}}{q(q+2)}, \\
    \b{E}\left[\innpr{\m{S} (\m{U}^{\top} \phi)}{(\m{U}^{\top} \phi)}_{\b{R}^{q}} (\m{U}^{\top} \psi) (\m{U}^{\top} \psi)^{\top}\right] &= - \frac{2 \m{S}}{q(q+2)(q-1)}.
\end{align*}

\item If $\b{F} = \b{C}$ and $\m{U}$ follows a Haar measure on $U(q)$, then
\begin{align*}
    \b{E}\left[\innpr{\m{S} (\m{U}^{*} \phi)}{(\m{U}^{*} \phi)}_{\b{C}^{q}} (\m{U}^{*} \phi) (\m{U}^{*} \phi)^{*}\right] &= \frac{\m{S}}{q(q+1)}, \\
    \b{E}\left[\innpr{\m{S} (\m{U}^{*} \phi)}{(\m{U}^{*} \phi)}_{\b{C}^{q}} (\m{U}^{*} \psi) (\m{U}^{*} \psi)^{*}\right] &= - \frac{\m{S}}{q(q+1)(q-1)}.
\end{align*}
\end{enumerate}
\end{lemma}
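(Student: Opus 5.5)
By unitary invariance, $\m{u} := \m{U}^{*}\phi$ is uniformly distributed on the unit sphere of $\b{F}^{q}$ (for $\b{F}=\b{R}$, $\m{u} = \m{U}^{\top}\phi$). The pair $(\m{U}^{*}\phi, \m{U}^{*}\psi)$ is uniformly distributed on the Stiefel manifold of orthonormal 2-frames. The plan is to handle the first identity through fourth moments of a uniform unit vector, and then to get the second identity from the first by summing over a completed orthonormal basis, rather than computing the joint law of the frame.

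\textbf{First identity.} Consider the superoperator $\m{S} \mapsto \b{E}[\innpr{\m{S}\m{u}}{\m{u}}\,\m{u}\m{u}^{*}]$. It commutes with conjugation $\m{S}\mapsto \m{V}\m{S}\m{V}^{*}$ for every $\m{V}$ in $O(q)$ or $U(q)$. By Schur's lemma (irreducibility of $\b{F}_{sa,0}^{q\times q}$ under this action), or by direct computation, it has the form $\m{S} \mapsto a\m{S} + b\tr{\m{S}}\m{I}_q$. The same conclusion follows concretely from the standard Gaussian-normalization moment formulas. For the complex case these are
\begin{align*}
\b{E}[u_i\bar u_j \bar u_k u_l] = \frac{\delta_{ij}\delta_{kl}+\delta_{il}\delta_{jk}}{q(q+1)}.
\end{align*}
The real case has three pairings and denominator $q(q+2)$. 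Since $\tr{\m{S}}=0$, only the coefficient $a$ matters. Taking the trace inner product with $\m{S}$ gives $a\tr{\m{S}^2} = \b{E}[\innpr{\m{S}\m{u}}{\m{u}}^2]$. The pairing formula then yields $a = 1/(q(q+1))$ in the complex case and $a = 2/(q(q+2))$ in the real case, where the extra real pairing contributes $2\tr{\m{S}^2}$ against the vanishing $\tr{\m{S}}^2$.

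\textbf{Second identity.} Complete $\phi=\phi_1,\psi=\phi_2$ to an orthonormal basis $\{\phi_1,\dots,\phi_q\}$. Then $\sum_{l}(\m{U}^{*}\phi_l)(\m{U}^{*}\phi_l)^{*} = \m{I}_q$, so
\begin{align*}
\sum_{l=1}^q \b{E}\big[\innpr{\m{S}\m{u}}{\m{u}}\,(\m{U}^{*}\phi_l)(\m{U}^{*}\phi_l)^{*}\big] = \b{E}[\innpr{\m{S}\m{u}}{\m{u}}]\,\m{I}_q = \frac{\tr{\m{S}}}{q}\m{I}_q = \m{0}.
\end{align*}
There is a Haar-preserving symmetry that permutes $\phi_2,\dots,\phi_q$ while fixing $\phi_1$: right multiplication of $\m{U}^{*}$ by a permutation-type unitary. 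Hence the $q-1$ terms with $l\ge2$ are all equal. The second expectation therefore equals $-1/(q-1)$ times the first, which gives exactly the stated constants $-2/(q(q+2)(q-1))$ and $-1/(q(q+1)(q-1))$.

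\textbf{Main obstacle.} The only delicate point is getting the fourth-moment constants right, especially the real case with its extra pairing. I would fix these by checking the trace normalization: $\sum_{i,j}$ of $\b{E}|u_i|^2|u_j|^2 = 1$ must hold. For invariance, the key point is that the maps $\m{S} \mapsto \m{V}^{*}\m{S}\m{V}$ act transitively enough, which is immediate from the Haar property $\m{U}\overset{d}{=}\m{U}\m{V}$.
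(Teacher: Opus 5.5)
Your proposal is correct, and for the second identity it takes a genuinely different route from the paper. The paper handles both identities by moment calculus. For the orthonormal pair $(\m{v},\m{w})$ it posits an invariant ansatz $\b{E}[v_iv_jw_kw_l]=C_1\delta_{ij}\delta_{kl}+C_2(\delta_{ik}\delta_{jl}+\delta_{il}\delta_{jk})$ (resp.\ $C_1\delta_{ij}\delta_{kl}+C_2\delta_{ik}\delta_{jl}$ for $\b{E}[\bar v_iv_jw_k\bar w_l]$). It then solves for $C_1,C_2$ from the contractions $\b{E}[v_iv_j\innpr{\m{w}}{\m{w}}]=\delta_{ij}/q$ and $\b{E}[v_iw_l\innpr{\m{v}}{\m{w}}]=0$.

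You never need joint moments of the frame. Three facts suffice:
\begin{itemize}
\item the resolution of the identity $\sum_l(\m{U}^*\phi_l)(\m{U}^*\phi_l)^*=\m{I}_q$;
\item $\b{E}[\innpr{\m{S}\m{u}}{\m{u}}]=\tr{\m{S}}/q=0$;
\item $(\m{U}^*\phi_1,\m{U}^*\phi_l)$ has the same law for every $l\ge 2$, from $\m{W}^*\m{U}\overset{d}{=}\m{U}$ with $\m{W}$ permuting $\phi_2,\dots,\phi_q$ and fixing $\phi_1$.
\end{itemize}
Together these give the second expectation as exactly $-\frac{1}{q-1}$ times the first, for either field. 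Your argument uses only single-vector fourth moments and makes the field-independent ratio $-1/(q-1)$ transparent. In fact it is the $m=q$ case of \cref{lem:haar:expec:v2}, where the factor $q-m$ vanishes, read in reverse. The paper's route produces the full joint moment tensor, which is more than this lemma needs.

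One slip to correct: your displayed complex moment formula is wrong as written, because each unconjugated index must pair with a conjugated one. It should read $\b{E}[u_i\bar u_j\bar u_k u_l]=(\delta_{ij}\delta_{kl}+\delta_{ik}\delta_{jl})/(q(q+1))$. As written it gives $\b{E}[u_1^2\bar u_2^2]=1/(q(q+1))$, contradicting phase invariance. Your proposed check $\sum_{i,j}\b{E}|u_i|^2|u_j|^2=1$ cannot catch this, since that contraction is symmetric under the swap. Your stated value $a=1/(q(q+1))$ is nonetheless correct; it follows from $\b{E}[\innpr{\m{S}\m{u}}{\m{u}}^2]=(\tr{\m{S}}^2+\tr{\m{S}^2})/(q(q+1))$.
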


\begin{proof}
Let $\m{v} := \m{U} \phi$ and $\m{w} := \m{U} \psi$. Since $\m{U}$ is Haar-distributed (and thus $\m{U}$, $\m{U}^\top$, and $\m{U}^*$ share the same distribution) and $\innpr{\phi}{\psi}=0$, the pair $(\m{v}, \m{w})$ is uniformly distributed on the Stiefel manifold. We compute the expectations component-wise using the moment formulas for the Stiefel manifold \cite{collins2006integration}.

\begin{enumerate}[leftmargin = *]
\item \textbf{Real Case ($\b{F} = \b{R}$):}
The $(k,l)$-th entry of the target matrix is $\sum_{i,j} \m{S}_{ij} \b{E}[v_i v_j v_k v_l]$.
Using the fourth-order moment formula for the unit sphere:
\begin{equation*}
    \b{E} [v_{i} v_{j} v_{k} v_{l}] = \frac{\delta_{ij}\delta_{kl} + \delta_{ik}\delta_{jl} + \delta_{il}\delta_{jk}}{q(q+2)}.
\end{equation*}
Substituting this into the summation:
\begin{align*}
    \left( \b{E} [\innpr{\m{S}\m{v}}{\m{v}} \m{v} \m{v}^{\top}] \right)_{kl} = \sum_{i,j=1}^{q} \frac{\m{S}_{ij} (\delta_{ij}\delta_{kl} + \delta_{ik}\delta_{jl} + \delta_{il}\delta_{jk})}{q(q+2)} 
    = \frac{\tr{\m{S}}\delta_{kl} + \m{S}_{kl} + \m{S}_{lk}}{q(q+2)}.
\end{align*}
Since $\tr{\m{S}}=0$ and $\m{S}$ is self-adjoint ($\m{S}_{lk}=\m{S}_{kl}$), this simplifies to $\frac{2 \m{S}_{kl}}{q(q+2)}$.
For the orthogonal vectors $\m{v} \perp \m{w}$, we require the joint moments. By $O(q)$-invariance, the general form for the tensor is:
\begin{equation*}
    \b{E}[v_i v_j w_k w_l] = C_1 \delta_{ij}\delta_{kl} + C_2 (\delta_{ik}\delta_{jl} + \delta_{il}\delta_{jk}).
\end{equation*}
The constants are determined by conditioning:
\begin{align*}
    &\frac{1}{q} \delta_{ij} = \b{E}[v_i v_j \innpr{\m{w}}{\m{w}}] =  \sum_{k=1}^{q} [C_{1} \delta_{ij} + 2 C_2 (\delta_{ik}\delta_{jk})] = (q C_{1} + 2 C_2) \delta_{ij}, \\ 
    &0 = \b{E}[v_i w_l \innpr{\m{v}}{\m{w}}] = \sum_{k=1}^{q} [C_1 \delta_{ik}\delta_{kl} + C_2 (\delta_{ik}\delta_{kl} + \delta_{il})] = (C_{1} + (q+1) C_2) \delta_{il},
\end{align*}
yielding $C_2 = - [q(q-1)(q+2)]^{-1}$ and $C_1 = -(q+1)C_2$.
The expectation is:
\begin{align*}
    \left( \b{E} [\innpr{\m{S}\m{v}}{\m{v}} \m{w} \m{w}^{\top}] \right)_{kl} &= \sum_{i,j=1}^{q} \m{S}_{ij} [ C_1 \delta_{ij}\delta_{kl} + C_2 (\delta_{ik}\delta_{jl} + \delta_{il}\delta_{jk}) ] \\
    &= C_1 \tr{\m{S}} \delta_{kl} + C_2 (\m{S}_{kl} + \m{S}_{lk}) 
    = 2 C_2 \m{S}_{kl} = - \frac{2 \m{S}_{kl}}{q(q+2)(q-1)}.
\end{align*}

\item \textbf{Complex Case ($\b{F} = \b{C}$):}
The $(k,l)$-th entry is $\sum_{i,j} \m{S}_{ij} \b{E}[\bar{v}_i v_j v_k \bar{v}_l]$.
Using the Weingarten formula for $U(q)$ \cite{collins2006integration}:
\begin{equation*}
    \b{E} [\bar{v}_{i} v_{j} v_{k} \bar{v}_{l}] = \frac{\delta_{ij}\delta_{kl} + \delta_{ik}\delta_{jl}}{q(q+1)}.
\end{equation*}
Substituting this into the summation:
\begin{align*}
    \left( \b{E} [\innpr{\m{S}\m{v}}{\m{v}} \m{v} \m{v}^{*}] \right)_{kl} = \sum_{i,j=1}^{q} \frac{\m{S}_{ij} (\delta_{ij}\delta_{kl} + \delta_{ik}\delta_{jl})}{q(q+1)} 
    = \frac{\tr{\m{S}}\delta_{kl} + \m{S}_{kl}}{q(q+1)}.
\end{align*}
Since $\tr{\m{S}}=0$, we obtain $\frac{\m{S}_{kl}}{q(q+1)}$.
Again, for the orthogonal vectors $\m{v} \perp \m{w}$, the joint moment takes the form: $\b{E}[\bar{v}_i v_j w_k \bar{w}_l] = C_1 \delta_{ij}\delta_{kl} + C_2 \delta_{ik}\delta_{jl}$.
Using $\b{E}[\bar{v}_i v_j \innpr{\m{w}}{\m{w}}] = q^{-1}\delta_{ij}$ and $\b{E}[\bar{v}_i w_k \innpr{\m{w}}{\m{v}}] = 0$, we find $C_2 = - \frac{1}{q(q^2-1)}$ and $C_1 = - q C_2$.
The expectation is:
\begin{align*}
    \left( \b{E} [\innpr{\m{S}\m{v}}{\m{v}} \m{w} \m{w}^{*}] \right)_{kl} &= \sum_{i,j} \m{S}_{ij} (C_1 \delta_{ij}\delta_{kl} + C_2 \delta_{ik}\delta_{jl}) \\
    &= C_1 \tr{\m{S}} \delta_{kl} + C_2 \m{S}_{kl}
    = C_2 \m{S}_{kl} = - \frac{\m{S}_{kl}}{q(q+1)(q-1)}.
\end{align*}
\end{enumerate}
\end{proof}

\begin{corollary}[Haar Measure Expectations for Projections]\label{lem:haar:expec:v2}
Let $\m{\Pi} \in \b{F}_{sa}^{q \times q}$ be a projection operator of rank $m$ and let $\m{S} \in \b{F}_{sa, 0}^{q \times q}$. Then, it holds that
\begin{align*}
    \frac{1}{m} \b{E}\left[\tr{\m{S}(\m{U} \m{\Pi} \m{U}^{*})} \m{U} \m{\Pi} \m{U}^{*} \right] =  \frac{q - m}{q-1} \frac{\alpha_{q} \m{S}}{q}, \quad \alpha_{q} := \begin{cases}
        (q/2+1)^{-1} &, \quad \b{F} = \b{R}, \\
        (q+1)^{-1} &, \quad \b{F} = \b{C}.
    \end{cases} 
\end{align*}
\end{corollary}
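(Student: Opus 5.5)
We expand the projection into an orthonormal eigenbasis and then apply \cref{lem:haar:expec} term by term. Write $\m{\Pi} = \sum_{a=1}^{m} \phi_{a} \phi_{a}^{*}$ with $\{\phi_{a}\}_{a=1}^{m}$ orthonormal. Then $\m{U}\m{\Pi}\m{U}^{*} = \sum_{a} \m{v}_{a}\m{v}_{a}^{*}$ with $\m{v}_{a} := \m{U}\phi_{a}$. Consequently
\begin{align*}
    \tr{\m{S}(\m{U}\m{\Pi}\m{U}^{*})}\, \m{U}\m{\Pi}\m{U}^{*} = \sum_{a=1}^{m}\sum_{b=1}^{m} \innpr{\m{S}\m{v}_{a}}{\m{v}_{a}}\, \m{v}_{b}\m{v}_{b}^{*}.
\end{align*}
Since $\m{U}$ and $\m{U}^{*}$ (or $\m{U}^{\top}$) share the Haar law, each summand has the same distribution as the corresponding expression in \cref{lem:haar:expec} with $\m{U}^{*}\phi$ in place of $\m{U}\phi$. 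So by linearity of expectation we only need to count the diagonal and off-diagonal pairs.

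We then split the double sum. There are $m$ diagonal terms with $a=b$, each contributing $c_{1}\m{S}$. There are $m(m-1)$ off-diagonal terms with $a\neq b$; there $\phi_{a}\perp\phi_{b}$, so each contributes $c_{2}\m{S}$. In the complex case $c_{1} = 1/(q(q+1))$ and $c_{2} = -1/(q(q+1)(q-1))$. In the real case both constants carry an extra factor $2$ and $q+1$ is replaced by $q+2$. Summing gives
\begin{align*}
    \b{E}[\cdots] = m\, c_{1}\left(1 - \frac{m-1}{q-1}\right)\m{S} = m\, c_{1}\,\frac{q-m}{q-1}\,\m{S}.
\end{align*}
Dividing by $m$ leaves $c_{1}\frac{q-m}{q-1}\m{S}$.

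It remains to match the constant with $\alpha_{q}/q$. In the complex case $c_{1} = \frac{1}{q}\cdot\frac{1}{q+1} = \alpha_{q}/q$. In the real case $c_{1} = \frac{2}{q(q+2)} = \frac{1}{q}\cdot\frac{1}{q/2+1} = \alpha_{q}/q$. This proves the claim.

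The only delicate point is justifying that the orthogonal-pair formula of \cref{lem:haar:expec} applies to every pair $(\phi_{a},\phi_{b})$ with $a\neq b$. This holds because the lemma is stated for arbitrary orthonormal pairs and only uses the joint Haar law of $(\m{U}\phi_{a},\m{U}\phi_{b})$ on the Stiefel manifold. The trace-zero hypothesis on $\m{S}$ enters only through the lemma, which kills the $\tr{\m{S}}$ terms. The edge case $m=q$ correctly gives $0$, consistent with $\m{\Pi}=\m{I}_{q}$ and $\tr{\m{S}}=0$.
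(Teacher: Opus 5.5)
Your proposal is correct and follows the paper's proof essentially verbatim. Like the paper, you expand $\m{\Pi}$ in an orthonormal eigenbasis, apply \cref{lem:haar:expec} to the $m$ diagonal and $m(m-1)$ off-diagonal pairs, and sum to obtain $\frac{m\alpha_{q}}{q}\frac{q-m}{q-1}\m{S}$. Your remarks that $\m{U}$ and $\m{U}^{*}$ share the Haar law, and that $c_{1}=\alpha_{q}/q$ in both the real and complex cases, only make explicit what the paper leaves implicit.
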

\begin{proof}
Write $\m{\Pi} = \sum_{k=1}^{m} \phi_{k} \phi_{k}^{*}$,
where $\{\phi_{k}: k = 1, \cdots, m \}$ are mutually orthogonal unit vectors. Then,
\begin{align*}
    \tr{\m{S}(\m{U} \m{\Pi} \m{U}^{*})} \m{U} \m{\Pi} \m{U}^{*} &= \sum_{k, k'=1}^{m} \tr{\m{S}(\m{U} \phi_{k} \phi_{k}^{*} \m{U}^{*})} \m{U} \phi_{k'} \phi_{k'}^{*} \m{U}^{*} \\
    &= \sum_{k, k'=1}^{m} \innpr{\m{S} (\m{U} \phi_{k})}{(\m{U} \phi_{k})} (\m{U} \phi_{k'}) (\m{U} \phi_{k'})^{*},
\end{align*}
so the result follows from \cref{lem:haar:expec}:
\begin{align*}
    \b{E}\left[\tr{\m{S}(\m{U} \m{\Pi} \m{U}^{*})} \m{U} \m{\Pi} \m{U}^{*} \right] =   \frac{\alpha_{q} \m{S}}{q} \left[ m -  \frac{m(m-1)}{q-1} \right] = \frac{m \alpha_{q} \m{S}}{q} \frac{q - m}{q-1}.
\end{align*}
\end{proof}

\begin{proof}[Proof of \cref{thm:const:unit:dsgn}]
\quad
\begin{enumerate}[leftmargin = *]
\item Assume that $\s{O}$ is $\alpha$-unitary. Given $\m{V} \in \b{F}^{q \times q}$ with $\m{V} \m{V}^{*} = \m{I}_{q}$, the eigenprojection with respect to the new design $\{\tilde{\m{O}}_{i} := \m{V} \m{O}_{i} \m{V}^{*}: i = 1, \cdots, n\}$ becomes $\tilde{\m{\Pi}}_{ik} = \m{V} \m{\Pi}_{ik} \m{V}^{*}$ with the same multiplicities. Because $\m{V}^{*} \mb{\Delta} \m{V} \in \b{F}_{sa, 0}^{q \times q}$ if and only if $\mb{\Delta} \in \b{F}_{sa, 0}^{q \times q}$, we obtain
\begin{align*}
    \frac{\tilde{\f{D}}^{*} \tilde{\f{D}}}{n} \mb{\Delta} &= \frac{1}{n} \sum_{i=1}^{n} \sum_{k=1}^{q_{i}} \frac{\tr{\mb{\Delta} \tilde{\m{\Pi}}_{ik}}}{m_{ik}} \tilde{\m{\Pi}}_{ik} = \m{V} \left[ \frac{1}{n} \sum_{i=1}^{n} \sum_{k=1}^{q_{i}} \frac{\tr{(\m{V}^{*} \mb{\Delta} \m{V}) \m{\Pi}_{ik}}}{m_{ik}} \m{\Pi}_{ik} \right] \m{V}^{*} \\
    &= \m{V} \left[ \frac{\f{D}^{*} \f{D}}{n} (\m{V}^{*} \mb{\Delta} \m{V}) \right] \m{V}^{*} = \alpha \m{V} (\m{V}^{*} \mb{\Delta} \m{V}) \m{V}^{*} = \alpha \mb{\Delta}.
\end{align*}
The other direction is trivial since $\m{O}_{i} = \m{V}^{*} \tilde{\m{O}}_{i} \m{V}$.

\item Assume that $\s{O}$ is $\alpha$-unitary. Let $\m{U}$ be a random matrix distributed according to the Haar measure on $O(q)$ (if $\b{F} = \b{R}$) or $U(q)$ (if $\b{F} = \b{C}$). Then, \cref{lem:haar:expec:v2} leads to 
\begin{align*}
    \alpha \mb{\Delta} &= \frac{1}{n} \sum_{i=1}^{n} \sum_{k=1}^{q_{i}} \int \tr{\mb{\Delta} \m{U} \m{\Pi}_{ik} \m{U}^{*}} \frac{\m{U} \m{\Pi}_{ik} \m{U}^{*}}{m_{ik}} \rd \m{U} \\
    &= \left( \frac{1}{n} \sum_{i=1}^{n} \sum_{k=1}^{q_{i}} \frac{q - m_{ik}}{q-1}  \right) \frac{\alpha_{q} {\mb{\Delta}}}{q} 
    = \frac{\bar{q} - 1}{q-1} \alpha_{q} \mb{\Delta}.
\end{align*}
\end{enumerate}
\end{proof}

\begin{proof}[Proof of \cref{thm:supop:qubit}]
If $\m{v}_{i} = \m{0}$, we have $\tr{\mb{\Delta} \m{O}_{i}} \m{O}_{i} = \m{0}$ for any $\mb{\Delta} \in \b{C}_{sa, 0}^{2 \times 2}$.
When $\m{v}_{i} \ne \m{0}$, the eigenvalues of $\m{O}_{i}$ are given by $w_{i} \pm \|\m{v}_{i}\|$, with the corresponding eigenprojections $\m{\Pi}_{i,\pm} = \tfrac{1}{2} (\m{I}_{2} \pm \m{u}_{i} \cdot \vec{\mb{\sigma}})$.
Hence, for any $\mb{\Delta} = \m{a} \cdot \vec{\mb{\sigma}} \in \b{C}_{sa, 0}^{2 \times 2}$ with $\m{a} \in \b{R}^{3}$:
\begin{align*}
    \tr{\mb{\Delta} \m{\Pi}_{i, \pm}} \m{\Pi}_{i, \pm} &= \frac{1}{2} \tr{\pm (\m{a} \cdot \vec{\mb{\sigma}}) (\m{u}_{i} \cdot \vec{\mb{\sigma}})} \frac{\m{I}_{2} \pm \m{u}_{i} \cdot \vec{\mb{\sigma}}}{2} \\
    &= \pm (\m{a} \cdot \m{u}_{i}) \frac{\m{I}_{2} \pm \m{u}_{i} \cdot \vec{\mb{\sigma}}}{2},
\end{align*}
which leads to $\tr{\mb{\Delta} \m{\Pi}_{i, +}} \m{\Pi}_{i, +} + \tr{\mb{\Delta} \m{\Pi}_{i, -}} \m{\Pi}_{i, -} = (\m{u}_{i} \m{u}_{i}^{\top} \m{a}) \cdot \vec{\mb{\sigma}} \in \b{C}_{sa, 0}^{2 \times 2}$,
demonstrating \eqref{eq:supop:qubit}. Finally, noting that $\text{Ran} \big[ \sum_{i=1}^{n} \m{u}_{i} \m{u}_{i}^{\top} \big] = \spann \{\m{v}_{i}\}_{i=1}^{n}$, it is clear that $\{\m{O}_{i}\}_{i=1}^{n}$ is complete if and only if $\spann \{\m{v}_{i}\}_{i=1}^{n} = \b{R}^{3}$. Also, the only possible value for an $\alpha$-unitary design is $3\alpha = \tfrac{1}{n} \sum_{i=1}^{n} \|\m{u}_{i}\|^{2} = 1$.
\end{proof}

\begin{proof}[Proof of \cref{prop:unit:design}]
The sufficiency is immediate by restricting the identity to $\b{F}_{sa, 0}^{q \times q}$. Conversely, the trace-centering decomposition in \eqref{eq:tr:center} establishes the necessity:
\begin{align*}
    \frac{\f{D}^{*} \f{D}}{n} \m{S} = \frac{\f{D}^{*} \f{D}}{n} \left[\mb{\Delta} + \tr{\m{S}} \frac{\m{I}_{q}}{q}  \right] = \alpha \mb{\Delta} + \tr{\m{S}} \frac{\m{I}_{q}}{q} = \alpha \m{S} + (1-\alpha) \tr{\m{S}} \frac{\m{I}_{q}}{q}.
\end{align*}
Consider the following linear map:
\begin{align*}
    \f{T}: \m{S} \in \b{F}_{sa}^{q \times q} \mapsto \frac{1}{\alpha} \left[ \m{S} - \frac{1-\alpha}{q} \tr{\m{S}} \m{I}_{q} \right] \in \b{F}_{sa}^{q \times q}.
\end{align*}
Since $\tr{\f{T} [\m{S}]} = \tr{\m{S}}$, we have
\begin{align*}
    \frac{\f{D}^{*} \f{D}}{n} \f{T} \m{S} = \alpha \f{T} [\m{S}] + (1-\alpha) \tr{\m{S}} \frac{\m{I}_{q}}{q} 
    = \m{S}.
\end{align*}
This establishes that $(\f{D}^{*} \f{D}/n)$ is invertible with $\f{T} = (\f{D}^{*} \f{D}/n)^{-1}$.
\end{proof}

\begin{proof}[Proof of \cref{prop:sq:sum:proj}]
Due to \eqref{eq:design2spd} and \cref{prop:unit:design}, we have
\begin{align*}  
    \frac{1}{n} \sum_{i=1}^{n} \sum_{k=1}^{q_{i}} \frac{d_{ik}[\m{S}]^{2}}{m_{ik}} = \innpr{\frac{\f{D}^{*} \f{D}}{n} \m{S}}{\m{S}}_{2}
    = \alpha \tr{\m{S}^{2}} + \frac{1-\alpha}{q} \tr{\m{S}}^{2}.
\end{align*}
\end{proof}

\begin{proof}[Proof of \cref{thm:est:normal:eq}]
We first show that the unconstrained minimizer over $\b{F}_{sa}^{q \times q}$ automatically satisfies the unit trace condition. The normal equation is $(\f{D}^{*} \f{D}) \hat{\mb{\rho}} = \f{D}^{*} \m{P}$. Since the frame superoperator $(\f{D}^{*} \f{D}/n)$ preserves the trace, we have:
\begin{align*}
    n \tr{\hat{\mb{\rho}}} = \tr{(\f{D}^{*} \f{D}) \hat{\mb{\rho}}} = \tr{\f{D}^{*} \m{P}} = \innpr{\f{D}^{*} \m{P}}{\m{I}_{q}}_{2} = \innpr{\m{P}}{\f{D} \m{I}_{q}}_{2}.
\end{align*}
Recall that $\f{D}\m{I}_q$ is the matrix of all ones. Thus, $\innpr{\m{P}}{\f{D} \m{I}_{q}}_{2} = \sum_{i,k} p_{ik} = \sum_i 1 = n$. Hence, $\tr{\hat{\mb{\rho}}} = 1$.
The structure of the solution set follows immediately, and the null space vanishes if and only if $\{\m{O}_{i}\}_{i=1}^{n}$ is complete due to \cref{prop:sphe:design}.
\end{proof}

\begin{proof}[Proof of \cref{cor:equiv:proj}]
For any $\mb{\rho} \in \b{S}(\b{F}_{+}^{q \times q})$, since $\mb{\Delta} = \mb{\rho} - \hat{\mb{\rho}}^{\text{LSE}} \in \b{F}_{sa, 0}^{q \times q}$, the cross-term vanishes:
\begin{align*}
    \mathrm{Loss}[\mb{\rho}] = \vertiii{\f{D}[\mb{\Delta}] + (\f{D}[\hat{\mb{\rho}}^{\text{LSE}}] - \m{P})}_{2}^{2} &= \vertiii{\f{D}[\mb{\Delta}]}_{2}^{2} + \mathrm{Loss}[\hat{\mb{\rho}}^{\text{LSE}}] = \alpha \vertiii{\mb{\Delta}}_{2}^{2} + \mathrm{Loss}[\hat{\mb{\rho}}^{\text{LSE}}],
\end{align*}
hence the result follows.
\end{proof}

\begin{proof}[Proof of \cref{cor:unit:mse}]
Using \eqref{eq:unit:des:LSE}, $\tr{\f{D}^{*} \m{P}} = n$, and \cref{prop:unit:design}, we get
\begin{align*}
    \hat{\mb{\rho}}^{\text{LSE}} = (\f{D}^{*} \f{D})^{-1} \f{D}^{*} \m{P} = \frac{1}{n \alpha} \left[ \f{D}^{*} \m{P} - n (1-\alpha) \frac{\m{I}_{q}}{q} \right] = \frac{\f{D}^{*} \m{P}}{n \alpha} - \frac{1-\alpha}{\alpha} \frac{\m{I}_{q}}{q}{\color{red}.}
\end{align*}
For the MSE, observe that the error matrix lies in the traceless subspace: $\f{D}^{*} (\m{P} - \f{D} \mb{\rho}) \in \b{F}_{sa, 0}^{q \times q}$.
Consequently, the estimation error is:
\begin{align*}
    &\hat{\mb{\rho}}^{\text{LSE}} - \mb{\rho} = (\f{D}^{*} \f{D})^{-1} \f{D}^{*} (\m{P} - \f{D} \mb{\rho}) = \frac{\f{D}^{*} (\m{P} - \f{D} \mb{\rho})}{n\alpha}, \\
    &\vertiii{\hat{\mb{\rho}}^{\text{LSE}} - \mb{\rho}}_{2}^{2} =
    \frac{1}{(n \alpha)^{2}} \vertiii{ \sum_{i=1}^{n} \sum_{k=1}^{q_{i}} \frac{p_{ik} - d_{ik}[\mb{\rho}]}{m_{ik}} \m{\Pi}_{ik} }_{2}^{2}.
\end{align*}
Since $\Cov (p_{ik}, p_{i'k'} \vert \mb{\rho})$ in \eqref{eq:multinom:cov} whenever $i \ne i'$, taking the expectation yields:
\begin{align*}
    \mathrm{MSE}[\hat{\mb{\rho}}^{\text{LSE}} \vert \mb{\rho}] 
    &= \frac{1}{(n \alpha)^{2}} \sum_{i=1}^{n} \sum_{k, k'=1}^{q_{i}} \frac{\Cov (p_{ik}, p_{ik'} \vert \mb{\rho})}{m_{ik} m_{ik'}} \tr{\m{\Pi}_{ik} \m{\Pi}_{ik'}} \\
    &= \frac{1}{(n \alpha)^{2}} \sum_{i=1}^{n} \sum_{k=1}^{q_{i}} \frac{\Cov (p_{ik}, p_{ik} \vert \mb{\rho})}{m_{ik}} 
    = \frac{1}{(n \alpha)^{2} r} \sum_{i=1}^{n} \sum_{k=1}^{q_{i}} \frac{(d_{ik}[\mb{\rho}] - d_{ik}[\mb{\rho}]^{2})}{m_{ik}}.
\end{align*}
Observe the bound for the first-order term and the identity from \cref{prop:sq:sum:proj}:
\begin{align*}
    &\frac{1}{n} \sum_{i=1}^{n} \sum_{k=1}^{q_{i}} \frac{d_{ik}[\mb{\rho}]}{m_{ik}} \le \frac{1}{n} \sum_{i=1}^{n} \sum_{k=1}^{q_{i}} d_{ik}[\mb{\rho}] = 1, \quad \text{with equality if } m_{ik} \equiv 1, \\
    &\frac{1}{n} \sum_{i=1}^{n} \sum_{k=1}^{q_{i}} \frac{d_{ik}[\mb{\rho}]^{2}}{m_{ik}} = \alpha \tr{\mb{\rho}^{2}} + \frac{1-\alpha}{q} = \alpha \left(\tr{\mb{\rho}^{2}} - \frac{1}{q} \right) + \frac{1}{q},
\end{align*}
Substitution into the MSE expression yields the result. 
\end{proof}

\begin{proof}[Proof of \cref{prop:fixed:bud}]
Let $\{\m{E}_{\ell} : 1 \le \ell \le \dim(\b{F}_{sa}^{q \times q})\}$ be an orthonormal basis for $\b{F}_{sa}^{q \times q}$. Using \eqref{eq:design2spd}, we obtain
\begin{align*}
    \innpr{\left(\frac{\f{D}^{*} \f{D}}{n}\right) \m{E}_{\ell}}{\m{E}_{\ell}}_{2} = \frac{1}{n} \sum_{i=1}^{n} \sum_{k=1}^{q_{i}} \frac{\tr{\m{E}_{\ell} \m{\Pi}_{ik}}^{2}}{m_{ik}}.
\end{align*}
By Parseval's identity, we get
\begin{align*}
    \vertiii{\frac{\f{D}^{*} \f{D}}{n}}_{1} &= 
    \sum_{\ell=1}^{\dim(\b{F}_{sa}^{q \times q})}
    \innpr{\left(\frac{\f{D}^{*} \f{D}}{n}\right) \m{E}_{\ell}}{\m{E}_{\ell}}_{2} = \frac{1}{n} \sum_{i=1}^{n} \sum_{k=1}^{q_{i}} \frac{1}{m_{ik}} \left( \sum_{\ell=1}^{\dim(\b{F}_{sa}^{q \times q})} \tr{\m{E}_{\ell} \m{\Pi}_{ik}}^{2}\right) \\
    &= \frac{1}{n} \sum_{i=1}^{n} \sum_{k=1}^{q_{i}} \frac{1}{m_{ik}} \tr{\m{\Pi}_{ik}^{2}} = \frac{1}{n} \sum_{i=1}^{n} q_i = \bar{q}.
\end{align*}
The superoperator always has eigenvalue $1$ on $\spann\{\m{I}_{q}\}$, thus the remaining trace $\bar{q}-1$ must be distributed on $\b{F}_{sa, 0}^{q \times q}$. Thus, the smallest eigenvalue is maximized when the trace is distributed uniformly, i.e., when $\c{O}$ is a unitary design:
\begin{align*}
    \tau_{\min} \le \frac{1}{\dim(\b{F}_{sa}^{q \times q})} \vertiii{\left. \frac{\f{D}^{*} \f{D}}{n} \right\vert_{\b{F}_{sa, 0}^{q \times q}} }_{1} = \frac{\bar{q} - 1}{\dim(\b{F}_{sa}^{q \times q}) -1} = \alpha.
\end{align*}
\end{proof}

\begin{proof}[Proof of \cref{thm:minimax}]
We employ Fano's method \cite{tsybakov2009introduction} with a local packing around the maximally mixed state $\mb{\rho}^{(0)}= \m{I}_{q}/q$. Let $D = \dim (\b{F}_{sa, 0}^{q \times q}) \asymp q^{2}$. Let $\{\m{E}_{1}, \cdots, \m{E}_{D}\} \subset \b{F}_{sa, 0}^{q \times q}$ be the eigenmatrices of the Gram superoperator. From \cref{prop:fixed:bud}, their eigenvalues satisfy $\sum_{j=1}^{D} \tau_{j} = \bar{q} -1$. For a radius parameter $\varepsilon > 0$, we define the perturbations:
\begin{align*}
    \mb{\rho}^{(\omega)} = \mb{\rho}^{(0)} \oplus \underbrace{\varepsilon \sum_{j=1}^{D} \frac{\omega_j}{\sqrt{D \tau_{j}}} \m{E}_{j}}_{=: \mb{\Delta}^{(\omega)}} \in \spann\{\m{I}_{q}\} \oplus \b{F}_{sa, 0}^{q \times q}, \quad \omega \in \{+1, -1\}^{D}.
\end{align*}

By the Varshamov--Gilbert lemma, there exists a subset of the hypercube $\Omega \subset \{+1, -1\}^{D}$ with cardinality $|\Omega| \ge 2^{D/8}$ such that for any distinct $\omega, \omega' \in \Omega$, the Hamming distance satisfies $d_{H}(\omega, \omega') \ge D/8$. Then, $\c{M} = \{\mb{\rho}^{(\omega)} : \omega \in \Omega \} \subset \b{S}(\b{F}_{+}^{q \times q})$ satisfies $\delta$-separation:
\begin{align*}
    &\vertiii{\mb{\rho}^{(\omega)} - \mb{\rho}^{(\omega')}}_{2}^{2} = \vertiii{\mb{\Delta}^{(\omega)} - \mb{\Delta}^{(\omega')}}_{2}^{2} = \frac{4 \varepsilon^{2}}{D} \left( \sum_{j=1}^{D} \frac{\ds{1}(\omega_{j} \neq \omega_{j}')}{ \tau_{j}} \right) \\
    &\ge \frac{4 \varepsilon^{2}}{D} \frac{ \left( \sum_{j=1}^{D} \ds{1}(\omega_{j} \neq \omega_{j}') \right)^{2}}{\sum_{j=1}^{D} \tau_{j}} = \frac{4 \varepsilon^2 d_{H}(\omega, \omega')^{2}}{D (\bar{q}-1)} \ge  \frac{D \varepsilon^2}{16 (\bar{q}-1)} = \frac{\varepsilon^2}{16 \alpha} =: \delta^{2},
\end{align*}
where $\alpha = \frac{\bar{q} - 1}{q-1} \cdot \alpha_{\c{Q}}$.
We now bound the KL divergence by the $\chi^{2}$-divergence. Using the fact that $d_{ik}[\mb{\rho}^{(0)}] = m_{ik} / q$, we obtain for any $\omega \in \Omega$ that:
\begin{align*}
    KL(P_{\mb{\rho}^{(\omega)}} \Vert P_{\mb{\rho}^{(0)}}) &\le \sum_{i=1}^{n} \sum_{j=1}^{r} KL( \mathrm{Categorical}([d_{ik}[\mb{\rho}^{(\omega)}]]) \Vert \mathrm{Categorical}([d_{ik}[\mb{\rho}^{(0)}]]) ) \\
    &\le r \sum_{i=1}^{n} \chi^{2}( \mathrm{Categorical}([d_{ik}[\mb{\rho}^{(\omega)}]]) \Vert \mathrm{Categorical}([d_{ik}[\mb{\rho}^{(0)}]]) ) \\
    &= rq \sum_{i=1}^{n} \sum_{k=1}^{q_{i}} \frac{d_{ik}[\mb{\Delta}^{(\omega)}]^{2}}{m_{ik}} = (n rq ) \innpr{\frac{\f{D}^{*} \f{D}}{n} \mb{\Delta}^{(\omega)}}{\mb{\Delta}^{(\omega)}}_{2} = (nrq) \varepsilon^{2}.
\end{align*}
Then, the Yang--Barron method \cite{yang1999information} gives:
\begin{align*}
    I(V ; Y) \le \frac{1}{|\Omega|} \sum_{\omega \in \Omega} KL(P_{\mb{\rho}^{(\omega)}} \Vert P_{\mb{\rho}^{(0)}}) \le (nrq) \varepsilon^{2},
\end{align*}
and the Fano method yields
\begin{align*}
    \inf_{\hat{\mb{\rho}}} \sup_{\mb{\rho} \in \b{S}(\b{F}_{+}^{q \times q})} \b{E} \left[\vertiii{\hat{\mb{\rho}} - \mb{\rho}}_{2}^{2} \vert \mb{\rho} \right] &\ge \frac{\delta^{2}}{2} \left(1 - \frac{I(V; Y) + \log 2}{\log |\Omega|} \right) \\
    &\ge \frac{\varepsilon^{2}}{32 \alpha} \left(1 - \frac{(nrq) \varepsilon^{2} + \log 2}{(\log 2) D/8 } \right).
\end{align*}
Finally, taking $\varepsilon^{2} = c_{0} \frac{q}{3nr} \asymp D/[(n r) q]$ for some universal constant $c_{0} > 0$ establishes the lower bound, provided that $r \ge q c_{0} \tr{\left( \f{D}^{*} \f{D} \right)^{-1}|_{\b{F}_{sa, 0}^{q \times q}}}$ so that the perturbation $\mb{\rho}^{(\omega)}$ is a valid density matrix because:
\begin{align*}
    \vertiii{\mb{\Delta}^{(\omega)}}_{\infty}^{2} \le \vertiii{\mb{\Delta}^{(\omega)}}_{2}^{2} = \varepsilon^{2} \left( \sum_{j=1}^{D} \frac{1}{D \tau_{j}} \right) &= c_{0} \frac{q}{r D} \tr{\left(\f{D}^{*} \f{D} \right)^{-1}|_{\b{F}_{sa, 0}^{q \times q}}}  < \frac{1}{q^{2}}.
\end{align*}
\end{proof}

\begin{proof}[Proof of \cref{thm:loss:max:eigen}]
To handle multiplicities explicitly, we use the stretched indices $\ell \in \{1, \dots, q\}$ such that $\tilde{\lambda}_{i\ell} = \lambda_{i k_{i}(\ell)}$. The normalized coefficients are $\tilde{d}_{i\ell}[\m{S}] = d_{i k_{i}(\ell)}[\m{S}]/m_{i k_{i}(\ell)}$ and similarly for $\tilde{p}_{i\ell}$. The discrepancy operator acts on $\c{R}(K)$ as:
\begin{align*}
    \mb{\Delta}_{i} := \m{T}_{K}^{\mb{\nu}_{i, \m{S}}} - \m{T}_{K}^{\hat{\mb{\nu}}_{i, \mb{\rho}}} &= \b{E}[k_{Y} k_{Y}^{*} \mid \m{S}] - \frac{1}{r} \sum_{j=1}^{r} k_{Y_{ij}} k_{Y_{ij}}^{*} \\
    &= \sum_{k=1}^{q_{i}} (d_{ik}[\m{S}] - p_{ik}) k_{\lambda_{ik}} k_{\lambda_{ik}}^{*} = \sum_{\ell=1}^{q} (\tilde{d}_{i\ell}[\m{S}] - \tilde{p}_{i\ell}) k_{\tilde{\lambda}_{i\ell}} k_{\tilde{\lambda}_{i\ell}}^{*}.
\end{align*}
Since this operator has finite rank ($\le q_i$), its Schatten norms are well-defined. Let $\c{V}_{i} = \spann \{k_{\tilde{\lambda}_{i\ell}}\}_{\ell=1}^q \subset \c{R}(K)$. The operator $\mb{\Delta}_i$ vanishes on $\c{V}_{i}^{\perp}$, so we restrict our attention to $\c{V}_i$.
Consider the isometric isomorphism: 
\begin{equation*}
    \m{i} : (\c{V}_{i}, \innpr{\cdot}{\cdot}_{\c{R}(K)}) \to (\text{Ran}[\m{K}_{i}^{1/2}], \innpr{\cdot}{\cdot}_{\b{F}^{q}}), \, \sum_{\ell=1}^{q} \alpha_{\ell} k_{\tilde{\lambda}_{i\ell}} \mapsto \m{K}_{i}^{1/2} \mb{\alpha}, \quad \mb{\alpha} = [\alpha_{1}, \cdots, \alpha_{q}]^{\top}.
\end{equation*}

Under this isomorphism, the quadratic form of the discrepancy operator becomes:
\begin{align*}
    \innpr{\mb{\Delta}_{i} f}{g}_{\c{R}(K)} &= \sum_{\ell=1}^{q} (\tilde{d}_{i\ell}[\m{S}] - \tilde{p}_{i\ell}) f(\tilde{\lambda}_{i\ell}) \overline{g(\tilde{\lambda}_{i\ell})} \\
    &= \innpr{[\m{K}_{i}^{1/2} (\m{D}_{i}[\m{S}] - \m{P}_{i}) \m{K}_{i}^{1/2}] \m{i}(f)}{\m{i}(g)}_{\b{F}^{q}}, \quad f, g \in \c{V}_{i},
\end{align*}
since $[f(\tilde{\lambda}_{i1}), \cdots, f(\tilde{\lambda}_{iq})]^{\top} = \m{K}_{i} \mb{\alpha} = \m{K}_{i}^{1/2} [\m{i}(f)]$ and $[g(\tilde{\lambda}_{i1}), \cdots, g(\tilde{\lambda}_{iq})]^{\top} = \m{K}_{i}^{1/2} [\m{i}(g)]$.
Thus, $\mb{\Delta}_i$ is unitarily equivalent to the matrix $\m{i} \circ \mb{\Delta}_{i} \circ \m{i}^{-1} = \m{K}_{i}^{1/2} (\m{D}_{i}[\m{S}] - \m{P}_{i}) \m{K}_{i}^{1/2} \in \b{F}^{q \times q}$ restricted to its range, and their Schatten norms coincide.
\end{proof}

\begin{proof}[Proof of \cref{prop:superoperator_trace_norm}]
Let $\{\m{E}_{\ell} : 1 \le \ell \le D := \dim(\b{F}_{sa}^{q \times q})\}$ be an orthonormal basis for $\b{F}_{sa}^{q \times q}$. Then,
\begin{align*}
    &\innpr{\f{H}_{K} \m{E}_{\ell}}{\m{E}_{\ell}}_{2} = \frac{1}{n} \sum_{i=1}^{n} \sum_{k, k'=1}^{q_{i}} [\mb{\Omega}_i]_{kk'} \tr{\m{E}_{\ell} \m{\Pi}_{ik}} \tr{\m{E}_{\ell} \m{\Pi}_{ik'}}.
\end{align*}
For any $i, k, k'$, Plancherel's identity yields $\sum_{\ell=1}^{D} \tr{\m{E}_{\ell} \m{\Pi}_{ik}} \tr{\m{E}_{\ell} \m{\Pi}_{ik'}} = m_{ik} \delta_{kk'}$.
Consequently, the trace of the kernel Gram superoperator is given by
\begin{align*}
    \vertiii{\f{H}_{K}}_{1} &= 
    \sum_{\ell=1}^{D}
    \innpr{\f{H}_{K} \m{E}_{\ell}}{\m{E}_{\ell}}_{2} = \frac{1}{n} \sum_{i=1}^{n} \sum_{k=1}^{q_{i}} m_{ik} [\mb{\Omega}_i]_{kk} .
\end{align*}
\end{proof}

\begin{proof}[Proof of \cref{thm:quark:supop:pd}]
Using the cyclic property of the trace: 
\begin{align*} 
    &\vertiii{\m{K}_{i}^{1/2} (\m{D}_{i}[\m{S}] - \m{P}_{i}) \m{K}_{i}^{1/2}}_{2}^{2} = \tr{ (\m{D}_{i}[\m{S}] - \m{P}_{i}) \m{K}_{i} (\m{D}_{i}[\m{S}] - \m{P}_{i}) \m{K}_{i} } \\ 
    &= \sum_{\ell, \ell'=1}^{q} [(\m{D}_{i}[\m{S}] - \m{P}_{i})]_{\ell \ell} [\m{K}_{i}]_{\ell \ell'} [(\m{D}_{i}[\m{S}] - \m{P}_{i})]_{\ell' \ell'} [\m{K}_{i}]_{\ell' \ell} \\
    &= \sum_{k, k'=1}^{q_{i}} m_{ik} m_{ik'} \frac{d_{ik}[\m{S}] - p_{ik}}{m_{ik}} |K(\lambda_{ik}, \lambda_{ik'})|^{2} \frac{d_{ik'}[\m{S}] - p_{ik'}}{m_{ik'}} \\
    &= (\left[d_{i k}[\m{S}] - p_{ik}  \right]_{1 \le k \le q_{i}})^{\top} \mb{\Omega}_{i} \left[d_{i k}[\m{S}] - p_{ik}  \right]_{1 \le k \le q_{i}}.
\end{align*} 
Consequently, the loss function becomes $\tfrac{1}{n} \mathrm{Loss}_{K}[\m{S}] = \innpr{\f{H}_{K} [\m{S}]}{\m{S}}_{2} - 2 \innpr{\m{S}}{\m{P}_{K}}_{2} + \mathrm{Const.}$ 

Now, we assume that the squared kernel $|K|^2 : \b{R} \times \b{R} \to \b{R}$ is strictly p.d., and we claim that $\f{H}_{K}$ is strictly p.d.. It is trivial that $\f{H}_{K}$ is n.n.d., hence it suffices to show injectivity. Assume that $\f{H}_{K} [\m{S}] = \m{0}$ for some $\m{S} \in \b{F}_{sa}^{q \times q}$. Then, we have $d_{ik}[\m{S}] = 0$ for all $i, k$ as the Gram matrix $\mb{\Omega}_{i} \succ \m{0}$ is strictly p.d. for any $i$. This implies that $\m{S} \in \b{F}_{sa, 0}^{q \times q}$, and the completeness of the design implies that $\m{S} = \m{0}$. 
As a result, the loss function is strictly convex, and we have a unique solution.
To incorporate the unit-trace constraint, consider the Lagrangian:
\begin{align*} 
    \c{L}(\m{S}, \lambda) &= \frac{1}{2} \innpr{\f{H}_{K}[\m{S}]}{\m{S}}_{2} - \innpr{\m{S}}{\m{P}_{K}}_{2} + \lambda (\tr{\m{S}} - 1) \\
    &= \frac{1}{2} \innpr{\f{H}_{K}[\m{S}]}{\m{S}}_{2} - \innpr{\m{S}}{\m{P}_{K}}_{2} + \lambda (\innpr{\m{S}}{\m{I}_{q}}_{2} - 1).
\end{align*} 
Taking the gradient with respect to $\m{S}$ and setting it to zero: 
\begin{align*} 
    \nabla_{\m{S}} \c{L} = \f{H}_{K}[\m{S}] - \m{P}_K + \lambda \m{I}_q = \m{0} \quad \Longrightarrow \quad \hat{\mb{\rho}}^{K} = \f{H}_K^{-1}(\m{P}_K) - \lambda \f{H}_K^{-1}(\m{I}_{q}).
\end{align*} 
Then, solving for $\lambda \in \b{R}$ yields 
\begin{align*}
    1 = \tr{\f{H}_K^{-1}(\m{P}_K)} - \lambda \tr{\f{H}_K^{-1}(\m{I}_{q})} \quad \Longrightarrow \quad \lambda = \frac{\tr{\f{H}_K^{-1}(\m{P}_K)} - 1}{\tr{\f{H}_K^{-1}(\m{I}_{q})}}.
\end{align*}
\end{proof}

\begin{proof}[Proof of \cref{cor:unbiased}]
Straightforward since $\b{E}[\m{P}_{K} \mid \mb{\rho}] = \f{H}_K[\m{I}_{q}]$ and
\begin{align*}
    \hat{\mb{\rho}}^{K} - \mb{\rho} = \f{H}_K^{-1}[\m{P}_K - \f{H}_K[\m{I}_{q}] ] - \frac{ \tr{\f{H}_K^{-1}[\m{P}_K - \f{H}_K^{-1}[\m{I}_{q}] ]}}{\tr{\f{H}_K^{-1}[\m{I}_{q}]}} \f{H}_K^{-1}[\m{I}_{q}] \in \b{F}_{sa, 0}^{q \times q}.
\end{align*}
\end{proof}

\begin{proof}[Proof of \cref{prop:ker:tr:centre}]
Fix $\m{S}, \tilde{\m{S}} \in \b{F}_{sa}^{q \times q}$. Note that $\f{H}_K^{-1}$ is self-adjoint, hence $\innpr{\f{H}_K^{-1}[\m{I}_{q}]}{\tilde{\m{S}}}_{2} = \innpr{\f{H}_K^{-1}[\tilde{\m{S}}]}{\m{I}_{q}}_{2} = \tr{\f{H}_K^{-1}[\tilde{\m{S}}]}$.
This gives
\begin{align*}
    \innpr{\f{C}_{K}[\m{S}]}{\tilde{\m{S}}}_{2} &= \innpr{\m{S} - \frac{ \tr{\m{S}}}{\tr{\f{H}_K^{-1}[\m{I}_{q}]}} \f{H}_K^{-1}[\m{I}_{q}]}{\tilde{\m{S}}}_{2} = \innpr{\m{S}}{\tilde{\m{S}}}_{2} - \frac{ \tr{\m{S}} \tr{\f{H}_K^{-1}[\tilde{\m{S}}]}}{\tr{\f{H}_K^{-1}[\m{I}_{q}]}} \\
    &= \innpr{\m{S}}{\tilde{\m{S}} - \frac{ \tr{\f{H}_K^{-1}[\tilde{\m{S}}]}}{\tr{\f{H}_K^{-1}[\m{I}_{q}]}} \m{I}_{q}}_{2} = \innpr{\m{S}}{\f{C}_{K}^{*}[\tilde{\m{S}}]}_{2}.
\end{align*}
Observe that
\begin{align*}
    \f{H}_{K}^{-1} \f{C}_{K}^{*} [\m{S}] = \f{H}_{K}^{-1} \left[ \m{S} - \frac{ \tr{\f{H}_K^{-1}[\m{S}]}}{\tr{\f{H}_K^{-1}[\m{I}_{q}]}} \m{I}_{q} \right] 
    = \f{C}_{K} \f{H}_{K}^{-1} [\m{S}], 
\end{align*}
which yields the decomposition:
\begin{align*}
    \hat{\mb{\rho}}^{K} - \frac{\m{I}_{q}}{q} &= \left( \f{H}_K^{-1}[\m{P}_K] - \tr{\f{H}_K^{-1}[\m{P}_K]} \frac{\f{H}_K^{-1}[\m{I}_{q}]}{\tr{\f{H}_K^{-1}[\m{I}_{q}]}} \right) - \left( \frac{\m{I}_{q}}{q} - \tr{\frac{\m{I}_{q}}{q}} \frac{\f{H}_K^{-1}[\m{I}_{q}]}{\tr{\f{H}_K^{-1}[\m{I}_{q}]}} \right) \\
    &= \f{C}_{K} \big[ \f{H}_K^{-1}[\m{P}_K] \big] - \f{C}_{K} \left[ \frac{\m{I}_{q}}{q} \right] = \f{C}_{K} \left[ \f{H}_K^{-1}[\m{P}_K] - \frac{\m{I}_{q}}{q} \right] = \f{A}_{K} \left[\m{P}_K - \frac{\m{I}_{q}}{q} \right].
\end{align*}
\end{proof}

\begin{proof}[Proof of \cref{prop:eff:data:cov}]
From $\b{E}[\m{P}_K \mid \mb{\rho}] =  \f{H}_K [\mb{\rho}]$ and \cref{prop:ker:tr:centre}, we get
\begin{align*}
    \hat{\mb{\rho}}^{K} - \mb{\rho} &= \f{C}_{K} \left[ \f{H}_K^{-1}[\m{P}_K] - \frac{\m{I}_{q}}{q} \right] - \left[ \mb{\rho} - \frac{\m{I}_{q}}{q} \right] = \f{C}_{K} \left[ \f{H}_K^{-1}[\m{P}_K] - \mb{\rho} \right] \\
    &= \f{C}_{K} \f{H}_K^{-1} \left[ \m{P}_K - \f{H}_K [\mb{\rho}] \right] = \f{H}_K^{-1} \f{C}_{K}^{*}  \left[ \m{P}_K - \f{H}_K [\mb{\rho}] \right].
\end{align*}
Fix a self-adjoint matrix $\m{S} \in \b{F}_{sa}^{q \times q}$. Then
\begin{align*}
    \innpr{\m{P}_K - \f{H}_K [\mb{\rho}]}{\m{S}}_{2} = \frac{1}{n} \sum_{i=1}^{n} \sum_{k, k'=1}^{q_{i}} [\mb{\Omega}_i]_{kk'} (p_{ik} - d_{ik}[\mb{\rho}]) d_{ik'}[\m{S}].
\end{align*}
hence the action of the covariance is given by
\begin{align*}
    &\b{E}[\innpr{\m{S}}{\m{P}_{K} - \f{H}_K [\mb{\rho}]}_{2} (\m{P}_{K} - \f{H}_K [\mb{\rho}]) \mid \mb{\rho}] \\
    &= \frac{1}{n^{2}} \sum_{i=1}^{n} \sum_{k, k', k'', k''' =1}^{q_{i}} [\mb{\Omega}_i]_{kk'} \Cov (p_{ik}, p_{ik''}  \mid \mb{\rho}) [\mb{\Omega}_i]_{k''k'''} d_{ik'}[\m{S}] \m{\Pi}_{ik'''} = (A) - (B),
\end{align*}
due to \eqref{eq:multinom:cov}, where
\begin{align*}
    (A) &= \frac{1}{n^{2} r} \sum_{i=1}^{n} \sum_{k, k', k''' =1}^{q_{i}} d_{ik}[\mb{\rho}] [\mb{\Omega}_i]_{kk'}  d_{ik'}[\m{S}] [\mb{\Omega}_i]_{kk'''}  \m{\Pi}_{ik'''} \\
    &= \frac{1}{n^{2} r} \sum_{i=1}^{n} \sum_{k=1}^{q_{i}} d_{ik}[\mb{\rho}] \left( \sum_{k'=1}^{q_{i}} [\mb{\Omega}_i]_{kk'} d_{ik'}[\m{S}]  \right) \left( \sum_{k'=1}^{q_{i}}  [\mb{\Omega}_i]_{kk'''}  \m{\Pi}_{ik'''} \right) \\
    &= \frac{1}{n^{2} r} \sum_{i=1}^{n} \sum_{k=1}^{q_{i}} d_{ik}[\mb{\rho}] \innpr{\m{S}}{\m{M}_{ik}^{K}}_{2} \m{M}_{ik}^{K}, \\
    (B) &= \frac{1}{n^{2} r} \sum_{i=1}^{n} \sum_{k, k', k'', k''' =1}^{q_{i}} d_{ik}[\mb{\rho}] [\mb{\Omega}_i]_{kk'} d_{ik'}[\m{S}] d_{ik''}[\mb{\rho}] [\mb{\Omega}_i]_{k''k'''}  \m{\Pi}_{ik'''} \\
    &= \frac{1}{n^{2} r} \sum_{i=1}^{n} \left( \sum_{k, k' =1}^{q_{i}} d_{ik}[\mb{\rho}] [\mb{\Omega}_i]_{kk'} d_{ik'}[\m{S}] \right) \left( \sum_{k'', k''' =1}^{q_{i}} d_{ik''}[\mb{\rho}] [\mb{\Omega}_i]_{k''k'''}  \m{\Pi}_{ik'''} \right) \\
    &= \frac{1}{n^{2} r} \sum_{i=1}^{n} \innpr{\m{S}}{ \sum_{k=1}^{q_{i}} d_{ik}[\mb{\rho}] \sum_{k'=1}^{q_{i}} \m{M}_{ik'}^{K}}_{2} \left( \sum_{k''=1}^{q_{i}} d_{ik''}[\mb{\rho}] \sum_{k'''=1}^{q_{i}} \m{M}_{ik'''}^{K} \right).
\end{align*}
Substituting these results yields the expression for $\f{S}_{K, \mb{\rho}}$.
\end{proof}

\begin{proof}[Proof of \cref{thm:quark:clt}]
Following the proof of \cref{prop:eff:data:cov}, it is then a direct consequence that the CLT holds for the effective data matrix:
\begin{align*}
\sqrt{r}(\m{P}_K - \f{H}_K [\mb{\rho}]) \xrightarrow{d}  N\left(\m{0}, \f{S}_{K, \mb{\rho}} \right).
\end{align*}
Since $\f{A}_{K}$ is a bounded operator, the CLT for $\hat{\mb{\rho}}^{K} - \mb{\rho} = \f{A}_{K} \left[ \m{P}_K - \f{H}_K [\mb{\rho}] \right]$ follows by the continuous mapping theorem \cite{billingsley2013convergence}.
\end{proof}

\begin{proof}[Proof of \cref{thm:innpr:concen}]
Let $\m{T} = \f{A}_{K} [\m{S}]$ and consider the following random variable:
\begin{align*}
    Z :&= \innpr{\hat{\mb{\rho}}^{K} - \mb{\rho}}{\m{S}}_{2} = \innpr{\m{P}_K - \f{H}_K [\mb{\rho}]}{\m{T}}_{2} \\
    &= \frac{1}{n} \sum_{i=1}^{n} \sum_{k=1}^{q_{i}} (p_{ik} - d_{ik}[\mb{\rho}]) \underbrace{\innpr{\m{M}_{ik}^{K}}{\m{T}}_{2}}_{=:\beta_{ik}} 
    = \sum_{i=1}^{n} \sum_{j=1}^{r} \underbrace{\frac{1}{nr} \innpr{Y_{ij} - \m{d}_{i}}{\mb{\beta}_{i}}_{\b{R}^{q_{i}}}}_{=:Z_{ij}},
\end{align*}
where $Y_{ij} \sim \mathrm{Categorical}(\m{d}_{i})$, $\m{d}_{i} = [d_{i1}[\mb{\rho}], \cdots, d_{iq_{i}}[\mb{\rho}]]^{\top} \in \b{R}^{q_{i}}$, and $\mb{\beta}_{i} = [\beta_{i1}, \cdots, \beta_{iq_{i}}]^{\top} \in \b{R}^{q_{i}}$. We then use the Cram\'{e}r–Chernoff method to obtain the concentration inequality \cite{boucheron2003concentration}:
\begin{align*}
    \b{P} \left[Z \ge t \mid \mb{\rho} \right] \le \exp ( - g(t)), \quad g(t) := \sup_{\lambda > 0} (\lambda t - \log \b{E} \left[ e^{\lambda Z} \mid \mb{\rho} \right]).
\end{align*}

For each $1 \le i \le n$ and $1 \le j \le r$, note that $\b{E}[Z_{ij} \mid \mb{\rho}]= 0$, thus
\begin{align*}
    \log \b{E}\left[ e^{\lambda Z_{ij}} \mid \mb{\rho} \right] &= \log \left( 1 + \b{E}\left[ e^{\lambda Z_{ij}} - \lambda Z_{ij} - 1 \mid \mb{\rho} \right] \right) \\
    &\le \log \left( 1 + \b{E}[Z_{ij}^{2} \mid \mb{\rho}] \left( e^{\lambda} - \lambda - 1\right) \right) \le \b{E}[Z_{ij}^{2} \mid \mb{\rho}] \left( e^{\lambda} - \lambda - 1\right),
\end{align*}
and thus
\begin{align*}
    \log \b{E} \left[ e^{\lambda Z} \mid \mb{\rho} \right] = \sum_{i=1}^{n} \sum_{j=1}^{r} \b{E}\left[ e^{\lambda Z_{ij}} \mid \mb{\rho} \right] 
    \le \frac{m_{2}}{r} \left( e^{\lambda} - \lambda - 1\right),
\end{align*}
where
\begin{align*}
    m_{2} := r \sum_{i=1}^{n} \sum_{j=1}^{r} \b{E}[Z_{ij}^{2} \mid \mb{\rho}] = r\Cov(Z \mid \mb{\rho}) = \innpr{\f{S}_{K, \mb{\rho}} [\m{T}]}{\m{T}}_{2} = \innpr{\f{A}_{K} \f{S}_{K, \mb{\rho}} \f{A}_{K} [\m{S}]}{\m{S}}_{2}.
\end{align*}
Consequently, \cite[(2.10)]{boucheron2003concentration} yields the concentration inequality:
\begin{align*}
    g(t) \ge \sup_{\lambda > 0} \left( \lambda t - \frac{m_{2}}{r}(e^{\lambda} - \lambda - 1) \right) = \frac{m_{2}}{r} h \left( \frac{t}{m_{2}/r} \right) \ge \frac{t^{2}}{2(m_{2}/r + t/3)}.
\end{align*}
\end{proof}

\section{Additional Simulations}\label{sec:supp_material_simulation_study}
All simulations are performed in the real setting $\b{F}=\b{R}$.
As in \cref{ssec:emp:val}, we utilize the Gaussian kernel $K_{c}(x,y) := \exp (-c \|x-y\|^{2})$ for $c>0$, and write $K:=K_{1}$ for brevity. Here, the QHS is fixed to $\c{Q}=\b{R}^{8}$. We set the underlying density matrix as 
$\mb{\rho} = \Diag (\lambda_{1}, \dots, \lambda_{8})$, with eigenvalues 
$0.4 \geq 0.2 \geq 0.15 \geq 0.08 \geq 0.06 \geq 0.05 \geq 0.04 \geq 0.02$. We then sample a design of $n=100$ observables uniformly from $O(\b{R}^{8})$. 

\vspace{1em} \noindent
\textbf{Spectral Behavior.}
We investigate the spectral estimation errors based on $1000$ Monte Carlo repetitions, each with $r=50$ independent shots per observable.
\cref{fig:spectral_behavior} reports the spectral estimation errors for both the LSE and QUARK estimator with respect to $K=K_{1}$.
In contrast to \cref{fig:spectral_behavior_complex}, we observe in \cref{fig:spectral_behavior} that the LSE consistently achieves lower median estimation error for both eigenvalue and eigenvector estimation compared to the QUARK estimator.

\begin{figure}[H]
    \centering
    \begin{minipage}[t]{0.48\textwidth}
        \centering
        \includegraphics[width=\textwidth]{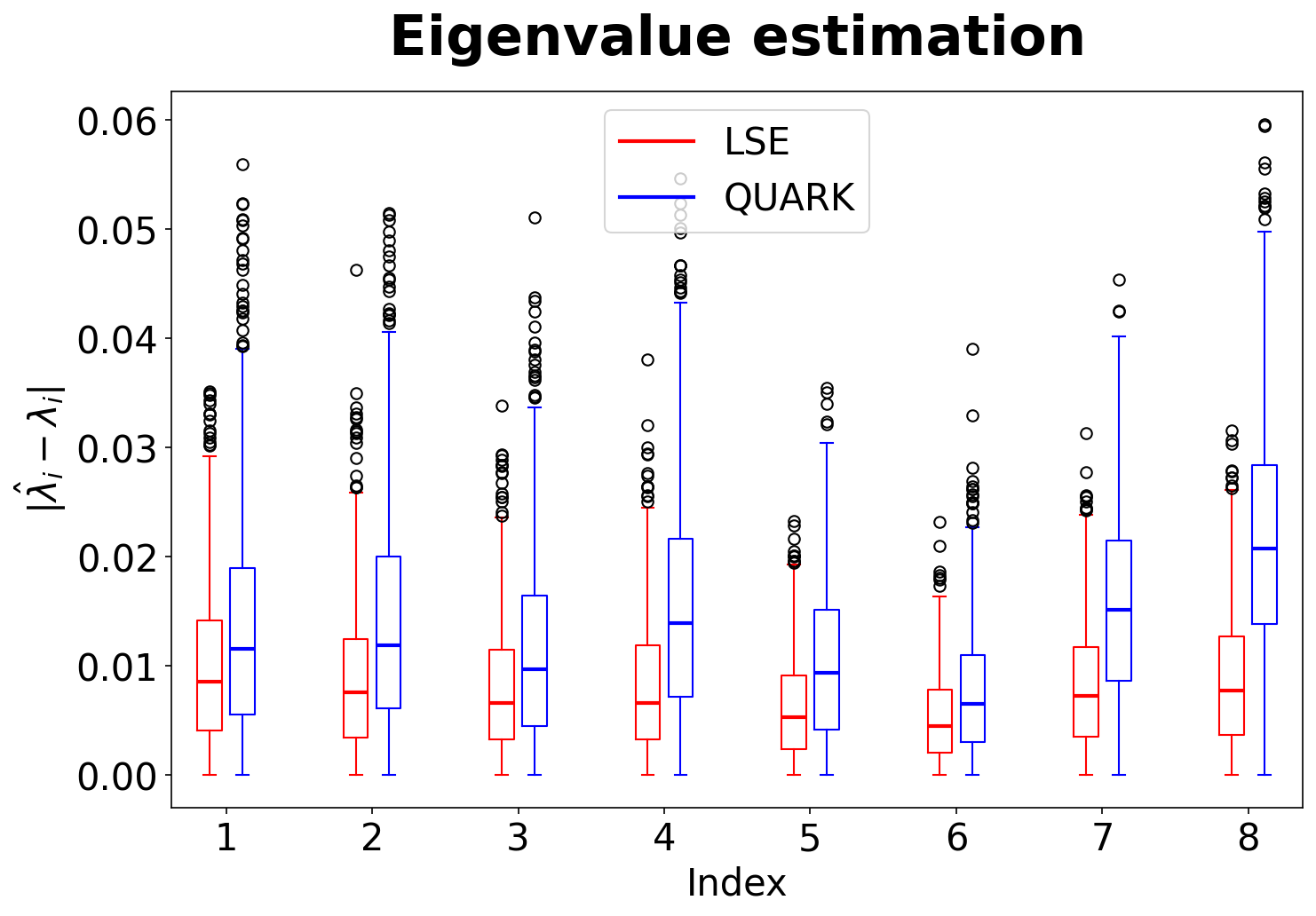}
    \end{minipage}
    \hfill
    \begin{minipage}[t]{0.48\textwidth}
        \centering
        \includegraphics[width=\textwidth]{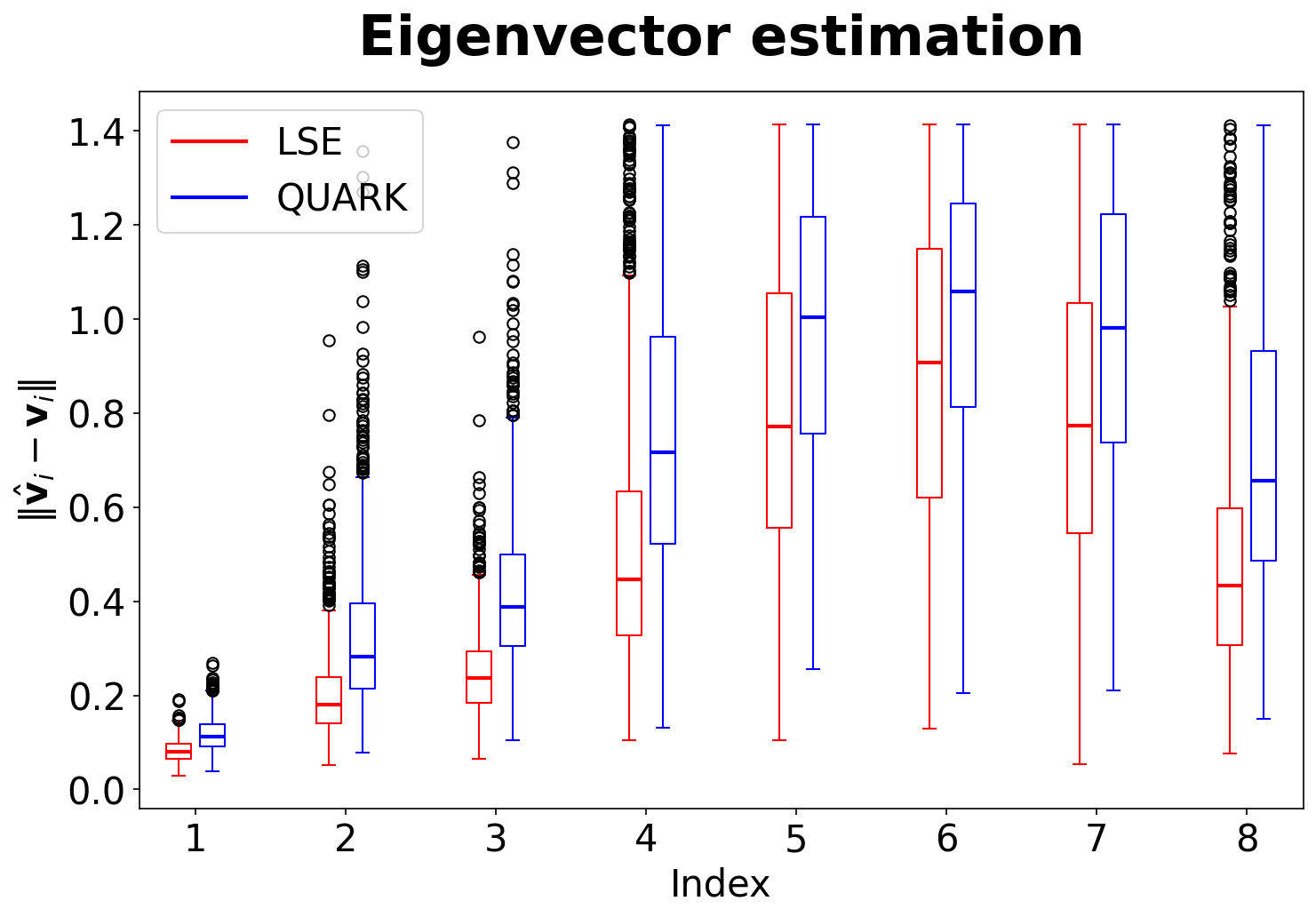}
    \end{minipage}
    \caption{Spectral behavior of the LSE and QUARK estimator, based on $1000$ Monte Carlo repetitions with $\b F=\b R$. The boxplots are ordered by decreasing eigenvalue magnitude. \textbf{Left:} Absolute eigenvalue errors $|\hat{\lambda}^{\text{LSE}}_{i}-\lambda_{i}|$ (red) and $|\hat{\lambda}^{K}_{i}-\lambda_{i}|$ (blue). \textbf{Right:} Absolute eigenvector errors $\|\hat{\m{v}}^{\text{LSE}}_{i}-\m{v}_{i}\|$ (red) and $\| \hat{\m{v}}_{i}^{K}-\m{v}_{i}\|$ (blue).}
    \label{fig:spectral_behavior}
\end{figure}

\noindent
\textbf{Asymptotic Performances.}
As in \cref{ssec:emp:val}, we vary the number of samples per observable $r$ increases. 
We perform the reconstructions for the LSE and various QUARK estimators. We then compare the results to the theoretical MSE under an exact unitary design. Note that since $\c{Q}=\b{R}^8$, the damping factor satisfies $\alpha_{\c Q}=1/5$. Similar phenomenon is observed as in \cref{ssec:emp:val}: The empirical behavior of the LSE closely follows the theoretical scaling. While all QUARK estimators exhibit larger errors asymptotically in comparison to the LSE estimator, the errors get closer to the theoretical LSE bound as $c$ increases. Nonetheless, \cref{fig:mse_vs_r} validates the asymptotic $O(r^{-1})$ decay predicted by the theory.

\begin{figure}[H]
    \centering
    \includegraphics[width=\textwidth]{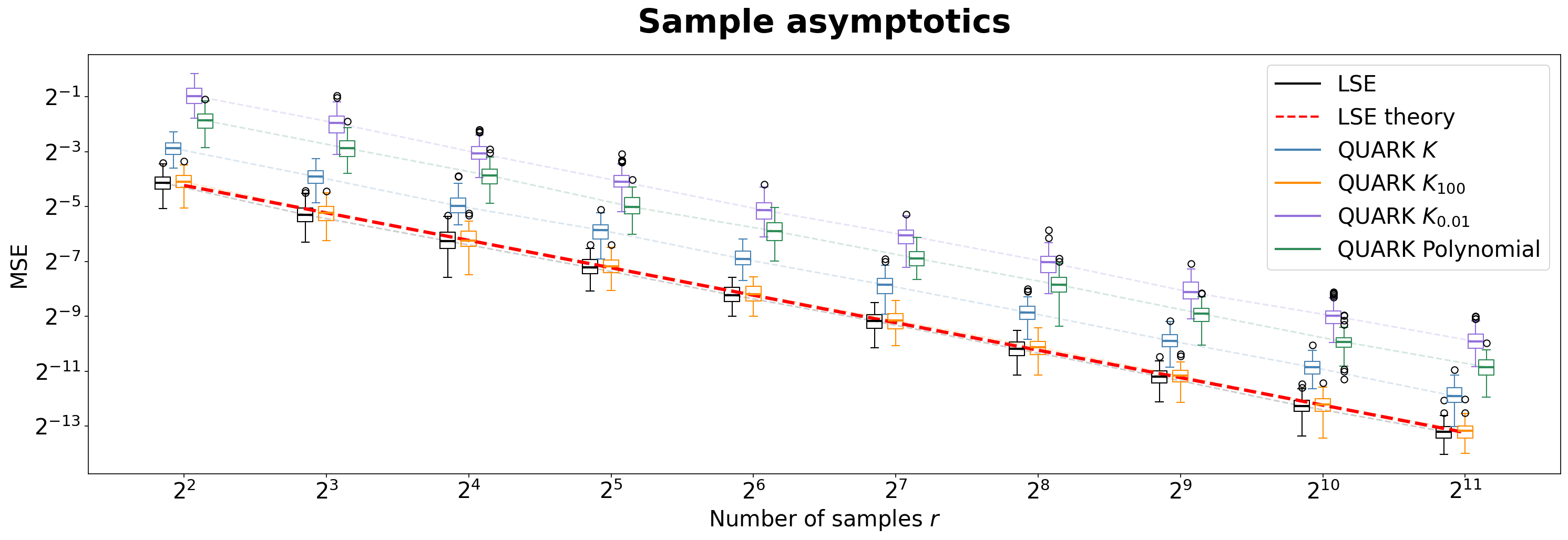}
    \caption{Asymptotic performances in terms of MSE of the LSE and various QUARK estimators with $\b{F}=\b{R}$.}
    \label{fig:mse_vs_r}
\end{figure}
\end{appendix}

\end{document}